\newtheorem{thm}{Theorem}[section]
\newtheorem{cor}[thm]{Corollary}
\newtheorem{lem}[thm]{Lemma}
\newtheorem{prop}[thm]{Proposition}
\newtheorem*{prob*}{Problem}
\newtheorem*{thm*}{Theorem}
\newtheorem*{quest*}{Question}
\theoremstyle{definition}
\newtheorem{defn}[thm]{Definition}
\newtheorem{example}[thm]{Example}
\newtheorem*{defn*}{Definition}
\newtheorem{rem}[thm]{Remark}
\newtheorem{rem*}[thm]{Remark}
\numberwithin{equation}{section}
\newcommand{\mP}{\mathbb {P}}
\newcommand{\mG}{\mathcal{G}}
\newcommand{\mL}{\mathcal{L}}
	\title{Host-Kra theory for $\bigoplus_{p\in P}\mathbb{F}_p$- Systems and multiple recurrence}
\date{\today}
\author{Or Shalom}
\begin{document}
	\begin{abstract}
		Let $\mathcal{P}$ be an (unbounded) countable multiset of primes (i.e. every prime may appear multiple times) and let $G=\bigoplus_{p\in \mathcal{P}}\mathbb{F}_p$. We develop a Host-Kra structure theory for the universal characteristic factors of an ergodic $G$-system. More specifically, we generalize the main results of Bergelson Tao and Ziegler from \cite{Berg& tao & ziegler} who studied these factors in the special case $\mathcal{P}=\{p,p,p,...\}$ for some fixed prime $p$. As an application we deduce a Khintchine-type recurrence theorem in the flavor of Bergelson Tao and Ziegler \cite{BTZ} and Bergelson Host and Kra \cite{BHK}.
	\end{abstract}
	\maketitle
\section{Introduction}
This paper is concerned with the study of the structure of the universal characteristic factors corresponding to multiple averages associated with the action of  $G=\bigoplus_{p\in \mathcal{P}}\mathbb{F}_p$ for some countable multiset of primes $P$ (formal definitions below).  The study of characteristic factors for multiple averages plays an important role in ergodic Ramsey theory. For example in the case of $\mathbb Z$-actions they are related to the theorem of Szemer\'edi on arithmetic progressions in sets of positive density in the integers. Universal characteristic factors corresponding to multiple averages associated with $\mathbb{Z}$-actions were studied by Host-Kra \cite{HK}, and Ziegler \cite{Z}, and for $\mathbb{F}_p^\omega$-actions by Bergelson-Tao-Ziegler \cite{Berg& tao & ziegler}. In this paper we develop the theory further to  $\bigoplus_{p\in \mathcal{P}}\mathbb{F}_p$ actions for a countable multiset of primes $P$. If $P$ is unbounded the universal characteristic factors may have a pathological structure as can be seen in Theorem \ref{example:thm}. This and other properties make the unbounded case differ significantly from that of $\mathbb{F}_p^\omega$-systems, see for instance Example \ref{Example} and the discussion afterwards.    \\
\ \\
We briefly survey our results. All definitions and statements used here are defined later in the paper where the results are formulated.
\begin{itemize}
\item {\textbf{Strongly Abramov $\bigoplus_{p\in P} \mathbb{F}_p$ - systems.}}
 In \cite{Berg& tao & ziegler} Bergelson Tao and Ziegler proved that the universal characteristic factors for $\mathbb{F}_p^\omega$-actions are \textit{strongly Abramov} (see Definition \ref{stronglyAbr}). We show that this is not generally true for $\bigoplus_{p\in P}\mathbb{F}_p$-actions and construct an example in Section \ref{example}. Most of the paper is devoted to proving an if and only if criterion for a $\bigoplus_{p\in P}\mathbb F_p$-system to be strongly Abramov (Theorem \ref{Mainr:thm}). Roughly speaking, in Theorem \ref{Main:thm} we prove that unless a system admits some pathological properties it must be strongly Abramov.
 \item {\textbf{$\bigoplus_{p\in P}\mathbb F_p$-systems as nilpotent systems}}
Host and Kra \cite{HK}, and Ziegler \cite{Z} proved that the universal characteristic factors for $\mathbb{Z}$-actions are inverse limits of nilsystems. In the case of Conze-Lesigne systems (systems of order $<3$) we prove a counterpart for the Host-Kra theorem for $\bigoplus_{p\in\mathcal{P}}\mathbb{F}_p$. We show that every such system is an inverse limit of \textit{nilpotent systems}; these are similar to nilsystems, but the Host-Kra group (the homogeneous group) as defined in \cite{HK}, is not necessarily a Lie group. For more details see Theorem \ref{nilpotentstructure}.
 \item {\textbf{Limit formula for multiple ergodic averages and a Khintchine-type recurrence Theorem.}}\\ Lesigne \cite{Les} and Ziegler \cite{Z1} proved a limit formula for the multiple ergodic averages associated with Szemer\'edi's theorem when the underlying space is a connected simply connected homogeneous space. We prove a counterpart for multiple averages associated with Szemer\'edi's theorem for $4$ term progressions on nilpotent systems (see Theorem \ref{formula}). We use our structure results to deduce a Khintchine-type recurrence result for four term progressions (Theorem \ref{recurrence:thm}). Similar averages for $\mathbb{Z}$ and $\mathbb{F}_p^\omega$ actions were studied by Bergelson-Host-Kra \cite{BHK} and by Bergelson-Tao-Ziegler \cite{BTZ} respectively.
\end{itemize}
\subsection{Universal Characteristic Factors}
We begin with some standard definitions, essentially taken from \cite{Berg& tao & ziegler}.
\begin{defn} A $G$-system is a quadruple $(X,\mathcal{B},\mu,G)$ where $(X,\mathcal{B},\mu)$ is a probability measure space which is separable modulo null sets\footnote{For technical reasons we assume that the space $(X,\mathcal{B},\mu)$ is regular, meaning that $X$ is compact, $\mathcal{B}$ is the Borel $\sigma$-algebra and $\mu$ is a Borel measure.}, together with an action of $G$ on $X$ by measure preserving transformations $T_g:X\rightarrow X$. For every topological group\footnote{All topological groups in this paper are implicitly assumed to be metrizable.} $(U,\cdot)$, measurable map $\phi:X\rightarrow U$ and element $g\in G$, we define the shift $T_g\phi = \phi \circ T_g$ and the multiplicative derivative $\Delta_g \phi = \frac{T_g \phi}{\phi}$. We say that a $G$-system $X$ is ergodic if the only functions in $L^2(X)$ which are invariant under the $G$-action are the constant functions.
\end{defn} 
The Gowers-Host-Kra seminorms play an important role throughout this paper
\begin{defn}
	[Gowers-Host-Kra (GHK) seminorms for an arbitrary countable abelian group $G$] \label{Gowers-Host-Kra}
	Let $G$ be a countable abelian group, let $X=(X,\mathcal{B},\mu)$ be a $G$-system, let $\phi\in L^\infty (X)$, and let $k\geq 1$ be an integer. The Gowers-Host-Kra seminorm $\|\phi\|_{U^k}$ of order $k$ of $\phi$ is defined recursively by the formula
	\[
	\|\phi\|_{U^1}:=\lim_{N\rightarrow\infty}\frac{1}{|\Phi_N^1|}\left\lVert\sum_{g\in\Phi_N^1}\phi\circ T_g\right\rVert_{L^2}
	\]
	for $k=1$, and
	\[
	\|\phi\|_{U^k}:=\lim_{N\rightarrow\infty}\left(\frac{1}{|\Phi_N^k|}\sum_{g\in\Phi_N^k}\left\lVert\Delta_g\phi\right\rVert_{U^{k-1}}^{2^{k-1}}\right)^{1/2^k}
	\]
	for $k\geq 1$, where $\phi_N^1,...,\phi_N^k$ are arbitrary F{\o}lner sequences.
\end{defn}
These seminorms where first introduced by Gowers in the special case where $G=\mathbb{Z}/N\mathbb{Z}$ in \cite{G}, where he derived quantitative bounds for Szemer{\'e}di's theorem about the existence of arbitrarily large arithmetic progressions in sets of positive upper Banach density, \cite{Sz}. Later, in \cite{HK} Host and Kra introduced essentially the above ergodic theoretical version of the Gowers norms for $G=\mathbb{Z}$ (see \cite[Proposition 16 Chapter 8]{HKbook} for this version).\\ 

Gowers' work raised a natural question about the behavior of functions with large $U^k$ norm. In the context of finite groups, this question is known as the inverse problem for the Gowers norms and it was answered partially by Gowers for the case $G=\mathbb{Z}/N\mathbb{Z}$. Inspired by the work of Host and Kra \cite{HK}, Green and Tao proved an inverse theorem for the Gowers $U^k$ norms  for $\mathbb{Z}/N\mathbb{Z}$ in the case $k=3$ in \cite{GT1} and together with Ziegler for general $k$ in \cite{GTZ}. Their work hints at a link between the ergodic theoretical structure of the universal characteristic factors and the inverse problem for the Gowers norms. Surprisingly, if one considers these problems in the context of vector spaces over finite fields this link becomes more concrete. Namely, in \cite{TZ} Tao and Ziegler deduced an inverse theorem for the Gowers norms over finite fields from an ergodic theoretical structure theorem for  $\mathbb{F}_p^\omega$-systems which they established together with Bergelson in \cite{Berg& tao & ziegler}.\\
 
 Another approach for the inverse problem is the study of nilspaces. In \cite{SzCam} Antol\'in Camarena and Szegedy introduced a purely combinatorial counterpart of the universal characteristic factors called a {\em nilspace}. The idea was to give a more abstract and general notion which describes the "cubic structure" of an ergodic system (see Host and Kra \cite[Section 2]{HK}). In \cite{Sz3} Candela and
Szegedy use nilspaces to prove a structure theorem for characteristic factors for GHK
seminorms associated with any nilpotent group, giving in particular an alternative proof
of the Host-Kra structure theorem. The notion "nilspace" is more general and abstract than the measure-theoretical counterpart. Thus, describing these nilspaces in a concrete way is often a difficult problem on its own. In a series of papers, \cite{Gut1},\cite{Gut2},\cite{Gut3} Gutman, Manners, and Varj{\'u} studied further the structure of nilspaces (more results in this direction can be found in \cite{Sz1},\cite{Sz2} by Candela Gonz\'alez-
S\'anchez and Szegedy and in \cite{GGY} by Glasner Gutman and Ye). By imposing another measure-theoretical aspect to these nilspaces, Gutman and Lian \cite{Gut4} gave another alternative proof of Host and Kra's theorem. 

In our work we do not pursue this approach, instead our goal is to generalize the ergodic theoretical structure for other groups and to deduce a Khintchine type recurrence.

The GHK seminorms correspond to a "factor" of $X$ (See Proposition \ref{UCF} below).
\begin{defn} [Factors, push-forwards and pullbacks] \label{Factor:def}
	Let $(X,\mathcal{B}_X, \mu_X, (T_g)_{g\in G})$ be a $G$-system. We say that a $G$-system $(Y,\mathcal{B}_Y,\mu_Y, (S_g)_{g\in G})$ is a factor of $(X,\mathcal{B}_X, \mu_X, (T_g)_{g\in G})$ if there is a measure preserving factor map $\pi_Y^X : X\rightarrow Y$ such that the push-forward of $\mu_X$ by $\pi^X_Y$ is $\mu_Y$ and $\pi^X_Y\circ T_g =S_g \circ \pi^X_Y$ $\mu_X$-a.e. for all $g\in G$.

	For a measure-space $U$ and a measurable map $f:Y\rightarrow U$ we define the pullback $(\pi_Y^X )^\star f: X\rightarrow U$ by $(\pi_Y^X )^\star f = f\circ\pi^X_Y$. We define the push-forward of $f$ by $\pi_Y^X$ to be the unique element $(\pi^X_Y)_\star f\in L^2(Y)$ such that $E(f|Y):=(\pi^X_Y)^\star (\pi^X_Y)_\star f $, where $E(f|Y)$ is the conditional expectation of $f$ with respect to the factor $Y$.\\
	We say that a function $f$ is $\mathcal{B}_Y$-measurable or measurable with respect to $Y$ if $f=E(f|Y)$, or equivalently if $f=(\pi^X_Y)^\star F$ for some $F\in L^2(Y)$.

	In this case we refer to $X$ as an extension of the $G$-system $Y$. Finally, we say that a factor $Y$ is generated by a collection $\mathcal{F}$ of measurable functions $f:X\rightarrow \mathbb{C}$ if $Y$ is the minimal factor of $X$ such that all $f\in\mathcal{F}$ are measurable with respect to $Y$.
\end{defn}
We have the following  Proposition\slash Definition (See Host and Kra, \cite[Lemma 4.3]{HK}).
\begin{prop}[Existence and uniqueness of the universal characteristic factors] \label{UCF} Let $G$ be a countable abelian group, let $X$ be a $G$-system, and let $k\geq 1$. Then there exists a factor $Z_{<k}(X)=(Z_{<k}(X),\mathcal{B}_{Z_{<k}(X)},\mu_{Z_{<k}(X)},\pi^X_{Z_{<k}(X)})$ of $X$ with the property that for every $f\in L^\infty (X)$, $\|f\|_{U^{k}(X)}=0$ if and only if $(\pi^{X}_{Z_{<k}(X)})_\star f = 0$ $($equivalently, $E(f|Z_{<k}(X))=0)$. This factor is unique up to isomorphism and is called the $k$-th universal characteristic factor of $X$.
\end{prop}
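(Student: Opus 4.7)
The plan is to adapt the Host--Kra construction from \cite[Lemma 4.3]{HK} to an arbitrary countable abelian group. Preliminary to the main argument I would check, by an inductive Cauchy--Schwarz--Gowers argument combined with the van der Corput lemma, that the recursive limit defining $\|\cdot\|_{U^k}$ exists in $L^2(X)$, yields a seminorm on $L^\infty(X)$, and is independent of the choice of Følner sequences $\Phi_N^1,\dots,\Phi_N^k$. These verifications go through verbatim for any countable abelian group, and in particular for $G=\bigoplus_{p\in\mathcal P}\mathbb F_p$.

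Next I would introduce the \emph{dual functions} of order $k$: for bounded measurable $(f_\omega)_{\omega\in\{0,1\}^{k}\setminus\{0\}}$ set
\[
\mathcal{D}\bigl((f_\omega)\bigr)(x) := \lim_{N\to\infty}\frac{1}{|\Phi_N|^{k}}\sum_{\vec h\in\Phi_N^{k}}\prod_{\omega\ne 0} \mathcal{C}^{|\omega|}\bigl(f_\omega\circ T_{\omega\cdot \vec h}\bigr)(x),
\]
where $\mathcal{C}$ denotes complex conjugation and $\omega\cdot\vec h=\sum_i\omega_i h_i$; existence of the $L^2$-limit follows by another van der Corput induction. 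I would then define $Z_{<k}(X)$ to be the factor of $X$ generated by all such dual functions, and record the duality
\[
\|f\|_{U^k}^{2^k}=\bigl\langle f,\; \mathcal{D}(f,\dots,f)\bigr\rangle,
\]
obtained by unwinding the recursive definition of $\|\cdot\|_{U^k}$.

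The easy direction of the characterization is now immediate: if $E(f|Z_{<k}(X))=0$ then $f$ is orthogonal in $L^2(X)$ to every dual function, in particular to the diagonal one, forcing $\|f\|_{U^k}=0$. For the converse I would show that pointwise products of dual functions are themselves $L^2$-weak limits of dual functions, so that the $L^2$-closed algebra they generate coincides with $L^2(Z_{<k}(X))$; then $\|f\|_{U^k}=0$ forces $f\perp L^2(Z_{<k}(X))$, i.e.\ $E(f|Z_{<k}(X))=0$. Uniqueness of $Z_{<k}(X)$ is automatic, since any two factors satisfying the stated kernel property cut out the same subspace of $L^2(X)$. I expect the main obstacle to be the algebra/approximation step: in the $\mathbb Z$-setting of \cite{HK} it ultimately rests on the Host--Kra cubic measure on $X^{2^k}$ and its symmetry properties, so the delicate point in our setting is to construct that cubic measure for $G$-actions and verify its invariance under the relevant face and diagonal symmetries of the cube.
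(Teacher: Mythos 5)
The paper offers no proof of this proposition: it is quoted directly from Host--Kra \cite[Lemma 4.3]{HK}, and your proposal is a faithful reconstruction of that standard argument (cubic measures, dual functions, and the factor they generate), which does transfer to arbitrary countable abelian $G$ exactly as you say --- indeed the paper sets up the cubic measures $\mu^{[k]}$ in this generality in Section 1.3. The one step your write-up elides is in the converse direction: to pass from $\|f\|_{U^k}=0$ to $f\perp L^2(Z_{<k}(X))$ you must know that $f$ is orthogonal to \emph{every} dual function $\mathcal D\bigl((f_\omega)\bigr)$, not merely the diagonal one recorded in your duality identity, and this requires the Gowers--Cauchy--Schwarz inequality $\bigl|\langle f,\mathcal D((f_\omega))\rangle\bigr|\le \|f\|_{U^k}\prod_{\omega\ne 0}\|f_\omega\|_{U^k}$. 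Since that inequality follows from the same iterated Cauchy--Schwarz argument you already invoke to establish the seminorm properties (equivalently, from the symmetries of $\mu^{[k]}$), this is a presentational omission rather than a substantive gap; likewise your claim that products of dual functions are weak limits of dual functions should be weakened to ``limits of averages (hence of linear combinations) of dual functions,'' which is what the change-of-variables argument actually yields and is all that is needed.
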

The structure of the universal characteristic factors for the GHK-norms for $\mathbb{Z}$-systems was studied by Host and Kra in \cite{HK} as a tool in the study of some non-conventional ergodic averages. Those averages were originally introduced by Furstenberg \cite{F1} in his proof of Szemer{\'e}di's Theorem. In \cite{Z} Ziegler defined these factors differently and in \cite{Leib} Leibman proved the equivalence.
\begin{thm} [Structure theorem for $Z_{<k}(X)$ for ergodic $\mathbb{Z}$-systems] \cite[Theorem 10.1]{HK} \cite[Theorem 1.7]{Z} \label{HK}
	For an ergodic system $X$, $Z_{<k}(X)$ is an inverse limit of $k$-step nilsystems\footnote{A $k$-step nilsystem is a quadruple $(\mathcal{G}/\Gamma,\mathcal{B},\mu,\mathbb{Z})$ where $\mathcal{G}$ is a $k$-step nilpotent Lie group, $\Gamma$ a co-compact subgroup, $\mathcal{B}$ is the Borel $\sigma$-algebra, $\mu$ the induced Haar measure and the action of $\mathbb{Z}$ is given by a left translation by an element in $\mathcal{G}$.}.
\end{thm}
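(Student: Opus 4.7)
The plan is to proceed by induction on $k$. The base cases are handled directly: for $k=1$, ergodicity forces $Z_{<1}(X)$ to be the trivial one-point factor. For $k=2$, the Gowers-Host-Kra seminorm $\|\cdot\|_{U^2}$ detects exactly the Kronecker factor, and by the classical spectral theorem applied to the unitary group $\{T_g\}$ restricted to $\mathcal{H}_{\mathrm{pp}}$, this factor is an inverse limit of rotations on compact abelian Lie groups, i.e.\ of $1$-step nilsystems.

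For the inductive step, assume $Z_{<k-1}(X)$ has been identified as an inverse limit of $(k-1)$-step nilsystems. The first task is to show that $Z_{<k}(X)$ is an isometric (abelian) extension of $Z_{<k-1}(X)$: there exist a compact abelian group $U$ and a measurable cocycle $\sigma:\mathbb{Z}\times Z_{<k-1}(X)\to U$ such that $Z_{<k}(X)\cong Z_{<k-1}(X)\times_\sigma U$. This is proved by analyzing the cubic measure on $X^{[k]}$ (the $2^k$-fold self-joining built into the definition of $\|\cdot\|_{U^k}$) and applying Mackey theory, using that $\|\cdot\|_{U^k}$ dominates $\|\cdot\|_{U^{k-1}}$ and that the conditional measure on fibers has a rigid structure forced by cubic symmetries.

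The core analytic step is to show that $\sigma$ satisfies a Conze--Lesigne-type equation: for every transformation $t$ in a dense subgroup of the Host-Kra group of $Z_{<k-1}(X)$, the derivative $\Delta_t\sigma$ is cohomologous to a phase polynomial of degree at most $k-1$ on $Z_{<k-1}(X)$. This is obtained by exploiting the rich symmetry group of the cubic measure on $Z_{<k}(X)^{[k]}$ and pulling these symmetries back to constraints on $\sigma$. From this equation one assembles the Host-Kra group $\mathcal{G}$ of measure preserving transformations of $Z_{<k}(X)$ that respect the cubic structure, and verifies directly that $\mathcal{G}$ is $k$-step nilpotent and Polish.

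The hard part, which I expect to be the main obstacle, is upgrading $\mathcal{G}$ to be (an inverse limit of) nilpotent Lie group(s). The strategy is to exhaust $Z_{<k}(X)$ by countably generated $\mathcal{G}$-invariant sub-$\sigma$-algebras, show each corresponding quotient group is locally compact with no small subgroups, and apply the Gleason--Yamabe--Montgomery--Zippin solution of Hilbert's fifth problem to conclude it is Lie. For each such approximant one then shows that the orbit map $\mathcal{G}\to Z_{<k}(X)$ factors through $\mathcal{G}/\Gamma$ for a cocompact discrete subgroup $\Gamma$, giving a genuine $k$-step nilsystem with $\mathbb{Z}$-action by translation by a fixed element. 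Passing to the inverse limit over the approximants yields the claimed structure for $Z_{<k}(X)$.
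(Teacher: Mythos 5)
First, note that the paper does not prove Theorem \ref{HK} at all: it is imported verbatim from Host--Kra \cite[Theorem 10.1]{HK} and Ziegler \cite[Theorem 1.7]{Z}, so there is no in-paper argument to compare yours against. Your outline does track the known Host--Kra strategy at the level of section headings (induction on $k$, abelian extension of $Z_{<k-1}(X)$ by a cocycle of finite type, Conze--Lesigne equations, the Host--Kra group, reduction to Lie quotients, inverse limits), and the first two steps you describe are genuine results (the abelian-extension step is \cite[Proposition 6.3]{HK}, restated in this paper as Proposition \ref{abelext:prop}).

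However, as a proof the proposal has a real gap at exactly the point where the theorem is hard. You assert as the ``core analytic step'' that $\Delta_t\sigma$ is cohomologous to a phase polynomial of degree at most $k-1$ for a dense set of $t$, and you then pass directly to ``the orbit map $\mathcal{G}\to Z_{<k}(X)$ factors through $\mathcal{G}/\Gamma$.'' Neither assertion is justified, and the second one silently assumes that the Host--Kra group acts \emph{transitively} on $Z_{<k}(X)$ --- equivalently, that the Conze--Lesigne lifting problem $\Delta_t\sigma = p_t\cdot\Delta F_t$ is solvable for \emph{every} $t$ in the structure group of $Z_{<k-1}(X)$, i.e.\ that the projection $\mathcal{G}(Z_{<k}(X))\to\mathcal{G}(Z_{<k-1}(X))$ is onto. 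That surjectivity is the heart of the matter: the entire proof of Theorem \ref{nilpotentstructure} in this paper is devoted to it in the much easier order-$<3$ case, and in \cite{HK} it consumes the bulk of the argument (via the reduction to toral systems, \cite[Theorem 10.3]{HK}). Moreover, the unqualified ``cohomologous to a phase polynomial'' claim is precisely the kind of statement that can fail for cocycles over connected groups --- the Furstenberg--Weiss construction reproduced in Section \ref{example} of this paper exists to make that point --- so it cannot be waved through; one needs the careful type/degree bookkeeping of \cite[Sections 7--9]{HK}. Finally, the appeal to Gleason--Yamabe--Montgomery--Zippin via a no-small-subgroups condition is misdirected: the passage to Lie groups is the easy part (every compact abelian structure group is an inverse limit of its Lie quotients $U/N$), whereas what actually requires work is showing the connected parts of the structure groups are finite-dimensional tori and that $\Gamma$ is discrete and cocompact, both of which again rest on the unsupplied Conze--Lesigne analysis.
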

This theorem led to various multiple recurrence and convergence results in ergodic theory, see for instance \cite{CL84},\cite{CL87},\cite{CL88},\cite{F&W} and \cite{BTZ}.
\subsection{Abelian cohomology and some notations}
We use the same notations as in \cite{Berg& tao & ziegler}.
\begin{defn} [Abelian cohomology] Let $G$ be a countable abelian group. Let $X=(X,\mathcal{B},\mu,G)$ be a $G$-system, and let $U=(U,\cdot)$ be a compact abelian group. 
	
	\begin{itemize}
		\item{We denote by $\mathcal{M}(X,U)$ the set of all measurable functions $\phi:X\rightarrow U$, with two functions $\phi,\phi'$ identified if they agree $\mu$-almost everywhere. $\mathcal{M}(X,U)$ is a topological group with respect to pointwise multiplication and the topology of convergence in measure.}
		\item {Similarly, let $\mathcal{M}(G,X,U)$ denote the set of all measurable functions $\rho:G\times X\rightarrow U$ with $\rho,\rho'$ being identified if $\rho(g,x)=\rho'(g,x)$ for $\mu$-almost every $x\in X$ and every $g\in G$.}
		\item {We let $Z^{1}(G,X,U)$ denote the subgroup of $\mathcal{M}(G,X,U)$ consisting of those $\rho:G\times X\rightarrow U$ which satisfy $\rho(g+g',x)=\rho(g,x)\rho(g',T_gx)$ for all $g,g'\in G$ and $\mu$-almost every $x\in X$. We refer to the elements of $Z^{1}(G,X,U)$ as cocycles.}
		\item {Given a cocycle $\rho:G\times X\rightarrow U$ we define the abelian extension $X\times_{\rho} U$ of $X$ by $\rho$ to be the product space $(X\times U,\mathcal{B}_X\times\mathcal{B}_U,\mu_X\times\mu_U)$ where $\mathcal{B}_U$ is the Borel $\sigma$-algebra on $U$ and $\mu_U$ the Haar measure. We define the action of $G$ on this product space by $(x,u)\mapsto (T_gx,\rho(g,x)u)$ for every $g\in G$. In this setting we define the vertical translations  $V_u(x,t)=(x,ut)$ on $X\times_{\rho} U$ for every $u\in U$. We note that this action of $U$ commutes with the $G$-action on this system.}
		\item {If $F\in \mathcal{M}(X,U)$, we define the derivative $\Delta F\in \mathcal{M}(G,X,U)$ of $F$ to be the function $\Delta F(g,x):=\Delta_g F(x)$. We write $B^1(G,X,U)$ for the image of $\mathcal{M}(X,U)$ under the derivative operation. We refer to the elements of $B^1(G,X,U)$ as $(G,X,U)$-coboundaries.}
		\item {We say that $\rho,\rho'\in \mathcal{M}(G,X,U)$ are $(G,X,U)$-cohomologous if $\rho/\rho'\in B^1(G,X,U)$. As usual, we denote the cohomology group by $H^1(G,X,U)=Z^1(G,X,U)/B^1(G,X,U)$.}
	\end{itemize}
\end{defn}
\begin{rem} \label{coh:rem} Observe that if $\rho$ and $\tilde{\rho}$ are $(G,X,U)$-cohomologous, then $X\times_{\rho} U$ and $X\times_{\tilde{\rho}}U$ are measure-equivalent systems. The isomorphism is given by $\pi(x,u)=(x,F(x)u)$ where $F:X\rightarrow U$ is a function such that $\rho = \tilde{\rho}\cdot \Delta F$.
\end{rem}
\subsection{Type of functions}
We introduce the notion of Cubic systems from \cite[Section 3]{HK} (Generalized for arbitrary countable abelian group).
\begin{defn} [Cubic measure spaces]  Let $X=(X,\mathcal{B},\mu,G)$ be a $G$-system for some countable abelian group $G$. For each $k\geq 0$ we define $X^{[k]} =(X^{[k]},\mathcal{B}^{[k]},\mu^{[k]},G^{[k]})$ where $X^{[k]}$ is the Cartesian product of $2^k$ copies of $X$, endowed with the product $\sigma$-algebra $\mathcal{B}^{[k]}=\mathcal{B}^{2^k}$, $G^{[k]}=G^{2^k}$ acting on $X^{[k]}$ in the obvious manner. We define the cubic measures $\mu^{[k]}$ and $\sigma$-algebras $\mathcal{I}_k\subseteq \mathcal{B}^{[k]}$ inductively. $\mathcal{I}_0$ is defined to be the $\sigma$-algebra of invariant sets in $X$, and $\mu^{[0]}:=\mu$. Once $\mu^{[k]}$ and $\mathcal{I}_k$ are defined, we identify $X^{[k+1]}$ with $X^{[k]}\times X^{[k]}$ and define $\mu^{[k+1]}$ by the formula 
	$$\int f_1(x)f_2(y) d\mu^{[k+1]}(x,y) = \int E(f_1|\mathcal{I}_k)(x)E(f_2|\mathcal{I}_k)(x) d\mu^{[k]}(x)$$
	for $f_1,f_2$ functions on $X^{[k]}$ and $E(\cdot|\mathcal{I}_k)$ the conditional expectation, and $\mathcal{I}_{k+1}$ being the $\sigma$-algebra of invariant sets in $X^{[k+1]}$.
\end{defn}
This construction leads to the following notion of type for functions and cocycles, see \cite[Definition 7.1]{HK} and \cite[Definition 4.1]{Berg& tao & ziegler}.
\begin{defn} [Functions of type $<k$]  \label{type:def} Let $G$ be a countable abelian group, let $X=(X,\mathcal{B},\mu,G)$ be a $G$-system. Let $k\geq 0$ and let $X^{[k]}$ be the cubic system associated with $X$ and $G$ acting on $X^{[k]}$ diagonally.
	\begin{itemize}
		\item{For each measurable $f:X\rightarrow U$, we define a measurable map $d^{[k]}f:X^{[k]}\rightarrow U$ to be the function $$d^{[k]}f((x_w)_{w\in \{-1,1\}^k}):=\prod_{w\in \{-1,1\}^k}f(x_w)^{\text{sgn}(w)}$$ where $\text{sgn}(w)=w_1\cdot w_2\cdot...\cdot w_k$.}
		\item {Similarly for each measurable $\rho:G\times X\rightarrow U$ we define a measurable map $d^{[k]}\rho:G\times X^{[k]}\rightarrow U$ to be the function 	$$d^{[k]}\rho(g,(x_w)_{w\in \{-1,1\}^k}):=\prod_{w\in \{-1,1\}^k} \rho(g,x_w)^{\text{sgn}(w)}.$$}
		\item {A function $\rho:G\times X\rightarrow U$ is said to be a function of type $<k$ if $d^{[k]}\rho$ is a $(G,X^{[k]},U)$-coboundary. We let $\mathcal{M}_{<k}(G,X,U)$ denote the subspace of functions $\rho:G\times X\rightarrow U$ of type $<k$. We let $\mathcal{C}_{<k}(G,X,U)$ denote the subspace of $\mathcal{M}_{<k}(G,X,U)$ consisting of elements of this space which are also cocycles.}
	\end{itemize}
\end{defn}

\begin{defn} [Phase polynomials]
	Let $G$ be a countable abelian discrete group, $X$ be a $G$-system, let $\phi\in L^\infty(X)$, and let $k\geq 0$ be an integer. A function $\phi:X\rightarrow \mathbb{C}$ is said to be a phase polynomial of degree $<k$ if we have $\Delta_{h_1}...\Delta_{h_k}\phi = 1$ $\mu_X$-almost everywhere for all $h_1,...,h_k\in G$. (In particular by setting $h_1=...=h_k=0$ we see that $\phi$ must take values in $S^1$, $\mu_X$-a.e.). We write $P_{<k}(X)=P_{<k}(X,S^1)$ for the set of all phase polynomials of degree $<k$. Similarly, a function $f:G\times X\rightarrow \mathbb{C}$ is said to be a phase polynomial of degree $<k$ if $f(g,\cdot)\in P_{<k}(X,S^1)$ for every $g\in G$. We let $P_{<k}(G,X,S^1)$ denote the set of all phase polynomials $f:G\times X\rightarrow \mathbb{C}$ of degree $<k$.
\end{defn}
\begin{defn} \label{Abramov:def} We write $\text{Abr}_{<k}(X)$\footnote{It was Abramov who studied systems of this type for $\mathbb{Z}$-actions, see \cite{A}.} for the factor of $X$ generated by $P_{<k}(X)$, and say that $X$ is an Abramov system of order $<k$ if it is generated by $P_{<k}(X)$, or equivalently if $P_{<k}(X)$ spans $L^2(X)$. 
	\end{defn}
\begin{rem}
	The notion of phase polynomials can be generalized for an arbitrary abelian group $(U,\cdot)$. A function $\phi:X\rightarrow U$ is said to be a phase polynomial of degree $<k$ if $\Delta_{h_1}...\Delta_{h_k}\phi = 1$ $\mu_X$-a.e. for all $h_1,...,h_k\in G$. We let $P_{<k}(X,U)$ and similarly $P_{<k}(G,X,U)$ denote the spaces of phase polynomials of degree $<k$ taking values in $U$.
\end{rem}

We recall some basic facts about functions of type $<k$ from \cite[Lemma 4.3]{Berg& tao & ziegler}.
\begin{lem} \label{PP}
	Let $G$ be a countable abelian group, let $X=(X,\mathcal{B},\mu,G)$ be an ergodic $G$-system, let $U=(U,\cdot)$ be a compact abelian group, and let $k\geq 0$.
	
	\begin{enumerate} [(i)]
		\item {Every function $f:G\times X\rightarrow U$ of type $<k$ is also of type $<k+1$.}
		\item {The set $\mathcal{M}_{<k}(G,X,U)$ is a subgroup of $\mathcal{M}(G,X,U)$ and it contains the group $B^1(G,X,U)$ of coboundaries. In particular every function that is $(G,X,U)$-cohomologous to a function of type $<k$, is a function of type $<k$.}
		\item {A function $f:G\times X\rightarrow U$ is a phase polynomial of degree $<k$ if and only if $d^{[k]}f=1$, $\mu^{[k]}$-almost everywhere. In particular every phase polynomial of degree $<k$ is of type $<k$.}
		\item{If $f:G\times X\rightarrow U$ is a $(G,X,U)$-coboundary, then $d^{[k]}f:G\times X^{[k]}\rightarrow U$ is a $(G,X^{[k]},U)$-coboundary.}
	\end{enumerate}
	
\end{lem}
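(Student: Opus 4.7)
The four parts are essentially independent, though (ii) depends on (iv). My plan is to dispatch (iv), (ii), and (i) by direct calculations on $X^{[k]}$, and then tackle (iii) by induction on $k$, which is the substantive part.

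For (iv), if $f = \Delta F$ with $F \colon X \to U$, a direct expansion gives
$$d^{[k]}f(g,(x_w)) = \prod_{w\in\{-1,1\}^k}\left(\frac{F(T_g x_w)}{F(x_w)}\right)^{\text{sgn}(w)} = \frac{d^{[k]}F\bigl((T_g x_w)_w\bigr)}{d^{[k]}F\bigl((x_w)_w\bigr)},$$
identifying $d^{[k]}f$ as the coboundary of $d^{[k]}F$ on $X^{[k]}$ under the diagonal $G$-action. Part (ii) is then immediate: $f\mapsto d^{[k]}f$ is a homomorphism, so $\mathcal{M}_{<k}(G,X,U)$ is the preimage of the subgroup $B^1(G,X^{[k]},U)$; by (iv) it contains $B^1(G,X,U)$, and the ``cohomologous'' statement is closure under multiplication. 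For (i), identify $X^{[k+1]}=X^{[k]}\times X^{[k]}$ by splitting on the last coordinate: since $\text{sgn}(w',w_{k+1})=\text{sgn}(w')\,w_{k+1}$,
$$d^{[k+1]}f(g,(x,y)) = \frac{d^{[k]}f(g,y)}{d^{[k]}f(g,x)},$$
and if $d^{[k]}f=\Delta H$ with $H\colon X^{[k]}\to U$, then $\tilde H(x,y):=H(y)/H(x)$ satisfies $d^{[k+1]}f=\Delta\tilde H$.

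For (iii), I induct on $k$. The base case $k=0$ is immediate since $d^{[0]}f=f$ and a phase polynomial of degree $<0$ is forced to be identically $1$. For the inductive step, fix $g$ and set $\phi(x):=f(g,x)$. By the identity from (i), $d^{[k+1]}\phi=1$ $\mu^{[k+1]}$-a.e.\ iff $d^{[k]}\phi(x)=d^{[k]}\phi(y)$ $\mu^{[k+1]}$-a.e.\ on $X^{[k]}\times X^{[k]}$. Passing to characters of $U$ to reduce to the scalar case and expanding via the recursive definition of $\mu^{[k+1]}$, this equality is equivalent to $d^{[k]}\phi$ being $\mathcal{I}_k$-measurable on $X^{[k]}$, where $\mathcal{I}_k$ is the $\sigma$-algebra of diagonal $G$-invariants of $(X^{[k]},\mu^{[k]})$. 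Such measurability is equivalent to $\Delta_h^{\mathrm{diag}}(d^{[k]}\phi)=1$ $\mu^{[k]}$-a.e.\ for every $h\in G$, and the interchange identity $\Delta_h^{\mathrm{diag}}(d^{[k]}\phi)=d^{[k]}(\Delta_h\phi)$ (a special case of the computation in (iv)) combined with the inductive hypothesis states precisely that $\Delta_h\phi$ is a phase polynomial of degree $<k$ for every $h$, i.e.\ $\phi$ is a phase polynomial of degree $<k+1$.

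The main obstacle is the measure-theoretic equivalence within step (iii): ``$d^{[k]}\phi(x)=d^{[k]}\phi(y)$ $\mu^{[k+1]}$-a.e.\ $\Longleftrightarrow$ $d^{[k]}\phi$ is $\mathcal{I}_k$-measurable''. This requires unpacking the recursive formula for $\mu^{[k+1]}$ through the conditional expectation onto $\mathcal{I}_k$, working character by character to handle $U$-valued functions, and being careful to interpret $\mathcal{I}_k$ as the diagonal $G$-invariants of $X^{[k]}$ (the natural notion that makes the recursion non-trivial) rather than the full $G^{[k]}$-invariants. Once this identification is pinned down, the induction closes cleanly and the lemma follows.
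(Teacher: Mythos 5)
Your argument is correct: the computations for (i), (ii), (iv) and the inductive characterization in (iii) via the relatively independent joining over $\mathcal{I}_k$ (including the correct reading of $\mathcal{I}_k$ as the diagonal invariants) are exactly the standard proof. The paper itself gives no proof here --- it simply recalls the lemma from Bergelson--Tao--Ziegler \cite[Lemma 4.3]{Berg& tao & ziegler} --- and your argument matches the one given there.
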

We also recall some properties of functions of type $<k$ and phase polynomials from \cite[Lemma 1.15]{Berg& tao & ziegler}.
\begin{lem}  \label{PPP}
	Let $G$ be a countable abelian group, $X$ be a $G$-system and $k\geq 0$.
	\begin{enumerate}[(i)]
		\item{(monotonicity) We have $P_{<k}(X,U)\subseteq P_{<k+1}(X,U)$. In particular $Abr_{<k}(X)\leq Abr_{<k+1}(X)$.}
		\item {(Homomorphism) $P_{<k}(X,U)$ is a group under pointwise multiplication, and for each $h\in H$ $\Delta_h$ is a homomorphism from $P_{<k+1}(X,U)$ to $P_{<k}(X,U)$.}
		\item {(Polynomiality criterion) Conversely, if $\phi:X\rightarrow U$ is measurable and for every $g\in G$, $\Delta_g\phi\in P_{<k}(X,U)$ then $\phi\in P_{<k+1}(X,U)$.}
		\item {(Functoriality) If $Y$ is a factor of $X$ then the pullback $(\pi^X_Y)^\star$ is a homomorphism from $P_{<k}(Y,U)$ to $P_{<k}(X,U)$. Conversely if $f:Y\rightarrow U$ is such that $(\pi^X_Y)^\star f\in P_{<k}(X,U)$ then $f\in P_{<k}(Y,U)$.}
		
	\end{enumerate}
\end{lem}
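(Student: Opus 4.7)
My plan is to exploit two elementary observations throughout: the multiplicative derivative $\Delta_g\phi=(T_g\phi)/\phi$ is a group homomorphism of $\mathcal{M}(X,U)$ into itself (so constants are mapped to $1$ and inverses are preserved), and the operators $\Delta_g$, $\Delta_h$ commute because $G$ is abelian, so $T_{g+h}=T_{h+g}$ and hence
\[
\Delta_g\Delta_h\phi=\frac{T_{g+h}\phi\cdot \phi}{T_g\phi\cdot T_h\phi}=\Delta_h\Delta_g\phi
\]
$\mu$-almost everywhere. With these in hand, each of the four items reduces to a one-line manipulation of the definition of $P_{<k}(X,U)$.

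For (i), I would apply $\Delta_{h_{k+1}}$ to the defining identity $\Delta_{h_1}\cdots\Delta_{h_k}\phi=1$ and use that $\Delta_{h_{k+1}}(1)=1$; the inclusion $\mathrm{Abr}_{<k}(X)\leq \mathrm{Abr}_{<k+1}(X)$ is then immediate from Definition \ref{Abramov:def} because enlarging a generating family can only enlarge the factor it generates. For (ii), closure of $P_{<k}(X,U)$ under pointwise multiplication and inversion follows because the composition $\Delta_{h_1}\cdots\Delta_{h_k}$ is itself a homomorphism of $\mathcal{M}(X,U)$; that $\Delta_h$ sends $P_{<k+1}$ into $P_{<k}$ is then a consequence of commutativity of derivatives combined with the same homomorphism property.

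For (iii), I would fix $h_1,\dots,h_{k+1}\in G$, rename $g=h_{k+1}$, and use commutativity of the derivatives to write
\[
\Delta_{h_1}\cdots\Delta_{h_{k+1}}\phi=\Delta_{h_1}\cdots\Delta_{h_k}\bigl(\Delta_{h_{k+1}}\phi\bigr)=1,
\]
the last equality holding because $\Delta_{h_{k+1}}\phi\in P_{<k}(X,U)$ by hypothesis. For (iv), the key point is that the pullback $(\pi_Y^X)^\star$ is a multiplicative homomorphism intertwining the two $G$-actions, so it commutes with every $\Delta_h$. The forward direction is then obtained by pulling back the identity $\Delta_{h_1}\cdots\Delta_{h_k}f=1$ through $(\pi_Y^X)^\star$; the converse uses that the push-forward of $\mu_X$ is $\mu_Y$, so any measurable $F:Y\to U$ whose pullback equals $1$ $\mu_X$-almost everywhere must itself equal $1$ $\mu_Y$-almost everywhere, and one applies this to $F=\Delta_{h_1}\cdots\Delta_{h_k}f$.

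The only mild subtlety I foresee is measure-theoretic bookkeeping: each identity holds only almost everywhere, and in (iii) one must ensure that the a.e.\ identities $\Delta_{h_1}\cdots\Delta_{h_{k+1}}\phi=1$ hold simultaneously for all tuples $(h_1,\dots,h_{k+1})\in G^{k+1}$. Since $G$ is countable the exceptional set is a countable union of null sets, hence null, and no substantive obstacle arises. The entire lemma is essentially a direct consequence of the definition of $P_{<k}(X,U)$ together with the commutativity and homomorphism properties of the derivative.
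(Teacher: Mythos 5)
Your argument is correct and is exactly the standard one: the paper itself gives no proof, quoting the lemma from Bergelson--Tao--Ziegler (Lemma 1.15 there), whose proof is the same direct manipulation of the definition via the homomorphism property and commutativity of the derivatives $\Delta_g$. Your handling of the converse in (iv) (using that the push-forward of $\mu_X$ is $\mu_Y$) and the remark on countably many null sets are both appropriate; nothing further is needed.
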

We can now formulate the main result of Bergelson Tao and Ziegler \cite[Theorem 1.20]{Berg& tao & ziegler}.
\begin{thm}  [Structure theorem for $Z_{<k}(X)$ for ergodic $\mathbb{F}_p^\omega$-systems] \label{BTZ} There exists a constant $C(k)$ such that for any ergodic $\mathbb{F}_p^\omega$-system $X$, $L^2(Z_{<k}(X))$ is generated by phase polynomials of degree $<C(k)$.  Moreover if $p$ is sufficiently large with respect to $k$ then $C(k)=k$.
\end{thm}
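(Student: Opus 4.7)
The plan is induction on $k$. The cases $k=1,2$ are immediate: $Z_{<1}(X)$ is the trivial factor for ergodic $X$, and $Z_{<2}(X)$ is the Kronecker factor. Because $G = \mathbb{F}_p^\omega$ is a torsion group, the Kronecker factor is a rotation on a compact abelian group of exponent $p$, whose characters are exactly phase polynomials of degree $<2$; thus $C(1)=1$ and $C(2)=2$. This also sets the template for the sharp statement, since in both cases the torsion obstructions vanish.

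For the inductive step, I assume the result for $k$ and want to establish it for $k+1$. By the Host-Kra tower machinery (applied to the abelian group $\mathbb{F}_p^\omega$ as in \cite{Berg& tao & ziegler}), $Z_{<k+1}(X)$ sits as an abelian extension of $Z_{<k}(X)$ of the form
\[
Z_{<k+1}(X) \;=\; Z_{<k}(X)\times_{\rho} U
\]
for some compact abelian group $U$ and some cocycle $\rho\in Z^1(G,Z_{<k}(X),U)$ of type $<k$. Pontryagin-decomposing $U$, it suffices to handle the case $U=S^1$, and the problem reduces to showing that every cocycle $\rho:G\times Z_{<k}(X)\to S^1$ of type $<k$ is $(G,Z_{<k}(X),S^1)$-cohomologous to a phase polynomial of controlled degree.

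The core step is to produce this phase polynomial by integrating the coboundary data. Since $\rho$ is of type $<k$, we have $d^{[k]}\rho=\Delta F$ for some $F$ on $Z_{<k}(X)^{[k]}$, and the cocycle identity translates into polynomial-like relations on $F$ over the cubic system. Using the inductive hypothesis—that $Z_{<k}(X)$ is an inverse limit of Abramov systems, hence spanned by phase polynomials of degree $<C(k)$—one Fourier-decomposes $F$ along the phase polynomial spectrum of $Z_{<k}(X)^{[k]}$. The torsion of $G$ (every $g$ has order $p$) then forces an appropriate power of $\rho$, after correction by a coboundary, to equal a phase polynomial whose degree is bounded in terms of $C(k)$, $k$, and $p$. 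This produces the explicit recursion defining $C(k+1)$.

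The hardest part of the argument, and where most of the work in \cite{Berg& tao & ziegler} concentrates, is the degree accounting that yields the sharp bound $C(k)=k$ in the large-prime regime. The key mechanism is that when $p>k$, the binomial coefficients $\binom{k}{j}$ that appear as obstructions in the integration procedure are invertible in $\mathbb{F}_p$, so one can solve for an exact antiderivative and avoid the degree blow-up that $p$-torsion artifacts would otherwise introduce. For small $p$ the obstructions are genuine and can only be removed after passing to a higher-order phase polynomial, producing the weaker constant $C(k)$. Carefully tracking how the type of a function and the degree of a phase polynomial interact under the operators $\Delta_g$ and $d^{[k]}$ — i.e.\ keeping the Lemma \ref{PP}/Lemma \ref{PPP} bookkeeping tight at every stage of the induction — is the central technical obstacle.
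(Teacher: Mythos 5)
First, a point of orientation: the paper does not prove Theorem \ref{BTZ} at all --- it is imported verbatim from \cite{Berg& tao & ziegler} as background, and the paper's own contribution is the generalization (Corollary \ref{cor}, via Theorems \ref{Main:thm}, \ref{TDred:thm} and \ref{MainT:thm}) to $\bigoplus_{p\in P}\mathbb{F}_p$. So there is no in-paper proof to compare against; I can only judge your sketch against the machinery the paper (and BTZ) actually uses. Your skeleton is right: induction on $k$, the base cases, Proposition \ref{abelext:prop} to write $Z_{<k+1}(X)=Z_{<k}(X)\times_\rho U$ with $\rho$ of type $<k$, Pontryagin reduction to $U=S^1$, and the reduction of the whole theorem to ``every type $<k$ cocycle is cohomologous to a phase polynomial of bounded degree.'' That reduction is exactly how the paper deduces the Abramov property in Section \ref{proof}.

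The gap is in your core step. ``Fourier-decompose $F$ (with $d^{[k]}\rho=\Delta F$) along the phase polynomial spectrum of $Z_{<k}(X)^{[k]}$; the torsion of $G$ then forces an appropriate power of $\rho$, after correction by a coboundary, to equal a phase polynomial'' is not an argument, and the mechanism it gestures at is the wrong one. Neither BTZ nor this paper works directly with the coboundary data on the cubic system; the engine is \emph{vertical differentiation}: one differentiates $\rho$ by the automorphisms $V_u$ of the extension $Z_{<k}(X)=Z_{<k-1}(X)\times_\sigma U$, uses Lemma \ref{dif:lem} to drop the type by one at each differentiation, runs a downward induction to get the Conze--Lesigne equations (\ref{Conze-Lesigne}) $\Delta_u\rho=p_u\cdot\Delta F_u$, linearizes $u\mapsto p_u$ (Lemma \ref{lin:lem}), and then integrates back over $U$ using its presentation as a bounded-torsion profinite group (Theorem \ref{ptorsion}, Lemma \ref{PIL:lem}, and the generator-and-relation bookkeeping of Theorem \ref{FC:thm}). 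Moreover, the specific shortcut you propose --- deduce that some power $\rho^N$ is a phase polynomial times a coboundary and then undo the power --- does not close on its own: knowing $\rho^N=q\cdot\Delta F$ with $q$ polynomial only gives you that $F^N$ is a phase polynomial, and recovering a genuine polynomial representative for $\rho$ requires extracting an $N$-th \emph{phase polynomial} root of $F^N$ (Lemma \ref{valp:lem}, Corollary \ref{roots}), which in turn relies on polynomials taking finitely many values on these systems. That root-extraction step is precisely what fails in the unbounded-characteristic setting (Theorem \ref{example:thm}), so it cannot be waved through as a consequence of ``torsion.'' Your remark about invertibility of binomial coefficients for $p>k$ is a correct heuristic for the sharp bound $C(k)=k$, but as written the inductive step has no proof of its central claim.
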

\subsection{Main results}
We say that a system $X$ is of order $<k$ if $X=Z_{<k}(X)$. We begin with the following result of Host and Kra \cite[Proposition 6.3]{HK} generalized for arbitrary discrete countable abelian group action.
\begin{prop}  [Order $<k+1$ systems are abelian extensions of order $<k$ systems]\label{abelext:prop} Let $G$ be a discrete countable abelian group, let $k\geq 1$ and $X$ be an ergodic $G$-system of order $<k+1$. Then $X$ is an abelian extension $X=Z_{<k}(X)\times_{\rho} U$ for some compact abelian metrizable group $U$ and a cocycle $\rho:G\times Z_{<k}(X)\rightarrow U$ of type $<k$.
\end{prop}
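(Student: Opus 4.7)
Set $Y := Z_{<k}(X)$, so the task is to realise $X$ as $Y\times_\rho U$ for a compact abelian metrizable group $U$ and a cocycle $\rho\colon G\times Y\to U$ of type $<k$. My plan follows Host and Kra's \cite[Proposition~6.3]{HK} argument for $\Z$-actions, with F\o lner averaging replacing $\Z$-averaging throughout; I would proceed in three stages.

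First I would show that $\pi\colon X\to Y$ is isometric. Because $Y$ is the $U^k$-characteristic factor, one has the identity $\|F\|_{U^k(X)}=\|E(F|Y)\|_{U^k(Y)}$ for every $F\in L^\infty(X)$, so for $f\in L^\infty(X)$ with $E(f|Y)=0$ one computes
\[
\|f\|_{U^{k+1}(X)}^{2^{k+1}}=\lim_{N\to\infty}\frac{1}{|\Phi_N|}\sum_{g\in\Phi_N}\|E(\Delta_g f\mid Y)\|_{U^k(Y)}^{2^k}.
\]
If $f$ lay in a relatively weakly mixing sub-extension of $X$ over $Y$, the conditional expectations $E(\Delta_g f\mid Y)$ would tend to $0$ in density in $L^2(Y)$, which would force the right-hand side to vanish; since $X=Z_{<k+1}(X)$, the $U^{k+1}$-seminorm is faithful and we would conclude $f=0$. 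By the Furstenberg--Zimmer dichotomy, $X\to Y$ is therefore isometric.

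Next I would upgrade this isometric extension to a principal abelian extension. Mackey--Zimmer exhibits $X$ as $Y\times_\sigma K/H$ for a compact metrizable group $K$, a closed subgroup $H\le K$ and a cocycle $\sigma$; I would then let $U$ denote the group of measure-preserving $G$-equivariant automorphisms of $X$ fixing $Y$ pointwise. The key input is that $X$ is of order $<k+1$: the recursive definition of $\mu^{[k+1]}$ forces $\mathcal{I}_k(X^{[k]})$ to be generated (mod null sets) by the pullback of $\mathcal{I}_k(Y^{[k]})$, which in turn makes $U$ act transitively on $\pi$-fibres and forces $K$ to be abelian with $H$ trivial. This yields an identification $X\cong Y\times_\rho U$ as $G$-systems for some $\rho\in Z^1(G,Y,U)$.

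Finally I would verify $\rho\in\mathcal{C}_{<k}(G,Y,U)$, i.e.\ that $d^{[k]}\rho$ is a $(G,Y^{[k]},U)$-coboundary. Having $X=Y\times_\rho U$, the cubic map $X^{[k]}\to Y^{[k]}$ is controlled by the cocycle $d^{[k]}\rho$, and the relative independence of $\mu_X^{[k+1]}$ over $\mathcal{I}_k(X^{[k]})$ (again a consequence of the order hypothesis) together with Lemma~\ref{PP}(iv) produces a measurable $F\colon Y^{[k]}\to U$ with $d^{[k]}\rho=\Delta F$. The main obstacle is the second stage: isolating the abelian-group structure of the isometric extension is the step that really uses the order $<k+1$ hypothesis, via a careful analysis of the cubic structure rather than any general compact-extension theory.
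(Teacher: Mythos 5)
The paper never proves Proposition \ref{abelext:prop}: it is imported wholesale from Host and Kra \cite[Proposition 6.3]{HK}, with the (standard, also recorded in \cite{Berg& tao & ziegler}) observation that the argument survives the replacement of Ces\`aro averages by F{\o}lner averages. So the comparison is really with Host--Kra's proof, and your three-stage outline is exactly its skeleton. Stage one is essentially right once two points are tightened: the identity $\|\Delta_g f\|_{U^k(X)}=\|E(\Delta_g f\mid Y)\|_{U^k(Y)}$ does not follow from the defining property of $Z_{<k}$ in Proposition \ref{UCF} alone but from the relative independence of $\mu_X^{[k]}$ over $\mu_Y^{[k]}$; and the object you want is not a ``relatively weakly mixing sub-extension'' but the maximal isometric sub-extension $W$ of $Y$ inside $X$, characterized by the fact that $E(f\mid W)=0$ iff $\|E(T_gf\cdot\overline{f}\mid Y)\|_{L^2}$ tends to $0$ in density. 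Combined with the elementary bound $\|F\|_{U^k}^{2^k}\le\|F\|_{L^1}\|F\|_{L^\infty}^{2^k-1}$, this gives $\|f\|_{U^{k+1}(X)}=0$ for $f\perp L^2(W)$ and hence $W=X$.

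The genuine gap is in stage two. The structural claim you lean on --- that $\mathcal{I}_k(X^{[k]})$ is generated by the pullback of $\mathcal{I}_k(Y^{[k]})$ --- is not the mechanism that yields abelianness or triviality of $H$; the correct general fact is only a containment ($\mathcal{I}_k(X^{[k]})$ is measurable with respect to $\mathcal{B}_Y^{\otimes 2^k}$), it holds for \emph{every} system with $Y=Z_{<k}(X)$, and it says nothing about the fibres of $X\to Y$. What Host and Kra actually do is study the groups of side transformations of the cubes $X^{[k+1]}$: ergodicity of the relevant conditional measures manufactures, for almost every pair of points in a fibre, an automorphism of $X$ over $Y$ carrying one to the other (whence transitivity and $H=\{1\}$), and the order-$<k+1$ hypothesis is used to show that commutators of such automorphisms act trivially (whence abelianness). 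None of this is a formal consequence of the relative-independence statements you invoke, so as written the decisive step is asserted rather than proved. The same remark applies, less severely, to stage three, where the primitive $F$ with $d^{[k]}\rho=\Delta F$ is built from an explicit invariant function on $X^{[k+1]}$ rather than extracted abstractly from ``relative independence together with Lemma \ref{PP}(iv)''.
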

In other words, for a countable discrete abelian group $G$, an ergodic $G$-system of order $<k+1$ is isomorphic to a tower of abelian extensions \begin{equation} \label{structure}U_0\times_{\rho_1}U_1\times...\times_{\rho_k}U_k
\end{equation} such that for each $1\leq i\leq k$, $\rho_i:G\times Z_{<i-1}(X)\rightarrow U_i$ is a cocycle of type $<i$. We call $U_1,...,U_k$ the structure groups of $X$. We are particularly interested in the structure of these groups. We need the following definitions.
	\begin{defn} [Totally disconnected systems and Weyl systems] \label{TD:def} Let $k\geq 1$. Let $G$ be a countable abelian group and $X$ be an ergodic $G$-system of order $<k$. We write $X=U_0\times_{\rho_1}U_1\times...\times_{\rho_{k-1}}U_{k-1}$ as in equation (\ref{structure}).
		\begin{itemize}
			\item {We say that $X$ is a totally disconnected system if $U_0,U_1,...,U_{k-1}$ are totally disconnected groups.}
			\item {We say that $X$ is a Weyl system if for every $1\leq i \leq k-1$ the cocycle $\rho_i$ is a phase polynomial.}
		\end{itemize} Note that we will show that totally disconnected systems are isomorphic to Weyl systems (Theorem \ref{TDisweyl}).
	\end{defn}
	
	We are particularly interested in systems whose abelian extensions by cocycles of finite type are Abramov of some finite order (see Definition \ref{Abramov:def}). More formally, we have the following definition.
\begin{defn} \label{stronglyAbr}
    Let $X$ be an ergodic $G$-system. We say that $X$ is \emph{strongly Abramov} if for every $m\in\mathbb{N}$ there exists $l_m\in\mathbb{N}$ such that for any compact abelian group $U$ and a cocycle $\rho:G\times X\rightarrow U$ of type $<m$ the extension $X\times_\rho U$ is Abramov of order $<l_m$.
\end{defn}
Note that if $X$ is strongly Abramov, then it is Abramov of order $<l_0$. To see this consider the trivial extension of $X$ with the trivial group and the trivial cocycle (which is of type $<0$).\\

For a natural number $m\geq 0$ and a $G$-system $X$, we define the cohomology class $H^1_{<m}(G,X,S^1) = Z^1_{<m}(G,X,S^1)/B^1(G,X,S^1)$ to be the set of all cocycles of type $<m$ modulo coboundary. 
Our first results is the following equivalent characterization of the strongly Abramov property. 
\begin{thm} [A criterion for the strong Abramov property] \label{Mainr:thm}
	Let $P$ be a countable (unbounded) multiset of primes and let $G=\bigoplus_{p\in P}\mathbb{F}_p$. Let $X=Z_{<k}(X)$ be an ergodic $G$-system of order $<k$. Then the factors $Z_{<1}(X),Z_{<2}(X),...,Z_{<k}(X)$ are strongly Abramov if and only if there exists a totally disconnected factor $Y$ of $X$ such that for every $1\leq l \leq k$ the homomorphism induced by the factor map $\pi_l:Z_{<l}(X)\rightarrow Z_{<l}(Y)$
	$$\pi_l^\star:H^1_{<m}(G,Z_{<l}(Y),S^1)\rightarrow H^1_{<m}(G,Z_{<l}(X),S^1)$$ is onto for every $m\in\mathbb{N}$. Moreover, we can take $l_m=O_{k}(1)$\footnote{Here we fix $m$. Namely, there may be a different bound for different $m$'s.} and if $k,m\leq \min P$ we can take $l_m=m+1$.
\end{thm}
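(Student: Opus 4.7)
The plan is to prove both implications by induction on $l$ up to $k$, reducing to $U = S^1$ via Pontryagin duality and the tower structure (\ref{structure}). A key input for both directions is a structural lemma (to be established separately in the paper) asserting that on a totally disconnected, hence by Theorem \ref{TDisweyl} Weyl, system, every $S^1$-valued cocycle of type $<m$ is cohomologous to a phase polynomial of degree bounded in terms of $m$ and the order of the system. Throughout, the torsion of $G$ (every $g$ lies in some $\mathbb{F}_p$ summand and hence has finite order) is what forces connected degrees of freedom in cohomology to collapse into roots of unity.

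For the direction $(\Leftarrow)$, assume $Y$ exists with surjective pullbacks on $H^1_{<m}$ at every level. I induct on $l$ to show $Z_{<l}(X)$ is strongly Abramov. Write $Z_{<l}(X) = Z_{<l-1}(X) \times_{\sigma_l} U_{l-1}$ with $\sigma_l$ of type $<l-1$ by Proposition \ref{abelext:prop}, and decompose $L^2(Z_{<l}(X))$ by the vertical $U_{l-1}$-action into isotypic components indexed by $\chi \in \widehat{U_{l-1}}$. Then $\chi \circ \sigma_l$ is an $S^1$-valued cocycle of type $<l-1$, and the surjectivity at level $l-1$ combined with the structural lemma for Weyl systems writes $\chi \circ \sigma_l = \pi_{l-1}^\star p_\chi \cdot \Delta H_\chi^{-1}$ with $p_\chi$ a phase polynomial on $Z_{<l-1}(Y)$ of bounded degree. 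A direct computation shows $\Delta_g(\chi(u) H_\chi(x)) = \pi_{l-1}^\star p_\chi(g, x)$, so by Lemma \ref{PPP}(iii) the function $\chi(u) H_\chi(x)$ is a phase polynomial on $Z_{<l}(X)$ of bounded degree. Multiplying these by phase polynomials on $Z_{<l-1}(X)$ coming from the inductive hypothesis spans $L^2(Z_{<l}(X))$, and the same argument with $\sigma_l$ replaced by an arbitrary type-$<m$ cocycle $\rho$ yields the strong Abramov property.

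For the converse $(\Rightarrow)$, I take $Y$ to be the factor of $X$ generated by all phase polynomials whose values lie in roots of unity. Since the generating functions take values in finite subgroups $\mu_n \subset S^1$, the resulting structure groups are profinite, so $Y$ is totally disconnected. To verify surjectivity of $\pi_l^\star$: given $\rho: G \times Z_{<l}(X) \to S^1$ of type $<m$, the hypothesis makes $Z_{<l}(X) \times_\rho S^1$ Abramov, and the vertical $S^1$-action decomposes $L^2$ into isotypic components, each spanned by phase polynomials. Extracting a phase polynomial of the form $F(x) u$ in the first nontrivial component and applying $\Delta_g(F(x) u) = (\Delta_g F)(x) \rho(g, x)$ shows $\rho$ is cohomologous to a phase polynomial cocycle $p$ on $Z_{<l}(X)$. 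It then remains to modify $p$ by a coboundary so that its values lie in roots of unity, at which point $p$ is measurable with respect to $Y$. This uses the cocycle identity $1 = \prod_{i=0}^{q-1} T_{ig} p(g, \cdot)$ for each $g$ in an $\mathbb{F}_q$ summand of $G$, combined with an induction on the degree of $p$, to absorb the connected fluctuation of $p$ into a coboundary.

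The main obstacle is the last step of the converse direction: reducing an $S^1$-valued phase polynomial cocycle to a roots-of-unity-valued one. Because the cocycle identity coupled with the torsion of $G$ controls only iterated products of $p$ around orbits rather than pointwise values, the reduction must proceed delicately by induction on the polynomial degree, absorbing connected fluctuation into coboundaries at each stage. This step also controls the quantitative claim: the generic estimate $l_m = O_k(1)$ tracks the degree increase when solving for coboundaries, while the sharper $l_m = m+1$ when $k, m \leq \min P$ reflects that large enough prime summands permit a clean integration step with no degree loss.
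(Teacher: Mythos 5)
Your $(\Leftarrow)$ direction follows the paper's argument essentially verbatim: induct on $l$, compose $\rho$ with characters of $U$, use surjectivity of $\pi_{l-1}^\star$ together with Theorem \ref{Main:thm} on the totally disconnected system $Z_{<l}(Y)$ to produce phase polynomial representatives, and observe that $\chi(u)\overline{F_\chi}(x)$ is a phase polynomial on the extension; combined with the inductive Abramov property of $Z_{<l}(X)$ this spans $L^2$. That half is sound.

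The $(\Rightarrow)$ direction has genuine gaps. First, your extraction of a phase polynomial of the form $P(x,u)=F(x)u$ from the Abramov extension $\tilde X=Z_{<l}(X)\times_\rho S^1$ tacitly assumes $\tilde X$ is ergodic: the claim that every polynomial in a nontrivial isotypic component is orthogonal to $(x,u)\mapsto u$ rests on Proposition \ref{pinv:prop} (giving $\Delta_s P=\chi(s)$ for a character $\chi$ of $S^1$), which requires ergodicity. When $\tilde X$ is not ergodic this breaks down, and the paper must switch to Mackey theory: $\rho$ is cohomologous to a minimal cocycle $\tau$ with $\tau^n=1$, and one then uses divisibility of connected groups and Lemma \ref{cob:lem} to make $\tau$ invariant under a connected automorphism group. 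You do not address this case. Second, your $Y$ (the factor generated by roots-of-unity-valued phase polynomials) is a genuinely different construction from the paper's, which builds by induction a compact connected abelian group $\mathcal{H}_l$ of automorphisms of $Z_{<l}(X)$ containing $U_{l-1,0}$ (via a Host--Kra-type extension $H_{l+1}=\{S_{h,F}:\Delta_h\sigma_l=\Delta F\}$, whose existence uses the strong Abramov hypothesis) and sets $Y=X/\mathcal{H}_k$; your assertions that your $Y$ is totally disconnected and that measurability with respect to it produces a class in $H^1_{<m}(G,Z_{<l}(Y),S^1)$ (which also needs descent of type, Lemma \ref{Cdec:lem}, and compatibility of $Z_{<l}(\cdot)$ with your factor) are left unproved. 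Third, the step you single out as the main obstacle --- modifying $p$ so its values lie in roots of unity --- is not actually the difficulty: once $p=\rho\cdot\Delta F$ is a phase polynomial cocycle, Proposition \ref{PPC} already forces $p(g,\cdot)$ to take values in $C_{n^d}$ for $g$ of order $n$. The real work is the invariance of $p$ under the connected part of the system, which the paper obtains from Proposition \ref{pinv:prop} applied to $\mathcal{H}_l$; your proposed "induction on degree absorbing connected fluctuation" is not an argument as stated.
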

In particular, we deduce the following result.
\begin{cor} \label{cor} Let $G$ be as in Theorem \ref{Mainr:thm} and let $X$ be an ergodic totally disconnected $G$-system of order $<k$. Fix an integer $m\in\mathbb{N}$, then any cocycle $\rho:G\times X\rightarrow S^1$ of type $<m$ is $(G,X,S^1)$-cohomologous to a phase polynomial of degree $<O_{k,m}(1)$. If $k,m<\min P$ then it is cohomologous to a phase polynomial of degree $<m$.
\end{cor}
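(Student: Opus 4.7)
The corollary is obtained by invoking Theorem~\ref{Mainr:thm} and then passing from ``Abramov of the abelian extension'' to ``cocycle cohomologous to a phase polynomial''. Since $X$ is itself totally disconnected of order $<k$, I may apply Theorem~\ref{Mainr:thm} with the totally disconnected factor chosen to be $X$: each factor map $\pi_l:Z_{<l}(X)\to Z_{<l}(X)$ is then the identity, and the induced pullback $\pi_l^\star$ on $H^1_{<m}$ is trivially surjective. The theorem therefore certifies that $X$ is strongly Abramov, and by Definition~\ref{stronglyAbr} the abelian extension $Y:=X\times_\rho S^1$ is Abramov of order $<l_m$, where $l_m=O_{k,m}(1)$ in general and $l_m=m+1$ when $k,m\leq\min P$.

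The main content of the proof is to extract, from this Abramov property, a phase polynomial $P\in P_{<l_m}(Y,S^1)$ of pure vertical character $1$, i.e., $V_u P=u\cdot P$ for all $u\in S^1$, where $V_u(x,t)=(x,ut)$ denotes the vertical $S^1$-action. The strategy is to assume no such $P$ exists and derive a contradiction with Abramov. More precisely, the set
\[
N:=\{n\in\mathbb{Z}:\exists P\in P_{<l_m}(Y,S^1)\text{ with }V_u P=u^n P\}
\]
is a subgroup of $\mathbb{Z}$ (since $P_{<l_m}$ is a group under pointwise multiplication preserved by $V_u$), and a Fourier/averaging argument on $P_{<l_m}$ should show that if $N=d\mathbb{Z}$ with $d\geq 2$ (or $N=\{0\}$), then every phase polynomial of degree $<l_m$ descends to a proper factor of $Y$, namely $Y/\mu_d=X\times_{\rho^d}S^1$ (respectively $X$); the closed linear span of $P_{<l_m}(Y,S^1)$ would then be strictly contained in $L^2(Y)$, contradicting Abramov. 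I expect this extraction step to be the main technical obstacle, since the vertical Fourier support of a generic phase polynomial need not equal its pure-character subgroup $N$, and the argument requires careful use of the multiplicative group structure of $P_{<l_m}(Y,S^1)$ together with the commuting $V_u$-action.

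Once such a $P$ is in hand, the identity $V_u P=u\cdot P$ combined with $|P|=1$ a.e.\ forces $P(x,u)=u\cdot f(x)$ for some $f\in \mathcal{M}(X,S^1)$. A direct computation gives
\[
\Delta_g P(x,u)=\frac{P(T_g x,\rho(g,x)u)}{P(x,u)}=\rho(g,x)\cdot\Delta_g f(x),
\]
which depends only on $x$; by Lemma~\ref{PPP}(ii) it is a phase polynomial of degree $<l_m-1$ on $Y$, and by the converse direction of Lemma~\ref{PPP}(iv) applied to the projection $Y\to X$ it is a phase polynomial of degree $<l_m-1$ on $X$. Rearranging, $\rho=(\Delta_{\cdot}P)\cdot\Delta(f^{-1})$ exhibits $\rho$ as the product of a phase polynomial of degree $<l_m-1$ on $X$ and a $(G,X,S^1)$-coboundary. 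Hence $\rho$ is cohomologous to a phase polynomial of degree $<l_m-1=O_{k,m}(1)$; in the regime $k,m<\min P$ one has $l_m=m+1$, so the phase polynomial has degree $<m$, matching the stated sharper bound.
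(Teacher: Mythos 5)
Your first step is fine: taking $Y=X$ in Theorem~\ref{Mainr:thm} does show that a totally disconnected system of order $<k$ is strongly Abramov. But the heart of your argument --- extracting from the Abramov property of $X\times_\rho S^1$ a phase polynomial $P$ with $V_uP=u\cdot P$ --- has a genuine gap, and it sits exactly where you flagged it. The vertical-frequency decomposition you rely on (every $P\in P_{<l_m}(Y,S^1)$ satisfies $V_uP=u^{n_P}P$, so that polynomials of frequency $\neq 1$ are orthogonal to $(x,u)\mapsto u$) is a consequence of Corollary~\ref{ker:cor}, which requires the extension $Y=X\times_\rho S^1$ to be \emph{ergodic}: only then is $P_{<1}(Y,S^1)$ the constants and $u\mapsto\Delta_uP$ a character of $S^1$. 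When $Y$ is not ergodic this fails badly --- already for $\rho=1$ every measurable function of $u$ alone is $G$-invariant, hence a phase polynomial of degree $<1$ with no single vertical frequency --- so your subgroup $N$ is not defined and the descent-to-a-proper-factor argument does not get off the ground. The non-ergodic case is not a corner case here: it occurs precisely when $\rho$ is cohomologous to a cocycle into a proper closed subgroup of $S^1$, which (by Proposition~\ref{TDPV:prop}) is the typical outcome for totally disconnected systems. The paper's own treatment of the analogous dichotomy (Cases I and II in Section~\ref{proof}) confirms that the non-ergodic case cannot be read off from the Abramov property alone; there it is handled by Mackey theory, divisibility of connected groups, Lemma~\ref{cob:lem}, and a separate induction on the type.

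You should also be aware that the intended proof is far shorter and does not pass through Theorem~\ref{Mainr:thm} at all: a totally disconnected system satisfies the splitting condition of Definition~\ref{split:def} with trivial torus part, so Theorem~\ref{Main:thm} applies verbatim and gives the $O_{k,m}(1)$ bound, while Theorem~\ref{MainH:thm} gives the exact bound $<m$ when $k,m<\min P$; equivalently, the statement is Theorem~\ref{MainT:thm} specialized to cocycles. Your detour is also uncomfortably close to circular in spirit: the "only if" direction of Theorem~\ref{Mainr:thm} is itself proved by applying Theorem~\ref{Main:thm} to the totally disconnected factor, i.e.\ by invoking exactly the statement of this corollary.
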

Since all ergodic $\mathbb{F}_p^\omega$-systems of order $<k$ are totally disconnected (see Theorem \ref{ptorsion}) this result generalizes Theorem \ref{BTZ}.
A priori, it is not clear that systems which are not strongly Abramov exists. In fact, Bergelson Tao and Ziegler \cite[Theorem 4.5]{Berg& tao & ziegler} proved that in the case where $P$ is bounded every system is strongly Abramov. We show that when $P$ is unbounded, there exists an ergodic Abramov system which is not strongly Abramov.
\begin{thm} [An Abramov system that is not strongly Abramov]\label{example:thm}
	Let $P$ be the set of prime numbers. There exists a solenoid\footnote{A solenoid is a compact abelian finite dimensional group that is not a Lie group. These are known for their pathological properties.} $U$ with an ergodic $\bigoplus_{p\in P}\mathbb{F}_p$ action such that $(U,\bigoplus_{p\in P}\mathbb{F}_p)$ is of order $<2$ and a cocycle $\rho:\bigoplus_{p\in P}\mathbb{F}_p\times U\rightarrow S^1$ of type $<2$ that is not $(\bigoplus_{p\in P}\mathbb{F}_p,U,S^1)$-cohomologous to a phase polynomial of any degree. The extension $U\times_\rho S^1$ is an ergodic $\bigoplus_{p\in P}\mathbb{F}_p$-system of order $<3$ that is not Abramov of any order.
\end{thm}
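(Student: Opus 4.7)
The plan is to construct the solenoid $U$ explicitly as the Pontryagin dual of a suitable dense subgroup of $\mathbb{Q}$. For concreteness I would take $\hat U = \mathbb{Z}[1/N]$ where $N$ is the (formal) product of the primes in $P$, so that $U$ is a one-dimensional compact connected abelian group whose $p$-torsion subgroup $U[p]$ is isomorphic to $\mathbb F_p$ for each $p\in P$, and whose total torsion subgroup is dense in $U$ (hence $U$ is not a Lie group). Let $\alpha_p$ be a generator of $U[p]$, and define the action of $G=\bigoplus_{p\in P}\mathbb F_p$ on $U$ by letting the standard generator $e_p$ act by translation by $\alpha_p$. Density of the torsion subgroup in $U$ gives ergodicity, and since $U$ is itself a compact abelian group acted on by rotations, $(U,G)$ is a Kronecker system, i.e.\ of order $<2$.

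Next I would construct the cocycle $\rho\colon G\times U\to S^1$ using the freedom afforded by the infinitely generated character group $\hat U$. On each generator $e_p$ one specifies $\rho(e_p,x)$ as a function on $U$ designed so that the derivative $\Delta_{e_p}\rho$ is cohomologous to a constant (a Conze--Lesigne-type equation), which by the cocycle identity then gives the same for every $h\in G$, so that $d^{[2]}\rho$ is a coboundary on $U^{[2]}$ and $\rho$ is of type $<2$. A natural choice combines a fractional character $x\mapsto e^{2\pi i\chi(x)/p}$ for each prime $p$ with a compatible family of fractional characters on $U$; the Conze--Lesigne equation then has to be verified prime by prime using that $\alpha_p$ has order $p$. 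Abelian extension by such a $\rho$ gives $U\times_\rho S^1$, an ergodic $G$-system of order $<3$ by Proposition \ref{abelext:prop}.

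The core of the argument is to show that $\rho$ is not $(G,U,S^1)$-cohomologous to any phase polynomial. Suppose, for contradiction, that $\rho=\sigma\cdot\Delta F$ for some $\sigma\in P_{<k}(G,U,S^1)$ and measurable $F\colon U\to S^1$. By Lemma~\ref{PPP}(iv) and the functoriality of phase polynomials, for each prime $p>k$ the restriction $\sigma(e_p,\cdot)\colon U\to S^1$ is a phase polynomial of degree $<k$ for the $\mathbb F_p$-rotation action by $\alpha_p$, which (since the group is cyclic of prime order larger than $k$) forces it to be, up to a constant, a character of $U$. The family $\{\sigma(e_p,\cdot)\}_{p>k}$ must then satisfy the cocycle identity globally, and one shows (using that $\hat U$ is torsion-free, so no nontrivial character on $U$ has finite order) that this forces $\rho$ itself, for all sufficiently large $p$, to already be cohomologous to a character along $e_p$; but $\rho$ was engineered to produce a nontrivial ``Mackey-type'' obstruction for infinitely many $p\in P$, yielding the desired contradiction. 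The non-Abramov conclusion is then immediate, since Abramov of order $<l$ would force every cocycle of type $<2$ to be cohomologous to a phase polynomial of some bounded degree.

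The main obstacle is the third paragraph: designing $\rho$ concretely so that the cohomological obstruction survives \emph{for every} $k$ uniformly, and simultaneously verifying the Conze--Lesigne equation for $\rho$ across all primes $p\in P$. This requires choosing the fractional characters coherently so that the type-$<2$ property (a condition on $d^{[2]}\rho$ involving all primes at once) holds, while the per-$p$ obstructions grow with $p$ and cannot be absorbed by any fixed-degree phase polynomial; the torsion-freeness of $\hat U$ is what turns this quantitative growth into a genuine obstruction rather than a finite-index ambiguity.
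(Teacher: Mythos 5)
Your construction fails at the very first step. You propose $\hat U=\mathbb{Z}[1/N]$ with $N$ the product of \emph{all} primes in $P$, and then want each generator $e_p$ to act by translation by an element $\alpha_p$ of order $p$. But if $p$ is invertible in $\hat U$ then $p\hat U=\hat U$, so $U[p]=\widehat{\hat U/p\hat U}$ is trivial: a solenoid whose dual inverts $p$ has \emph{no} $p$-torsion, and the claimed $\alpha_p$ does not exist. (If you instead keep $p$ non-invertible in $\hat U$ for all $p\in P$ you are back to $\hat U=\mathbb{Z}$, i.e.\ the circle of Example \ref{Example}, which is a Lie group and, as Theorem \ref{Main:thm} shows, is strongly Abramov.) The paper resolves exactly this tension by splitting $P=\mathcal P_1\sqcup\mathcal P_2$ into two infinite sets, building two solenoids $U_1,U_2$ with $\Delta_i=\prod_{p\in\mathcal P_i}C_p$, and letting $G_i=\bigoplus_{p\in\mathcal P_i}\mathbb F_p$ act on $U_j$ for $j\neq i$, where the order-$p$ elements do exist because $p\notin\mathcal P_j$. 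This cross-action on a \emph{two}-dimensional solenoid is not incidental: the cocycle is manufactured from the antisymmetric bilinear form $\phi(x,y)=e(x_1y_2-x_2y_1)$ on $\mathbb{R}^2\times\mathbb{R}^2$ (the Furstenberg--Weiss skew form), and a one-dimensional connected component cannot carry a nondegenerate antisymmetric pairing, so your one-dimensional setup has no analogue of the engine that makes $\rho$ genuinely two-step.

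Beyond that, the two substantive verifications are left as intentions rather than arguments: the cocycle is never written down (you acknowledge this is ``the main obstacle''), and the non-cohomologous-to-any-polynomial claim is deferred to an unconstructed ``Mackey-type obstruction'' that must ``grow with $p$.'' The paper avoids any such quantitative growth argument entirely with a soft observation: the total space $X=U\times_\rho S^1$ is connected, so (by the same argument as the Claim in Example \ref{Example}, via Proposition \ref{pinv:prop}) every phase polynomial on $X$ has degree $<2$ and is therefore measurable with respect to the Kronecker factor $U$; hence the vertical coordinate $(u,x)\mapsto x$ is orthogonal to all phase polynomials, which simultaneously shows $X$ is not Abramov and that $\rho$ is not cohomologous to a phase polynomial of any degree. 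I would redo the construction following the two-solenoid cross-action and the skew form, and replace the growth argument by this connectedness argument.
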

This example is based on the work of Furstenberg and Weiss \cite{Furstenberg}. They showed that in the case of $\mathbb{Z}$-actions, not all systems are strongly Abramov. (In \cite{HK02} Host and Kra worked out their example in details).
In Furstenberg and Weiss' example, the group $U$ is the $2$ dimensional torus. While a torus can be given a structure of an ergodic $\bigoplus_{p\in P}\mathbb{F}_p$-system (Example \ref{Example}), it is impossible to generalize their example for $\bigoplus_{p\in P}\mathbb{F}_p$-actions with $U$ being a torus. In Theorem \ref{Main:thm} below we show that the torus and a much larger family of non-pathological ergodic systems are strongly Abramov. Before we formulate the theorem we recall the following results of Host and Kra \cite{HK} for $\mathbb{Z}$-actions and the result of Bergelson Tao and Ziegler \cite{Berg& tao & ziegler} for $\mathbb{F}_p^\omega$-actions.\\

Let $X$ be an ergodic $G$-system of order $<k$ and let $U_0,U_1,...,U_{k-1}$ be its structure groups as in (\ref{structure}). We say that $X$ is a Toral system if $U_1$ is a Lie group and for all $2\leq i\leq k-1$, $U_i$ is isomorphic to a finite dimensional torus. Host and Kra proved the following result \cite[Theorem 10.3]{HK}.
\begin{thm} Let $X$ be an ergodic $\mathbb{Z}$-system of order $<k$. Then $X$ is an inverse limit of Toral systems of order $<k$.
\end{thm}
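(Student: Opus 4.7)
The plan is to combine Theorem \ref{HK} with an analysis of the lower central series of the nilpotent Lie group underlying a nilsystem. By Theorem \ref{HK}, every ergodic $\mathbb{Z}$-system of order $<k$ is already an inverse limit of $k$-step nilsystems, so it suffices to prove the stronger pointwise statement: every $k$-step nilsystem $X=\mathcal{G}/\Gamma$ is itself a Toral system of order $<k$.

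Let $\mathcal{G} = \mathcal{G}_1 \supseteq \mathcal{G}_2 \supseteq \cdots \supseteq \mathcal{G}_k \supseteq \mathcal{G}_{k+1} = \{e\}$ be the lower central series. A standard computation, carried out in detail in \cite{HK}, identifies the $i$-th Host--Kra factor $Z_{<i+1}(X)$ with the quotient nilsystem $\mathcal{G}/\mathcal{G}_{i+1}\Gamma$, and accordingly identifies the $i$-th structure group in the tower (\ref{structure}) with $U_i \cong \mathcal{G}_i/\mathcal{G}_{i+1}(\mathcal{G}_i\cap\Gamma)$. Since $\mathcal{G}$ is a finite-dimensional Lie group and $\Gamma$ is co-compact, each $U_i$ is a finite-dimensional compact abelian Lie group; in particular $U_1$ is exactly the kind of group permitted in the definition of a Toral system, so no further work is required at the bottom level.

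The core step is to show that $U_i$ is \emph{connected} whenever $i\geq 2$, so that it becomes a finite-dimensional torus. After reducing to the case that $\mathcal{G}$ is connected (a standard reduction in nilsystem theory, passing if necessary to a finite cover so that $\mathcal{G}=\mathcal{G}^\circ\cdot\Gamma$), the subgroup $\mathcal{G}_i$ for $i\geq 2$ is connected: it is generated by $i$-fold iterated commutators of elements of the connected Lie group $\mathcal{G}$, and the continuous commutator map $[\cdot,\cdot]\colon \mathcal{G}\times\mathcal{G}\to\mathcal{G}$ sends $(e,e)$ to $e$, so every iterated commutator lies in a path component of the identity. The image of the connected group $\mathcal{G}_i$ in the compact Lie group $U_i$ is therefore a connected subgroup, and combined with the surjection $\mathcal{G}_i\to U_i$ this forces $U_i$ to be connected, hence a finite-dimensional torus.

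The main obstacle is the preliminary identification $Z_{<i+1}(X)\cong \mathcal{G}/\mathcal{G}_{i+1}\Gamma$: this requires computing the cubic measures $\mu^{[i+1]}$ on the nilsystem and verifying that the functions invariant under the relevant Host--Kra cubic structure coincide with functions pulled back from the quotient by $\mathcal{G}_{i+1}\Gamma$. The connectedness reduction on $\mathcal{G}$ also has to interact cleanly with these identifications, since an arbitrary choice of cover could damage the action of $\mathbb{Z}$. Once these geometric identifications are in place, the Lie-theoretic observation above finishes the proof.
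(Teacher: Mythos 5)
This statement is quoted from Host--Kra \cite[Theorem 10.3]{HK}; the paper itself offers no proof, so there is nothing internal to compare against. Your route is the \emph{reverse} of the one actually used in \cite{HK}: there, the toral approximation is proved first, by purely cocycle-theoretic means (approximating each structure group $U_i$ by Lie-group quotients and straightening the cocycles, much in the spirit of Sections \ref{fdred:sec}--\ref{tdred:sec} of the present paper), and is then used as an ingredient in the proof of the nilsystem structure theorem. So as a reconstruction of how the theorem is established your argument is circular; as a formal deduction that takes Theorem \ref{HK} as a black box it is legitimate, provided one also imports from \cite{HK} the identification $Z_{<i+1}(\mathcal{G}/\Gamma)\cong\mathcal{G}/\mathcal{G}_{i+1}\Gamma$ and hence $U_i\cong\mathcal{G}_i/\mathcal{G}_{i+1}(\mathcal{G}_i\cap\Gamma)$, which you correctly flag as the substantial external input. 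What your approach buys is a clean conceptual statement (every ergodic $k$-step nilsystem is itself Toral, so no genuinely new inverse limit is needed); what it costs is that it cannot serve where the theorem is actually needed, namely as a step \emph{toward} Theorem \ref{HK}.

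One step as written does not go through. You cannot in general reduce to the case that $\mathcal{G}$ is connected, and the normalization $\mathcal{G}=\mathcal{G}^\circ\cdot\Gamma$ obtained by passing to a cover neither makes $\mathcal{G}$ connected nor controls the commutators coming from the $\Gamma$-cosets, so the subsequent claim that iterated commutators of elements of a connected group stay in the identity component has no group to apply to. The correct standard reduction is to assume $\mathcal{G}$ is generated by $\mathcal{G}^\circ$ together with the translating element $a$; then $\mathcal{G}/\mathcal{G}^\circ$ is cyclic, every $i$-fold commutator of elements of $\bigcup_{n}a^{n}\mathcal{G}^\circ$ lies in a connected subset through the identity, and one concludes that $\mathcal{G}_i$ is connected (indeed contained in $\mathcal{G}^\circ$) for $i\geq 2$ --- this is precisely the form in which Host and Kra state the fact. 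With that substitution, and the routine observation that the nilsystems occurring in the inverse limit are factors of $X$ and hence automatically of order $<k$, your argument is sound.
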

We say that a group $(U,\cdot)$ is $n$-torsion if $u^n=1_U$ for all $u\in U$. Bergelson Tao and Ziegler proved the following result \cite[Theorem 4.8]{Berg& tao & ziegler}.
\begin{thm} \label{ptorsion}
	Let $X$ be an ergodic $\mathbb{F}_p^\omega$-system of order $<k$. Then every structure group $U_0,U_1,...,U_{k-1}$ is a $p^m$-torsion group\footnote{A group $(U,\cdot)$ is $n$-torsion for some $n\in\mathbb{N}$ if $u^n=1_U$ for every $u\in U$.} for some $m=O_k(1)$. In particular $X$ is a totally disconnected system.
\end{thm}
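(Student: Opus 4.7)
The plan is induction on $k$, using the tower structure from Proposition \ref{abelext:prop} together with the $p$-torsion of $G = \mathbb{F}_p^\omega$. For $k = 1$, the Kronecker factor $U_0$ hosts an ergodic action of $G$ by translations $T_g x = x + \alpha(g)$ for a homomorphism $\alpha: G \to U_0$ with dense image. Since $pg = 0$ in $G$, the image $\alpha(G)$ lies inside the closed subgroup $U_0[p] := \{u \in U_0 : pu = 1_{U_0}\}$; by density, $U_0 = U_0[p]$, so $U_0$ is $p$-torsion. For the inductive step, assume every ergodic $G$-system of order $<k$ has $p^{O_k(1)}$-torsion structure groups, and write $X = Z_{<k}(X) \times_{\rho_k} U_k$ with $\rho_k$ a cocycle of type $<k$. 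By Pontryagin duality, showing $U_k$ is $p^{M'}$-torsion amounts to showing $\chi^{p^{M'}}$ is trivial for every $\chi \in \hat U_k$. By ergodicity of the extension, a nontrivial character $\psi$ of $U_k$ forces $\psi \circ \rho_k$ to fail to be a coboundary (otherwise $\psi(u)/F(y)$ would be a nonconstant $G$-invariant function). So it suffices to reduce to the following key lemma.

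\emph{Key Lemma.} For every ergodic $\mathbb{F}_p^\omega$-system $Y$ whose structure groups are $p^{O_k(1)}$-torsion, every cocycle $f: G \times Y \to S^1$ of type $<k$ satisfies $f^{p^{M_k}} \in B^1(G, Y, S^1)$ for some $M_k = O_k(1)$. Applied to $f = \chi \circ \rho_k$ on $Y = Z_{<k}(X)$, this yields that $U_k$ is $p^{M_k}$-torsion.

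To prove the Key Lemma, I would induct on $k$. The base case $k = 0$ is immediate since type $<0$ means $f = d^{[0]}f$ is a coboundary. For the inductive step, two ingredients combine. First, the $p$-torsion $pg = 0$ and the iterated cocycle identity give the telescoping relation $\prod_{i=0}^{p-1} f(g, T_{ig} x) = f(pg, x) = 1$, which can be rearranged into the formula $f(g,x)^{p} = \prod_{i=1}^{p-1} (\Delta_{ig} f)(g, x)^{-1}$, expressing $f^{p}$ as a product of the "vertical derivatives" $\Delta_h f(g, x) = f(g, T_h x)/f(g, x)$. Second, a cube manipulation using $d^{[k]}f \in B^1(G, Y^{[k]}, S^1)$ shows that for each fixed $h \in G$, the cocycle $g \mapsto \Delta_h f(g, \cdot)$ is of type $<k-1$ on $Y$ (restricting the coboundary identity for $d^{[k]}f$ to the face corresponding to $h$). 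Applying the inductive hypothesis to each $\Delta_h f$ and assembling the transfer functions $F_h$ produced by it into one transfer function that witnesses $f^{p \cdot p^{M_{k-1}}}$ as a coboundary gives $M_k \leq M_{k-1} + 1$, hence $M_k = O_k(1)$. The total disconnectedness conclusion follows immediately, since a compact abelian group of bounded exponent has discrete (hence zero-dimensional) Pontryagin dual, so the group itself is totally disconnected.

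The main obstacle is the assembly step, because the telescoping formula features $h = ig$ varying with $g$, while the inductive hypothesis produces transfer functions $F_h$ for each fixed $h$ without any a priori cocyclic dependence on $h$. Overcoming this requires a Mackey-type argument that uses the concrete tower structure of $Y$ and the inductive $p^{O_k(1)}$-torsion of its structure groups to select the $F_h$ coherently, absorbing any obstruction into a further bounded power of $p$. This is also the place where the uniform bound $M_k = O_k(1)$ (rather than an unbounded function of the complexity of $Y$) is most delicate, and where the systemic $p$-torsion of $G$ is crucial: the failure of the analogous assembly for groups whose torsion is not uniformly bounded is exactly what allows the pathology of Theorem \ref{example:thm} in the $\bigoplus_{p \in P}\mathbb{F}_p$ setting.
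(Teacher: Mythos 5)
This theorem is not proved in the paper: it is quoted from Bergelson--Tao--Ziegler \cite[Theorem 4.8]{Berg& tao & ziegler}, so I am comparing your argument with the BTZ proof and with the paper's closest in-house analogue, the proof of Theorem \ref{TST:thm}. Your outer skeleton is sound and standard: induction along the tower of Proposition \ref{abelext:prop}, the base case via density of the $p$-torsion image of $G$ in the Kronecker group, and the reduction by Pontryagin duality and Zimmer's ergodicity criterion to showing that $(\chi\circ\rho_k)^{p^{M}}$ is a coboundary for some $M=O_k(1)$.

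The genuine gap is in your Key Lemma, specifically the ``assembly step'' you defer to ``a Mackey-type argument''. The telescoping identity $f(g,x)^p=\prod_{i=1}^{p-1}\Delta_{ig}f(g,x)^{-1}$ together with the differentiation lemma only yields, for each \emph{fixed} $h$, that $(\Delta_hf)^{p^{M_{k-1}}}=\Delta F_h$ for some unstructured transfer function $F_h$; substituting gives $f^{p^{M_{k-1}+1}}(g,x)=\Delta_g\bigl(\prod_{i=1}^{p-1}F_{ig}^{-1}\bigr)(x)$, where the putative transfer function still depends on $g$. Replacing the family $(F_h)_{h\in G}$ by a coherent (cocyclic in $h$) choice so that $\prod_{i}F_{ig}^{-1}$ collapses to a single $F$ up to controlled errors is exactly the Conze--Lesigne problem; solving it, with a bound $M_k=O_k(1)$ uniform over all systems, is the content of the linearization, measurable-selection and polynomial-integration machinery that occupies most of \cite{Berg& tao & ziegler} (and Sections \ref{fdred:sec}--\ref{high:sec} of this paper). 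It cannot be absorbed ``into a further bounded power of $p$'' by a short argument. The way the literature closes the loop is different: one first proves, by the simultaneous induction with the main structure theorem, that $\chi\circ\rho_k$ is cohomologous to a phase polynomial $q$ of degree $O_k(1)$, and only then runs the telescoping argument on $q$ --- for phase polynomials the induction closes pointwise (Proposition \ref{PPC}: a phase polynomial cocycle of degree $<d$ satisfies $q(g,\cdot)^{p^d}=1$), because ``lower degree'' is an identity with no transfer functions to assemble. This is precisely how Theorem \ref{TST:thm} is proved here. A milder point: your one-line derivation that $\Delta_hf$ has type $<k-1$ (``restricting the coboundary identity to the face'') skips the descent of a coboundary from $X^{[k]}$ to $X^{[k-1]}$, but that is a citable lemma (Lemma \ref{dif:lem}).
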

Let $X$ be a $G$-system and $\rho:G\times X\rightarrow U$ be a cocycle into some compact abelian group $U$. If $V$ is a quotient of $U$ with quotient map $p:U\rightarrow V$ then we refer to the cocycle $p\circ\rho:G\times X\rightarrow V$ as the projection of $\rho$ to the group $V$. \\ The next definition captures all non-pathological systems including Toral systems and totally disconnected systems. 
\begin{defn} [Splitting condition] \label{split:def}
	Let $G$ be a countable discrete abelian group and let $X$ be an ergodic $G$-system of order $<k$ and write $X=U_0\times_{\rho_1} U_1\times...\times_{\rho_{k-1}}U_{k-1}$ as in (\ref{structure}). If for every $0\leq i\leq k-1$ the group $U_i$ is isomorphic to $T_i\times D_i$ where $T_i$ is a torus (possibly zero dimensional, or infinite dimensional) and $D_i$ is totally disconnected and the projection of each $\rho_i$ to $D_i$ is invariant under $T_1\times...\times T_{i-1}$, then we say that $X$ satisfies the splitting condition (or that $X$ splits in short). In this case we denote by $\mathcal{T}(X):=\prod_{i=0}^{k-1}T_i$ the torus part of $X$ and by $D(X):=\prod_{i=0}^{k-1} D_i$ the totally disconnected part of $X$. 
\end{defn} 

\begin{rem}
	The assumption that the projection of $\rho_i$ to $D_i$ is invariant under the torus is necessary to avoid pathological systems such as in Theorem \ref{example:thm}. For instance there exists an ergodic system $(\mathbb{T},\bigoplus_{p\in P}\mathbb{F}_p)$ where $\mathbb{T}$ is a torus and a cocycle $\rho:\bigoplus_{p\in P}\mathbb{F}_p\times\mathbb{T}\rightarrow \Delta$ (which is non-constant) into a disconnected group $\Delta$ such that the extension $X:=\mathbb{T}\times_\rho \Delta$ is isomorphic to a solenoid. 
\end{rem}
Our main result implies that these systems are strongly Abramov.
\begin{thm} [Splitting implies strongly Abramov]\label{Main:thm} 
    Let $P$ be a countable (unbounded) multiset of primes and $G=\bigoplus_{p\in P}\mathbb{F}_p$. Let $k,m\in\mathbb{N}$ and let $X$ be an ergodic $G$-system of order $<k$ which splits. Then any cocycle $\rho:G\times X\rightarrow S^1$ of type $<m$ is $(G,X,S^1)$-cohomologous to a phase polynomial of degree $<O_{k,m}(1)$.
\end{thm}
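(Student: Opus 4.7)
The plan is to verify the criterion of Theorem \ref{Mainr:thm}: I will exhibit a totally disconnected factor $Y$ of $X$ such that for every $1\leq l\leq k$ and every $m\in\mathbb{N}$, the pullback on cohomology
$$\pi_l^\star: H^1_{<m}(G,Z_{<l}(Y),S^1)\to H^1_{<m}(G,Z_{<l}(X),S^1)$$
is surjective. Theorem \ref{Mainr:thm} then shows that the factors $Z_{<l}(X)$ are strongly Abramov with $l_m=O_k(1)$. To derive the present statement, apply this to the given cocycle $\rho$ of type $<m$: the extension $X\times_\rho S^1$ is Abramov of order $<l_m$, so the coordinate character $(x,s)\mapsto s$ is an $L^2$-limit of phase polynomials of degree $<l_m$. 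Projecting onto the $V_u$-eigenline of weight $u$ (which commutes with the $G$-action) yields a non-zero phase polynomial eigenfunction of the form $f(x)\cdot s$, and expanding $\Delta_g(f(x)s)=\Delta_g f(x)\cdot\rho(g,x)$ forces $\rho$ to be cohomologous to a phase polynomial of degree $<O_{k,m}(1)$.

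For the totally disconnected factor $Y$, the splitting hypothesis hands it to us almost for free. Writing $X=U_0\times_{\rho_1}\cdots\times_{\rho_{k-1}}U_{k-1}$ with $U_i=T_i\times D_i$, the assumption that the projection $\rho_i^D$ of $\rho_i$ to $D_i$ is $T_1\times\cdots\times T_{i-1}$-invariant lets it descend to a cocycle $\tilde\rho_i$ on the tower $D_0\times_{\tilde\rho_1}\cdots\times_{\tilde\rho_{i-1}}D_{i-1}$. Iterating, put
$$Y:=D_0\times_{\tilde\rho_1}D_1\times\cdots\times_{\tilde\rho_{k-1}}D_{k-1}.$$
This is manifestly totally disconnected, the coordinate-wise projection $X\to Y$ is a factor map, and one checks by induction on $l$ that $Z_{<l}(Y)$ is exactly the length-$l$ truncation of this tower. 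Writing $\mathcal{T}_l:=T_1\times\cdots\times T_{l-1}$ for the torus acting by vertical rotations on $Z_{<l}(X)$, the map $\pi_l:Z_{<l}(X)\to Z_{<l}(Y)$ is precisely the quotient by $\mathcal{T}_l$.

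The core technical step is the surjectivity of $\pi_l^\star$. Given a cocycle $\sigma:G\times Z_{<l}(X)\to S^1$ of type $<m$, the goal is to construct a measurable $F:Z_{<l}(X)\to S^1$ such that $\sigma\cdot\Delta F^{-1}$ is $V_a$-invariant for every $a\in\mathcal{T}_l$; any such invariant cocycle descends to a cocycle of type $<m$ on $Z_{<l}(Y)$ via (a type-version of) Lemma \ref{PPP}(iv) applied to $d^{[m]}\sigma$. For each $a\in\mathcal{T}_l$, the ratio $V_a\sigma/\sigma$ is again a cocycle of type $<m$ on $Z_{<l}(X)$ (since $V_a$ commutes with the $G$-action). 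The crucial input is that it is in fact a \emph{coboundary} $\Delta F_a$, with measurable transfer functions $F_a$ chosen so that $a\mapsto F_a$ is itself a $\mathcal{T}_l$-cocycle valued in $\mathcal{M}(Z_{<l}(X),S^1)$; integrating $F_a$ against the Haar measure of the compact abelian group $\mathcal{T}_l$ then produces the desired $F$.

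The main obstacle is supplying the trivializations $F_a$ with the requisite measurable cocycle structure over $\mathcal{T}_l$. I would proceed by a double induction on $k$ and $m$: in the outer induction Proposition \ref{abelext:prop} exhibits $Z_{<l}(X)$ as an abelian extension of $Z_{<l-1}(X)$, and the inductive hypothesis already handles cocycles pulled back from the lower level; the genuinely level-$l$ content is reduced, via the vertical derivative, to a type-$<m-1$ problem on $Z_{<l-1}(X)$ (inner induction). The splitting condition is exactly what keeps the torus and disconnected parts from mixing during this descent --- this is the feature whose failure is exhibited by Theorem \ref{example:thm}, and is why the Furstenberg--Weiss counterexample cannot be reproduced on splitting systems. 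Once $F$ is in hand, surjectivity is established, Theorem \ref{Mainr:thm} applies, and the extraction argument from the first paragraph concludes the proof.
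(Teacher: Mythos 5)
Your high-level strategy --- pass to a totally disconnected factor determined by the splitting and conclude from the totally disconnected case --- is indeed the paper's strategy, and the paper even remarks that, in retrospect, this is the natural route. But as written your argument is circular: Theorem \ref{Mainr:thm} is \emph{deduced from} Theorem \ref{Main:thm} in Section \ref{proof} (the ``surjective implies strongly Abramov'' direction applies Theorem \ref{Main:thm} to the totally disconnected system $Z_{<l}(Y)$), so you cannot invoke it to prove Theorem \ref{Main:thm}. To break the circle you would have to supply the totally disconnected case independently, and that case --- Theorem \ref{MainT:thm}, together with the topological structure theorem for totally disconnected systems, the Weyl property, root extraction for finitely-valued phase polynomials, and the finite-group Conze--Lesigne analysis --- is roughly half the paper. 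Your proposal treats it as available for free and never addresses it.

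The second, independent gap is in your ``core technical step.'' You assert that for $a$ in the torus $\mathcal{T}_l$ the ratio $V_a\sigma/\sigma$ ``is in fact a coboundary.'' This is not automatic and is precisely where the work lies. Lemma \ref{dif:lem} only lowers the type; even in the best case type $<1$ one gets, via Moore--Schmidt (Lemma \ref{type0}), cohomology to a \emph{character of $G$}, not triviality, and for general $m$ one gets a genuine phase polynomial term $p_a$ as in the C.L.\ equation (\ref{Conze-Lesigne}). Killing $p_a$ on the connected part requires the measurable selection and linearization lemmas (Lemmas \ref{sel:lem}, \ref{lin:lem}), the separation lemma, the Gleason--Yamabe approximation by Lie groups, and the observation that a measurable homomorphism from a connected (hence divisible) group into the totally disconnected group $\hat G$, or into the countable group $P_{<d}/P_{<1}$, is trivial (Lemma \ref{ker:lem}, Lemma \ref{real:lem}, Proposition \ref{FD:prop}, Proposition \ref{con:prop}). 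You also skip the step, essential in Theorem \ref{con:thm}, of first straightening the higher cocycles $\rho_{j+1},\dots,\rho_{k-1}$ so that the torus actually acts on $X$ by automorphisms --- without this, $V_a$ need not commute with the $G$-action and none of the derivative computations apply. Finally, your eigenfunction-extraction argument in the first paragraph silently assumes $X\times_\rho S^1$ is ergodic; the non-ergodic case needs the Mackey-theoretic argument of Case II in Section \ref{proof}. So the proposal correctly names the destination but omits or misstates the mechanisms that get there.
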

We also prove an exact version in the case where $\min P$ is large. \begin{thm}\label{MainH:thm}
Under the assumptions of Theorem \ref{Main:thm}, if in addition we have $k,m<\min P $, then $\rho$ is $(G,X,S^1)$-cohomologous to a phase polynomial of degree $<m$.
\end{thm}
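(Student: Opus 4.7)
The plan is to argue by induction on the order $k$ of $X$, following the strategy behind Theorem \ref{Main:thm} but tracking degrees tightly; the hypothesis $k, m < \min P$ will remove the slack responsible for the $O_{k,m}(1)$ bound in the general case.

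For the base case $k=1$, $X = U_0 = T_0 \times D_0$ is a Kronecker-type system on which $G$ acts by translations. A cocycle $\rho$ of type $<m$ decomposes over $\widehat{U_0}$ into contributions from the torus factor $T_0$ and the totally disconnected factor $D_0$, which by the splitting hypothesis are independent. On $D_0$ I invoke the sharp form of Corollary \ref{cor}, which in the range $m < \min P$ gives cohomology to a phase polynomial of degree $<m$. On $T_0$ I reduce, via Fourier on $T_0$, to solving $\Delta_g P = \rho_g$ for a phase polynomial $P$ of degree $<m$; the only obstructions are integer divisibilities by numbers up to $m!$, all of which are invertible in each $\mathbb{F}_p$ that appears in $G$ because every prime in $P$ exceeds $m$.

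For the inductive step, write $X = Y \times_\sigma U$ with $Y = Z_{<k-1}(X)$ (still splitting, of order $<k-1$) and $U = U_{k-1} = T_{k-1} \times D_{k-1}$, and consider the vertical translations $V_u$ for $u \in U$. For each such $u$, the cocycle $\Delta_{V_u}\rho$ is of type $<m$ on $X$ and, by a standard vertical-integration argument, is cohomologous to the pullback from $Y$ of a cocycle $\rho_u$ of type $<m-1$. The inductive hypothesis applied to $Y$ yields phase polynomials $p_u \in P_{<m-1}(Y, S^1)$ with $\rho_u$ cohomologous to $p_u$. The consistency identity $p_{uv} \sim p_u \cdot V_u^\star p_v$ then defines a polynomial $1$-cocycle $U \to P_{<m-1}(Y, S^1)$, and the goal is to realize it as a coboundary of a single $P \in P_{<m}(X, S^1)$. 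Once such a $P$ is produced, the ratio $\rho / \Delta P$ is vertically invariant modulo coboundary, hence pulled back from $Y$ at type $<m$, and a final application of the inductive hypothesis on $Y$ closes the induction.

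The main obstacle is precisely this integration step in the $U$-direction. On $D_{k-1}$ the splitting hypothesis, namely that the $D$-projection of $\sigma$ is $T$-invariant, decouples the problem from the torus and lets the sharp Corollary \ref{cor} solve it in exact degree. On $T_{k-1}$ one is left with a Moore-type cohomological equation for families of phase polynomials; it is soluble in exact degree $<m$ only when $p$-th roots can be extracted in the auxiliary structure groups for every $p \in P$, which is exactly what $k, m < \min P$ guarantees. Splicing the two integrations back together through the splitting produces a single $P \in P_{<m}(X, S^1)$ and completes the argument.
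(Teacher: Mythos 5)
Your overall shape (differentiate vertically, apply an inductive hypothesis, integrate back) is indeed the skeleton of the paper's argument, but the quantitative bookkeeping that makes the exact bound $<m$ come out is wrong at two places, and the reduction steps that make the integration legitimate are missing. First, for $u\in U_{k-1}$ the differentiation lemma gives that $\Delta_{V_u}\rho$ is of type $<m-\min(m,k-1)$, not $<m-1$; and the exact integration lemma raises the degree of the integrated family by the degree of the extension cocycle $\sigma_{k-1}$, which is $k-1$, not by $1$. The two errors happen to cancel numerically, but the correct accounting ($m-(k-1)$ for the type drop, $+(k-1)$ for the integration) only closes if one already knows that $\sigma_{k-1}$ is (cohomologous to) a phase polynomial of degree exactly $<k-1$. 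That Weyl structure with exact degrees is itself part of what must be proved inductively (the paper's Theorem \ref{TSTH:thm} and the induction hypothesis in Theorem \ref{reductionH:thm}); your proposal assumes it silently. Second, the integration lemma needs the family $u\mapsto p_u$ to satisfy $p_{uv}=p_uV_up_v$ exactly, not merely up to coboundary; achieving this requires the linearization lemma, which only works on an open neighborhood of the identity and forces the passage to a finite quotient of $U$ — the "reduction to finite $U$" step you omit. Finally, to make the induction close, the object being integrated must carry extra structure (it must be a line cocycle and a quasi-cocycle of order $<k-1$, with $P(g,\cdot)$ valued in $C_n$ for $g$ of order $n$); without tracking these, the claim that $\rho/\Delta P$ descends to $Y$ at type $<m$, and that the descended object again satisfies the inductive hypotheses, does not follow.

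The treatment of the connected part is also not what actually happens. The paper first proves the non-exact Theorem \ref{Main:thm}, uses it (via Theorem \ref{con:thm} and Lemma \ref{ker:lem}) to show that the polynomial terms $p_u$ vanish for $u$ in the connected component, so that $\rho$ is cohomologous to a cocycle invariant under the torus and hence pulled back from a totally disconnected Weyl factor; only on that factor is the exact-degree argument run. There is no "Moore-type cohomological equation soluble in exact degree on $T_{k-1}$" to solve — the torus contributes nothing to integrate, and trying to extract roots there is precisely what fails in general (cf.\ the claim in Example \ref{Example} that the identity character on the circle has no phase polynomial roots). You should restructure the proof as: reduce to a totally disconnected Weyl factor with structure cocycles of exact degrees $<1,\dots,<j-1$, then run the double induction on order and type with the line-cocycle and quasi-cocycle hypotheses, using the exact integration lemma to recover degree $<k$ at each stage.
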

Theorem \ref{Main:thm} is the main result in this paper and it implies Theorem \ref{Mainr:thm} (see section \ref{proof}). In order to demonstrate some of our difficulties, we give a simple example of an ergodic $\bigoplus_{p\in P}\mathbb{F}_p$ action on the torus.
\begin{example}  [The torus as an ergodic $\bigoplus_{p\in P} \mathbb{F}_p$-system]\label{Example}
	Consider the circle $X=S^1$ with the Borel $\sigma$-algebra and the Haar measure. We define an action of $\bigoplus_{p\in P} \mathbb{F}_p$ on $X$ by $T_g x = \varphi(g)x$, where the homomorphism $\varphi:\bigoplus_{p\in P} \mathbb{F}_p\rightarrow S^1$ is given by the formula $$\varphi((g_p)_{p\in P}) = \prod_{p\in P} w_p^{g_p}$$
	where $w_p=e^{\frac{2\pi i}{p}}$ is the first root of unity of degree $p$. This formula is well-defined because $g_p=0$ for all but finitely many $p\in P$ (including multiplicities).\\
	Let $f\in L^\infty(X)$ be an invariant function. We write $f(x)=\sum_{n\in\mathbb{Z}} a_n x^n$ for the Fourier series of $f$. By comparing the Fourier coefficients of $f$ and $f\circ T_g$ we see that $a_n=0$ whenever $\varphi(g)^n\not=1$. In particular, if $P$ is unbounded, then for every $n\not = 0$ there exists $g\in \bigoplus_{p\in P}\mathbb{F}_p$ such that $\varphi(g)^n\not = 1$. This implies that $f$ must be a constant and $X:=(S^1,\bigoplus_{p\in P}\mathbb{F}_p)$ is ergodic. Moreover, it is easy to see that the characters $\chi_n(z)=z^n$ form an orthonormal basis of eigenfunctions in $L^2(X)$. From this it follows that $X$ is a system of order $<2$ and every phase polynomial of degree $<2$ is a constant multiple of a character.
	Observe the following fact.\\
	\textbf{Claim:} Let $X$ be as in Example \ref{Example}. Then every phase polynomial $F:X\rightarrow S^1$ is of degree $<2$ (There are no phase polynomials of higher degree).
	\begin{proof}  Fix $m\in\mathbb{N}$ and let $F:X\rightarrow S^1$ be a phase polynomial of degree $<m$. By Corollary \ref{ker:cor}, the map $s\mapsto \Delta_s F$ from $S^1$ to $P_{<m}(X)$ takes values in $P_{<1}(X)$. In ergodic systems phase polynomials of degree $<1$ are constants. Therefore, there exists a function $\chi:X\rightarrow S^1$ such that $\Delta_s F = \chi(s)$ for every $s\in X$. Using the cocycle identity (i.e. $\Delta_{st} F(x) = \Delta_s F(tx) \Delta_t F(x)$) we conclude that $\chi(st)=\chi(s)\chi(t)$ and $\chi$ is a character. It follows that $\Delta_s (F(x)/\chi(x))=1$ for every $s\in S^1$. Hence $F/\chi$ is a constant. Since $\chi$ is a phase polynomial of degree $<2$, we conclude that so is $F$.
	\end{proof}
\end{example}
	From this example we see that many results of Bergelson Tao and Ziegler \cite{Berg& tao & ziegler} for $\mathbb{F}_p^\omega$-systems could not be generalized directly for $\bigoplus_{p\in P}\mathbb{F}_p$ where $P$ is unbounded, due to several new difficulties including the following:
	
	\begin{enumerate}
		\item {Ergodic $\bigoplus_{p\in P} \mathbb{F}_p$-systems of finite order need not be totally disconnected.}
		\item {Phase polynomials need not take finitely many values.}
		\item {From the last claim we see that some phase polynomials, like the identity map, need not have phase polynomial roots.\footnote{By a phase polynomial root for a phase polynomial $p:X\rightarrow S^1$ we mean a phase polynomial $q$ such that $q^n = p$ for some $n>1$.}}
	\end{enumerate}

\subsubsection{Nilpotent systems, limit formula and Khintchine-type recurrence}	
We prove a structure result for systems of order $<3$ from which we deduce a limit formula and a Khintchine-type recurrence theorem.
\begin{defn} \label{nilpsystem:def}
	Let $G=\bigoplus_{p\in P}\mathbb{F}_p$. For an ergodic $G$-system $X$ let $\mG(X)$ be the Host-Kra group of $X$ as defined in \cite[Section 5]{HK} (see also Definition \ref{HKGroup:def}). We say that $X$ is a $2$-step nilpotent $G$-system if the Host-Kra group of $X$ is $2$-step nilpotent and it acts transitively on $X$. 
\end{defn}
\begin{rem}
In this case $X=(\mG/\Gamma,\mathcal{B},\mu,G)$ where $\Gamma$ is the stabilizer of some point $x\in X$, $\mathcal{B}$ is the Borel $\sigma$-algebra, $\mu$ is the Haar measure (which always exists because nilpotent groups are uni-modular) and the action of $g\in G$ is given by a left multiplication by $\varphi(g)\in\mathcal{G}$ for some homomorphism $\varphi:G\rightarrow\mathcal{G}$.
\end{rem}
 We prove the following counterpart of Host and Kra's structure theorem for systems of order $<3$.
\begin{thm}\label{nilpotentstructure}
	Let $X$ be an ergodic $G$-system of order $<3$. Then $X$ is an inverse limit of finite dimensional $2$-step nilpotent systems.
\end{thm}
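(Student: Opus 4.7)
The strategy is to represent $X$ as an inverse limit of finite-dimensional abelian extensions of its Kronecker factor, and then to show that each such factor is a $2$-step nilpotent $G$-system in the sense of Definition \ref{nilpsystem:def}.

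By Proposition \ref{abelext:prop} I would write $X = Z \times_\rho U$, where $Z := Z_{<2}(X)$ is the Kronecker factor (a compact abelian group acted on by $G$ through a dense homomorphism) and $\rho : G \times Z \to U$ is a cocycle of type $<2$. Pontryagin duality presents $\hat U$ and $\hat Z$ as direct limits of their finitely generated subgroups, which dually expresses $U$ and $Z$ as inverse limits of finite-dimensional compact abelian groups $U_\alpha$, $Z_\alpha$. After passing to a cofinal subfamily and absorbing a coboundary (so that the projected cocycle $\rho_\alpha := p_{U_\alpha} \circ \rho$ descends to a cocycle on $Z_\alpha$), each $X_\alpha := Z_\alpha \times_{\rho_\alpha} U_\alpha$ is a finite-dimensional ergodic factor of $X$ of order $<3$, and $X = \varprojlim X_\alpha$.

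For each such $X_\alpha$, the claim is that $\mG(X_\alpha)$ is $2$-step nilpotent and acts transitively on $X_\alpha$. Two-step nilpotency is the Host-Kra part of the theory \cite[Sections 5--7]{HK}: for a system of order $<3$ the commutator $[\mG(X_\alpha),\mG(X_\alpha)]$ sits inside the group of vertical translations $V_{U_\alpha}$, which is central. For transitivity, the vertical translations move freely along the fiber, so it suffices to produce elements of $\mG(X_\alpha)$ covering every rotation $z\mapsto z+t$ on $Z_\alpha$. This is provided by the Conze-Lesigne type equation: since $\rho_\alpha$ is of type $<2$, for every $t \in Z_\alpha$ there exist $F_t \in \mathcal{M}(Z_\alpha,U_\alpha)$ and $c_t \in \mathcal{M}(G,U_\alpha)$ with $c_t$ independent of $z$, such that
\[
\Delta_t \rho_\alpha(g,z) \;=\; \Delta_g F_t(z)\cdot c_t(g).
\]
The transformation $S_t(z,u) := (z+t,\, F_t(z)\, u)$ then lies in $\mG(X_\alpha)$, and together with the $V_u$, $u\in U_\alpha$, generates a transitive subgroup. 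Combined with the previous step, this realizes $X_\alpha$ as a homogeneous space of a finite-dimensional $2$-step nilpotent group, completing the proof after passing to the inverse limit.

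The main obstacle is establishing the Conze-Lesigne equation over $G = \bigoplus_{p\in P}\mathbb F_p$. For $\mathbb{Z}$-actions this is classical \cite{HK}; here one must exploit that $\Delta_t\rho_\alpha$ is of type $<1$ in the $G$-cohomology, and extract a decomposition $\Delta_t \rho_\alpha = \Delta F_t \cdot c_t$ with $c_t$ depending measurably on $t$, so that $t \mapsto S_t$ defines a Borel homomorphism. This is where the structural results of the paper (in particular Theorem \ref{Main:thm} applied to the finite-dimensional split system $Z_\alpha$, and the accompanying Mackey-type analysis of cocycles of type $<2$) are used to guarantee that the required $F_t$ exists and can be chosen to depend measurably on the parameter.
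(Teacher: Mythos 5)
Your architecture coincides with the paper's: write $X=Z\times_\rho U$ with $Z$ the Kronecker factor, reduce $U$ to a compact abelian Lie group (torus times finite group) by an inverse limit, introduce the Host--Kra group $\mG(X)$ of Definition \ref{HKGroup:def}, observe that $2$-step nilpotency comes from \cite{HK}, and reduce transitivity to showing that the projection $p:\mG(X)\rightarrow Z$ is onto, i.e.\ to solving, for every $t\in Z$, the equation $\Delta_t\rho=c_t\cdot\Delta F_t$ with $c_t:G\rightarrow U$ constant in $z$ and $F_t:Z\rightarrow U$. Up to that point the proposal matches the paper.

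The gap is in the step you describe as ``provided by the Conze--Lesigne type equation.'' It is not true that $\Delta_t\rho$ being of type $<1$ yields a decomposition $\Delta_t\rho=\Delta F_t\cdot c_t$ with $c_t$ and $F_t$ taking values in $U$: the Moore--Schmidt theorem (Lemma \ref{type0}) applies only to $S^1$-valued cocycles, and, as the paper notes explicitly, it fails for cocycles into finite groups. For the toral coordinates of $U$ the claim is indeed immediate from Lemma \ref{dif:lem} plus Lemma \ref{type0}. For a finite coordinate $C_{p^n}$, Moore--Schmidt only produces $c_t$ and $F_t$ valued in $S^1$, and the entire content of the paper's proof is to correct them so that $c_t^{p^n}=F_t^{p^n}=1$: one first shows $\rho_D$ is cohomologous to a phase polynomial of degree $<2$ (eliminating the connected component of $Z$ via divisibility and Lemma \ref{cob:lem}, then applying the exact structure theorem, Theorem \ref{MainH:thm}, on the totally disconnected quotient), and then runs a root-extraction argument involving the bilinear part $q$ of that polynomial, the open subgroup on which $\Delta_s F^{p^n}$ is a character, and the coprimality of the residual torsion to $p$. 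Pointing to ``Theorem \ref{Main:thm} and Mackey-type analysis'' does not substitute for this, because the difficulty is not the existence of some $S^1$-valued solution but forcing the constants into the finite fiber group; without that, $S_t$ does not act on $Z\times_{\rho_D}C_{p^n}$ and transitivity fails. A secondary point: your claim that the cocycle descends to a finite-dimensional quotient $Z_\alpha$ of the Kronecker factor ``after absorbing a coboundary'' also needs justification; the paper avoids this by approximating only the fiber group $U$, not $Z$.
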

Let $X$ be a $2$-step nilpotent system. We prove a (pointwise) limit formula for three term ergodic averages $$\lim_{N\rightarrow\infty}\mathbb{E}_{g\in \Phi_N} T_gf_1(x)T_{2g}f_2(x)T_{3g}f_3(x)$$ for any $f_1,f_2,f_3\in L^\infty(X)$ along a F{\o}lner sequence of $G$ (See Lemma \ref{formula} for the exact formulation). Recall that a subset $A\subseteq G$ is syndetic if there exists a finite subset $C\subseteq G$ such that $A+C = \{a+c : a\in A, c\in C\} = G$. Using the formula above we deduce the following Khintchine type recurrence for $\bigoplus_{p\in P}\mathbb{F}_p$-systems.
\begin{thm} \label{recurrence:thm}
	Let $\mathcal{P}$ be a multiset of primes and suppose that $\min_{p\in\mathcal{P}}p > 3$. Then, for every ergodic $\bigoplus_{p\in \mathcal{P}}\mathbb{F}_p$-system $(X,\mu)$, every measurable set $A\subseteq X$ of positive measure and every $\varepsilon>0$, the set $$\{g\in\bigoplus_{p\in\mathcal{P}}\mathbb{F}_p : \mu(A\cap T_g A \cap T_{2g} A \cap T_{3g} A)\geq \mu(A)^4-\varepsilon\}$$ is syndetic.
\end{thm}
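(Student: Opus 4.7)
The strategy is the standard Bergelson--Host--Kra one, adapted to $\bigoplus_{p\in\mathcal P}\mathbb{F}_p$-actions. First, a van der Corput argument (valid for any countable abelian group, cf.\ the analogous step in \cite{Berg& tao & ziegler}) shows that the $4$-linear form
$(f_0,\ldots,f_3)\mapsto\lim_{N\to\infty}\mathbb{E}_{g\in\Phi_N}\int f_0\cdot T_gf_1\cdot T_{2g}f_2\cdot T_{3g}f_3\,d\mu$
is controlled by the $U^3$-seminorm in each argument. Hence replacing $1_A$ by $\tilde f:=E(1_A\mid Z_{<3}(X))$ preserves the limit of the $4$-term correlations, and as $0\le\tilde f\le 1$ and $\int\tilde f\,d\mu=\mu(A)$, we may work inside $Z_{<3}(X)$. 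Using Theorem \ref{nilpotentstructure}, $Z_{<3}(X)$ is an inverse limit of finite-dimensional $2$-step nilpotent $G$-systems, so after an $L^4$-approximation we may further assume $X=\mG/\Gamma$ is itself $2$-step nilpotent.

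On such $X$, the limit formula of Lemma \ref{formula} identifies
$\lim_{N\to\infty}\mathbb{E}_{g\in\Phi_N}\tilde f(x)\tilde f(T_gx)\tilde f(T_{2g}x)\tilde f(T_{3g}x)$,
for $\mu$-a.e.\ $x$, as an integral of $\tilde f^{\otimes 4}$ against the Haar measure of the orbit closure $\Hc_x\subseteq X^4$ of the $4$-term progression through $x$. The hypothesis $\min_{p\in\mathcal P}p>3$ enters precisely here: it makes $2$ and $3$ invertible in every $\mathbb{F}_p$ occurring in $G$, which forces the affine image of $g\mapsto(0,g,2g,3g)$ in the Kronecker factor $\mG_{\mathrm{ab}}^4$ to be as large as possible, so that $\Hc_x$ is the ``full'' Leibman sub-nilmanifold rather than a degenerate one. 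Averaging in $x$ and applying H\"older/Jensen on the fibers of the projection of $\Hc_x$ onto its first (diagonal) coordinate then yields
$\int_X\tilde f(x)\int_{\Hc_x}\tilde f^{\otimes 3}\,dm_{\Hc_x}\,d\mu(x)\;\ge\;\Bigl(\int_X\tilde f\,d\mu\Bigr)^4=\mu(A)^4$,
exactly as in the Bergelson--Host--Kra proof for $\mathbb Z$-actions.

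To upgrade this averaged lower bound to a syndetic one for individual $g$'s, I would use that $c(g):=\int\tilde f\cdot T_g\tilde f\cdot T_{2g}\tilde f\cdot T_{3g}\tilde f\,d\mu$ is a $2$-step nilsequence, i.e.\ the evaluation of a continuous function on a fixed compact nilmanifold along a group-polynomial orbit of $G$. By an equidistribution argument on this nilmanifold, the orbit of $g\mapsto(\varphi(g),\varphi(2g),\varphi(3g))$ is dense in a homogeneous sub-nilmanifold on which the corresponding continuous function has mean $\ge\mu(A)^4$; the set $\{g:c(g)\ge\mu(A)^4-\varepsilon\}$ is then the preimage of a non-empty open set and therefore syndetic. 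The main obstacle is the second paragraph: establishing the $\mu(A)^4$ lower bound from the limit formula requires a precise description of $\Hc_x$ as a coset of a subgroup of $\mG^4$ whose intersection with the Kronecker diagonal is surjective with Haar-preserving fibers, and it is exactly here that the condition $\min p>3$ is indispensable, since for $p\le 3$ the progression $(0,g,2g,3g)$ can collapse in some $\mathbb{F}_p$-coordinate and the diagonal-fiber H\"older step fails.
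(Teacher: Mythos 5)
Your outline matches the paper's strategy up to the reduction to a $2$-step nilpotent system and the invocation of the limit formula, but the second paragraph contains the essential gap, and your own diagnosis of where the difficulty lies is not quite right. The claim that "averaging in $x$ and applying H\"older/Jensen" yields
$$\lim_{N\to\infty}\mathbb{E}_{g\in\Phi_N}\int_X \tilde f\cdot T_g\tilde f\cdot T_{2g}\tilde f\cdot T_{3g}\tilde f\,d\mu\;\ge\;\mu(A)^4$$
is false in general, for the same reason it is false for $\mathbb{Z}$-actions, and no hypothesis on $\min_{p\in\mathcal{P}}p$ repairs it. In the unweighted average the variable $y_1$ of the limit formula ranges over all of $\mG/\Gamma$, so the configuration $(x,xy_1,xy_1^2y_2,xy_1^3y_2^3)$ projects to a genuine $4$-term progression $(z,z+s,z+2s,z+3s)$ in the Kronecker factor with $s$ equidistributed. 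Expanding $E(\tilde f\mid Z)$ in characters, the Kronecker contribution is a sum over all solutions of $\chi_1\chi_2\chi_3\chi_4=1$ and $\chi_2\chi_3^2\chi_4^3=1$, which has many nontrivial solutions (e.g.\ $(\chi,\chi^{-3},\chi^3,\chi^{-1})$) contributing terms of arbitrary sign, so the integral can drop below $\mu(A)^4$. The projection of your $\Hc_x$ onto the first coordinate does not have the Haar-preserving fiber structure your H\"older step needs; that structure only appears after $y_1$ is constrained to lie in $\mG_2$, i.e.\ after $s$ is forced to be trivial in the Kronecker factor.

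The paper closes this gap with Frantzikinakis's weighting trick. Assuming the set in the statement is not syndetic, its complement contains a F{\o}lner sequence $\Phi_N$, and one studies the weighted averages $\mathbb{E}_{g\in\Phi_N}\eta(a_g)\,\mu(A\cap T_gA\cap T_{2g}A\cap T_{3g}A)$, where $\eta\ge 0$ is a continuous function of the Kronecker rotation $a_g$ normalized so that $\mathbb{E}_{g\in\Phi_N}\eta(a_g)=1$. Taking $\eta$ to approximate $1_{B(\mG_2,\delta)}$ and letting $\delta\to 0$ restricts $y_1$ to $\mG_2$ in the limit formula; only then does the configuration become the coset structure $\{(h_1,h_2,h_3,h_4)\in\mG_2^4: h_1h_3^3=h_4h_2^3\}$ inside a single fiber, on which Cauchy--Schwarz gives the lower bound $\mu(A)^4$, contradicting the pointwise upper bound $<\mu(A)^4-\varepsilon$ on $\bigcup_N\Phi_N$. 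Your third paragraph (nilsequence plus equidistribution) could in principle substitute for this, but it would have to evaluate the continuous function near the identity coset of the sub-nilmanifold rather than appeal to its mean, which is the same issue in disguise. Finally, the hypothesis $\min_{p\in\mathcal{P}}p>3$ is used in the paper for the characteristicity of $Z_{<3}$ for the coefficients $1,2,3$ and to guarantee $\mG_2^2=\mG_2$ (Proposition \ref{2-divisible}), which enters the ergodicity argument behind the limit formula and the final change of variables -- not to prevent a collapse of the progression as you suggest.
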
 
\begin{rem}
It is also possible to prove a counterpart for $3$-term progressions, namely that $$\{g\in\bigoplus_{p\in\mathcal{P}}\mathbb{F}_p : \mu(A\cap T_g A\cap T_{2g}A)\geq \mu(A)^3-\varepsilon\}$$ is syndetic. One can also extend this result for other configurations. For instance if $c_0,c_1,c_2,c_3\leq \min_{p\in\mathcal{P}}p$ are such that $c_i+c_j=c_k+c_l$ for some permutation $\{i,j,k,l\}$ of $\{0,1,2,3\}$, then the same argument as we provide in section \ref{khintchine} allow to replace $\mu(A\cap T_g A\cap T_{2g} A\cap T_{3g} A)$ with $\mu(T_{c_0g}A\cap T_{c_1g}A\cap T_{c_2g}A\cap T_{c_3g}A)$ in the theorem above.
\end{rem} 

\subsection{The structure of the paper}
Most of the paper (sections \ref{sr:sec}-\ref{high:sec}) will be devoted to the proof of Theorem \ref{Main:thm} which is also the main component in the proof of Theorem \ref{Mainr:thm} (section \ref{proof}). The proof follows the ideas of Bergelson Tao and Ziegler \cite{Berg& tao & ziegler} and those of Host and Kra \cite{HK} with various modifications. In section \ref{example} we give an example of an ergodic $\bigoplus_{p\in \mathcal{P}}\mathbb{F}_p$ Kronecker system (system of order $<2$) that is not strongly Abramov as in Theorem \ref{example:thm}. In the next section we show that Theorem \ref{Main:thm} implies a structure theorem for all ergodic $\bigoplus_{p\in \mathcal{P}}\mathbb{F}_p$-systems of order $<3$ as an inverse limit of $2$-step nilpotent systems. Finally, we use this structure theorem to prove a limit formula from which we deduce the Khintchine type recurrence as in Theorem \ref{recurrence:thm}.\\

\textbf{Acknowledgment} I would like to to thank my adviser Prof. Tamar Ziegler for many helpful discussions and suggestions. I also thank the anonymous referee for many valuable comments that contributed to the clarity of the paper. The author is supported by an ERC grant ErgComNum 682150.

\section{Standard reductions} \label{sr:sec}
Throughout the rest of the paper it will be convenient to use the letter $G$ to denote the group $\bigoplus_{p\in P}\mathbb{F}_p$ for some fixed and possibly unbounded countable multiset of primes $P$. We need the following definition.
\begin{defn} [Automorphism] \label{Aut:def} Let $X$ be a $G$-system. A measure-preserving transformation $u:X\rightarrow X$ is called an \textit{automorphism} if the induced action on $L^2(X)$ by $V_u(f)=f\circ u$ commutes with the action of $G$. In this case we define the multiplicative derivative with respect to $u$ by $\Delta_u f = V_uf\cdot \overline{f}$.
\end{defn}
Automorphisms arise naturally from Host-Kra's theory. For instance, given an abelian extension $Y\times_\rho U$, the group $U$ acts on this extension by automorphisms defined by $V_u (y,t)=(y,tu)$.\\
Our next goal is to show that Theorem \ref{Main:thm} follows from the following two theorems. The first asserts that in order to prove Theorem \ref{Main:thm} it suffices to do so on a totally disconnected system. We begin with the following definition.
\begin{defn} \label{Conze-Lesigne equation}
	Let $m\geq 1$ be a natural number. Let $X$ be an ergodic $G$-system and $f:G\times X\rightarrow S^1$ be a function of type $<m$. We say that $f$ is a C.L.\ function\footnote{Equation (\ref{Conze-Lesigne}) is named after the mathematicians Conze and Lesigne who studied these equations in the case where $m=2$, see \cite{CL84},\cite{CL87},\cite{CL88}.} if for every automorphism $t:X\rightarrow X$ there exists a phase polynomial $p_t\in P_{<m}(G,X,S^1)$ and a measurable map $F_t:X\rightarrow S^1$ such that for all $g\in G$
		\begin{equation} \label{Conze-Lesigne}\Delta_t f(g,x) = p_t(g,x)\cdot \Delta_g F_t(x).
		\end{equation}
		If the equation is satisfied only for a certain group of automorphisms we say that $f$ is a C.L.\ cocycle with respect to that group.
\end{defn}
\begin{thm} [Reduction to a totally disconnected factor]\label{TDred:thm}
	Let $k\geq 1$, and $X$ be an ergodic $G$-system of order $<k$ which splits. Then there exists a totally disconnected factor $Y$, with a factor map $\pi:X\rightarrow Y$ such that the following holds:\\
	For every $m\in\mathbb{N}$ and a cocycle $\rho:G\times X\rightarrow S^1$ of type $<m$, if $\rho$ is a C.L.\ cocycle, then $\rho$ is $(G,X,S^1)$-cohomologous to $\pi^\star \rho'$ for a cocycle $\rho':G\times  Y\rightarrow S^1$.
\end{thm}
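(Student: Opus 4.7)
My plan is to construct $Y$ as the totally disconnected factor obtained by projecting out the connected components of each structure group, and then to use the C.L.\ equation to remove the torus dependence of $\rho$ one level at a time. Write $X=U_0\times_{\rho_1}U_1\times\cdots\times_{\rho_{k-1}}U_{k-1}$ with $U_i=T_i\times D_i$, and let $\sigma_i$ denote the $D_i$-projection of $\rho_i$. By the splitting hypothesis $\sigma_i$ is invariant under $T_0\times\cdots\times T_{i-1}$, so iterating we obtain that $Y=D_0\times_{\sigma_1}D_1\times\cdots\times_{\sigma_{k-1}}D_{k-1}$ is a well-defined totally disconnected $G$-system and a natural factor of $X$ via the coordinate projection $\pi:X\to Y$. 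Observe that an $S^1$-valued measurable function on $X$ is (up to null sets) a pullback from $Y$ precisely when it is invariant under vertical translation by each torus $T_l$; hence it suffices to find $F:X\to S^1$ such that $\rho\cdot(\Delta F)^{-1}$ is $T_l$-invariant for every $0\le l\le k-1$, at which point the resulting cocycle descends through $\pi$ to a cocycle $\rho':G\times Y\to S^1$ as required.

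I would then proceed by descending induction on $l\in\{k-1,\ldots,0\}$, modifying $\rho$ by coboundaries at each stage so that the updated $\rho$ becomes invariant under $T_l$ while preserving the invariance already secured for $T_{l+1},\ldots,T_{k-1}$. Vertical translation by $t\in T_l$ is an automorphism of $Z_{<l+1}(X)$, and using the previously-achieved invariances together with the splitting condition (which takes care of the $D$-projections of $\rho_{l+1},\ldots,\rho_{k-1}$) one can lift it to an automorphism of $X$ commuting with the $G$-action. The C.L.\ assumption (\ref{Conze-Lesigne}) then furnishes, for each $t\in T_l$, a phase polynomial $p_t\in P_{<m}(G,X,S^1)$ and a measurable $F_t:X\to S^1$ satisfying $\Delta_t\rho(g,x)=p_t(g,x)\cdot\Delta_g F_t(x)$. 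Following the Bergelson--Tao--Ziegler and Host--Kra template, I handle this identity in two substeps: first, absorb the phase polynomial part by replacing $\rho$ with $\rho\cdot\Delta Q$ for a carefully chosen phase-polynomial-valued $Q$, using that $t\mapsto p_t$ is a cocycle on the connected group $T_l$ taking values in phase polynomials of bounded degree; second, once we may assume $p_t\equiv 1$, use a Mackey/Moore--Schmidt type argument, exploiting the connectedness of $T_l$ and ergodicity of the $G$-action, to replace the family $\{F_t\}_{t\in T_l}$ by a single $F:X\to S^1$ with $\rho\cdot(\Delta F)^{-1}$ genuinely $T_l$-invariant.

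The main obstacle I anticipate is the first substep: trivializing the phase-polynomial cocycle $t\mapsto p_t$ by a coboundary within the class $P_{<O(m)}(G,X,S^1)$. This is the step that genuinely uses the CL hypothesis and the finer structure of phase polynomials, since one has to exploit the identity $p_{t+t'}(g,x)=p_t(g,x)\cdot V_t p_{t'}(g,x)$ (derived from the cocycle property of $\rho$ and automorphism property of $t$) together with the connectedness of $T_l$ to reduce the problem to a soluble coboundary equation inside $P_{<O(m)}$; in the absence of a smooth structure on $T_l$ this connectedness input must come through approximation by divisible one-parameter pieces rather than through literal differentiation. Once both substeps are carried out at level $l$, the modified cocycle is $T_l$-invariant; iterating down to $l=0$ we arrive at a coboundary-equivalent cocycle invariant under every $T_l$, which descends via $\pi$ to a cocycle $\rho':G\times Y\to S^1$ with $\rho\sim\pi^\star\rho'$, yielding the theorem.
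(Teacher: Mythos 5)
Your overall architecture coincides with the paper's: iterate over the levels, lift each torus $T_l$ to a group of automorphisms of $X$, apply the C.L.\ equation, kill the phase-polynomial term, and then straighten via Lemma \ref{cob:lem} to get $T_l$-invariance, so that the cocycle descends to the totally disconnected factor. However, there is a genuine gap at exactly the point you flag as the ``main obstacle'': you never actually show that the phase-polynomial cocycle $t\mapsto p_t$ can be trivialized over the connected group $T_l$, and this is where essentially all of the work in Sections \ref{fdred:sec}--\ref{tdred:sec} of the paper lives. The mechanism is not an integration/coboundary equation inside $P_{<O(m)}$ (that device, Lemma \ref{PIL:lem}, is reserved for the totally disconnected and finite cases); rather, one shows $p_t$ is already trivial, or at least a coboundary, for $t$ in the connected component. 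This requires: the Separation Lemma \ref{sep:lem}, which makes $P_{<1}$ of countable index in $P_{<m}$ modulo constants; the fact that connected locally compact groups and $\mathbb{R}^n$ have no proper measurable subgroups of countable index (Corollary \ref{openker}), so the induced homomorphism lands in $P_{<1}\cap Z^1\cong\hat G$; and the triviality of continuous homomorphisms from a connected (hence divisible) group into the totally disconnected torsion group $\hat G$. Moreover, since the linearization (Lemma \ref{lin:lem}) only gives the identity $p_{uv}=p_uV_up_v$ on an \emph{open neighborhood} of the identity, and $T_l$ may be infinite-dimensional, one must first pass to a finite-dimensional quotient via Gleason--Yamabe approximation and Lemma \ref{ker:lem} (Proposition \ref{FD:prop}), and then lift the local homomorphism on an open ball of $\mathbb{R}^n$ to a global one (Lemma \ref{lift:lem}) before the divisibility argument applies. ``Approximation by divisible one-parameter pieces'' gestures at one of these inputs but does not substitute for them.

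A second, smaller gap: to lift $V_t$, $t\in T_l$, to an automorphism of all of $X$ you need the structure cocycles $\rho_{l+1},\dots,\rho_{k-1}$ themselves to be $T_l$-invariant, and the ``previously-achieved invariances'' of $\rho$ under $T_{l+1},\dots,T_{k-1}$ are irrelevant to this. The splitting condition handles only the $D$-projections; for the torus projections of $\rho_{l+1},\dots,\rho_{k-1}$ the paper must invoke the inductive hypothesis of Theorem \ref{Main:thm} (for systems of order $<l+1$) to replace them, up to coboundary, by phase polynomials, which are then invariant under the connected group $T_l$ by Proposition \ref{pinv:prop}. Note also that after all these modifications one must re-verify that the modified cocycle is still a C.L.\ cocycle with respect to the quotient torus (the paper does this explicitly in the proof of Theorem \ref{con:thm}); your proposal skips this verification as well.
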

The second theorem is a version of Theorem \ref{Main:thm} for totally disconnected systems. As in \cite[Theorem 5.4]{Berg& tao & ziegler} the theorem holds for general functions of type $<m$ that may not be cocycles.

\begin{thm} [The totally disconnected case]\label{MainT:thm} Let $k,m\geq 0$ and let $X$ be an ergodic totally disconnected $G$-system of order $<k$. Let $f:G\times X\rightarrow S^1$ be a function of type $<m$. Then $f$ is $(G,X,S^1)$-cohomologous to a phase polynomial of degree $<O_{k,m}(1)$.
\end{thm}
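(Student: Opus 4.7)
I plan to prove Theorem \ref{MainT:thm} by a double induction on the order $k$ and the type $m$, closely following the scheme that Bergelson-Tao-Ziegler set up for $\mathbb{F}_p^\omega$-systems in \cite[Theorem 5.4]{Berg& tao & ziegler}, but replacing their $p^s$-torsion structure groups with arbitrary profinite abelian ones. The base cases ($k\le 1$ or $m\le 1$) reduce to the Kronecker situation: a function of type $<1$ is cohomologous to one pulled back from the Kronecker factor $Z_{<1}(X)$, and on an ergodic totally disconnected Kronecker $G$-system the eigenfunctions of $G$ are automatically phase polynomials of degree $<2$, so a Fourier-analytic decomposition on the compact abelian group $Z_{<1}(X)$ closes these cases.

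For the inductive step I invoke Theorem \ref{TDisweyl} (totally disconnected systems are Weyl) to write $X = Z_{<k-1}(X)\times_\sigma U$ for a profinite compact abelian group $U$ and a phase-polynomial cocycle $\sigma \in P_{<k-1}(G, Z_{<k-1}(X), U)$; every $u\in U$ then acts on $X$ by a vertical automorphism $V_u$ in the sense of Definition \ref{Aut:def}. Given $f:G\times X\to S^1$ of type $<m$, for each $u \in U$ the derivative $\Delta_u f(g,x)$ is again of type $<m$ by Lemma \ref{PP}; furthermore, a standard manipulation of the coboundary expression witnessing $d^{[m]}f\in B^1(G,X^{[m]},S^1)$ shows that $\Delta_u f$ is in fact of type $<m-1$ up to a coboundary correction. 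The inductive hypothesis in $m$ then yields, for each $u\in U$, a phase polynomial $p_u\in P_{<C'}(G,X,S^1)$ with $C'=O_{k,m-1}(1)$ and a measurable $F_u:X\to S^1$ with
\[
\Delta_u f(g,x)=p_u(g,x)\cdot\Delta_g F_u(x),
\]
which is precisely the Conze-Lesigne equation of Definition \ref{Conze-Lesigne equation} verified for every $u\in U$.

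The remaining step is to straighten the $u$-dependence. Since $U$ is profinite, I plan to approximate it by its finite abelian quotients and reduce to a cocycle identity for $u\mapsto p_u$ on a finite abelian group, which can be analyzed Fourier-analytically along its Pontryagin dual in the spirit of \cite[Section 5]{Berg& tao & ziegler}. The outcome should be a single measurable $H:X\to S^1$ such that $f\cdot\Delta H$ is $V_u$-invariant for every $u\in U$; hence $f\cdot\Delta H$ descends to a function $\tilde f:G\times Z_{<k-1}(X)\to S^1$ of type $<m$. Applying the inductive hypothesis in $k$ on the totally disconnected factor $Z_{<k-1}(X)$ of order $<k-1$ produces a phase-polynomial representative for $\tilde f$, and pulling back via Lemma \ref{PPP}(iv) gives the desired phase polynomial for $f$, with the degree bound obtained by composing the growths through the double induction.

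The principal difficulty I anticipate is precisely this straightening step. In the $\mathbb{F}_p^\omega$ setting of \cite{Berg& tao & ziegler} the structure groups are $p^s$-torsion (Theorem \ref{ptorsion}), so the Pontryagin dual of $U$ is single-prime torsion and the cocycle in $u$ can be inverted with explicit uniform degree bounds. In the present setting $U$ may mix infinitely many primes and carry unbounded torsion; ensuring uniform $O_{k,m}(1)$ degree bounds across the inverse system of finite quotients, and verifying that the correction map $u\mapsto F_u$ can be chosen measurably and compatibly so as to produce a single coboundary correction of $f$, will require a careful Moore-Schmidt-type vanishing argument combined with the phase-polynomial calculus of Lemma \ref{PPP} and a close examination of how the Weyl cocycle $\sigma$ interacts with phase-polynomial degrees in the tower \eqref{structure}.
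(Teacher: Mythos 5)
Your overall architecture matches the paper's: differentiate by the vertical rotations to obtain Conze--Lesigne equations via induction on the type, linearize the polynomial term, reduce from the profinite $U$ to a finite quotient, straighten, descend to $Z_{<k-1}(X)$, and close with the induction on the order. However, the straightening step as you state it contains a genuine error, and the part you defer is where essentially all the work lies.

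The error: you claim the outcome of straightening is a single measurable $H:X\rightarrow S^1$ such that $f\cdot\Delta H$ is $V_u$-invariant for \emph{every} $u\in U$. Since $V_u$ commutes with the $G$-action, this would force $\Delta_u f$ to be a $(G,X,S^1)$-coboundary for every $u$, which is false in general: the Conze--Lesigne equation only gives $\Delta_u f=p_u\cdot\Delta F_u$ with $p_u$ a genuinely non-coboundary phase polynomial (already for $f$ a phase polynomial of degree $2$ the $p_u$ are nontrivial characters of $G$). The missing ingredient is the Polynomial Integration Lemma (Lemma \ref{PIL:lem}): after linearizing $u\mapsto p_u$ on an open subgroup $U'$, one integrates to produce a single phase polynomial $Q:G\times X\rightarrow S^1$ of bounded degree with $\Delta_u Q=p_u$ on $U'$ --- this is precisely where the Weyl structure of $\sigma$ is used --- and only then is $\Delta_u(f/Q)$ a coboundary. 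Moreover Lemma \ref{cob:lem} then yields invariance of $f/Q$ (up to coboundary) only under an \emph{open} subgroup $U''$ of $U$, not all of $U$; so one descends not to $Z_{<k-1}(X)$ but to a finite extension $Z_{<k-1}(X)\times U/U''$ of it, and the phase polynomial $P$ in the conclusion comes from $Q$, not only from the inductive hypothesis on the base.

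The deferred part is the finite-group case, which is the technical heart (Theorem \ref{FC:thm} in the paper): $U/U''\cong\prod_{i=1}^N C_{p_i^{n_i}}$ with $N$ and the primes unbounded, and one must build $F$ generator by generator, verifying the torsion relations $e_j^{p_j^{n_j}}=1$ and the commutator relations. This forces extraction of $p_j^{n_j}$-th roots of phase polynomials, which is done coordinate-by-coordinate in the finite value group with two distinct arguments (Corollary \ref{roots} for bounded primes; for large primes one shows the relevant polynomial is constant via Proposition \ref{PPC} and Theorem \ref{HTDPV:thm}), and the correction term $\xi_j$ is a product of unboundedly many phase polynomials whose bounded degree requires a Taylor-expansion rearrangement. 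A generic ``Fourier analysis on the Pontryagin dual'' will not by itself deliver the uniform $O_{k,m}(1)$ bounds here.
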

Note that in retrospect, knowing that Theorem \ref{Mainr:thm} holds, reducing the proof of Theorem \ref{Main:thm} to a totally disconnected case is a reasonable approach.
\subsection{Proof of Theorem \ref{Main:thm} assuming Theorem \ref{TDred:thm} and Theorem \ref{MainT:thm}}
We recall some results from Bergelson Tao and Ziegler, \cite[Proposition 8.11 and Lemma 5.3]{Berg& tao & ziegler}.
\begin{lem}  [Descent of type for cocycles]  \label{Cdec:lem}
	Let $k\geq 0$ and let $X$ be an ergodic $G$-system of order $<k$. Let $Y$ be a factor of $X$, with factor map $\pi:X\rightarrow Y$. Suppose that $\rho:G\times Y\rightarrow S^1$ is a cocycle. If $\pi^\star \rho$ is of type $<k$, then $\rho$ is of type $<k$.
\end{lem}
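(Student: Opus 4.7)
Here is my plan. By the hypothesis that $\pi^\star\rho$ is of type $<k$, there exists $\tilde F \in \mathcal{M}(X^{[k]}, S^1)$ such that
\[
\Delta\tilde F \;=\; d^{[k]}(\pi^\star\rho) \;=\; (\pi^{[k]})^\star\bigl(d^{[k]}\rho\bigr),
\]
where $\pi^{[k]}:X^{[k]}\to Y^{[k]}$ is the induced factor map between cubic systems (equipped with their cubic measures $\mu^{[k]}$). The strategy is to descend $\tilde F$ to an $S^1$-valued function on $Y^{[k]}$ which trivializes $d^{[k]}\rho$, using conditional expectation combined with the order $<k$ hypothesis on $X$.

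The natural candidate is $F := E(\tilde F\mid Y^{[k]})$. A direct computation --- using that $\pi^{[k]}$ is $G$-equivariant so conditional expectation commutes with the diagonal $G$-action, and that $\Delta_g\tilde F$ is already $Y^{[k]}$-measurable and can thus be pulled out of the expectation --- yields $T_g F = E(\Delta_g\tilde F\cdot\tilde F\mid Y^{[k]}) = d^{[k]}\rho(g,\cdot)\cdot F$, that is, $\Delta_g F = d^{[k]}\rho(g,\cdot)$ on $Y^{[k]}$. Hence, once we know $F$ is $S^1$-valued, we immediately conclude that $d^{[k]}\rho$ is a $(G,Y^{[k]},S^1)$-coboundary and therefore $\rho$ is of type $<k$, as required.

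The technical core is the unimodularity of $F$. A priori $|F|$ is only $G$-invariant and bounded by $1$, and could vanish on a positive-measure set (for instance if $\tilde F$ happens to be orthogonal to $L^2(Y^{[k]})$). The plan to handle this is to exploit the freedom to replace $\tilde F$ by $c\cdot\tilde F$ for any $G$-invariant, $S^1$-valued function $c$ on $X^{[k]}$ --- such a modification preserves $\Delta\tilde F$ --- and to choose $c$ so that $E(c\tilde F\mid Y^{[k]})$ is unimodular. This is precisely the step which uses $X = Z_{<k}(X)$: Host-Kra theory gives a rigid description of the invariant $\sigma$-algebra $\mathcal{I}_k$ of the diagonal $G$-action on $(X^{[k]},\mu^{[k]})$, and decomposing $\tilde F$ in $L^2(X^{[k]})$ along the characters of the relative structure $X^{[k]}\to Y^{[k]}$, the identity $\Delta\tilde F = (\pi^{[k]})^\star d^{[k]}\rho$ forces $\tilde F$ to be supported on a single character modulo $\mathcal{I}_k$; rotating to that character by an appropriate invariant $c$ yields $F$ with $|F|=1$ almost everywhere.

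The principal obstacle is this last step: turning the modification sketch into a rigorous argument. The setup and the conditional-expectation computation are essentially bookkeeping, but the descent to a genuinely $S^1$-valued $F$ on $Y^{[k]}$ is where the assumption that $X$ is of order $<k$ is indispensable --- without it the fiber-Fourier components need not cancel and the descent can fail. I expect this modification step to mirror the analogous argument in \cite{Berg& tao & ziegler} for $\mathbb{F}_p^\omega$-systems, with extra care needed when $P$ is unbounded because the cubic systems involved need no longer be totally disconnected.
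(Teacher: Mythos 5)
You should first note that the paper does not prove this lemma at all: it is imported directly from Bergelson--Tao--Ziegler \cite[Lemma 5.3]{Berg& tao & ziegler}, so the relevant comparison is with that proof. Your setup and the conditional-expectation computation are correct: $d^{[k]}(\pi^\star\rho)=(\pi^{[k]})^\star(d^{[k]}\rho)$, the cubic measure on $Y^{[k]}$ is the push-forward of $\mu^{[k]}$, and $F=E(\tilde F\mid Y^{[k]})$ does satisfy $T_gF=d^{[k]}\rho(g,\cdot)\,F$. The gap is exactly where you locate it, but the mechanism you propose for closing it does not work. The only freedom in $\tilde F$ is multiplication by a unimodular function $c$ invariant under the diagonal $G$-action on $X^{[k]}$, and the restriction of such a $c$ to any single ergodic component of $\mu^{[k]}$ is a constant. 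Thus $c$ can only re-weight the contributions of the various ergodic components of $X^{[k]}$ lying over a given ergodic component of $Y^{[k]}$; whenever that fibre is a single component, $E(c\tilde F\mid Y^{[k]})$ is just a unimodular scalar times $E(\tilde F\mid Y^{[k]})$ there, and no choice of $c$ can repair a vanishing conditional expectation. Moreover, your appeal to ``the characters of the relative structure $X^{[k]}\to Y^{[k]}$'' presupposes that this map is a compact abelian group extension admitting a fibrewise Fourier decomposition; $Y$ is an arbitrary factor of $X$, and no such structure is given.

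A telling symptom is that your argument never uses the hypothesis that $\rho$ is a cocycle: if the scheme could be completed as described, it would yield exact descent of type for arbitrary functions $f:G\times Y\rightarrow S^1$, which is precisely the statement that requires the quasi-cocycle hypothesis in Lemma \ref{Edec:lem} and incurs a loss of degree in Lemma \ref{dec:lem}. Both the cocycle structure of $\rho$ and the order $<k$ hypothesis on $X$ (which controls the invariant $\sigma$-algebra $\mathcal{I}_k$ of $X^{[k]}$, and hence the ergodic decomposition over which the obstruction above lives) must enter the non-vanishing argument itself; in the Host--Kra and Bergelson--Tao--Ziegler treatments this is done by testing whether $d^{[k]}\rho$ is a coboundary of the non-ergodic system $Y^{[k]}$ via the mean ergodic theorem, component by component, rather than by rotating a single global trivializer. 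As written, your proposal is an accurate diagnosis of the difficulty rather than a proof of the lemma.
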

\begin{lem}  [Differentiation Lemma] \label{dif:lem} Let $k\geq 1$, and let $X$ be an ergodic $G$-system. Let $f:G\times X\rightarrow S^1$ be a function of type $<m$ for some $m\geq 1$. Then for every automorphism $t:X\rightarrow X$ which preserves $Z_{<k}(X)$ the function $\Delta_t f(x) := f(tx)\cdot\overline{f}(x)$ is of type $<m-\min(m,k)$.
\end{lem}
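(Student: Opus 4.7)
My plan is to proceed by induction on $\min(m, k)$, relying on the key identity
\[
d^{[j]}(\Delta_t f)(g, (y_w)) = \Delta_{t^{[j]}} d^{[j]} f(g, (y_w)) \qquad (j \geq 0),
\]
where $t^{[j]}$ denotes the diagonal action of $t$ on the cubic system $X^{[j]}$. This identity follows by unfolding the product defining $d^{[j]}$ and using that $\Delta_t$ and $d^{[j]}$ act on disjoint coordinates. The second structural input I would use is that $d^{[m-j]}f$ is of type $<j$ on $X^{[m-j]}$ for every $0\leq j\leq m$: by associativity of cube derivatives $d^{[j]} d^{[m-j]} f = d^{[m]} f$, which is a $(G, X^{[m]}, S^1)$-coboundary by hypothesis.

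The base case $\min(m,k) = 0$ asks that $\Delta_t f$ be of type $<m$. This follows from Lemma \ref{PP}(ii) together with the computation $d^{[m]}(f \circ t) = (d^{[m]}f) \circ t^{[m]}$, which shows that $f \circ t$ is of type $<m$ because $t^{[m]}$, being an automorphism of $X^{[m]}$, carries $(G,X^{[m]},S^1)$-coboundaries to coboundaries; hence the ratio $\Delta_t f = (f \circ t)/f$ is of type $<m$ by the group structure of $\mathcal{M}_{<m}(G,X,S^1)$.

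For the inductive step, I would set $j = m - \min(m,k)$ in the key identity. The goal reduces to showing that $\Delta_{t^{[j]}} d^{[j]} f$ is a $(G, X^{[j]}, S^1)$-coboundary. Since $d^{[j]}f$ is of type $<\min(m, k)$ on $X^{[j]}$, I would apply the inductive hypothesis, on each ergodic component of $X^{[j]}$, to the function $d^{[j]}f$ and the automorphism $t^{[j]}$, with parameters $(m', k') = (\min(m,k), \min(m,k))$. This yields that $\Delta_{t^{[j]}} d^{[j]} f$ is of type $<0$, i.e., a coboundary. For this to be legitimate one must verify that $t^{[j]}$ preserves $Z_{<\min(m,k)}$ of each ergodic component of $X^{[j]}$, which amounts to a standard compatibility between the Host-Kra factors of $X$ and those of the cubic system $X^{[j]}$, combined with the assumption that $t$ preserves $Z_{<k}(X)$.

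The main obstacle will be the ``diagonal'' case $m'=k'$ which drives the induction: given $g$ of type $<n$ on an ergodic $G$-system and $s$ preserving $Z_{<n}$, show $\Delta_s g$ is a coboundary. The cleanest route is a descent principle — any function of type $<n$ is cohomologous to a pullback from $Z_{<n}$ — after which the conclusion is immediate, because the pullback is $s$-invariant and $\Delta_s$ of a coboundary is a coboundary. Establishing this descent for general (not-necessarily-cocycle) functions, and carefully bookkeeping the ergodic decomposition of the non-ergodic cubic systems $X^{[j]}$ so that the inductive hypothesis can be invoked componentwise, is the technically delicate piece of the argument.
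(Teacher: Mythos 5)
Your opening moves are the right ones: the identity $d^{[j]}(\Delta_t f)=\Delta_{t^{[j]}}d^{[j]}f$ and the observation that $d^{[j]}f$ is of type $<m-j$ on the (non-ergodic) system $X^{[j]}$ are exactly how the argument of Bergelson, Tao and Ziegler begins (this paper does not reprove the lemma; it cites \cite{Berg& tao & ziegler} and only remarks that their proof works for arbitrary automorphisms). But your proof collapses at the step that carries all the content. First, the induction on $\min(m,k)$ never decreases: in the inductive step you invoke the statement with parameters $(m',k')=(\min(m,k),\min(m,k))$, and $\min(m',k')=\min(m,k)$, so everything rests on proving the diagonal case $m=k$ from scratch. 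For that case you appeal to a descent principle --- every function of type $<n$ is cohomologous to a pullback from $Z_{<n}$ --- which you do not prove. This is not a small omission. For cocycles that descent is Host--Kra's Corollary 7.9, whose proof \emph{uses} the differentiation lemma (one differentiates by the vertical rotations of the top structure group to lower the type, straightens with Lemma \ref{cob:lem}, and inducts down the tower); for functions that are not cocycles it is not available off the shelf at all, and proving statements of this flavour is essentially what Sections \ref{sr:sec}--\ref{high:sec} of this paper are devoted to. So your route is circular relative to the standard development, and the ``technically delicate piece'' you defer at the end is the whole theorem. A further unaddressed point is your claim that $t^{[j]}$ acts suitably on $Z_{<\min(m,k)}$ of each ergodic component of $X^{[j]}$; the relation between the Host--Kra factors of $X$ and of the ergodic components of $X^{[j]}$ is a genuine theorem, not bookkeeping.

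The actual proof avoids descent entirely and is measure-theoretic. Because $t$ acts \emph{trivially} on $Z_{<k}(X)$ --- and note this is how ``preserves'' must be read; if $t$ merely mapped the factor to itself the lemma would be false (take $X$ equal to its Kronecker factor, $\rho$ of type $<2$, and $t$ a rotation $z_0$ of $X$: then $\Delta_{z_0}\rho=c_{z_0}(g)\cdot\Delta_g F$ is generically a quasi-coboundary, not a coboundary, contradicting the case $m=k=2$); your phrase ``the pullback is $s$-invariant'' silently uses this stronger reading --- the transformation of $X^{[m]}$ acting by $t$ on the coordinates of a face of codimension $k$ and trivially elsewhere preserves the cubic measure $\mu^{[m]}$. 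Writing $d^{[m]}f=\Delta F$ and differentiating by this face transformation exhibits the pullback of $d^{[m-k]}(\Delta_t f)$ under the face projection $X^{[m]}\to X^{[m-k]}$ as a $(G,X^{[m]},S^1)$-coboundary, and one concludes by descending coboundaries along that projection. If you want a self-contained argument, the input you need is the invariance of $\mu^{[m]}$ under these side transformations (Host--Kra, Section 5), not a cohomological descent to $Z_{<n}$.
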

In \cite{Berg& tao & ziegler} Bergelson Tao and Ziegler assume that the automorphism $t$ in the lemma above is a transformation of the form $V_t(y,u) = (y,tu)$ where $X=Y\times_\rho U$ is an abelian group extension of a $G$-system $Y$. However the proof does not use this fact and the result holds for any automorphism as stated above.\\ 

We prove Theorem \ref{Main:thm} assuming Theorem \ref{TDred:thm} and Theorem \ref{MainT:thm}. This proof is similar to the proof of \cite[Theorem 5.4]{Berg& tao & ziegler}.
\begin{proof} [Proof of Theorem \ref{Main:thm} assuming Theorem \ref{TDred:thm} and Theorem \ref{MainT:thm}] \label{proof1}
	Let $k,m,X,\rho$ be as in Theorem \ref{Main:thm}. We claim that for every $0\leq j \leq m$ and automorphisms $t_1,...,t_j:X\rightarrow X$ we have,
	\begin{equation}\label{multiCL}\Delta_{t_1}...\Delta_{t_j}\rho \in P_{<O_{k,m,j}(1)}(G,X,S^1)\cdot B^1(G,X,S^1).
	\end{equation}
	We prove this by downward induction on $j$.
	For $j=m$ the claim follows by iterating Lemma \ref{dif:lem} (with no polynomial term). Fix $j<m$ and assume inductively that the claim is true for $j+1$. By Theorem \ref{TDred:thm} we can find a factor $Y$ with factor map $\pi:X\rightarrow Y$ so that  $\Delta_{t_1}...\Delta_{t_j} \rho $ is $(G,X,S^1)$-cohomologous to $\pi^\star \rho_{t_1,...,t_j}$ where $\rho_{t_1,...,t_j}:G\times Y\rightarrow S^1$ is a cocycle. By assumption $\rho$ is of type $<m$. Since $t_1,...,t_j$ commute with the $G$-action, $\Delta_{t_1}...\Delta_{t_j}\rho$ is also of type $<m$ and by Lemma \ref{PP} (ii) so is $\pi^\star \rho_{t_1,...,t_j}$. Thus, Lemma \ref{Cdec:lem} implies that $\rho_{t_1,...,t_j}:G\times Y\rightarrow S^1$ is a cocycle of type $<m$. Since $\rho_{t_1,...,t_j}$ is defined on a totally disconnected system $Y$, we see from Theorem \ref{MainT:thm} that $\rho_{t_1,...,t_j}$ is $(G,Y,S^1)$-cohomologous to a phase polynomial $P_{t_1,...,t_j}:G\times Y\rightarrow S^1$ of degree $<O_{k,m,j}(1)$. Lifting everything up by $\pi$, we conclude that $\rho_{t_1,...,t_j}$ is $(G,X,S^1)$-cohomologous to $\pi^\star P_{t_1,...,t_j}$ which by Lemma \ref{PPP} (iv) is a phase polynomial of degree $<O_{k,m,j}(1)$. This completes the proof of the claim. Theorem \ref{Main:thm} now follows from the case $j=0$.
\end{proof}
The rest of the paper is devoted to the proof of Theorem \ref{TDred:thm} and Theorem \ref{MainT:thm} assuming the induction hypothesis of Theorem \ref{Main:thm}.
\section{Reduction to a finite dimensional $U$}\label{fdred:sec}
In the next couple of sections we develop a theory which eventually allows us to reduce the proof of Theorem \ref{Main:thm} to the case where $X$ is totally disconnected. We work in general settings (we do not assume that $X$ splits) and so we potentially deal with some pathological groups. Since we work in full generality, one can adapt our proof to prove a more general version of Theorem \ref{Main:thm}. More concretely, it is possible to replace the torus in the splitting condition with any compact connected abelian group $U$ that has no non-trivial local isomorphism to $\hat G$ (i.e. there is no open neighborhood $U'$ of the identity in $U$ and a non-trivial map $\varphi:U'\rightarrow \hat G$ such that $\varphi(u\cdot u')=\varphi(u)\cdot \varphi(u')$ whenever $u,u',u\cdot u'\in U'$). For the sake of simplicity we only prove Theorem \ref{Main:thm} in the way formulated above.\\

We define a notion of dimension of compact abelian groups. First, we say that a compact abelian group is of dimension zero if it is totally disconnected. As for higher dimensions we have the following definition/proposition from  \cite[Theorem 8.22]{HM}.
\begin{defn} [Definition and properties of finite dimensional compact abelian groups] \label{FD:def} The following conditions are equivalent for a compact abelian group $H$ and a natural number $n$:
	
	\begin{enumerate}
		\item {There is a compact zero dimensional subgroup $\Delta$ of $H$ and an exact sequence $$1\rightarrow \Delta \rightarrow H\rightarrow \mathbb{T}^n\rightarrow 1$$ where $\mathbb{T}^n$ is the $n$-dimensional torus.}
		\item {There is a compact zero dimensional subgroup $\Delta$ of $H$ and a continuous quotient homomorphism $\varphi:\Delta\times\mathbb{R}^n\rightarrow H$ which has a discrete kernel.}
	\end{enumerate}
	We say that a compact abelian group $H$ is finite dimensional if it satisfies at least one of these conditions.
	
\end{defn}
\begin{rem}
	Note that condition $(1)$ in the previous definition is equivalent to the existence of some zero dimensional subgroup $D$ of $H$ and an exact sequence $$1\rightarrow D\rightarrow H\rightarrow \mathbb{T}^n\times C_k\rightarrow 1$$ for some finite group $C_k$. For if $H/D\cong \mathbb{T}^n\times C_k$, then $C_k$ is a subgroup of $H/D$. Hence, the isomorphism theorem implies that there exists $D\subseteq D'\subseteq H$ such that $D'/D\cong C_k$. It follows that $D'$ is also zero dimensional group and $H/D'\cong \mathbb{T}^n$.
\end{rem}
We need the following definition.
\begin{defn} [free action] 
	Let $U$ be a locally compact group acting on a probability space $X$ in a measure-preserving way. The action of $U$ is said to be free if $X$ is measure-equivalent to a system of the form $Y\times U$ and the action of $u\in U$ is given by $V_u(y,v)=(y,uv)$. 
\end{defn}
In this section we study C.L.\ cocycles $\rho:G\times X\rightarrow S^1$ with respect to some group $U$ of measure-preserving transformations on $X$. Note that in retrospect, if $X$ satisfies the splitting condition and Theorem \ref{Main:thm} holds, then any cocycle of type $<m$ is a C.L.\ cocycle. The main result in this section is the following proposition.
\begin{prop}\label{FD:prop}
	Let $X$ be an ergodic $G$-system. Let $U$ be a compact abelian group acting freely on $X$ and commuting with the action of $G$. Let $m\geq 0$ and $\rho:G\times X\rightarrow S^1$ be a C.L.\ cocycle of type $<m$ with respect to $U$. Then $\rho$ is $(G,X,S^1)$-cohomologous to a cocycle which is invariant under some closed connected subgroup $J$ of $U$ for which $U/J$ is a finite dimensional compact abelian group.
\end{prop}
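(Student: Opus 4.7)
The plan is to extract from the Conze--Lesigne data a $1$-cocycle $\sigma: U \to Q$, where $Q := P_{<m}(G,X,S^1)\cdot B^1(G,X,S^1)/B^1(G,X,S^1)$, show its kernel $J_0$ is a closed subgroup of $U$ with $U/J_0$ finite-dimensional, and then pass to the connected component. Writing the C.L.\ equation as $\Delta_u \rho = p_u \cdot \Delta_g F_u$ and applying the ordinary cocycle identity $\Delta_{uv}\rho = V_u(\Delta_v\rho) \cdot \Delta_u\rho$, a direct manipulation yields
\[
p_{uv} \equiv V_u p_v \cdot p_u \pmod{B^1(G,X,S^1)},
\]
so $\sigma(u) := [p_u]$ is a $1$-cocycle of $U$ with coefficients in $Q$ (equipped with the induced $V$-action). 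One checks that $J_0 := \{u \in U : \sigma(u) = 0\}$ is a subgroup of $U$, and that for $u \in J_0$ the derivative $\Delta_u\rho$ is a coboundary.

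Next I would show $J_0$ is closed and $U/J_0$ is finite-dimensional. Using a Borel selection on the Polish group $\mathcal M(G,X,S^1)$, I would choose $u \mapsto p_u$ and $u \mapsto F_u$ in a Borel manner, making $\sigma$ Borel and $J_0$ a Borel subgroup of $U$, hence closed. For the dimension bound, I would exploit the freeness of the $U$-action: this realizes $X \cong (X/U)\times U$ as measure spaces and permits a Fourier-type decomposition of $\rho$ in the $U$-direction. The induction hypothesis of Theorem~\ref{Main:thm} applied to the lower-order factor $Z_{<m-1}(X)$, combined with Lemma~\ref{PP} and Lemma~\ref{PPP}, would then force the map $u \mapsto [p_u]$ to depend on $u$ only through finitely many characters in $\hat U$. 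Equivalently, $\sigma$ factors through a quotient of $U$ with finitely generated Pontryagin dual, so $U/J_0$ is finite-dimensional in the sense of Definition~\ref{FD:def}.

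Finally, set $J := J_0^0$, the connected component of $J_0$. The exact sequence $1 \to J_0/J \to U/J \to U/J_0 \to 1$ presents $U/J$ as an extension of a finite-dimensional compact abelian group by a totally disconnected one, and is therefore itself finite-dimensional by the remark following Definition~\ref{FD:def}. To produce a $J$-invariant cohomologous $\rho$, I note that for $u \in J_0$ we have $\Delta_u\rho = \Delta_g \tilde F_u$ with $\tilde F_u: X \to S^1$ measurable in $(u,x)$; a Moore--Schmidt-style averaging of $\tilde F_u$ over $J$ against Haar measure -- legitimate because the $U$-action, and hence the $J$-action, is free -- produces a single $F: X \to S^1$ such that $\Delta_u(\rho \cdot \Delta_g F^{-1}) = 1$ for every $u \in J$. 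The hard part throughout is the finite-dimensionality step: it genuinely requires the Fourier-analytic leverage provided by the free $U$-action together with the inductive structure on lower factors, since purely abstract compact-group arguments would permit the very pathology (solenoids, Theorem~\ref{example:thm}) that the proposition is designed to rule out modulo coboundaries.
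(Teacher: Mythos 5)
The central gap is the finite-dimensionality step, and your own closing remark correctly flags it as the hard part --- but the mechanism you propose for it does not work, and the paper's proof is organized precisely so as never to have to prove anything like it. You take $J_0$ to be the full kernel of the class map $\sigma(u)=[p_u]$ and then need $U/J_0$ to be finite dimensional. There is no reason why $\sigma$ should ``depend on $u$ only through finitely many characters in $\hat U$'': the target $Q$ is a quotient of a group of phase polynomials, not the dual of a torus, and a cocycle into it can genuinely depend on infinitely many ``coordinates'' of $U$ (already for totally disconnected $U=\prod_n C_{p_n}$ the map $u\mapsto p_u$ varies over all coordinates --- harmless there only because such $U$ is zero-dimensional to begin with). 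No appeal to the induction hypothesis on lower-order factors or to Lemmas \ref{PP} and \ref{PPP} produces the claimed factorization through a finitely generated dual. Two further problems: ``Borel subgroup, hence closed'' is false in compact groups (dense non-closed Borel subgroups exist; Proposition \ref{countableindex} requires countable \emph{index}, which you have not established, since $\sigma$ is only a cocycle --- the $V_u$-action on $Q$ is nontrivial --- rather than a homomorphism); and the identity $p_{uv}\equiv p_uV_up_v \pmod{B^1}$ is too weak for the connectedness argument, which needs the genuine identity for the polynomials themselves.

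The paper sidesteps all of this. After the measurable selection (Lemma \ref{sel:lem}) it invokes the linearization lemma (Lemma \ref{lin:lem}) to upgrade the mod-coboundary identity to the exact identity $p_{uv}=p_uV_up_v$ on an open neighborhood $U'$ of the identity; Gleason--Yamabe (Theorem \ref{approxLieGroups}) then yields a closed subgroup $J'\subseteq U'$ with $U/J'$ a Lie group; Lemma \ref{ker:lem} (degree-$<1$ phase polynomial cocycles are characters of $G$, $\hat G$ is totally disconnected, automatic continuity) forces $p_j=1$ for $j$ in the connected component $J:=J'_0$, so $\Delta_j\rho$ is a coboundary there, and Lemma \ref{cob:lem} together with the fact that $J$ has no proper open subgroups produces the $J$-invariant representative (no ad hoc averaging needed). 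Finite-dimensionality of $U/J$ is then automatic from the exact sequence $1\to J'/J\to U/J\to U/J'\to 1$, with $J'/J$ totally disconnected and $U/J'$ a compact abelian Lie group --- it uses no information about $\rho$ whatsoever. To repair your outline, replace $J_0=\ker\sigma$ by the connected component of a Gleason--Yamabe subgroup contained in the linearization neighborhood.
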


We will take advantage of the following results of Bergelson Tao and Ziegler \cite[Lemma C.1 and Lemma C.4]{Berg& tao & ziegler}. 

\begin{lem}  [Separation Lemma]\label{sep:lem} Let $X$ be an ergodic $G$-system, let $k\geq 1$, and let $\phi,\psi\in P_{<k}(X,S^1)$ be such that $\phi/\psi$ is non-constant. Then $\|\phi-\psi\|_{L^2(X)} \geq \sqrt{2}/2^{k-2}$.
\end{lem}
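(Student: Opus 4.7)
The plan is to first reduce to the case $\psi \equiv 1$. Since $\phi$ and $\psi$ take values in $S^1$, a direct computation gives
\[
\|\phi - \psi\|_{L^2}^2 = 2 - 2\re\!\int \phi\bar\psi\, d\mu = \|\phi/\psi - 1\|_{L^2}^2,
\]
because $1/\psi = \bar\psi$ pointwise. By Lemma \ref{PPP}(ii), $f := \phi/\psi$ lies in $P_{<k}(X,S^1)$, and by hypothesis $f$ is non-constant. Thus it suffices to prove: for every $k\geq 1$ and every non-constant $f\in P_{<k}(X,S^1)$,
\[
\|f - 1\|_{L^2(X)} \geq \sqrt{2}/2^{k-2}.
\]

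I would prove this by induction on $k$. For $k=1$, the hypothesis is vacuous since ergodicity forces every $f\in P_{<1}(X,S^1)$ to be constant. For the base case $k=2$, polynomiality says that $\Delta_g f$ is a constant $\lambda(g)\in S^1$ for every $g\in G$, and the cocycle identity (which follows from $\Delta_{g+g'}f = T_g(\Delta_{g'}f)\cdot \Delta_g f$) forces $\lambda$ to be a character of $G$. Since $f$ is non-constant, $\lambda$ is non-trivial, and invariance of $\mu$ gives $\int f\,d\mu = \lambda(g)\int f\,d\mu$ for every $g$, hence $\int f\,d\mu = 0$. Therefore $\|f - 1\|_{L^2}^2 = 2$, matching the bound $\sqrt{2} = \sqrt{2}/2^0$.

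For the inductive step, assume the result for $k-1 \geq 2$, and let $f\in P_{<k}(X,S^1)$ be non-constant. I would split into two cases. If $\Delta_g f$ is constant for every $g$, then the base case argument above applies verbatim and gives $\|f-1\|_{L^2} = \sqrt{2}$, which is at least $\sqrt{2}/2^{k-2}$. Otherwise there exists $g\in G$ with $\Delta_g f$ non-constant; by Lemma \ref{PPP}(ii), $\Delta_g f\in P_{<k-1}(X,S^1)$, so the inductive hypothesis gives
\[
\|\Delta_g f - 1\|_{L^2} \geq \sqrt{2}/2^{k-3}.
\]
Since $|f|=1$ we have $\|\Delta_g f - 1\|_{L^2} = \|T_g f - f\|_{L^2}$, and the triangle inequality combined with $T_g$-invariance of $\mu$ yields
\[
\|T_g f - f\|_{L^2} \leq \|T_g f - 1\|_{L^2} + \|1 - f\|_{L^2} = 2\|f - 1\|_{L^2}.
\]
Combining the two inequalities gives $\|f-1\|_{L^2}\geq \sqrt{2}/2^{k-2}$, closing the induction.

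There is no real obstacle here; the only thing to be careful about is correctly handling the ``all derivatives constant'' branch of the induction and the use of $|f|=1$ to identify multiplicative and additive derivatives in $L^2$. The constants $\sqrt{2}/2^{k-2}$ come out exactly because each inductive step costs a factor of $2$ via the triangle inequality, starting from the sharp value $\sqrt{2}$ at $k=2$.
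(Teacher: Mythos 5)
Your proof is correct, and the paper itself gives no proof of this lemma (it is recalled verbatim from Bergelson--Tao--Ziegler, Lemma C.1); your argument — reduce to $\psi\equiv 1$ via $\|\phi-\psi\|_{L^2}=\|\phi/\psi-1\|_{L^2}$, then induct on $k$ using the triangle inequality $\|T_gf-f\|\le 2\|f-1\|$ and the orthogonality-to-constants base case — is essentially the standard argument from that source. No gaps.
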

Since $X$ is compact, $L^2(X)$ is separable. From this it follows that up to a constant multiple, there are only countably many phase polynomials of a given degree.\\
The next lemma asserts that the components in the Conze-Lesigne equations can be chosen in a measurable way.
\begin{lem}  [Measure selection lemma] \label{sel:lem} Let $X$ be an ergodic $G$-system and $k\geq 1$. Let $U$ be a compact abelian group. If $u\mapsto h_u$ is Borel measurable map from $U$ to $\mathcal{P}_{<k}(G,X,S^1)\cdot \mathcal{B}^1(G,X,S^1) \subseteq \mathcal{M}(G,X,S^1)$ where $\mathcal{M}(G,X,S^1)$ is the group of measurable maps of the form $G\times X\rightarrow S^1$ endowed with the topology of convergence in measure, then there is a Borel measurable choice of $f_u,\psi_u$ (as functions from $U$ to $M(X,S^1)$ and $U$ to $P_{<k}(G,X,S^1)$ respectively) obeying that $h_u = \psi_u \cdot \Delta f_u$.
\end{lem}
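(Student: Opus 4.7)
The strategy is to exploit the near-discreteness of $P_{<k}(X,S^1)$ modulo constants provided by Lemma \ref{sep:lem}, combined with a Borel uniformization in the Polish ambient space $\mathcal{M}(G,X,S^1)$.

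First I would observe that by Lemma \ref{sep:lem}, distinct cosets in $P_{<k}(X,S^1)/S^1$ have representatives pairwise $L^2$-separated by at least $\sqrt{2}/2^{k-2}$; since $L^2(X)$ is separable, the quotient $P_{<k}(X,S^1)/S^1$ is countable. The ambient spaces $\mathcal{M}(X,S^1)$ and $\mathcal{M}(G,X,S^1)$, equipped with the topology of convergence in measure, are Polish groups, and by Lemma \ref{PP}(iii) the set $P_{<k}(G,X,S^1)$ is the preimage of $1$ under the continuous map $\psi\mapsto d^{[k]}\psi$, hence a closed (and therefore Polish) subgroup of $\mathcal{M}(G,X,S^1)$.

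Next I would consider the continuous map $F: P_{<k}(G,X,S^1)\times \mathcal{M}(X,S^1)\to \mathcal{M}(G,X,S^1)$ given by $F(\psi,f):=\psi\cdot\Delta f$. The multifunction $\Phi(u):=F^{-1}(h_u)$ has nonempty closed values, and its graph in $U\times P_{<k}(G,X,S^1)\times \mathcal{M}(X,S^1)$ is Borel. To extract a Borel selection, I would leverage the countability above slice-by-slice: for each fixed $g\in G$, the ``$g$-th slice'' $\psi_u(g,\cdot)$ is determined by $h_u(g,\cdot)$ modulo a $T_g$-coboundary, and modulo the countable group $P_{<k}(X,S^1)/S^1$ it takes values in a countable discrete set. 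Hence the map $u\mapsto [\psi_u(g,\cdot)]$ is a Borel map from $U$ into a countable set, and choosing canonical representatives slice-by-slice yields $\psi_u$ Borel-measurably (using the polynomiality criterion Lemma \ref{PPP}(iii) to consistently patch the slices into a phase polynomial on $G\times X$). Finally, $f_u$ is recovered from the fiber $\{f\in\mathcal{M}(X,S^1):\Delta f = h_u\psi_u^{-1}\}$, which in the ergodic case is a single coset of $S^1\subseteq \mathcal{M}(X,S^1)$ — a compact Polish fiber from which a Borel section is obtained by a standard normalization (e.g.\ requiring a prescribed Fourier coefficient of $f_u$ to be nonnegative real).

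The principal obstacle is the Borel measurability of the ``which coset does $h_u(g,\cdot)$ land in'' map, which reduces to showing that $B^1(G,X,S^1)$ is Borel in $\mathcal{M}(G,X,S^1)$. While $B^1$ need not be closed, the derivative $\Delta:\mathcal{M}(X,S^1)/S^1\to \mathcal{M}(G,X,S^1)$ is a continuous injection between Polish groups (injectivity coming from ergodicity), so by the Lusin--Souslin theorem its image $B^1(G,X,S^1)$ is Borel in $\mathcal{M}(G,X,S^1)$. With this identification, all the conditions used in the slice-by-slice selection become Borel, and the construction of $\psi_u$ and $f_u$ proceeds Borel-measurably.
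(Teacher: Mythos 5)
First, note that the paper does not prove this lemma at all: it is imported verbatim from Bergelson--Tao--Ziegler (Lemma C.4 there), so the comparison below is with the intended argument rather than with a proof printed in this paper. Your general toolkit is the right one — Polishness of $\mathcal{M}(X,S^1)$ and $\mathcal{M}(G,X,S^1)$, countability of $P_{<k}(X,S^1)/S^1$ via Lemma \ref{sep:lem}, Lusin--Souslin applied to the continuous injection $\mathcal{M}(X,S^1)/S^1\to\mathcal{M}(G,X,S^1)$ induced by $\Delta$ (this correctly shows $B^1(G,X,S^1)$ is Borel and that $h\mapsto[f]$ with $\Delta f=h$ is Borel on it), and ergodicity to see that once $\psi_u$ is fixed the fiber of admissible $f_u$ is a single coset of $S^1$.

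The gap is in the selection of $\psi_u$. Your slice-by-slice step asserts that for each fixed $g$ the class of $\psi_u(g,\cdot)$ in the countable set $P_{<k}(X,S^1)/S^1$ is determined by $h_u(g,\cdot)$ and can then be represented canonically. Neither half of this is right. First, the ambiguity in $\psi_u(g,\cdot)$ given $h_u(g,\cdot)$ is the full group $P_{<k}(X,S^1)\cap\{\,\Delta_g F: F\in\mathcal{M}(X,S^1)\,\}$ of polynomial $T_g$-coboundaries, not merely the constants, so the coset in $P_{<k}(X,S^1)/S^1$ is \emph{not} a well-defined function of $h_u$. Second, and more seriously, even if each slice is selected measurably, independent choices over $g\in G$ only guarantee that $h_u(g,\cdot)/\psi_u(g,\cdot)$ is, for each $g$ separately, a $T_g$-coboundary; the lemma requires $h_u/\psi_u=\Delta f_u$ for a \emph{single} $f_u$, and Lemma \ref{PPP}(iii) (a polynomiality criterion) cannot restore this lost compatibility. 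The correct repair is to keep $\psi$ and $f$ coupled: consider the continuous homomorphism $\Phi(\psi,f)=\psi\cdot\Delta f$ on the Polish group $H=P_{<k}(G,X,S^1)\times\mathcal{M}(X,S^1)$, whose kernel $K=\{(\overline{\Delta q},q):q\in P_{<k+1}(X,S^1)\}$ is a \emph{closed} subgroup (here Lemma \ref{PPP}(iii) is genuinely used, to identify the kernel). Then $\overline{\Phi}:H/K\to\mathcal{M}(G,X,S^1)$ is a continuous injection of Polish spaces, so by Lusin--Souslin its inverse is Borel on the image, and composing with a Borel section of $H\to H/K$ (which exists because $K$ is closed — for a countable, possibly dense subgroup such as the bare image $\Delta(P_{<k+1}(X,S^1))$ a Vitali-type obstruction would block a naive coset-representative choice) yields the Borel map $u\mapsto(\psi_u,f_u)$ simultaneously, which is what the lemma asks for.
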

The next lemma deals with C.L.\ cocycles in which the polynomial term is trivial. In this case, up to coboundary, the cocycles are linear on an open subgroup \cite[Lemma C.9]{HK}.
\begin{lem} [Straightening almost translation-invariant cocycles] \label{cob:lem}
	Let $X$ be an ergodic $G$-system, let $K$ be a compact abelian group acting freely on $X$ by automorphisms and let $\rho:G\times X\rightarrow S^1$ be such that $\Delta_k\rho$ is a $(G,X,S^1)$-coboundary for every $k\in K$. Then $\rho$ is $(G,X,S^1)$-cohomologous to a cocycle which is invariant under the action of some open subgroup of $K$. 
\end{lem}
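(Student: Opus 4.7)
The plan is to produce, after a $(G,X,S^1)$-coboundary modification of $\rho$, witness functions $F_k$ that become constant in $x$ for $k$ in a suitable open subgroup $K_0\le K$; the desired $K_0$-invariance of the modified cocycle then follows at once from $\Delta_k\rho=\Delta_g F_k$, since constants on $X$ are $G$-invariant and hence $\Delta_g$-trivial.

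First I would invoke Lemma~\ref{sel:lem} with trivial phase polynomial component to obtain a Borel measurable selection $k\mapsto F_k\in\mathcal{M}(X,S^1)$ satisfying $\Delta_k\rho(g,x)=\Delta_g F_k(x)$ for every $k\in K$. Expressing $\Delta_{kk'}\rho$ in two ways (using that the $K$-action is abelian and commutes with the $G$-action) and invoking ergodicity produces a Borel map $c:K\times K\to S^1$ with
\[
F_{kk'}(x)=c(k,k')\,F_k(k'\cdot x)\,F_{k'}(x),
\]
and the associativity of multiplication in $K$ forces $c$ to be a Borel $2$-cocycle on $K$.

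The key step is to adjust $F_k$ so as to trivialize the multiplier $c$ on some open subgroup $K_0\le K$. By the Mackey--Moore continuity theorem the Borel $2$-cocycle $c$ is cohomologous to a continuous one, and a continuous $2$-cocycle on a compact abelian group becomes trivial on a sufficiently small open subgroup after multiplication by a continuous coboundary $\phi:K\to S^1$. Absorbing $\phi$ into the redefinition $F_k\leftarrow \phi(k)^{-1}F_k$ does not disturb the identity $\Delta_k\rho=\Delta_g F_k$ (multiplication by a constant in $S^1$ is $G$-invariant), so we may assume $c|_{K_0\times K_0}\equiv 1$. Consequently $k\mapsto F_k$ restricted to $K_0$ is a genuine Borel $1$-cocycle for the action of $K_0$ on $X$ with values in $S^1$.

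Since $K_0\le K$ inherits a free action, $X$ is measurably isomorphic to $X/K_0\times K_0$ with $K_0$ translating the second coordinate, and on such spaces a standard Mackey calculation shows every Borel $K_0$-cocycle into $S^1$ is cohomologous to a character: there exist $\chi\in\widehat{K_0}$ and a Borel $H:X\to S^1$ with $F_k(x)=\chi(k)\,H(k\cdot x)/H(x)$ for every $k\in K_0$. Setting $\rho'(g,x):=\rho(g,x)\,H(x)/H(T_gx)$ (which is cohomologous to $\rho$) changes the witnesses to $F'_k(x)=F_k(x)H(x)/H(k\cdot x)=\chi(k)$, a constant in $x$ for every $k\in K_0$. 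Hence $\Delta_k\rho'(g,x)=\Delta_g\chi(k)=1$ for every $k\in K_0$, i.e., $\rho'$ is invariant under the open subgroup $K_0$, as required. The hard part will be the trivialization of the Borel multiplier $c$ on an open subgroup: this relies on Moore's continuity theorem for measurable cocycles on compact groups together with the structural description of continuous central $S^1$-extensions of compact abelian groups, while the remaining pieces are a clean Mackey-cohomology computation for free actions.
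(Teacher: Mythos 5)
The paper does not actually prove this lemma --- it is imported from Host--Kra (\cite[Lemma C.9]{HK}) --- so there is no internal argument to compare against. Your proposal is, in outline, the standard proof: select the witnesses $F_k$ measurably, observe that the obstruction $c(k,k')$ to multiplicativity of $k\mapsto F_k$ is constant by ergodicity and is a Borel $2$-cocycle, kill it on an open subgroup $K_0$, and then untwist the resulting genuine $K_0$-cocycle $k\mapsto F_k$ using freeness. For that last step you do not need the character $\chi$: Lemma \ref{B.4} already says that every cocycle of a free compact abelian action is a coboundary, so $F_k=\Delta_k H$ outright (keeping $\chi$ is harmless). A very minor point at the start: Lemma \ref{sel:lem} with trivial polynomial part still produces a factor $\psi_k(g)\in P_{<1}$, constant in $x$; since $\Delta_k\rho$ is assumed to be an exact coboundary this constant is an eigenvalue of the system and is absorbed by multiplying $F_k$ by the corresponding eigenfunction, but this should be said.

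The one step that is stated incorrectly is the trivialization of $c$. It is neither true in general nor needed that the Borel $2$-cocycle $c$ is cohomologous to a globally continuous one. The correct intermediate object is the extension group $\wt K=\{(k,F):\Delta_k\rho=\Delta F\}$ with $(k,F)(k',F')=(kk',V_{k'}F\cdot F')$: by ergodicity this is a central extension $1\to S^1\to\wt K\to K\to 1$ of Polish groups, surjective onto $K$ by hypothesis, hence compact by Theorem \ref{openmappingthm} and Corollary \ref{compact}. Its commutator map descends to a continuous bicharacter $K\times K\to S^1$, i.e.\ a homomorphism from $K$ to the discrete group $\hat K$ (continuous by Lemma \ref{AC:lem}), whose kernel $K_0$ is open. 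Over $K_0$ the extension is abelian and splits because $\mathrm{Ext}(K_0,S^1)=0$ for compact abelian $K_0$ (dualize and use that $\mathbb{Z}$ is free); a continuous splitting is exactly the statement that $c|_{K_0\times K_0}$ is a coboundary. With this substitution your argument closes up and coincides with the Host--Kra proof.
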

\begin{rem}
	Note that if $K$ is connected, then it has no non-trivial open subgroups (see Lemma \ref{connectedcomponent}). In this case we have that $\rho$ is $(G,X,S^1)$-cohomologous to a function which is invariant under $K$. Moreover, it is important to note that such result does not work for cocycles which takes values in an arbitrary compact abelian group. Finally, we note that lemma \ref{cob:lem} holds even if one replaces the cocycle $\rho$ with an arbitrary function $f:G\times X\rightarrow S^1$
\end{rem}
Taking advantage of the fact that there are only countably many phase polynomials of a given degree (modulo constants), we can assume that the polynomial term in all C.L.\ equations are almost linear. Formally we have the following lemma.
\begin{lem} [Linearization of the polynomial term] \label{lin:lem}
	Let $X$ be an ergodic $G$-system and let $U$ be a compact abelian group acting freely on $X$ by automorphisms. Let $\rho:G\times X\rightarrow S^1$ be a cocycle and suppose that there exists $m\in\mathbb{N}$ such that for every $u\in U$ there exist phase polynomials $p_u\in P_{<m}(G,X,S^1)$ and a measurable map $F_u:X\rightarrow S^1$ such that $\Delta_u\rho = p_u\cdot \Delta F_u$. Then, there exists a measurable choice $u\mapsto p'_u$ and $u\mapsto F'_u$ such that $\Delta_u\rho = p'_u\cdot \Delta F'_u$ for phase polynomials $p'_u\in P_{<m}(G,X,S^1)$ which satisfies that $p'_{uv} = p'_u\cdot V_u p'_v$ whenever $u,v,uv\in U'$ where $U'$ is some open neighborhood of the identity in $U$.
\end{lem}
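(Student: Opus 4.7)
The proof plan is to (i) use measurable selection to make the Conze--Lesigne data Borel in $u$, (ii) invoke the separation lemma together with a Lusin/Steinhaus-type argument to produce an open neighborhood of the identity on which the coset of $p_u$ is controlled, and (iii) perform a Borel modification absorbing the variation into $F_u$ while preserving the phase polynomial property of $p_u$.

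First, apply Lemma~\ref{sel:lem} to $u\mapsto \Delta_u\rho$ to obtain Borel measurable $u\mapsto p_u\in P_{<m}(G,X,S^1)$ and $u\mapsto F_u\in\mathcal{M}(X,S^1)$ with $\Delta_u\rho=p_u\cdot\Delta F_u$. Since $p_e\Delta F_e=\Delta_e\rho=1$, replacing $p_u$ by $p_u/p_e$ and $F_u$ by $F_u/F_e$ I may assume $p_e=1$ and $F_e=1$. The cocycle identity $\Delta_{uv}\rho=\Delta_u\rho\cdot V_u\Delta_v\rho$, combined with the commutation of $V_u$ with $\Delta_g$, yields
$$\sigma(u,v)\;:=\;\frac{p_{uv}}{p_u\cdot V_up_v}\;=\;\Delta\!\left(\frac{F_u\cdot V_uF_v}{F_{uv}}\right),$$
so that $\sigma(u,v)$ automatically lies in $P_{<m}(G,X,S^1)\cap B^1(G,X,S^1)$ and satisfies the $2$-cocycle relation over $U$.

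Second, I would invoke the separation Lemma~\ref{sep:lem} fiberwise in $g$: for each fixed $g\in G$, any two elements of $P_{<m}(X,S^1)$ that are not scalar multiples of one another are $L^2$-separated by $\sqrt{2}/2^{m-2}$, so $P_{<m}(X,S^1)/S^1$ is a countable discrete group. The composition $u\mapsto[p_u(g,\cdot)]_{S^1}$ is Borel from $U$ to this countable discrete set; by Lusin's theorem there is a compact $K_g\subseteq U$ of measure arbitrarily close to $1$ on which the composition is continuous and hence locally constant. Intersecting over the countably many $g\in G$ gives a single compact $K\subseteq U$ of positive Haar measure on which every coset $[p_u(g,\cdot)]_{S^1}$ is locally constant; after shrinking, we may assume each is constant on $K$. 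The Steinhaus theorem then furnishes an open neighborhood $U'$ of the identity contained in $K\cdot K^{-1}$.

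Finally, on $U'$, constancy of the cosets means that for every $u\in K\cdot K^{-1}$ we have a decomposition $p_u=q_0\cdot c(u,g)\cdot\Delta G_u$ with $q_0$ a fixed phase polynomial, $c(u,g)\in S^1$ a scalar depending measurably on $u$, and $G_u\in P_{<m+1}(X,S^1)$ such that $\Delta G_u$ is a phase polynomial of degree $<m$; the last factor can be absorbed into $F_u$ (via $F_u\mapsto F_u\cdot G_u$) without disturbing the Conze--Lesigne equation. Substituting into the identity for $\sigma$ reduces the original twisted $2$-cocycle equation to a finite problem about the scalar factors $c(u,g)$ and the twisted $U$-action on the countable discrete quotient, which can be solved by a further application of Lemma~\ref{sel:lem} to produce Borel choices $p'_u,F'_u$ satisfying $p'_{uv}=p'_u\cdot V_up'_v$ on a possibly smaller open neighborhood of the identity. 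The main obstacle is this last step: the modifications must simultaneously preserve that $p'_u$ is a phase polynomial of degree $<m$, remain Borel in $u$, and kill the obstruction $\sigma$ exactly (not just modulo constants), which is what forces the careful matching between the kernel of $\Delta$ on $P_{<m+1}$ and the image of $\sigma$ inside phase polynomial coboundaries.
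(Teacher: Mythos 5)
Note first that the paper does not prove this lemma at all: it is quoted from the proof of Proposition 6.1 in \cite{Berg& tao & ziegler} (equation (6.5) there), so your proposal has to stand on its own. Your setup is correct and matches the standard one: measurable selection via Lemma \ref{sel:lem}, the normalization $p_e=1$, and the computation showing that $\sigma(u,v)=p_{uv}/(p_uV_up_v)=\Delta\bigl(F_uV_uF_v/F_{uv}\bigr)$ lies in $P_{<m}(G,X,S^1)\cap B^1(G,X,S^1)$, so that $H(u,v):=F_uV_uF_v/F_{uv}$ is a phase polynomial of degree $<m+1$. The problems begin in your step (ii). First, a technical gap: from ``locally constant on $K_g$'' for each $g$ separately you cannot conclude constancy of all the cosets $[p_u(g,\cdot)]_{S^1}$ on a single positive-measure set, because you would have to intersect, over the countably many $g\in G$, level sets chosen separately for each $g$, and such an intersection can be null; nor does constancy on $K$ transfer to $K\cdot K^{-1}$, since $u\mapsto [p_u]$ is not a homomorphism. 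Second, and more fundamentally, the intermediate statement you are aiming for is false in general: if $p_u=q_0\cdot c(u,g)\cdot\Delta G_u$ held for all $u$ in a neighborhood of the identity with a \emph{fixed} $q_0$, then $p_e=1$ would force $q_0\in S^1\cdot B^1(G,X,S^1)$, hence $\Delta_u\rho$ would be cohomologous to a constant for every $u$ near the identity. That is precisely the degenerate situation of Lemma \ref{cob:lem}, and it fails already for Weyl systems of order $<3$ carrying cocycles of type $<3$, where $\Delta_u\rho$ is cohomologous to a phase polynomial of degree $<2$ that genuinely varies with $u$ and is not reducible to a constant. The conclusion of the lemma is a cocycle identity $p'_{uv}=p'_uV_up'_v$, which is strictly weaker than constancy of the coset of $p'_u$; your argument conflates the two.

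The actual content of the lemma, which your last step defers to ``a finite problem\dots which can be solved by a further application of Lemma \ref{sel:lem}'', is exactly the part that is missing. Lemma \ref{sel:lem} is a measurable selection statement and does not trivialize cocycles. What has to be shown is that the measurable $2$-cocycle $(u,v)\mapsto H(u,v)\in P_{<m+1}(X,S^1)$ is, modulo constants (which are killed by $\Delta$ and therefore harmless), a $2$-coboundary on some open neighborhood $U'$ of the identity: one must produce a measurable $u\mapsto G_u\in P_{<m+1}(X,S^1)$ with $H(u,v)\equiv G_{uv}/(G_uV_uG_v)$ for $u,v,uv\in U'$, and then set $p'_u=p_u\cdot\Delta G_u$ and $F'_u=F_u/G_u$. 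It is here, applied to $H$ rather than to $p_u$, that the countability and discreteness of $P_{<m+1}(X,S^1)/S^1$ from Lemma \ref{sep:lem} and the Weil--Steinhaus positive-measure argument must be deployed; as written, your proposal never carries out this step, and the object it does try to control ($[p_u]$ itself) is the wrong one.
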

The proof of this lemma is given in \cite{Berg& tao & ziegler} as part of the proof of Proposition 6.1 (see in particular equation (6.5) in that proof).
\begin{rem}
	The idea of linearizing the term $p_u$ as in the lemma above was originally introduced by Furstenberg and Weiss in \cite{F&W}. Later, this idea was used by Ziegler in \cite{Z} and by Bergelson Tao and Ziegler in \cite{Berg& tao & ziegler} who studied the structure of the universal characteristic factors for the groups $\mathbb{Z}$ and $\mathbb{F}_p^\omega$, respectively.
\end{rem}

Finally, we prove the following Lemma about phase polynomial cocycles and connected groups. This lemma plays an important role in the proof of Proposition \ref{FD:prop}.

\begin{lem}\label{ker:lem}
	Let $X$ be an ergodic $G$-system, let $H$ be a compact abelian group acting freely on $X$ by automorphisms and let $p:H\rightarrow P_{<m}(G,X,S^1)\cap Z^1(G,X,S^1)$ be a measurable map. Suppose that $p$ satisfies $p(hk)=p(h)V_h p(k)$ for every $h,k\in H$. Then $p(h,\cdot)=1$ for every $h\in H_0$, where $H_0$ is the connected component of the identity in $H$.
\end{lem}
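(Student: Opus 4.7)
The plan is to use the Separation Lemma \ref{sep:lem} to make $P_{<m}(X,S^1)/S^1$ discrete in the $L^2$-topology, and then exploit the connectedness of $H_0$ via the general principle that a continuous map from a connected space into a discrete space is constant. This principle will be applied three times.

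First I will show that for every $h \in H_0$, the automorphism $V_h$ acts trivially on the quotient $P_{<m}(X,S^1)/S^1$. Fix $\phi \in P_{<m}(X,S^1)$ and set $K_\phi := \{h \in H : V_h\phi/\phi \text{ is constant}\}$, which is a subgroup of $H$. Since $h \mapsto V_h\phi$ is $L^2$-continuous and Lemma \ref{sep:lem} separates non-constant-equivalent phase polynomials by at least $\sqrt{2}/2^{m-2}$, the subgroup $K_\phi$ contains an $L^2$-open neighborhood of $e$ and is therefore open. The connectedness of $H_0$ (which has no nontrivial open subgroups, by the Remark following Lemma \ref{cob:lem}) then forces $H_0 \subseteq K_\phi$.

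Next, for each $g \in G$ I set $q_g(h) := [p(h)(g, \cdot)] \in P_{<m}(X,S^1)/S^1$. By the Separation Lemma this quotient, with the $L^2$-induced topology, is discrete and hence (by separability of $L^2$) countable. Using the cocycle identity $p(hk) = p(h)\cdot V_h p(k)$ together with the previous step, $q_g(hk) = q_g(h)q_g(k)$ for all $h,k \in H_0$; and from $p(e) = p(e)^2$ we get $p(e) = 1$, so $q_g(e)$ is the identity class. Measurability of $p$ makes $q_g$ a measurable homomorphism from the compact Polish group $H_0$ into a countable discrete group, and a Pettis-style automatic continuity argument upgrades this to continuity. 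A continuous homomorphism from a connected group into a discrete group is trivial, so $q_g \equiv [1]$ on $H_0$. Thus for every $h \in H_0$ and $g \in G$, $p(h)(g, \cdot)$ is some constant $c(h, g) \in S^1$.

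Finally, the cocycle identities for $p$ restricted to $H_0$ translate into $c(h, g+g') = c(h, g) c(h, g')$ (from the $G$-cocycle identity) and $c(hk, g) = c(h, g) c(k, g)$ (using that $V_h$ fixes constants); so $c : H_0 \times G \to S^1$ is a bicharacter. Because $G = \bigoplus_{p \in P} \mathbb{F}_p$ is a torsion group, every $g \in G$ has finite order $n_g$, so $c(h, g)^{n_g} = c(h, n_g g) = 1$, placing $c(h, g)$ in the finite discrete subgroup of $n_g$-th roots of unity in $S^1$. The resulting measurable character $h \mapsto c(h, g)$ is continuous (Pettis), and as a continuous map from the connected group $H_0$ into a discrete space it must be constant, equal to its value $c(e, g) = 1$ at the identity. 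Therefore $p(h, g, x) = 1$ for every $h \in H_0$, every $g \in G$ and $\mu$-a.e. $x$. The main technical care needed is to justify the three applications of the ``measurable homomorphism into a discrete Polish group is continuous'' meta-statement, and to verify that the $L^2$ topology really does make $P_{<m}(X,S^1)/S^1$ into a countable discrete Polish group; once these are settled the core combinatorics (connected in, discrete out) is immediate.
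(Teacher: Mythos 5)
Your proof is correct and follows essentially the same route as the paper: the paper first applies Corollary \ref{ker:cor} (which descends the degree one step at a time via the Separation Lemma and the countable-index/open-kernel argument) to land $p_g|_{H_0}$ in the constants $P_{<1}(X,S^1)$, and then kills the resulting measurable homomorphism $H_0\rightarrow \hat G$ by automatic continuity together with the fact that $H_0$ is connected while $\hat G$ is totally disconnected. Your version merely reorganizes this — collapsing the descent into a single application of the Separation Lemma to the countable discrete quotient $P_{<m}(X,S^1)/S^1$ (after first checking that $V_h$ acts trivially on that quotient for $h\in H_0$), and replacing "$\hat G$ is totally disconnected" by the equivalent concrete observation that each $c(h,g)$ lies in the finite group of $n_g$-th roots of unity because $G$ is torsion.
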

\begin{proof}
By evaluating the polynomials at $g$, we see that the map $p:H\rightarrow P_{<m}(G,X,S^1)\cap Z^1(G,X,S^1)$ induces a map $p_g:H\rightarrow P_{<m}(X,S^1)$ with the property that $p_g(hk) = p_g(h)V_hp_g(k)$ for every $h,k\in H$. Let $g\in G$ be arbitrary and let $H_0$ be the connected component of the identity in $H$. Since this group is a subgroup of any open subgroup of $H$, we conclude by applying Corollary \ref{ker:cor} that the image of the restriction of $p_g$ to $H_0$ is an element in $P_{<1}(X,S^1)$. Since this holds for every $g\in G$, we conclude that $p(H_0)$ is a subset of $P_{<1}(G,X,S^1)\cap Z^1(G,X,S^1)$. By ergodicity, $P_{<1}(G,X,S^1)$ consists of maps of the form $c:G\rightarrow S^1$ (constant in $x$). If in addition $c\in Z^1(G,X,S^1)$, then $c$ is a character of $G$. In other words, we can identify $P_{<1}(G,X,S^1)\cap Z^1(G,X,S^1)$ with the dual $\hat G$. Observe that the topology on $P_{<1}(G,X,S^1)\cap Z^1(G,X,S^1)$ as a subset of $\mathcal{M}(G,X,S^1)$ coincides with the natural topology on $\hat G$ (of pointwise convergence). In other words the identification of $P_{<1}(G,X,S^1)\cap Z^1(G,X,S^1)$ with $\hat G$ is an isomorphism of topological groups. Therefore, we can assume that $p|_{H_0}$ takes values in $\hat G$. Since $p(hk)=p(h)V_hp(k)=p(h)p(k)$ for every $h,k\in H$, we see that $p|_{H_0}:H_0\rightarrow \hat G$ is a measurable homomorphism. By Lemma \ref{AC:lem}, $p|_{H_0}$ is continuous. Since $H_0$ is connected and $\hat G$ is totally disconnected this map is trivial.
\end{proof}
We can finally prove Proposition \ref{FD:prop}.
\begin{proof}[Proof of Proposition \ref{FD:prop}]
	Let $X,U,\rho,m,p_u,F_u$ be as in Proposition \ref{FD:prop}. By Lemma \ref{sel:lem}, we may assume that $u\mapsto p_u$ and $u\mapsto F_u$ are measurable. Moreover, by Lemma \ref{lin:lem} we may assume that there exists an open neighborhood $U'$ of the identity in $U$ on which $p_u$ is a cocycle in $u$ (i.e. $p_{uv}=p_u V_u p_v$ whenever $u,v,uv\in U'$). It is well known that every compact abelian group can be approximated by Lie groups (Theorem \ref{approxLieGroups}). In other words, there exists a closed subgroup $J'$ contained in $U'$ such that $U/J'$ is a Lie group. Let $p:J'\rightarrow P_{<m}(G,X,S^1)$ be the map $j\mapsto p_j$. From Lemma \ref{ker:lem} it follows that $p$ is trivial on the connected component of the identity in $J'$. Let $J:=J'_0$, we conclude that $\Delta_j \rho$ is a coboundary for every $j\in J$. By Lemma \ref{cob:lem} and Lemma \ref{connectedcomponent} we have that $\rho$ is $(G,X,S^1)$-cohomologous to a cocycle which is invariant under the action of $J$.\\
	
	It is left to show that $U/J$ is a finite dimensional group. We have the following exact sequence $$1\rightarrow J'/J\rightarrow U/J\rightarrow U/J'\rightarrow 1$$
Here $J'/J$ is totally disconnected and $U/J'$ is a Lie group. By the structure theorem for compact abelian Lie groups (Theorem \ref{structureLieGroups}) we know that $U/J'$ is a finite extension of a Torus. Hence, $U/J'$ is a finite dimensional compact abelian group as in property (1) of Definition \ref{FD:def}.
\end{proof}
\section{Reduction to the totally disconnected group case} \label{tdred:sec}
In this section we work with a finite dimensional group $U$ acting on a system $X$. By the structure of finite dimensional compact abelian groups (Definition \ref{FD:def}), we can write $U\cong\left(\mathbb{R}^n\times\Delta\right) /\Gamma$ for some $n\in\mathbb{N}$, a totally disconnected compact abelian group $\Delta$ and a discrete subgroup $\Gamma$ of $\mathbb{R}^n\times \Delta$. Throughout this section it will be convenient to identify $\mathbb{R}^n\times\{1\}$ with $\mathbb{R}^n$ and $\{1\}\times \Delta$ with $\Delta$.
\begin{defn}
	Let $(H,+)$ and $(K,\cdot)$ be two abelian groups. We say that a map $\varphi:H\rightarrow K$ is a homomorphism on some set $A\subseteq H$ if $\varphi(x+y)=\varphi(x)\cdot\varphi(y)$ whenever $x,y\in A$.
\end{defn}
Below is a simple lemma about homomorphisms on open subsets of $\mathbb{R}^n$.
\begin{lem} [Lifting of homomorphisms on open sets of $\mathbb{R}^n$]\label{lift:lem} Let $H$ be a compact abelian group and let $\varphi:\mathbb{R}^n\rightarrow H$ be a measurable map.
	Suppose that there exists an open ball around zero $W\subseteq \mathbb{R}^n$ such that $\varphi(v+w)=\varphi(v)+\varphi(w)$ whenever $v,w\in W$. Then there exists a homomorphism $\tilde{\varphi}:\mathbb{R}^n\rightarrow H$ such that $\tilde{\varphi}$ and $\varphi$ agree on $W$.
\end{lem}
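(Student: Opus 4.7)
The plan is to define an extension $\tilde\varphi$ of $\varphi|_W$ by a scaling trick. Since $W$ is an open ball centered at $0\in\mathbb{R}^n$, every $v\in\mathbb{R}^n$ satisfies $v/N\in W$ for all sufficiently large integers $N$. I would set
\[
\tilde\varphi(v) := N\cdot\varphi(v/N),
\]
where $N\cdot h$ denotes the $N$-fold sum $h+h+\cdots+h$ in $H$, with the aim of showing that the right-hand side is independent of the choice of $N$. Once well-definedness is established, the agreement with $\varphi$ on $W$ is immediate by taking $N=1$.

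The core technical step is the proof of well-definedness. Given integers $N,M\geq 1$ with $v/N,v/M\in W$, I would set $L:=NM$ and observe that for every $1\leq k\leq \max(N,M)$ the partial sum $k\cdot(v/L)$ lies in $W$, since its norm is at most $\max(\|v/N\|,\|v/M\|)$ and $W$ is a ball around $0$. An induction on $k$, each step of which applies the local homomorphism hypothesis to $k\cdot(v/L)$ and $v/L$, then yields $\varphi(v/N)=M\cdot\varphi(v/L)$ and $\varphi(v/M)=N\cdot\varphi(v/L)$. Scaling by $N$ and $M$ respectively shows both $N\cdot\varphi(v/N)$ and $M\cdot\varphi(v/M)$ equal $L\cdot\varphi(v/L)$, so $\tilde\varphi(v)$ is unambiguous.

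Finally, to verify that $\tilde\varphi$ is a homomorphism on all of $\mathbb{R}^n$, given $u,v\in\mathbb{R}^n$ I would pick $N$ large enough that $u/N$, $v/N$, and $(u+v)/N$ simultaneously lie in $W$; the local homomorphism property then gives $\varphi((u+v)/N)=\varphi(u/N)+\varphi(v/N)$, and scaling by $N$ produces $\tilde\varphi(u+v)=\tilde\varphi(u)+\tilde\varphi(v)$. The only real obstacle is the bookkeeping inside the well-definedness argument, but since $W$ is symmetric about $0$ and open, there is enough room for all intermediate sums to remain in $W$, so no serious difficulty arises.
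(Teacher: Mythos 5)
Your proposal is correct, but it takes a genuinely different route from the paper. The paper extends $\varphi$ step by step: it first builds $\varphi_1$, a homomorphism on the dilated ball $1.5W$, by decomposing each point as $x+y$ with $x\in W$, $y\in 2W$ and checking the decomposition is immaterial; iterating gives homomorphisms on $(1.5)^nW$ whose union is $\tilde\varphi$. You instead exploit divisibility of $\mathbb{R}^n$ directly via the global formula $\tilde\varphi(v)=N\cdot\varphi(v/N)$, reducing everything to the single well-definedness identity $N\cdot\varphi(v/N)=L\cdot\varphi(v/L)=M\cdot\varphi(v/M)$ with $L=NM$. Your verification of that identity is sound: for $k\leq M$ the point $k\cdot(v/L)=(k/M)(v/N)$ has norm at most $\lVert v/N\rVert$, so all intermediate sums stay in the ball and the induction $\varphi(k\cdot(v/L))=k\cdot\varphi(v/L)$ goes through; the homomorphism property and the agreement on $W$ (take $N=1$) follow as you say. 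What your approach buys is brevity and an explicit closed formula, which in particular makes the measurability of $\tilde\varphi$ (noted in the remark following the lemma) immediate, since on each dilate $NW$ one has $\tilde\varphi=N\cdot\varphi(\cdot/N)$. The paper's successive-extension argument is closer in spirit to the more general statement of \cite[Lemma 4.7]{Z} it is modeled on and does not require dividing by integers, but for the group $\mathbb{R}^n$ your scaling argument is the more economical one. Neither proof uses compactness of $H$; only that $H$ is abelian.
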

\begin{rem}
	Lemma \ref{lift:lem} is a special case of \cite[Lemma 4.7]{Z}. Also we note that if $\varphi$ is measurable then so is $\tilde{\varphi}$.
\end{rem}

\begin{proof}
	Write $W=B(0,r)$ for the open ball of radius $r$ around $0$ for some $r>0$. We extend $\varphi$ to a function $\varphi_1$ which is a homomorphism on $1.5W = B(0,1.5 r)$ as follows:\\
	If $a\not\in 3W$, let $\varphi_1(a)=1$ (or any other element in $H$). Otherwise, choose any $x\in W$ and $y\in 2W$ with $a=x+y$ and let $\varphi_1(a)=\varphi(x)\varphi(y)$. Assume for now that $\varphi_1$ is well defined (i.e. independent of the choice of $x,y$), we claim that $\varphi_1$ is a homomorphism on $1.5W$. Indeed, let $x,y\in 1.5W$ then write $x=1.5u$ and $y=1.5v$ for $u,v\in W$. We have, $$\varphi_1(1.5u+1.5v)=\varphi_1 (u+(0.5u+0.5v)+v)$$
	as $u,v\in W$ we see that $0.5u+0.5v\in W$ and $0.5u+1.5v\in 2W$. If $\varphi_1$ is well defined we have
	$$\varphi_1 (u+(0.5u+0.5v)+v)=\varphi(u)\varphi(0.5u+0.5v+v)=\varphi(u)\varphi(0.5u+0.5v)\varphi(v)$$
	where the last equality follows from the linearity of $\varphi$ on $W$. Using the linearity of $\varphi$ few more times we have
	$$\varphi(u)\varphi(0.5u+0.5v)\varphi(v)=\varphi(u)\varphi(0.5u)\varphi(0.5v)\varphi(v)=\varphi_1(1.5u)\varphi_1(1.5v)$$
	Combining everything we conclude that $\varphi_1(x+y)=\varphi_1(x)\varphi_1(y)$ as desired.\\ 
	
	It is left to show that $\varphi_1$ is well defined. Let $x,x'\in W$ and $y,y'\in 2W$ be so that $x+y=x'+y'$. We want to show that $\varphi(x)\varphi(y)=\varphi(x')\varphi(y')$.\\
	By assumption, $x-x'=x+(-x')$ is a sum of two elements in $W$ hence we have that $\varphi(x-x')=\varphi(x)\varphi(x')^{-1}$ and so $\varphi(x)=\varphi(x')\varphi(x-x')$. It is thus enough to prove that $\varphi(y')=\varphi(y)\varphi(x-x')$. We have 
	$$\varphi(\frac{y'}{2})=\varphi(\frac{x-x'}{2}+\frac{y}{2})=\varphi(\frac{x-x'}{2})\varphi(\frac{y}{2}) $$
	Now, as $\frac{y'}{2},\frac{(x-x')}{2}$ and $\frac{y}{2}$ are elements in $W$ we see that $$\varphi(y')=\varphi(\frac{y'}{2})^2=(\varphi(\frac{x-x'}{2}))^2(\varphi(\frac{y}{2}))^2=\varphi(x-x')\varphi(y)$$
	We conclude that $\varphi_1$ is well defined.  
	\\
	
	Replacing $\varphi$ with $\varphi_1$ and $W$ with $1.5W$, by the same arguments as before we can construct $\varphi_2$ which extends $\varphi_1$ on $1.5W$ and is a homomorphism on $(1.5)^2W$. Continuing this way, we deduce that there exists a sequence $\varphi_1,\varphi_2,...$ with $\varphi_{i}$ extending $\varphi_{i-1}$ for each $i$, such that $\varphi_n$ is a homomorphism on $(1.5)^nW$ for every $n\in\mathbb{N}$.\\
	
	Thus, for every $x\in \mathbb{R}^n$ there exists $m$ so that $x\in B(0,(1.5)^mr)$. Let $\tilde{\varphi}(x)=\varphi_m(x)$, it is easy to see that $\tilde{\varphi}$ is well defined, is a homomorphism and it agrees with $\varphi$ on $W$.
\end{proof}
Given a finite dimensional compact abelian group $U$, its connected component $U_0$ is also finite dimensional (see \cite[Corollary 8.24]{HM}). Let $\rho$ be a C.L.\ cocycle, we use the previous lemma to obtain some results on the map $p:U_0\rightarrow P_{<m}(G,X,S^1)$ given by $u\mapsto p_u$.
\begin{lem}\label{real:lem}
	Let $X$ be an ergodic $G$-system. Let $U$ be a finite dimensional compact abelian group acting freely on $X$ and commuting with the action of $G$. Let $m\geq 1$ and let $\rho:G\times X\rightarrow S^1$ be a C.L.\ cocycle of type $<m$ and write $\Delta_u \rho = p_u\cdot \Delta F_u$ as in (\ref{Conze-Lesigne}). We denote by $U_0$ the connected component of the identity in $U$ and write $U_0=\left(\mathbb{R}^n\times\Delta\right)/\Gamma$ for some $n\in\mathbb{N}$, a totally disconnected group $\Delta$ and a discrete subgroup $\Gamma$. Let $\pi:\mathbb{R}^n\times \Delta \rightarrow U_0$ be the projection map. Then,
	\begin{enumerate}
		\item{There exists an open subgroup $V$ of $\Delta$ such that $$p_{\pi(uv)}=p_{\pi(u)}p_{\pi(v)}$$ whenever $u,v\in V$.}
		\item {There exists an open ball $W\subseteq\mathbb{R}^n$ such that $p_{\pi(w)}=1$ for all $w\in W$.}
		\item {$p_{\pi(w)}$ is a $(G,X,S^1)$-coboundary for every $\omega\in \mathbb{R}^n$.}
	\end{enumerate}
\end{lem}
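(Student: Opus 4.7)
The plan is to first put the data in convenient form and then analyze the two factors of $U_0 = (\mathbb{R}^n\times\Delta)/\Gamma$ separately. By Lemma \ref{sel:lem} I may take $u\mapsto (p_u, F_u)$ to be Borel measurable, and by Lemma \ref{lin:lem} I may further arrange that $p_{uv}=p_u\cdot V_u p_v$ whenever $u,v,uv$ lie in some open neighborhood $U'$ of the identity in $U$. Evaluating the linearized identity at $u=v=e$ gives $p_e=1$, and since $\rho\in Z^1(G,X,S^1)$ while $\Delta F_u\in B^1(G,X,S^1)\subseteq Z^1(G,X,S^1)$, each $p_u = \Delta_u\rho/\Delta F_u$ lies in $Z^1(G,X,S^1)\cap P_{<m}(G,X,S^1)$. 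Writing $\tilde p(u):=p_{\pi(u)}$ for $u\in\mathbb{R}^n\times\Delta$, I choose an open ball $B\subseteq\mathbb{R}^n$ around $0$ and an open subgroup $\Lambda\subseteq\Delta$ so that $\pi(B\times\Lambda)\subseteq U'$; the linearization identity $\tilde p(u+v) = \tilde p(u)\cdot V_{\pi(u)}\tilde p(v)$ then holds on $B\times\Lambda$.

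For Part (1), I restrict $\tilde p$ to $\{0\}\times\Lambda$. The key ingredient is that by the Separation Lemma (Lemma \ref{sep:lem}) together with the separability of $L^2(X)$, the quotient $P_{<m}(X,S^1)/P_{<1}(X,S^1)$ is countable and discrete; because $p_u$ is a cocycle on all of $G$, knowing it modulo constants on countably many generating $g\in G$ controls the full $\tilde p$. A Steinhaus-type argument in the compact totally disconnected group $\Lambda$ then furnishes an open subgroup $V\subseteq\Lambda$ on which $\tilde p(v)$ lies in $P_{<1}(G,X,S^1)$ for every $v\in V$; combined with $\tilde p(v)\in Z^1(G,X,S^1)$, this forces $\tilde p(v)\in\hat G$, i.e.\ a character of $G$. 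Since characters of $G$ depend only on $g$ and not on $x$, they are invariant under $V_{\pi(u)}$, and the linearization identity collapses to $\tilde p(u+v)=\tilde p(u)\tilde p(v)$ on $V$, yielding (1).

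For Part (2), I apply the same Steinhaus argument to $\tilde p$ restricted to $B\subseteq \mathbb{R}^n$: the class $[\tilde p(w)]$ in $P_{<m}/P_{<1}$ takes at most countably many values, so on a smaller ball $W\subseteq B$ around $0$ we have $[\tilde p(w)]=[\tilde p(0)]=[1]$, hence $\tilde p(w)\in P_{<1}(G,X,S^1)\cap Z^1(G,X,S^1)=\hat G$. The same character invariance used in Part (1) makes $\tilde p|_W:W\to \hat G$ a local homomorphism. Lemma \ref{lift:lem} extends it to a measurable homomorphism $\tilde\varphi:\mathbb{R}^n\to\hat G$, which by Lemma \ref{AC:lem} is continuous. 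But $\mathbb{R}^n$ is connected while $\hat G=\prod_{p\in P}\mathbb{F}_p$ is totally disconnected, so $\tilde\varphi\equiv 1$; in particular $\tilde p\equiv 1$ on $W$, proving (2).

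Finally, for Part (3), the set $H:=\{u\in U:\Delta_u\rho\in B^1(G,X,S^1)\}$ is a subgroup of $U$: if $\Delta_u\rho=\Delta F$ and $\Delta_v\rho=\Delta F'$, then $\Delta_{uv}\rho=V_u\Delta_v\rho\cdot\Delta_u\rho=\Delta(V_u F'\cdot F)$. By (2), $H\supseteq\pi(W)$; since $W$ generates $\mathbb{R}^n$ and $\pi|_{\mathbb{R}^n}$ is a homomorphism, $H\supseteq\pi(\mathbb{R}^n)$, so $p_{\pi(w)}=\Delta_{\pi(w)}\rho/\Delta F_{\pi(w)}$ is a coboundary for every $w\in\mathbb{R}^n$. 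The main obstacle I anticipate is carrying out the Steinhaus step rigorously in Parts (1) and (2): the quotient $P_{<m}(G,X,S^1)/P_{<1}(G,X,S^1)$ is only discrete fiberwise in $g$, and the twist by $V_{\pi(u)}$ prevents $\tilde p$ from being a genuine homomorphism on $B\times\Lambda$; both difficulties must be resolved using the cocycle structure of $p_u$ in $g$ and the separation/separability of phase polynomials to transfer discreteness uniformly in $g$.
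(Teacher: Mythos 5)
Your overall architecture (linearize, treat the $\Delta$-factor and the $\mathbb{R}^n$-factor separately, lift to a global homomorphism into $\hat G$ and kill it by connectedness, then deduce (3) from the subgroup $\{w:\ p_{\pi(w)}\in B^1\}$) matches the paper, and your part (3) is identical to the paper's. But there is a genuine gap at the pivot of part (2), and you have in effect flagged it yourself without resolving it. You try to show $p_{\pi(w)}\in P_{<1}(G,X,S^1)$ on a small ball by a local Steinhaus argument in $P_{<m}/P_{<1}$. Two things break. First, the linearized identity is twisted: $[\tilde p(u+v)]=[\tilde p(u)][V_{\pi(u)}\tilde p(v)]$, and $V_{\pi(u)}\tilde p(v)/\tilde p(v)=\Delta_{\pi(u)}\tilde p(v)$ only lies in $P_{<m-1}$, not $P_{<1}$, so the map is not a homomorphism modulo constants; an inductive descent as in Corollary \ref{ker:cor} needs a global cocycle identity on a compact group, which you do not have on a ball in $\mathbb{R}^n$. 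Second, even running Steinhaus for each fixed $g$ separately (where the quotient $P_{<m}(X,S^1)/P_{<1}(X,S^1)$ is genuinely countable and discrete by Lemma \ref{sep:lem}) only produces a neighborhood of $0$ depending on $g$; intersecting over the countably many $g\in G$ may leave nothing. These are exactly the "two difficulties" you list in your closing sentence, and they are not cosmetic: without them part (2) does not go through, and part (1) as you argue it (via the same Steinhaus reduction on $\Lambda$) inherits the first issue as well unless you pass to the compact group $\pi(\Lambda)$ and invoke Corollary \ref{ker:cor} there.

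The missing ingredient is Proposition \ref{pinv:prop}: a phase polynomial which is also a cocycle in $g$ takes finitely many values for each $g$ (Proposition \ref{PPC}) and is therefore automatically invariant under the free action of the \emph{connected} group $U_0$. This kills the twist outright: $V_{\pi(u)}p_{\pi(v)}=p_{\pi(v)}$ for all $u$, which proves (1) in one line (no Steinhaus needed) and makes $p\circ\pi$ an honest homomorphism on $W\times V$ with values in $P_{<m}(G,X,S^1)$. One then applies Lemma \ref{lift:lem} to this $P_{<m}$-valued map to get a \emph{global} homomorphism on $\mathbb{R}^n$, and uses Corollary \ref{openker} (no proper measurable subgroup of countable index in $\mathbb{R}^n$) for each $g$ to conclude that the image lies in $P_{<1}$ on all of $\mathbb{R}^n$ -- globality is what removes the dependence on $g$. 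After that, your endgame (characters of $G$, a measurable hence continuous homomorphism $\mathbb{R}^n\to\hat G$, trivial by connectedness versus total disconnectedness of $\hat G$; the paper instead uses divisibility of $\mathbb{R}^n$ against the torsion group $\hat G$) is correct and equivalent to the paper's.
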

\begin{proof}
	In order to keep the same notations as above, we use the multiplicative notation to denote the additive operation of $\mathbb{R}^n$.\\
	
	Applying the linearization Lemma (Lemma \ref{lin:lem}), there exists an open neighborhood $U'$ of the identity in $U_0$ on which $u\mapsto p_u$ is a cocycle (i.e. $p_{uv}=p_u V_u p_v$ whenever $u,v\in U'$). Since $\pi$ is continuous, $\pi^{-1}(U')$ is an open subset of $\mathbb{R}^n\times \Delta$ around zero and therefore it contains an open subset of the form $W\times V$ where $W$ is an open subset of $\mathbb{R}^n$ and $V$ is an open subset of $\Delta$. By shrinking $W$ and $V$, we can assume that $W$ is an open ball around zero and by Proposition \ref{opensubgroup} that $V$ is an open subgroup of $\Delta$. Since $\pi(V)$ is a subgroup of $U_0$ and it lies inside $U'$, we conclude that $$p_{\pi(uv)} = p_{\pi(u)} V_{\pi(u)} p_{\pi(v)} = p_{\pi(u)}p_{\pi(v)}$$ where the last equality follows from Proposition \ref{pinv:prop}. This proves $(1)$.\\
	Let $p:U_0\rightarrow P_{<m}(G,X,S^1)$ be the map $u\mapsto p_u$. Then, $p\circ \pi : \mathbb{R}^n\times \Delta\rightarrow P_{<m}(G,X,S^1)$ is a homomorphism on $W\times V$. Applying Lemma \ref{lift:lem} once for every $v\in V$, we see that there exists a homomorphism $\tilde{p}:\mathbb{R}^n\times V\rightarrow P_{<m}(G,X,S^1)$ which extends $p\circ \pi$ on $W\times V$.  We restrict $\tilde{p}$ to $\mathbb{R}^n$. Since the index of $P_{<1}(X,S^1)$ in $P_{<m}(X,S^1)$ is at most countable (Lemma \ref{sep:lem}) and $\mathbb{R}^n$ has no proper measurable subgroups of countable index (Corollary \ref{openker}), we conclude that $\tilde{p}|_{\mathbb{R}^n}$ takes values in $P_{<1}(G,X,S^1)$. By ergodicity every element in $P_{<1}(G,X,S^1)$ is a constant. Hence, since $p_u$ are cocycles for every $u\in U$, we conclude that $p_u:G\rightarrow S^1$ is a character for every $u\in\mathbb{R}^n$. Therefore $\tilde{p}|_{\mathbb{R}^n}$ gives rise to a homomorphism from $\mathbb{R}^n$ into $\hat G$. Since $\mathbb{R}^n$ is divisible such a homomorphism must be trivial. In other words the image of $\tilde{p}|_{\mathbb{R}^n}$ is trivial. This completes the proof of $(2)$ because $\tilde{p}$ and $p\circ \pi $ agree on $W$ and $\tilde{p}(W)=\{1\}$.\\
	For $(3)$ we let $$H=\{s\in\mathbb{R}^n : p_{\pi(s)} \text{ is a coboundary}\}.$$ Using the facts that $p_1=1$, $p_{st}$ is cohomologous to $p_s V_s p_t = p_s p_t$ and $p_s$ is cohomologous to $\overline{p_{s^{-1}}}$ for every $s,t\in U_0$, we have that $H$ is a subgroup of $\mathbb{R}^n$. Moreover from $(2)$ we have that $H$ contains the open ball $W$, therefore $H=\mathbb{R}^n$. This proves $(3)$.
\end{proof}

In this situation we would like to use Lemma \ref{cob:lem} in order to eliminate the $\mathbb{R}^n$ part of $U_0$. Unfortunately this is impossible, because the image of $\mathbb{R}^n$ is not necessarily a compact subgroup of $U_0$. In fact the image of $\mathbb{R}^n$ is dense and it is closed if and only if $U_0$ is a torus. We have the following Proposition as a corollary.
\begin{prop} \label{con:prop}
	In the settings of Lemma \ref{real:lem}, if $U_0$ is a torus, then $\rho$ is $(G,X,S^1)$-cohomologous to a cocycle which is invariant under the action of $U_0$.
\end{prop}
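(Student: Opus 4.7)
The plan is to reduce Proposition \ref{con:prop} to an application of Lemma \ref{cob:lem} with $K = U_0$, using that the torus $U_0$ is connected so that Lemma \ref{cob:lem} automatically yields full invariance. The main intermediate claim is that $\Delta_u \rho$ is a $(G,X,S^1)$-coboundary for every $u \in U_0$.

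My first step would be to verify that, in the decomposition $U_0 = (\mathbb{R}^n \times \Delta)/\Gamma$ afforded by Definition \ref{FD:def}, the restriction $\phi := \pi|_{\mathbb{R}^n}: \mathbb{R}^n \to U_0$ is already surjective. Since $\Gamma$ is discrete and $\Delta$ is zero-dimensional, the dimension of $U_0$ equals $n$ and $\pi$ is a local homeomorphism at the identity; restricting to the $\mathbb{R}^n$-factor gives a local homeomorphism from a small ball around $0$ onto an open neighborhood of $1_{U_0}$. Thus $\phi(\mathbb{R}^n)$ is an open subgroup of $U_0$, and since the torus $U_0$ is connected, $\phi(\mathbb{R}^n) = U_0$.

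Next, by Lemma \ref{real:lem}(3), $p_{\pi(w)}$ is a $(G,X,S^1)$-coboundary for every $w \in \mathbb{R}^n$; combined with surjectivity of $\phi$ this shows that $p_u$ is a coboundary for every $u \in U_0$. The C.L.\ equation $\Delta_u \rho = p_u \cdot \Delta F_u$ then exhibits $\Delta_u \rho$ as a product of two coboundaries, hence a coboundary itself, for every $u \in U_0$.

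Finally, I would apply Lemma \ref{cob:lem} with $K = U_0$: it gives a cocycle cohomologous to $\rho$ that is invariant under some open subgroup of $U_0$. By the remark following Lemma \ref{cob:lem} (connected groups have no proper open subgroups), this subgroup must equal $U_0$, yielding the desired conclusion. The only mildly delicate point is the first step, namely the dimension count ensuring that $\phi$ is open near $0$; everything after that is essentially bookkeeping based on lemmas already available in the paper.
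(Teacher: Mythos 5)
Your overall strategy is exactly the one the paper intends: the proposition is stated as a corollary of Lemma \ref{real:lem}(3) together with Lemma \ref{cob:lem}, the whole content being that when $U_0$ is a torus the subgroup $\pi(\mathbb{R}^n)$ is all of $U_0$, so that $p_u$ (and hence $\Delta_u\rho$) is a coboundary for \emph{every} $u\in U_0$, and connectedness upgrades the conclusion of Lemma \ref{cob:lem} to full $U_0$-invariance. Everything from your second paragraph onward is correct bookkeeping.

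The gap is in your first step, and it sits precisely where the torus hypothesis has to do its work. From ``$\Gamma$ discrete, $\Delta$ zero-dimensional, $\dim U_0=n$, $\pi$ a local homeomorphism'' you cannot conclude that $\phi=\pi|_{\mathbb{R}^n}$ maps a small ball onto an \emph{open} neighborhood of $1_{U_0}$: the solenoid of Section \ref{example} (e.g.\ $(\mathbb{R}\times\Delta_1)/\mathbb{Z}$) satisfies all of those premises, yet the image of a small interval of $\mathbb{R}\times\{1\}$ is a one-dimensional slice of a local product $B\times V$ with $V$ an infinite profinite group, hence has empty interior, and $\pi(\mathbb{R})$ is a proper dense subgroup. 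A restriction of a local homeomorphism is a homeomorphism onto its image, but that image need not be open; dimension equality does not repair this. The correct way to let the hypothesis enter is: a torus is locally connected, and since $\pi$ is a covering map, $\mathbb{R}^n\times\Delta$ must then be locally connected, forcing the totally disconnected compact group $\Delta$ to be discrete, hence finite; only then is $\mathbb{R}^n\times\{1\}$ open in $\mathbb{R}^n\times\Delta$ and openness of $\pi$ gives that $\phi(\mathbb{R}^n)$ is an open (hence, by connectedness, full) subgroup. Alternatively you can follow the route the paper sketches just before the proposition: $\pi(\mathbb{R}^n)$ is always dense (its closure is a closed connected subgroup whose quotient is both connected and totally disconnected, hence trivial), and when $U_0$ is a torus it is also closed, so it equals $U_0$. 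With either repair the rest of your argument goes through verbatim.
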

Assuming that $X$ splits, we combine this with the results of the previous section to conclude the following result.
\begin{thm} [Eliminating the connected components of a system $X$ of order $<k$ which splits] \label{con:thm} Let $k\geq 1$ be such that Theorem \ref{Main:thm} has already been proven for smaller values of $k$. Let $X$ be an ergodic $G$-system of order $<k$ which splits. We can then write $X=U_0\times_{\rho_1} U_1\times...\times_{\rho_{k-1}}U_{k-1}$, where $\rho_i:G\times Z_{<i}(X)\rightarrow U_i$ is a cocycle of type $<i$ for all $1\leq i \leq k-1$.
	Let $m\geq 0$ and $\rho:G\times X\rightarrow S^1$ be a C.L.\ cocycle of type $<m$. Then $\rho$ is $(G,X,S^1)$-cohomologous to a cocycle $\tilde{\rho}:G\times X\rightarrow S^1$ which is invariant under the action of $U_{j,0}$ on $X$ by translations for all $0\leq j \leq k-1$.
\end{thm}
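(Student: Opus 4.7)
The plan is to apply Propositions \ref{FD:prop} and \ref{con:prop} simultaneously to the full product torus $T := \prod_{j=0}^{k-1} U_{j,0}$ acting on $X$ by the diagonal product of vertical translations. The splitting condition gives $U_j = T_j \times D_j$, so $U_{j,0} = T_j$ is a torus for each $j$, and their product $T$ is a (possibly infinite-dimensional) compact connected abelian group. The action $V_{(t_0,\ldots,t_{k-1})}(x_0,\ldots,x_{k-1}) = (t_0 x_0,\ldots,t_{k-1} x_{k-1})$ is free, commutes with the $G$-action, and since $\rho$ is a C.L.\ cocycle with respect to every automorphism of $X$, it is in particular C.L.\ with respect to $T$.

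First I would apply Proposition \ref{FD:prop} with $U = T$: this yields a cocycle $\rho'$ cohomologous to $\rho$ and invariant under some closed connected subgroup $J \leq T$ such that $T/J$ is finite-dimensional. As a quotient of a connected group by a closed connected subgroup, $T/J$ is itself connected and finite-dimensional, hence a finite-dimensional torus.

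Next I pass to the quotient $G$-system $X' := X/J$, on which $T/J$ acts freely. The $J$-invariant cocycle $\rho'$ descends to a cocycle $\bar\rho$ on $X'$. The key technical step is to verify that $\bar\rho$ is itself a C.L.\ cocycle with respect to $T/J$: for $t \in T/J$ with any lift $\hat t \in T$, the identity $\Delta_{\hat t}\rho' = p_{\hat t}\cdot \Delta F_{\hat t}$ has a $J$-invariant left-hand side (since $\rho'$ is $J$-invariant and $T$ is abelian), and using Lemma \ref{sel:lem} for measurable selection together with Lemma \ref{ker:lem} (which rigidifies phase-polynomial cocycles on the connected group $J$), one can arrange the representatives $p_{\hat t}, F_{\hat t}$ to be $J$-invariant and hence descend to $X'$; the induction hypothesis of Theorem \ref{Main:thm} for orders strictly less than $k$ enters here to bound the degrees of the polynomials that arise. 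Then Proposition \ref{con:prop} applied to $\bar\rho$ with $U = T/J$ (whose connected component is all of $T/J$, being a torus) produces $\tilde{\bar\rho}$ cohomologous to $\bar\rho$ on $X'$ and invariant under $T/J$.

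Finally, lifting $\tilde{\bar\rho}$ through the quotient map gives a cocycle $\tilde\rho$ on $X$ cohomologous to $\rho$ which is automatically $J$-invariant (as a lift from $X/J$) and $(T/J)$-invariant (inherited from $X'$); together these yield $T$-invariance, which in particular contains invariance under each $U_{j,0}$ as required. The main obstacle will be the descent step: producing $J$-invariant representatives of the phase polynomial and coboundary terms in the C.L.\ equations on the large, possibly infinite-dimensional, group $T$, and verifying that the cocycle property and type are preserved when passing to the quotient $X'$. The splitting condition is crucial in ensuring that $J$ really does consist of vertical translations compatible with the tower structure, so that $X/J$ remains a well-behaved $G$-system of order $<k$.
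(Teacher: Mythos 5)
There is a genuine gap at the very first step: you assert that the product $T=\prod_{j=0}^{k-1}U_{j,0}$ acts on $X$ by the diagonal product of vertical translations, freely and \emph{commuting with the $G$-action}. For $j<k-1$ this is false in general. Translating the $j$-th coordinate by $t_j$ changes the arguments of the higher cocycles $\rho_{j+1},\dots,\rho_{k-1}$ in the tower $U_0\times_{\rho_1}\cdots\times_{\rho_{k-1}}U_{k-1}$, so $V_{t_j}$ commutes with $T_g$ only if $\rho_{j+1},\dots,\rho_{k-1}$ are invariant under that translation. Consequently the hypotheses of Proposition \ref{FD:prop} (a compact group acting freely \emph{and commuting with $G$}) are not met for $U=T$, and the phrase ``$\rho$ is C.L.\ with respect to $T$'' is vacuous, since the elements of $T$ need not be automorphisms of $X$ at all. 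Arranging for $U_{j,0}$ to act by automorphisms is in fact the main content of the paper's proof: one fixes $j$, and first replaces $\rho_{j+1},\dots,\rho_{k-1}$ (up to cohomology, hence up to isomorphism of the system) by $U_{j,0}$-invariant cocycles. This uses the induction hypothesis of Theorem \ref{Main:thm} for smaller orders applied to the projection of each $\rho_i$ to the torus part of $U_i$ (phase polynomials are automatically invariant under connected groups by Proposition \ref{pinv:prop}), and it uses the splitting condition precisely to guarantee that the projection to the totally disconnected part is already torus-invariant. Your proposal never performs this step, and without it the whole construction (the free commuting action, the quotient system $X/J$, the descent of the C.L.\ equation) does not get off the ground.

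A secondary issue: even granting the action, quotienting by an arbitrary closed connected subgroup $J$ of the full product $T$ need not respect the tower structure, since $J$ need not split as a product of subgroups of the individual $U_{j,0}$; the paper sidesteps this by treating one level $j$ at a time, so that $J\leq U_{j,0}$ and the factor $Y$ obtained by replacing $U_j$ with $U_j/J$ is manifestly a well-defined $G$-system. Your descent step for the C.L.\ equation (adjusting $F_{\hat t}$ to be $J$-invariant) is in the right spirit — the paper does exactly this via the character $\chi_u$ with $\Delta_jF_u=\chi_u(j)$ and a phase-polynomial antiderivative supplied by Theorem \ref{Main:thm} — but it is downstream of the missing step.
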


\begin{proof}
	Fix $1\leq j \leq k-1$. We prove the theorem by induction on $k$. If $k=1$, then $X$ is just a point and the claim in the theorem is trivial. Fix $k\geq 2$ and assume that the claim holds for smaller values of $k$. We replace the cocycles $\rho_{j+1},...,\rho_{k-1}$ with cocycles that are invariant under the action of $U_{j,0}$. We begin with $\rho_{j+1}:G\times Z_{<j+1}(X)\rightarrow U_{j+1}$. Since $\rho_{j+1}$ is of type $<j+1$, we can apply  Theorem \ref{Main:thm} to the projection of $\rho_{j+1}$ to the torus of $U_{j+1}$ (i.e. $T(U_{j+1})$) by applying it to each coordinate separately. We conclude that $\rho_{j+1}$ is $(G,Z_{<j+1}(X),T(U_{j+1}))$-cohomologous to a phase polynomial which, by Proposition \ref{pinv:prop}, is invariant with respect to the action of $U_{j,0}$. Now, since $X$ splits the projection of $\rho_{j+1}$ to the totally disconnected part of $U_{j+1}$ is also invariant under the action of $U_{j,0}$. Hence, by Lemma \ref{cob:lem} (applied with $K=U_{j,0}$) and remark \ref{coh:rem} we can replace $\rho_{j+1}$ with a cocycle $\rho'_{j+1}$ that is invariant under $U_{j,0}$. We conclude that $Z_{j+2}(X)$ is isomorphic as $G$-systems to $Z_{<j+1}(X)\times_{\rho'_{j+1}}U_{j+1}$ (see Remark \ref{coh:rem}). On the new system, since $\rho'_{j+1}$ is invariant with respect to $U_{j,0}$, we see that for every $g\in G$ the transformation $T_g$ commutes with the translations $V_u$ for $u\in U_{j,0}$.\footnote{To see this let $(s,t)\in Z_{<j+1}(X)\times U_{j+1}$. Then, for every $u\in U_{j,0}$ and $g\in G$ we have $T_gV_u (s,t) = (S_g V_u s,\rho_{j+1}'(g,us)t)$, where $S_g$ is the action of $G$ on $Z_{<j+1}(X)$ and we abuse notation and denote the translation by $u$ on $Z_{<j+1}(X)$ by $V_u$ as well. Now, since $V_u$ commutes with $S_g$ and $\rho_{j+1}'$ is invariant to $u$ we conclude that $T_gV_u (s,t) = (S_g V_u s,\rho_{j+1}'(g,V_us)t)=(V_u S_g s,\rho_{j+1}'(g,s)t) = V_u T_g(s,t)$.} This means that the same argument can be applied this time for $\rho_{j+2}$. Continuing this way (using induction) we can assume that $\rho_{j+1},...,\rho_{k-1}$ are invariant with respect to the action of $U_{j,0}$.\\ In this case it follows that $U_{j,0}$ acts on $X$ by automorphisms.\\ Thus, by Proposition \ref{FD:prop}, $\rho$ is $(G,X,S^1)$-cohomologous to a cocycle $\rho':G\times X\rightarrow S^1$ which is invariant with respect to the translations by some connected subgroup $J$ of $U_{j,0}$ for which $U_{j,0}/J$ is a finite dimensional torus. Since $\rho_{j+1},...,\rho_{k-1}$ are invariant with respect to the action of $U_{j,0}$, they are also invariant to the action of $J$. In particular, we can define a factor $Y := U_0\times_{\rho_1}U_1\times...\times_{\rho_j} U_j/J\times_{\rho_{j+1}'} U_{j+1}\times_{\rho_{j+2}'}...\times_{\rho_{k-1}'}U_{k-1}$ of $X$ where $\rho_{j+1}',...,\rho_{k-1}'$ are induced from $\rho_{j+1},...,\rho_{k-1}$ under the projection $U_j\rightarrow U_j/J$. Let $\pi:X\rightarrow Y$ be the factor map $\pi(u_0,...,u_{k-1}) = (u_0,...,u_j+J,...,u_{k-1})$. We have that $\rho'$ is equal to $\pi^\star \rho''$ where $\rho'':G\times Y\rightarrow S^1$ is a cocycle and $U_{j,0}/J$ is a finite dimensional torus acting freely on $Y$ and commuting with the $G$-action.\\
	We want to apply Proposition \ref{con:prop}, but first we have to show that $\rho''$ is a C.L.\ cocycle of type $<m$ with respect to $U_{j,0}/J$.\\
	As $\rho'$ is $(G,X,S^1)$-cohomologous to $\rho$ we have that 
	\begin{equation} \label{eq'}
	\Delta_u \rho' = p_u \cdot \Delta F_u
	\end{equation}
	for a phase polynomial $p_u\in P_{<m}(G,X,S^1)$ and a measurable map $F_u:X\rightarrow S^1$. Recall that $\rho'$ is invariant with respect to the action of $J$. Since $J$ is connected, we conclude by Proposition \ref{pinv:prop} and equation (\ref{eq'}) that $\Delta F_u$ is also invariant with respect to $J$. Since the action of $J$ commutes with the $G$-action, we conclude that $\Delta_j F$ is invariant. By ergodicity and since $\Delta_{jj'}F = \Delta_j F V_j \Delta_{j'} F$ for every $j,j'\in J$, we see that $\Delta_j F_u = \chi_u(j)$ for all $j\in J$ for some character $\chi_u:U_j\rightarrow S^1$.\footnote{Initially $\chi_u$ is a character of $J$ which we can lift (arbitrarily) to a character of $U_j$ using pontryagin duality.}
	
	Applying Theorem \ref{Main:thm}, we have that $\chi_u\circ\rho_{j-1}$ is $(G,Z_{<i}(X),S^1)$-cohomologous to a phase polynomial $q$ of degree $<O_k(1)$, i.e. $\chi_u\circ\rho_{j-1} = q\cdot\Delta F$. Let $\phi_{\chi_u} = \chi_u\cdot F$. Then $\phi_{\chi_u}:Z_{<j}(X)\rightarrow S^1$ is a phase polynomial of degree $<O_k(1)$, and it satisfies that $\Delta_j \phi_{\chi_u} = \chi_u(j)$ for all $j\in J$. Let $\tilde{\phi}_{\chi_u}=(\pi^X_{Z_{<j}(X)})^\star\phi_{\chi_u}$. We have that $F'_u:=F_u/\tilde{\phi}_{\chi_u}$ is invariant with respect to the action of $J$ and that $\Delta_u \rho' = p'_u\cdot \Delta F'_u$ where $p'_u = p_u\cdot \Delta \tilde{\phi}_{\chi_u}$ is a phase polynomial of degree $<O_{k,m}(1)$. Now $\rho'$, $p'_u$ and $F'_u$ are all invariant under $J$ and so there exists $\tilde{p}_u :G\times Y\rightarrow S^1$ and $\tilde{F}_u:Y\rightarrow S^1$ such that $p_u = \pi^\star \tilde{p}_u$ and $F_u = \pi^\star \tilde{F}_u$. It follows that $\Delta_u\rho'' = \tilde{p}_u \cdot \Delta \tilde{F}_u$ for all $u\in U_j/J$ and by functoriality we have that $\tilde{p}_u$ are also phase polynomials of degree $<O_{k,m}(1)$.
	
	We can therefore apply Proposition \ref{con:prop}. It follows that $\rho''$ is $(G,Y,S^1)$-cohomologous to a cocycle $\tilde{\rho}:G\times Y\rightarrow S^1$ which is invariant under $U_{j,0}/J$. Therefore, $\pi^\star \tilde{\rho}$ is $(G,X,S^1)$-cohomologous to $\rho'=\pi^\star \rho''$, and is invariant under $U_{j,0}$. As $\rho$ is $(G,X,S^1)$-cohomologous to $\rho'$ it is also $(G,X,S^1)$-cohomologous to $\pi^\star \tilde{\rho}$ which completes the proof.
\end{proof}
\begin{rem}
	To obtain a C.L.\ equation for $\rho'$ it is also possible to use a similar argument as in \cite[Lemma B.11]{Berg& tao & ziegler}.
\end{rem}
Theorem \ref{TDred:thm} follows by repeatedly applying the theorem above. Each time eliminating the connected component of the next group. After $k-1$ iterations we get a totally disconnected factor of $X$. In the next section we study these totally disconnected systems.
\section{Topological structure theorem and Weyl systems} \label{TSTweyl:sec}
We denote by $C_p$ the group of all $p$-th roots of unity in the circle, equipped with discrete topology. We prove the following counterpart of \cite[Lemma 4.7]{Berg& tao & ziegler} for the group $G=\bigoplus_{p\in P}\mathbb{F}_p$ in the special case where $X$ is a totally disconnected system. 
\begin{thm}  [The structure of totally disconnected systems] \label{TST:thm} Let $k\geq 2$ be an integer such that Theorem \ref{Main:thm} has already been proven for smaller values of $k$. Let $X$ be a totally disconnected system of order $<k$ and suppose that $X=Z_{<k-1}(X)\times_{\sigma} U$ for some totally disconnected group $U$. Then, there exist $d=O_k(1)$, integers $0\leq m_1,m_2,... \leq d$, and primes $p_1,p_2,...\in P$ such that $U\cong \prod_{n=1}^\infty C_{p_n^{m_n}}$ as topological groups. Moreover, if $p_i$ sufficiently large ($p_i>k$) we can take $m_i\leq 1$.
\end{thm}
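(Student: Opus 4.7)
The plan is via Pontryagin duality. Since $U$ is totally disconnected compact abelian, $\hat U$ is a discrete torsion abelian group decomposing as $\hat U = \bigoplus_p \hat U_p$ into $p$-primary components. By Pr\"ufer's structure theorem for abelian $p$-groups of bounded exponent, to conclude it suffices to establish (i) $\hat U_p = 0$ for every prime $p \notin P$, and (ii) for each $p \in P$ the group $\hat U_p$ has $p$-exponent at most $d = O_k(1)$ (resp.\ $1$ when $p > k$). Dualizing then yields the required decomposition of $U$.

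To analyze $\hat U$ I embed it into a phase polynomial cohomology group on $Z_{<k-1}(X)$. Since $X$ has order $<k$, Proposition \ref{abelext:prop} guarantees that $\sigma$ is of type $<k-1$. For each $\chi \in \hat U$ the cocycle $\chi \circ \sigma : G \times Z_{<k-1}(X) \rightarrow S^1$ is then also of type $<k-1$, and since $Z_{<k-1}(X)$ is totally disconnected (hence trivially satisfies the splitting condition), the inductive hypothesis---Theorem \ref{Main:thm} for smaller $k$---produces a phase polynomial $P_\chi$ of degree $<d$ with $d = O_k(1)$ cohomologous to $\chi \circ \sigma$. Using Lemma \ref{sel:lem} to make $\chi \mapsto P_\chi$ measurable and passing to cohomology classes yields a group homomorphism
\[
\Phi : \hat U \rightarrow \Pi_d := \bigl(P_{<d}(G, Z_{<k-1}(X), S^1) \cap Z^1\bigr) / B^1.
\]
I then verify that $\Phi$ is injective: if $\chi \circ \sigma = \Delta F$ for some $F : Z_{<k-1}(X) \rightarrow S^1$, the function $(y, u) \mapsto F(y) \overline{\chi(u)}$ on $X$ is $G$-invariant, hence constant by ergodicity, forcing $\chi = 1$.

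It then remains to show that $\Pi_d$ has bounded $p$-exponent at most $p^d$ for each $p \in P$ and trivial $p$-primary part for $p \notin P$. I proceed by induction on $d$. In the base case $d = 1$, $\Pi_1$ is a quotient of $\hat G = \prod_{p \in P} C_p$ whose torsion subgroup is $\bigoplus_{p \in P} C_p$, so the image of $\Phi$ (which is torsion since $\hat U$ is) embeds into this direct sum. For the inductive step, given a phase polynomial cocycle $P \in P_{<d} \cap Z^1$ and an element $g$ in the $\mathbb{F}_p$-summand of $G$, the identity $pg = 0$ together with the multiplicative expansion $1 = \Delta_{pg} P = \prod_{i=0}^{p-1} V_g^i \Delta_g P$ produces a constraint on $\Delta_g P \in P_{<d-1} \cap Z^1$. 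Combined with the polynomiality criterion (Lemma \ref{PPP}(iii)) and the inductive exponent bound on $\Pi_{d-1}$, this forces $[P]^{p^d} = 1$ in $\Pi_d$ for every $p \in P$, and forces $p$-primary components for $p \notin P$ to vanish. Together with the injectivity of $\Phi$, Pr\"ufer's theorem yields $\hat U \cong \bigoplus_n C_{p_n^{m_n}}$ with $p_n \in P$ and $m_n \leq d$, which by Pontryagin duality gives the claimed product decomposition of $U$.

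For the sharper bound $m_n \leq 1$ when $p_n > k$, I invoke Theorem \ref{MainH:thm} (applicable since $k-1 < \min P$) to take $d = k-1$, and note that for $p > k \geq d$ the binomial coefficients $\binom{p}{i}$ with $1 \leq i \leq p-1$ are divisible by $p$, which collapses the exponent bound in the inductive step down to $p$. The main obstacle I foresee is executing the bounded-$p$-exponent induction on $\Pi_d$ rigorously: propagating the multiplicative constraint from $\Delta_{pg} P = 1$ through the polynomiality criterion at each stage requires constructing measurable antiderivatives that preserve the phase polynomial structure, and care is needed to ensure that no uncontrolled cohomological obstructions creep in when integrating up in degree.
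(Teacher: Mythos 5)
Your skeleton is sound and overlaps substantially with the paper's argument: both work character by character on the Sylow components of $U$, both use the inductive case of Theorem \ref{Main:thm} to replace $\chi\circ\sigma$ by a phase polynomial of degree $<O_k(1)$, and your injectivity argument for $\Phi$ (the invariant function $(y,u)\mapsto F(y)\overline{\chi(u)}$) is exactly the ergodicity step the paper uses to pin down the order of the extension. However, the central quantitative step is left as a gap, and the sketch you give for it contains a false intermediate claim. You assert that the induction "forces $[P]^{p^d}=1$ in $\Pi_d$ for every $p\in P$"; this cannot hold for all $P$ and all $p$ simultaneously --- take $P(g,x)=c(g)$ a nontrivial character of $G$ supported on the $\mathbb{F}_q$-summands with $q\neq p$: its class in $\Pi_1$ has order a power of $q$ and is not annihilated by any power of $p$ unless $c$ happens to be an eigenvalue. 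What you actually need is the statement restricted to $p$-power-torsion classes: if $[P]^{p^N}=1$ for some $N$, then $[P]^{p^d}=1$. Proving that is precisely the content of the paper's Lemma \ref{valp:lem}: one writes $P^{p^N}=\Delta F$, observes via the polynomiality criterion that $F$ is a phase polynomial, and then must extract a phase polynomial $p^N$-th root of $F^{p^N}$ in order to replace $P$ by a cohomologous polynomial taking values in $C_{p^N}$, after which Propositions \ref{PPC}, \ref{TDPV:prop} and Theorem \ref{HTDPV:thm} bound the exponent. The root-extraction (Corollary \ref{roots} and the splitting of the value group into its $q$-primary coordinates, treating $q=p_j$ and $q\neq p_j$ separately) is the technical heart of the proof, and it is exactly the step you flag at the end as "the main obstacle" without resolving it. Your constraint $\prod_{i=0}^{p-1}V_g^i\Delta_g P=1$ alone only controls $P(g,\cdot)$ for $g$ in the $\mathbb{F}_p$-summand and says nothing about the other components, which is why the value-restriction lemma cannot be bypassed.

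A second, smaller error: for the "moreover" clause you invoke Theorem \ref{MainH:thm} as "applicable since $k-1<\min P$", but Theorem \ref{TST:thm} carries no hypothesis on $\min P$; the condition $p_i>k$ is a per-prime statement about a single coordinate, while Theorem \ref{MainH:thm} requires \emph{all} primes of $P$ to exceed $k$. The paper instead obtains $m_i\leq 1$ from Theorem \ref{HTDPV:thm}, which is a pointwise statement about the values of a single phase polynomial with values in $C_{p^m}$ for one large prime $p$, and applies regardless of the small primes present elsewhere in $P$. You should route the sharper bound through that result rather than through the global high-characteristic theorem.
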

Recall that if $X$ is totally disconnected, then any phase polynomial $P:X\rightarrow S^1$ takes finitely many values (see Proposition \ref{TDPV:prop}, and Theorem \ref{HTDPV:thm}). In this case we can also prove that these phase polynomials have phase polynomial roots (see Corollary \ref{roots}).  These results imply the following useful lemma.
\begin{lem} \label{valp:lem}
	Let $X$ be a totally disconnected ergodic $G$-system of order $<k$. Let $\rho:G\times X\rightarrow S^1$ be a cocycle which takes values in $C_n$ and suppose that $\rho$ is $(G,X,S^1)$-cohomologous to a phase polynomial of degree $<d$. Then $\rho$ is $(G,X,C_n)$-cohomologous to a phase polynomial of degree $<O_{d,k}(1)$.
\end{lem}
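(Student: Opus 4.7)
The plan is to exploit the fact that on a totally disconnected system every phase polynomial has finite image (Proposition \ref{TDPV:prop}) and admits phase polynomial roots (Corollary \ref{roots}), and to combine these with the polynomiality criterion of Lemma \ref{PPP}(iii). Write the hypothesized cohomology as $\rho=p\cdot\Delta F$, where $p\in P_{<d}(G,X,S^1)$ and $F\in \mathcal{M}(X,S^1)$; we want to replace $p$ by a phase polynomial $\tilde p$ taking values in $C_n$ and $F$ by a transfer function $G$ taking values in $C_n$, while keeping the degree of $\tilde p$ bounded in terms of $d$ and $k$ only.

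First I would use the hypothesis that $\rho$ takes values in $C_n$ to get structural information on $F$. Since $\rho^n=1$, the identity $\rho=p\,\Delta F$ gives
\[
\Delta_g F^n = p(g,\cdot)^{-n}\qquad \text{for every }g\in G.
\]
The right-hand side is a phase polynomial of degree $<d$ in $x$ for every fixed $g$, so the polynomiality criterion (Lemma \ref{PPP}(iii)) upgrades this to $F^n\in P_{<d+1}(X,S^1)$. Since $X$ is totally disconnected, Corollary \ref{roots} then supplies a phase polynomial $Q:X\to S^1$ of degree $<D$, where $D=O_{d,k}(1)$, satisfying $Q^n=F^n$.

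The rest is then a straightforward bookkeeping step. Set
\[
\tilde p := p\cdot \Delta Q,\qquad G := F/Q.
\]
Then $\rho = p\,\Delta F = (p\,\Delta Q)\,\Delta(F/Q) = \tilde p\,\Delta G$, and $G^n = F^n/Q^n = 1$, so $G$ takes values in $C_n$. Consequently $\Delta G$ takes values in $C_n$, and since $\rho$ does as well, so does $\tilde p = \rho/\Delta G$. Finally $\tilde p$ is a phase polynomial of degree at most $\max(d,D+1)=O_{d,k}(1)$ by Lemma \ref{PPP}(ii) (the group of phase polynomials of degree $<D+1$ is closed under multiplication and under derivatives). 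This gives the desired $(G,X,C_n)$-cohomology between $\rho$ and the phase polynomial $\tilde p$.

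The only genuine content is step two: one needs the polynomiality criterion to produce $F^n$ as a phase polynomial, and then the existence of phase polynomial $n$-th roots on totally disconnected systems (Corollary \ref{roots}) with a degree bound depending only on $d$ and $k$. Both are already available in the previous sections of the paper, so the argument is essentially an unpacking of these two facts; no further induction on $k$ or reference to Theorem \ref{Main:thm} is required.
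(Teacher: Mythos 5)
There is a genuine gap at the step where you invoke Corollary \ref{roots}. That corollary only applies to a phase polynomial taking values in a single cyclic $p$-group $C_{p^d}$, and---crucially---its degree bound is $O_{d,n,p}(1)$, i.e.\ it \emph{depends on $n$ and on the prime $p$}. You assert that it supplies a root $Q$ of $F^n$ of degree $O_{d,k}(1)$, but (i) $F^n$ a priori takes values in $S^1$, not in any $C_{p^d}$, so the corollary does not apply until you first use Theorem \ref{TDPV:thm} to see that, after a constant rotation, $F^n$ lands in a finite subgroup $H\cong\prod_i C_{p_i^{l_i}}$ with $l_i=O_k(1)$ and then work coordinate by coordinate; and (ii) even then, the bound you would get from Corollary \ref{roots} blows up with $n$ and with the primes $p_i$ dividing $n$, whereas the lemma demands a bound depending only on $d$ and $k$, uniformly in $n$. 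Since $n$ is arbitrary and $P$ is an unbounded set of primes, this is not a cosmetic issue.

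This is exactly why the paper's proof is longer than "unpack two facts." After establishing $F^n\in P_{<d+1}$ and reducing to a coordinate $P_i$ valued in $C_{p_i^{l_i}}$, it writes $n=p_i^{r_i}n'$ with $(n',p_i)=1$; the $n'$-th root is obtained for free by raising $P_i$ to the inverse of $n'$ mod $p_i^{l_i}$ (no degree loss), and the $p_i^{r_i}$-th root is handled by a case split: if $p_i$ and $r_i$ are $O_{k,d}(1)$ then Corollary \ref{roots} gives an acceptable bound, while if $p_i$ or $r_i$ is large one shows (via Proposition \ref{PPC} and Theorem \ref{HTDPV:thm}) that $P_i$ is $G$-invariant, hence constant by ergodicity, so a root exists trivially. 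Your bookkeeping at the end (setting $\tilde p=p\cdot\Delta Q$, $G=F/Q$ and checking values in $C_n$) matches the paper and is fine; it is the existence of the root $Q$ with an $n$-independent degree bound that your argument does not actually establish.
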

\begin{proof}
	Write 
	\begin{equation}  \label{valp:eq0}
	\rho=q\cdot \Delta F
	\end{equation}
	where $q\in P_{<d}(G,X,S^1)$ and $F:X\rightarrow S^1$ is a measurable map. Then, it follows that
	\begin{equation} \label{valp:eq1}
	1=\rho^n = q^n\cdot \Delta F^n.
	\end{equation}
	We conclude that $P:=F^n$ is a phase polynomial of degree $<d+1$.\\
	As $X$ is totally disconnected, Theorem \ref{TDPV:thm} implies that $P$ takes values in some closed subgroup $H$ of $S^1$ (after rotating $F$ by a constant if necessary). Therefore, we can write $H\cong C_{{p_1}^{l_1}}\times...\times C_{{p_m}^{l_m}}$ for $l_1,...,l_m=O_k(1)$ and distinct primes $p_1,..,p_m$.\\
	Let $\pi_i: H\rightarrow C_{p_i^{l_i}}$ be one of the coordinate maps and let $P_i : = \pi_i\circ P$. Clearly, $P_i$ is also a phase polynomial of degree $<d+1$ and we have that $P=\prod_{i=1}^m P_i$.
	Our goal is to find for each coordinate $1\leq i \leq m$, a phase polynomial $\psi_i$ of degree $<O_{d,k}(1)$ such that $\psi_i^n = P_i$. \\
	
	Fix $1\leq i \leq m$, then $P_i$ takes values in $C_{{p_i}^{l_i}}$. Write $n=p_i^{r_i}\cdot n'$ for some $r_i\in\mathbb{N}$ and some integer $n'\in\mathbb{N}$ that is co-prime to $p_i$. First, we find an $n'$-th root of $P_i$. To do this, let $\alpha\in\mathbb{N}$ be such that $n'\cdot \alpha = 1 \text{ mod } p_i^{l_i}$, and let $\phi_i = P_i^{\alpha}$. We conclude that $$\phi_i^{n'} = P_i^ {n'\alpha} = P_i.$$ Clearly, $\phi_i$ is also a phase polynomial of degree $<d+1$. It is therefore left to find a $p_i^{r_i}$-th root for $\phi_i$. We have two cases. If $p_i,r_i=O_{k,d}(1)$, then the claim follows from Corollary \ref{roots}. Otherwise, suppose that either $p_i$ or $r_i$ are sufficiently large. Let $G=G_{p_i}\oplus G'$ where $G_{p_i}$ is the subgroup of $G$ of elements of order $p_i$. In this case we claim that $q(g,x)^n=1$ for all $g\in G_{p_i}$. Indeed, if $r_i$ is sufficiently large, then Proposition \ref{PPC} implies $q(g,x)^n=1$ for all $g\in G_{p_i}$. Otherwise if $p_i$ is sufficiently large $(p_i>k)$, then by Theorem \ref{HTDPV:thm} we have that $q(g,x)$ takes values in $C_{p_i}$, hence $q(g,x)^n = 1$.\\
From equation (\ref{valp:eq1}) we see that in both cases $P$ is invariant under $g\in G_{p_i}$ and therefore so is $P_i$. Recall that $P_i$ takes values in $C_{p_i^{l_i}}$ and so from Proposition \ref{PPC} it is invariant under $g\in G$ whose order is coprime to $p$ as well. It follows that $P_i$ is $G$-invariant, hence by ergodicity it is a constant. Since $\phi_i$ is some power of $P_i$ it is also a constant. Hence $\phi_i$ has an $p_i^{r_i}$-th root.\\

We conclude that either way, there exists a phase polynomial $\psi_i$ of degree $<O_{d,k}(1)$ with $\psi_i^n = P_i$. Now we glue all coordinates together. Let $\psi:X\rightarrow H$ be the product of all coordinates $\psi(x)=\psi_1(x)\cdots\psi_m(x)$. As $\psi_1,...,\psi_m$ are phase polynomials of degree $<O_{k,d}(1)$, so is $\psi$. Since $\psi_i^n=P_i$, we conclude that $\psi^n=P$.\\

To finish the proof we now let $F'=F/\psi$ and $q'=q\cdot \Delta \psi$. From equation (\ref{valp:eq0}), we see that $$\rho = q' \cdot \Delta F'.$$
	As $\psi$ is an $n$-th root of $P=F^n$, we have that $F'$ takes values in $C_n$ and therefore so does $q'$. Moreover as $\psi$ is a phase polynomial of degree $<O_{k,d}(1)$ we have that so is $q'$ as required.
\end{proof} 
\begin{proof} [Proof of Theorem \ref{TST:thm}]
	Let $X=Z_{<k-1}(X)\times_\sigma U$ be as in the theorem. By Proposition \ref{Sylow}, we have that $U=\prod_p U_p$ where $U_p$ are the $p$-sylow subgroups of $U$.\\
	Fix any prime $p$ and let $\chi:U_p\rightarrow S^1$ be any continuous character of $U_p$. As $U_p$ is a $p$-group, the image of $\chi$ is a cyclic group $C_{p^n}$ for some $n\in\mathbb{N}$. Noting that $\chi\circ\sigma$ is a cocycle in $Z^1(G,Z_{<k-1}(X),S^1)$, by our induction use of Theorem \ref{Main:thm} we have that $\chi\circ\sigma$ is $(G,X,S^1)$-cohomologous to a phase polynomial of degree $<O_k(1)$. Therefore, by Lemma \ref{valp:lem} it is $(G,X,C_{p^n})$-cohomologous to a phase polynomial $q:G\times X\rightarrow C_{p^n}$ of degree $<O_k(1)$ (potentially higher bound than before). It follows by Lemma \ref{PPC} that $q$ is trivial for all $g$ of order co-prime to $p$. If $g$ is of order $p$, then by Proposition \ref{TDPV:prop} we have that $q^{p^d}(g,x)=1$ and if $p$ is sufficiently large (greater than $k$), then by Theorem \ref{HTDPV:thm} we can take $d=1$. By the cocycle identity in $g$, it follows that $q^{p^d}(g,x)=1$ for all $g\in G$. Since $\sigma$ is cohomologous to $q$, we conclude that $\sigma^{p^d}$ is a coboundary.\\
	The system $Z_{<k}(X)\times_{\sigma^{p^d}} C_{p^{n-d}}$ is a factor of $Z_{<k-1}(X)\times_\sigma U$ and therefore a factor of $X$. Since $X$ is ergodic so is every factor, which means that $n=d$. Otherwise, if $n> d$ then write $\sigma^{p^d}=\Delta F$ for some measurable map $F:Z_{<k}(X)\rightarrow S^1$ and observe that $(s,u)\mapsto \overline{F}(s)u$ is a non-constant invariant function on $Z_{<k}(X)\times_{\sigma^{p^d}}C_{p^{n-d}}$.\\
	We conclude that $U_p$ is a $p^d$-torsion subgroup for some $d=O_k(1)$. Theorem \ref{torsion} implies that $U_p$ is a direct product of copies of $C_{p^r}$ for $r\leq d$. As required.
	\end{proof}
We can finally prove that all totally disconnected systems of order $<k$ are Weyl, assuming that Theorem \ref{Main:thm} holds for all smaller values of $k$.
\begin{thm} [Totally disconnected systems are Weyl] \label{TDisweyl} Let $k\geq 1$ be an integer such that Theorem \ref{Main:thm} has already been proven for all smaller values of $k$. Let $X$ be a totally disconnected system of order $<k$. Then $X$ is isomorphic to a Weyl system.
\end{thm}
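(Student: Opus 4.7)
The plan is to argue by induction on $k$. The base case $k=1$ is vacuous, since an ergodic $G$-system of order $<1$ is a point and so is trivially a Weyl system. Assume the theorem for all smaller values of $k$, and that Theorem \ref{Main:thm} has already been established for smaller values as well.

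By Proposition \ref{abelext:prop} we may write $X = Z_{<k-1}(X)\times_\sigma U$ for some compact abelian group $U$ and a cocycle $\sigma:G\times Z_{<k-1}(X)\to U$ of type $<k-1$. Since $X$ is totally disconnected, its structure groups $U_0,\dots,U_{k-1}$ are all totally disconnected, so $U=U_{k-1}$ is totally disconnected and $Z_{<k-1}(X)$ (whose structure groups are $U_0,\dots,U_{k-2}$) is a totally disconnected system of order $<k-1$. By the inductive hypothesis, $Z_{<k-1}(X)$ is already isomorphic to a Weyl system; thus it suffices to show that $\sigma$ is $(G,Z_{<k-1}(X),U)$-cohomologous to a phase polynomial cocycle of bounded degree, for then Remark \ref{coh:rem} identifies $X$ with a Weyl extension of a Weyl system.

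Apply Theorem \ref{TST:thm} to obtain an isomorphism $U\cong \prod_{n=1}^\infty C_{p_n^{m_n}}$ with $p_n\in P$ and $m_n\leq d=O_k(1)$, and decompose $\sigma=(\sigma_n)_n$ into coordinates, so each $\sigma_n:G\times Z_{<k-1}(X)\to C_{p_n^{m_n}}\hookrightarrow S^1$ is a cocycle of type $<k-1$ (projections of type-$<k-1$ cocycles remain of type $<k-1$ by Lemma \ref{PP}(ii)). The inductive instance of Theorem \ref{Main:thm} applied to $Z_{<k-1}(X)$ gives that each $\sigma_n$ is $(G,Z_{<k-1}(X),S^1)$-cohomologous to a phase polynomial of degree $<O_k(1)$, and then Lemma \ref{valp:lem} upgrades this to a $(G,Z_{<k-1}(X),C_{p_n^{m_n}})$-cohomology, producing phase polynomials $q_n:G\times Z_{<k-1}(X)\to C_{p_n^{m_n}}$ of degree $<O_k(1)$ and measurable $F_n:Z_{<k-1}(X)\to C_{p_n^{m_n}}$ with $\sigma_n=q_n\cdot\Delta F_n$. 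Crucially, the degree bound on $q_n$ is uniform in $n$.

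Set $q=(q_n)_n$ and $F=(F_n)_n$, both regarded as functions into $U=\prod_n C_{p_n^{m_n}}$; coordinate-wise measurability makes these measurable, the derivative operator $\Delta_h$ acts coordinate-wise, and the uniform bound on $\deg q_n$ makes $q$ a phase polynomial of degree $<O_k(1)$ into $U$. Evidently $\sigma=q\cdot\Delta F$, so by Remark \ref{coh:rem} we get $X\cong Z_{<k-1}(X)\times_q U$, exhibiting $X$ as a Weyl system. The only real obstacle is making the coordinate-wise construction produce a genuine phase polynomial cocycle into the infinite product $U$, and this is exactly what the uniform-in-$n$ degree bound from Theorem \ref{TST:thm} and Lemma \ref{valp:lem} secures; the rest is a straightforward synthesis of the previously established reductions.
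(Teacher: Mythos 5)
Your proposal is correct and follows essentially the same route as the paper's own proof: induction on $k$, the decomposition $U\cong\prod_n C_{p_n^{m_n}}$ from Theorem \ref{TST:thm}, coordinate-wise application of Theorem \ref{Main:thm} followed by Lemma \ref{valp:lem}, and gluing via the uniform-in-$n$ degree bound. No substantive differences.
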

\begin{proof} We prove the claim by induction on $k$. If $k=1$ then $X$ is a point and the claim is trivial. Let $k\geq 2$ and assume inductively that the theorem holds for systems of order $<k-1$. Write $X=Z_{<k-1}(X)\times_\sigma U$. By the induction hypothesis we can assume that $Z_{<k-1}(X)$ is a Weyl system. Moreover, from Theorem \ref{TST:thm} we have that $U\cong \prod_{i=1}^\infty C_{p_i^{m_i}}$ where $m_i=O_k(1)$ and $m_i=1$ if $p_i$ is sufficiently large. Let $\tau_i:U\rightarrow C_{p_i^{m_i}}$ be one of the coordinate maps. We think of $C_{p_i^{m_i}}$ as a subgroup of $S^1$. By Theorem \ref{Main:thm} applied for the factor $Z_{<k-1}(X)$, we have that $\tau_i\circ\sigma$ is $(G,Z_{<k-1}(X),S^1)$-cohomologous to a phase polynomial of degree $<O_k(1)$ (into $S^1$). Finally, from Lemma \ref{valp:lem} we have that $\tau_i\circ\sigma$ is $(G,Z_{<k-1}(X),C_{p_i^{m_i}})$-cohomologous to a phase polynomial $q_i:G\times X\rightarrow C_{p_i^{m_i}}$ of possibly higher but bounded ($<O_k(1)$) degree. Since this is true for every coordinate map, we conclude that $\sigma$ is $(G,Z_{<k-1}(X),U)$-cohomologous to $q:G\times Z_{<k-1}(X)\rightarrow U$ where $q(x):=(q_1(x),q_2(x),...)$. Since all of the phase polynomials $q_i$ are of bounded degree which only depends on $k$ (and is independent of $i$), it is easy to see that $q$ is of bounded degree. Since cohomologous cocycles defines isomorphic systems (see Remark \ref{coh:rem}), it follows that $X=Z_{<k-1}(X)\times_\sigma U$ is isomorphic to $Z_{<k-1}(X)\times_q U$. Since $q$ is a phase polynomial and $Z_{k-1}(U)$ is a Weyl system this completes the proof. 	
\end{proof}

\section{Proof of Theorem \ref{MainT:thm}} \label{low:sec}
The proof of Theorem \ref{MainT:thm} follows by similar methods as in \cite{Berg& tao & ziegler}. However, the multiplicity of generators of different prime orders in $G$ leads to some new difficulties in the "finite group case". These can be solved by working out each prime separately.
\subsection{Reduction of Theorem \ref{MainT:thm} to solving a Conze-Lesigne type equation on a totally disconnected system}

Arguing as in the proof of Theorem \ref{Main:thm} in section \ref{proof1} (see in particular equation (\ref{multiCL})), we can reduce matters into solving a Conze-Lesigne type equation. It is thus left to show the following result.
\begin{thm}  [Conze-Lesigne type equation for functions on totally disconnected system]\label{TDCL:thm}
	Let $k\geq 1$ be such that Theorem \ref{Main:thm} has already been proven for smaller values of $k$ and let $X=Z_{<k-1}(X)\times _{\rho} U$ be a totally disconnected ergodic $G$-system of order $<k$. Let $f:G\times X\rightarrow S^1$ be a C.L.\ function of type $<m$ for some $m\in\mathbb{N}$ as in Definition \ref{Conze-Lesigne}. Then $f$ is $(G,X,S^1)$-cohomologous to $P\cdot \pi^\star \tilde{f}$ for some $P\in P_{<O_{k,m}(1)}(G,X,S^1)$ and a measurable $\tilde{f}:Z_{<k-1}(X)\rightarrow S^1$ where $\pi:X\rightarrow Z_{<k-1}(X)$ is the factor map.
\end{thm}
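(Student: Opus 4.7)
The approach is to exploit the Conze--Lesigne hypothesis along the free $U$-action by vertical translations, and to use the totally disconnected structure of both $U$ (via Theorem \ref{TST:thm}) and $X$ (Theorem \ref{TDisweyl}) to absorb the phase polynomial error $p_u$ into a phase polynomial correction of $f$ itself, after which $f$ becomes invariant under a large enough subgroup to descend to $Z_{<k-1}(X)$.

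First I would apply the C.L.\ equation to the automorphisms $V_u$ for $u \in U$, and use the measure selection lemma (Lemma \ref{sel:lem}) together with the linearization lemma (Lemma \ref{lin:lem}) to produce measurable assignments $u \mapsto p_u \in P_{<m}(G,X,S^1)$ and $u \mapsto F_u \in \mathcal{M}(X,S^1)$ satisfying $\Delta_u f(g,x) = p_u(g,x)\cdot \Delta_g F_u(x)$ together with the linearized cocycle identity $p_{uv} = p_u \cdot V_u p_v$ on some open neighborhood of the identity, which I shrink to an open subgroup $V \leq U$ using total disconnectedness. By the separation lemma (Lemma \ref{sep:lem}) there are only countably many phase polynomials of degree $<m$ modulo constants, and by Lemma \ref{AC:lem} any measurable homomorphism into this countable quotient is continuous; shrinking $V$ further I may therefore assume $p_u$ is constant in $x$ for $u \in V$, in which case the cocycle identity in $g$ forces $p_u(g,x) = c_u(g)$ for some continuous homomorphism $c:V \to \hat G$.

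Next I would eliminate the character term. Using Theorem \ref{TST:thm} to decompose $U$ as $\prod_i C_{p_i^{m_i}}$ and handling each $p$-primary piece in turn, I would construct a phase polynomial $\phi \in P_{<O_{k,m}(1)}(G,X,S^1)$ with $\Delta_u \phi = c_u$ on a (possibly smaller) open subgroup $V' \leq V$. The essential ingredients for this construction are the inductive use of Theorem \ref{Main:thm} applied on the factor $Z_{<k-1}(X)$ (used to realize each character $c_u$ via a phase polynomial pulled back from the lower factor and the cocycle $\rho$), Lemma \ref{valp:lem} to extract phase polynomial roots taking values in the correct torsion group without losing too much degree, and the Weyl structure of $X$ (Theorem \ref{TDisweyl}) which guarantees that the $p$-th roots we need actually exist as phase polynomials of bounded degree. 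Replacing $f$ by $f/\phi$, I may then assume $p_u = 1$ for all $u \in V'$, so that $\Delta_u f$ is a $(G,X,S^1)$-coboundary for every $u \in V'$. The straightening lemma (Lemma \ref{cob:lem}) now produces an open subgroup $V'' \leq V'$ under whose action the modified $f$ is invariant up to a further coboundary.

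Since $V''$ is open in the compact group $U$, the quotient $U/V''$ is finite, and the modified $f$ descends to a finite extension $Z_{<k-1}(X) \times_{\rho''} U/V''$ of $Z_{<k-1}(X)$. I would then handle this finite extension by iterating the same argument on each coset (or by a direct averaging over the finite group $U/V''$), producing a final phase polynomial correction of bounded degree after which $f$ is invariant under the full $U$-action and is thus of the form $\pi^\star \tilde f$ for some measurable $\tilde f:Z_{<k-1}(X) \to S^1$. The main obstacle is the construction of $\phi$ in the third paragraph: because $U$ may have unbounded exponent across the primes in $P$, no single phase polynomial degree works uniformly for all characters $c_u$ at once; one must proceed one prime at a time, invoking the inductive hypothesis on Theorem \ref{Main:thm} and Lemma \ref{valp:lem} to control the degree uniformly while staying inside the appropriate $p$-torsion subgroup of $S^1$, and then reassembling the pieces into a single polynomial of degree $<O_{k,m}(1)$.
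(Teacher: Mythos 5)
Your first two paragraphs reproduce, in outline, the paper's reduction to a finite quotient (Theorem \ref{MainF:thm}): linearize $u\mapsto p_u$ on an open neighborhood, shrink to a subgroup using total disconnectedness, kill the polynomial term by a phase polynomial correction $\phi$ with $\Delta_u\phi=p_u$, straighten via Lemma \ref{cob:lem}, and descend to $Z_{<k-1}(X)\times U/U''$ with $U/U''$ finite. Two remarks on that part. First, the tool that actually produces $\phi$ is the polynomial integration lemma (Lemma \ref{PIL:lem}), applied once for each $g\in G$; your detour through making $p_u$ constant in $x$ and realizing it as a character of $\hat G$ is unnecessary (and slightly off, since $f$ need not be a cocycle in $g$, so $p_u(g,\cdot)$ being constant does not make $g\mapsto p_u(g)$ a character). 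Second, Lemma \ref{PIL:lem} requires the cocycle identity $q_{uv}=q_uV_uq_v$ for \emph{all} $u,v$ in the acting group, whereas linearization only gives it on the open subgroup $U'$; the paper handles this by using the cylinder structure from Theorem \ref{TST:thm} to write $U=U'\times W$ and re-express $X$ as an extension of $Z_{<k-1}(X)\times_{\sigma''}W$ by $U'$ before integrating. You gloss over this, but it is repairable.

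The genuine gap is your last paragraph. The finite group case is not a formality to be dispatched by ``iterating the same argument on each coset'' or ``direct averaging over $U/V''$'': it is the heart of the proof (Theorem \ref{FC:thm}) and neither of your suggestions works. Multiplicative averaging of $S^1$-valued data over a finite group does not produce the required antiderivative, and an induction over the generators of $U/V''\cong\prod_{i=1}^N C_{p_i^{n_i}}$ fails to give a degree bound of the form $O_{k,m}(1)$ because $N$ is unbounded (it depends on the system, not on $k$ and $m$), so a degree loss at each generator is fatal. What the paper actually does is construct a single function $F(y,[t_1,\dots,t_N])$ explicitly as an iterated product of the $F_i$'s along the ``digits'' of $U$, and then verify that this definition is consistent with the two kinds of relations in $U$: the torsion relations $e_j^{p_j^{n_j}}=1$, which require producing $p_j^{n_j}$-th phase polynomial roots of $\prod_t V_{e_j}^tF_j$ (via Corollary \ref{roots} for small primes and a separate argument using Proposition \ref{PPC} and Theorem \ref{HTDPV:thm} for large ones), and the commutator relations $[e_i,e_j]=1$, which produce the error terms $\omega_{i,j}=\Delta_{e_i}\tilde F_j/\Delta_{e_j}\tilde F_i$; showing that the resulting product $\xi_j$ over all $i$ is a phase polynomial of degree $O_{k,m}(1)$, despite involving unboundedly many factors, requires the Taylor-expansion rewriting $\prod_{0\le t_i'<t_i}\omega_{i,j}=\prod_{0\le l\le O_{k,m}(1)}\bigl[\Delta_{e_i}^l\omega_{i,j}\bigr]^{\binom{t_i}{l+1}}$ together with Lemma \ref{B.5}. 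None of this is suggested by your sketch, and without it the theorem is not proved.
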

\begin{rem}
	Note that we do not require that the function $f:G\times X\rightarrow S^1$ is a cocycle. This theorem is a counterpart of \cite[Theorem 4.5]{Berg& tao & ziegler}.
\end{rem}
\begin{proof}[Proof of Theorem \ref{Main:thm} assuming Theorem \ref{TDCL:thm}]
We prove the Theorem by induction on $k$. If $k=1$ then the system $X$ is a point and the claim follows. Let $k\geq 1$ and suppose that Theorem \ref{Main:thm} has already been proven for all smaller values of $k$ and let $\rho:G\times X\rightarrow S^1$ be a cocycle of type $<m$. Let $t_1,...,t_m$ be any automorphisms of $X$. We claim by downward induction on $0\leq j \leq m$ that equation (\ref{multiCL}) holds. The case $j=m$ follows by iterating Lemma \ref{dif:lem}. Fix $j<m$ and assume inductively that the equation holds for $j+1$. Let $\rho_{t_1,...,t_j}  = \Delta_{t_1}...\Delta_{t_j}\rho$. By the induction hypothesis on $j$, $\rho_{t_1,...,t_j}$ is a C.L.\ cocycle. Therefore, by Theorem \ref{TDred:thm}, we can reduce matters to the case where $X$ is totally disconnected. Now by Theorem \ref{TDCL:thm}, we see that there exists a phase polynomial $P$ of degree $<O_{k,m,j}(1)$ such that $P\cdot \rho_{t_1,...,t_j}$ is measurable with respect to $Z_{<k-1}(X)$. Since $X$ is totally disconnected, we can apply the induction hypothesis on $k$. We conclude that $P\cdot \rho_{t_1,...,t_j}$ (and $\rho_{t_1,...,t_j}$) are cohomologous to a phase polynomial of degree $<O_{k,m,j}(1)$, as required. The case $j=0$ in equation (\ref{multiCL}) implies Theorem \ref{Main:thm}.
\end{proof}
\subsection{Reduction to a finite $U$}
Now we turn to the proof of Theorem \ref{TDCL:thm} assuming the induction hypothesis of Theorem \ref{Main:thm}.\\
Just like in \cite[Proposition 6.1]{Berg& tao & ziegler} we first show that it suffices to prove the theorem in the case where the group $U$ is a finite group.\\

We recall the following results from \cite[Lemma 5.1 and Lemma B.6]{Berg& tao & ziegler}.
\begin{lem}  [Descent of type]\label{dec:lem} Let $Y$ be a $G$-system, let $k,m\geq 1$, and let $X=Y\times_{\rho} U$ be an ergodic extension of $Y$ by a phase polynomial cocycle $\rho:G\times Y\rightarrow U$ of degree $<m$. Let $\pi:X\rightarrow Y$ be a factor map and let $f:Y\rightarrow S^1$ be a function such that $\pi^\star f$ is of type $<k$. Then $f$ is of type $<k+m+1$.
\end{lem}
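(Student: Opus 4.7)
The plan is to transfer the coboundary equation for $\pi^\star f$ down from the $(k+m+1)$-cubic structure on $X$ to that on $Y$, using the polynomial triviality of $\rho$ to eliminate the fiber obstruction.

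First, by monotonicity (Lemma~\ref{PP}(i)), the hypothesis that $\pi^\star f$ is of type $<k$ implies it is of type $<k+m+1$. Hence there exists some $G\in\mathcal{M}(X^{[k+m+1]},S^1)$ with
\[
\Delta_g G \;=\; d^{[k+m+1]}(\pi^\star f) \;=\; (\pi^{[k+m+1]})^\star\bigl(d^{[k+m+1]} f\bigr).
\]
The right-hand side depends only on the $Y^{[k+m+1]}$-coordinates, so it is invariant under the vertical translations $V_{\vec u}$ by $\vec u\in U^{2^{k+m+1}}$ on $X^{[k+m+1]}$. Since $V_{\vec u}$ commutes with the diagonal $G$-action, this forces $\Delta_{V_{\vec u}}G := V_{\vec u}G/G$ to be $G$-invariant for every $\vec u$.

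The heart of the proof is to show that the cocycle $\vec u \mapsto \Delta_{V_{\vec u}}G$ from $U^{2^{k+m+1}}$ into the $G$-invariant maps $X^{[k+m+1]}\to S^1$ is itself a coboundary, i.e., equals $V_{\vec u}H/H$ for some measurable $H\colon X^{[k+m+1]}\to S^1$. Granted this, $G/H$ becomes $U^{2^{k+m+1}}$-invariant and therefore descends to some $F\in\mathcal{M}(Y^{[k+m+1]},S^1)$; since $H$ has $G$-invariant multiplicative derivatives in the $\vec u$ variable, the equality $\Delta_g F = \Delta_g G$ survives the descent, giving $\Delta_g F = d^{[k+m+1]} f$ on $Y^{[k+m+1]}$ and proving $f$ is of type $<k+m+1$.

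The main obstacle is establishing the coboundary condition in the previous paragraph, and this is precisely where the phase polynomial structure of $\rho$ enters. By Lemma~\ref{PP}(iii), $d^{[m]}\rho = 1$ on $Y^{[m]}$. Viewing $Y^{[k+m+1]}$ as an iterated product built out of copies of $Y^{[m]}$, this trivializes the cocycle $\rho^{[k+m+1]}$ along every $m$-dimensional sub-face of the $(k+m+1)$-cube. Fourier-expanding $G$ along the $U^{2^{k+m+1}}$-fibers, the $G$-action twists each character $\chi$ by the polynomial $\chi\circ\rho^{[k+m+1]}$, and the triviality above forces all non-trivial Fourier coefficients of $G$ to come from characters $\chi$ that respect these $m$-face constraints; the extra $m+1$ cube dimensions beyond $k$ are exactly what is needed to realize these constrained characters as $U$-derivatives of a single function $H$, trivializing the cocycle. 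Carrying out this Fourier bookkeeping carefully, and managing the interplay between the twisted $G$-dynamics and the $U^{2^{k+m+1}}$-fibers, is the main technical step, and mirrors the corresponding argument in \cite{Berg& tao & ziegler}.
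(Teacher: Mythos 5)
The paper does not actually prove this lemma --- it is quoted directly from Bergelson--Tao--Ziegler \cite[Lemma B.6]{Berg& tao & ziegler} --- so your argument has to stand on its own, and as written it has a genuine gap at its first substantive step. You treat every vertical translation $V_{\vec u}$, $\vec u\in U^{2^{k+m+1}}$, as a measure-preserving transformation of $(X^{[k+m+1]},\mu^{[k+m+1]})$ commuting with the diagonal $G$-action, and deduce that $\Delta_{V_{\vec u}}G$ is $G$-invariant. But the cubic measure $\mu^{[j]}$ is \emph{not} $\mu_Y^{[j]}\times m_{U^{2^j}}$: it is built from relatively independent self-joinings over the invariant $\sigma$-algebras $\mathcal{I}_i$, and its fibers over $Y^{[j]}$ are cosets of a closed subgroup of $U^{2^j}$ which is proper as soon as $j\geq 2$ in essentially every nontrivial example (for a Kronecker extension the fiber over $Y^{[2]}$ is already cut out by the parallelogram constraint $u_{00}u_{11}=u_{01}u_{10}$, and this constraint persists on every $2$-face of higher cubes). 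For $\vec u$ outside this subgroup, $V_{\vec u}\mu^{[j]}$ is singular with respect to $\mu^{[j]}$, so $V_{\vec u}G$ is not even well defined modulo null sets and the identity $\Delta_g\bigl(V_{\vec u}G/G\bigr)=1$ cannot be asserted. The argument must be restricted to the group of vertical translations that do preserve $\mu^{[j]}$, and one must then show this group is large enough to force descent to $Y^{[j]}$; this is exactly where the hypothesis that $\rho$ is a phase polynomial of degree $<m$ (so $d^{[m]}\rho=1$) and the extra $m+1$ cube dimensions have to be used, and your proposal never engages with this beyond the assertion that they are ``exactly what is needed.''

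Even granting the unproved claim that $\vec u\mapsto V_{\vec u}G/G$ is a coboundary $V_{\vec u}H/H$, the concluding step does not close. From $\Delta_g\bigl(V_{\vec u}H/H\bigr)=1$ you only learn that $\Delta_g H$ is invariant under the vertical translations, i.e.\ lifted from $Y^{[j]}$ --- not that it is trivial. Hence $\Delta_g(G/H)=(\pi^{[j]})^\star\bigl(d^{[j]}f\bigr)/\Delta_g H$, and pushing down gives $d^{[j]}f=\Delta_g F\cdot\eta(g,\cdot)$ for some cocycle $\eta$ on $Y^{[j]}$ whose lift to $X^{[j]}$ is a coboundary; showing that such an $\eta$ must itself be a coboundary on $Y^{[j]}$ is a descent problem of the same nature as the one you set out to solve, and again requires the polynomial structure of $\rho$. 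In short, the skeleton (descend the coboundary equation through the fibers of $X^{[j]}\to Y^{[j]}$) is reasonable, but both the measure-theoretic foundation and the two places where the degree-$<m$ hypothesis must do actual work are missing.
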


\begin{lem} [Polynomial integration lemma] \label{PIL:lem}
	Let $m,k\geq 1$, let $X=Y\times_{\rho} U$ be an ergodic abelian extension of a $G$-system $Y$ by a cocycle $\rho:G\times Y\rightarrow U$ that is also a phase polynomial of degree $<k$. For all $u\in U$ let $q_u: X\rightarrow S^1$ be a phase polynomial of degree $<m$ which obeys the cocycle identity $q_{uv}=q_u V_u q_v$ for all $u,v\in U$. Then there exists a phase polynomial $Q:X\rightarrow S^1$ of degree $<O_{k,m}(1)$ such that $\Delta_u Q = q_u$ for all $u\in U$.
\end{lem}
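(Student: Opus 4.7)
My plan is to construct $Q$ explicitly using the product structure $X\cong Y\times U$, and then verify the degree bound by induction on $m$. With $T_g(y,u_0)=(S_g y,\rho(g,y)u_0)$ and $V_v(y,u_0)=(y,v u_0)$, I would set
\[ Q(y,u_0) := q_{u_0}(y,e_U), \]
where $e_U$ is the identity of $U$. Evaluating the cocycle identity $q_{u_0 v}(x)=q_{u_0}(x)\cdot q_v(V_{u_0}x)$ at $x=(y,e_U)$ gives $q_{u_0 v}(y,e_U)=q_{u_0}(y,e_U)\cdot q_v(y,u_0)$, and combining this with the abelianness of $U$ (which makes $u_0 v=v u_0$) yields $\Delta_v Q(y,u_0)=q_{v u_0}(y,e_U)/q_{u_0}(y,e_U)=q_v(y,u_0)$, so $\Delta_v Q=q_v$ on $X$, as required.

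For polynomiality of $Q$ I would induct on $m$. In the base case $m=1$, each $q_u$ is a constant $c_u\in S^1$, and the cocycle identity forces $u\mapsto c_u$ to be a character $c:U\to S^1$; since this map is measurable on a compact abelian group, it is automatically continuous. Then $Q(y,u_0)=c(u_0)$, so $\Delta_g Q(y,u_0)=c(\rho(g,y))$, which is the composition of the $U$-valued phase polynomial $\rho(g,\cdot)$ of degree $<k$ with a continuous homomorphism, and is therefore a phase polynomial of degree $<k$ on $Y$. The polynomiality criterion (Lemma \ref{PPP}(iii)) then gives $Q\in P_{<k+1}(X,S^1)$.

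For the inductive step from $m$ to $m+1$, the key identity is
\[ \Delta_g Q(y,u_0) \;=\; q_{\rho(g,y)}(S_g y,e_U)\cdot \Delta_g q_{u_0}(y,e_U), \]
obtained by applying the cocycle identity in $U$ to split $q_{\rho(g,y) u_0}(S_g y,e_U)$ and using $T_g(y,e_U)=(S_g y,\rho(g,y))$. The second factor $(y,u_0)\mapsto \Delta_g q_{u_0}(y,e_U)$ is exactly the function our construction produces when applied to the derived cocycle $u\mapsto \Delta_g q_u$, which is a phase polynomial cocycle of degree $<m$; by the inductive hypothesis it is a phase polynomial of degree $<O_{k,m}(1)$ on $X$. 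The first factor depends only on $y$ and equals $Q(T_g(y,e_U))$; I would bound its degree on $Y$ by computing $Y$-derivatives, which via the cocycle identity for $\rho$ (degree $<k$) reduce to lower-degree expressions and trivialize after $O_k(1)$ many derivatives. Combining the two factors using Lemma \ref{PPP}(ii) gives $\Delta_g Q\in P_{<O_{k,m}(1)}(X,S^1)$ for every $g\in G$, and a final application of Lemma \ref{PPP}(iii) completes the induction.

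The main obstacle will be precisely controlling the degree bound in the inductive step. The displayed identity entangles the degree $<k$ of $\rho$ with the degree $<m$ of $q_u$, and one must verify that each inductive layer contributes only an additive $O_k(1)$ rather than a multiplicative factor, so that the final bound remains $O_{k,m}(1)$ instead of blowing up into a tower. This will likely require tracking an explicit degree function through the induction and, where needed, invoking Lemma \ref{dec:lem} (descent of type) to transfer type/degree information for the $y$-only factor between $X$ and $Y$.
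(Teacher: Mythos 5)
The paper does not prove this lemma; it is quoted verbatim from Bergelson--Tao--Ziegler (their Lemma B.6), so your attempt can only be measured against that source. Your construction $Q(y,u_0):=q_{u_0}(y,e_U)$ is the right one (it is exactly the formula recorded in the ``furthermore'' clause of Lemma \ref{Eint:lem}), the verification that $\Delta_vQ=q_v$ is correct, and the key identity
$\Delta_gQ(y,u_0)=q_{\rho(g,y)}(S_gy,e_U)\cdot\Delta_gq_{u_0}(y,e_U)$
is correct; the second factor is indeed the same construction applied to the derived family $u\mapsto\Delta_gq_u$, which has degree one lower, so handling it by induction on $m$ is sound.

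The genuine gap is the first factor $A_g(y):=q_{\rho(g,y)}(S_gy,e_U)$. Your plan --- compute $Y$-derivatives and claim they ``trivialize after $O_k(1)$ many derivatives'' via the cocycle identity for $\rho$ --- does not work as stated. Differentiating by $h$ and writing $\rho(g,S_hy)=\rho(g,y)\cdot\tau(y)$ with $\tau=\Delta_h\rho(g,\cdot)\in P_{<k-1}(Y,U)$, the cocycle identity in $u$ gives
\[
\Delta_hA_g(y)=\frac{q_{\rho(g,y)}(S_{g+h}y,e_U)}{q_{\rho(g,y)}(S_gy,e_U)}\cdot q_{\tau(y)}\bigl(S_{g+h}y,\rho(g,y)\bigr).
\]
Only in the second term has the $U$-valued index dropped in degree; the first term still carries the full-degree index $\rho(g,y)$, and moreover neither term is a $G$-derivative of $q_v$ on $X$ (the points $(S_{g+h}y,e_U)$ and $T_h(S_gy,e_U)=(S_{g+h}y,\rho(h,S_gy))$ differ), so restriction to the section $u=e_U$ is itself not degree-preserving. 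Consequently the derivatives of $A_g$ do not die after $O_k(1)$ steps by this mechanism alone; note also that $A_g(y)=Q(T_g(y,e_U))$, so arguing from the polynomiality of $Q$ would be circular. Closing this requires a genuine double induction on the degree $m$ of the family and the degree of the $U$-valued polynomial indexing it --- this is precisely the content of the composition lemma (the paper's Lemma \ref{B.5}, BTZ's Lemma B.5), which is the tool BTZ use at this point and which your sketch never invokes. The appeal to Lemma \ref{dec:lem} cannot substitute for it, since that lemma transfers cocycle \emph{type}, not phase-polynomial degree.
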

The following Proposition is a corollary of Lemma \ref{PIL:lem}.
\begin{prop} [Descent of the C.L.\ equation] \label{inv:prop} 
	Let $X,U,\rho$ be as in Lemma \ref{PIL:lem}. Let $m\geq 0$ and $f:G\times X\rightarrow S^1$ be a $U$-invariant function. Suppose that $f=p\cdot \Delta F$ for some phase polynomial $p\in P_{<m}(G,X,S^1)$ and a measurable map $F:X\rightarrow S^1$. Then there exists a phase polynomial $p':G\times X\rightarrow S^1$ of degree $<O_{k,m}(1)$ and a measurable map $F':X\rightarrow S^1$ such that $p'$ and $F'$ are invariant under the action of $U$ and $f=p'\cdot \Delta F'$. 
\end{prop}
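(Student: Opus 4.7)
The plan is to build a single phase polynomial $Q:X\rightarrow S^1$ that twists both $F$ and $p$ so as to enforce $U$-invariance on both sides, while preserving the identity $f = p' \cdot \Delta F'$. The key observation is that the obstruction to $U$-invariance of $F$ is automatically a phase polynomial cocycle in $u$, so it can be integrated using the Polynomial Integration Lemma.

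First I would differentiate the relation $f = p \cdot \Delta F$ in the $U$-direction. Since $V_u$ commutes with the $G$-action, $\Delta_u$ commutes with $\Delta_g$, and the $U$-invariance of $f$ gives $\Delta_u p \cdot \Delta(\Delta_u F) = 1$ for every $u \in U$. Rearranging, $\Delta(\Delta_u F) = \overline{\Delta_u p}$ is an element of $P_{<m}(G,X,S^1)$, and the polynomiality criterion in Lemma \ref{PPP}(iii) then forces $\Delta_u F$ itself to be a phase polynomial of degree $<m+1$ on $X$ for every $u \in U$. The cocycle identity $\Delta_{uv}F = \Delta_u F \cdot V_u \Delta_v F$ in the $u$-variable is immediate from the definition.

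At this point I would invoke Lemma \ref{PIL:lem} for the phase-polynomial cocycle $u \mapsto \Delta_u F$ of degree $<m+1$ (using the hypothesis that $\rho$ is a phase polynomial cocycle of degree $<k$), producing a single phase polynomial $Q : X \rightarrow S^1$ of degree $<O_{k,m}(1)$ satisfying $\Delta_u Q = \Delta_u F$ for every $u \in U$.

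Finally I would set $F' := F/Q$ and $p' := p \cdot \Delta Q$ and verify the requirements. The $U$-invariance of $F'$ is immediate from $\Delta_u F' = \Delta_u F / \Delta_u Q = 1$; the $U$-invariance of $p'$ follows from
\[
\Delta_u p' \;=\; \Delta_u p \cdot \Delta(\Delta_u Q) \;=\; \Delta_u p \cdot \Delta(\Delta_u F) \;=\; \Delta_u p \cdot \overline{\Delta_u p} \;=\; 1.
\]
The decomposition $p' \cdot \Delta F' = p \cdot \Delta Q \cdot \Delta F / \Delta Q = p \cdot \Delta F = f$ is automatic; and since the $G$-action commutes with multiplicative derivatives in the $X$-variable, each $\Delta_g Q$ is a phase polynomial of the same degree as $Q$, so $p' \in P_{<O_{k,m}(1)}(G,X,S^1)$. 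I do not expect any genuine obstacle here: the argument is a clean corollary of Lemma \ref{PIL:lem}, the only mild subtlety being to realize that one integrates $\Delta_u F$ rather than the more immediately visible $\Delta_u p$.
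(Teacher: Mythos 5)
Your argument is correct and is essentially identical to the paper's own proof: both differentiate the relation $f=p\cdot\Delta F$ by $u\in U$, deduce that $u\mapsto\Delta_u F$ is a phase polynomial cocycle of degree $<m+1$, integrate it via Lemma \ref{PIL:lem} to get $Q$, and set $F'=F/Q$, $p'=p\cdot\Delta Q$. The only cosmetic difference is that you verify the $U$-invariance of $p'$ by direct computation, whereas the paper deduces it from the invariance of $f$ and $F'$.
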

\begin{proof}
We take the derivative of both sides of the equation $f=p\cdot \Delta F$  by some $u\in U$. Since the action of $U$ commutes with the action of $G$ we have, $$\Delta_u p \cdot \Delta \Delta_u F=1.$$
	It follows that $\Delta_u F$ is a phase polynomial of degree $<m+1$. Moreover, it satisfies the cocycle identity $\Delta_{uv} F = \Delta_u F V_u \Delta_v F$. Therefore, applying Lemma \ref{PIL:lem}, we conclude that there exists a phase polynomial $Q:X\rightarrow S^1$ of degree $<O_{k,m}(1)$ with the property that $\Delta_u F = \Delta_u Q$. Let $p'=p\cdot \Delta Q$ and $F'=F/Q$. We have,
	$$f= p'\cdot \Delta F'.$$
	As $F'$ is invariant under $U$ and $f$ is invariant under $U$, we conclude that $p'$ is invariant under $U$ and this completes the proof.
\end{proof}
We need the following basic fact about the product topology.\\
\textbf{Fact:} Let $U\cong \prod_{n=1}^{\infty} C_{p_n^{m_n}}$ be as in Theorem \ref{TST:thm}. The cylinder neighborhoods of the identity (i.e. the co-finite sub-products of these cyclic groups) form a basis for the topology of $U$ around the identity. That is, any open neighborhood of the identity in $U$ contains a cylinder set.\\

Below we prove the following counterpart of Proposition 6.1 from \cite{Berg& tao & ziegler}. We fix an integer $k$ and assume (inductively) that Theorem \ref{Main:thm} holds for smaller values of $k$.
\begin{thm} \label{MainF:thm}
	In order to prove Theorem \ref{TDCL:thm} it suffices to do so in the case where $U$ is finite.
\end{thm}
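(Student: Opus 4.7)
The plan is to reduce to the finite case by showing that any C.L.\ function $f$ on $X=Z_{<k-1}(X)\times_\rho U$ is cohomologous, modulo a bounded-degree phase polynomial, to a function invariant under some open subgroup $U'\leq U$; since such a $U'$ has finite index in $U$, the invariant function descends to a factor whose structure group is finite, and the finite case of Theorem~\ref{TDCL:thm} then closes the argument. By Theorem~\ref{TST:thm} we have $U\cong\prod_{n=1}^\infty C_{p_n^{m_n}}$, so the cylinder subgroups $U_N=\prod_{n>N}C_{p_n^{m_n}}$ form a neighborhood basis of the identity consisting of open subgroups of finite index; by Theorem~\ref{TDisweyl} we may also assume that $\rho$ is a phase polynomial cocycle of degree $<O_k(1)$.

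For each $u\in U$, acting by the vertical translation $V_u$, the C.L.\ equation gives $\Delta_u f=p_u\cdot\Delta F_u$ with $p_u\in P_{<m}(G,X,S^1)$ and $F_u\in\mathcal M(X,S^1)$. Lemma~\ref{sel:lem} lets us choose $u\mapsto p_u$ and $u\mapsto F_u$ measurably, and Lemma~\ref{lin:lem} lets us assume, after possibly replacing $p_u$ and $F_u$, that $p_{uv}=p_u\cdot V_u p_v$ holds on an open neighborhood of the identity; since the cylinder subgroups form a basis, we may shrink to an open subgroup $U'\leq U$ on which this cocycle identity holds for all $u,v\in U'$.

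The key step is to integrate $p_u$ using the polynomial integration Lemma~\ref{PIL:lem}. We view $X$ as a $U'$-extension of the finite extension $Z_{<k-1}(X)\times_{\rho'}(U/U')$ of $Z_{<k-1}(X)$; the restricted cocycle remains a phase polynomial of bounded degree. For each fixed $g\in G$ the map $u\mapsto p_u(g,\cdot)\in P_{<m}(X,S^1)$ is a cocycle on $U'$, so Lemma~\ref{PIL:lem} yields a phase polynomial $Q_g\in P_{<O_{k,m}(1)}(X,S^1)$ with $\Delta_u Q_g=p_u(g,\cdot)$. A measurable selection argument (applying Lemma~\ref{sel:lem} to the parametrization by $g$) assembles these into a single $Q\in P_{<O_{k,m}(1)}(G,X,S^1)$ with $\Delta_u Q=p_u$ for all $u\in U'$. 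Replacing $f$ by $f/Q$, the C.L.\ equation becomes $\Delta_u(f/Q)=\Delta F_u$, a $(G,X,S^1)$-coboundary, for every $u\in U'$.

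We now invoke Lemma~\ref{cob:lem} with $K=U'$ (using the remark after it, which extends the conclusion to arbitrary functions, not only cocycles): $f/Q$ is $(G,X,S^1)$-cohomologous to a function $f'$ invariant under some open subgroup $U''\leq U'$. Because $U/U''$ is finite, $f'$ descends to a function $\bar f'$ on the factor $X_{U''}=Z_{<k-1}(X)\times_{\rho_{U''}}(U/U'')$, which has finite structure group. The finite case of Theorem~\ref{TDCL:thm} applied to $X_{U''}$ writes $\bar f'$, up to coboundary, as a bounded-degree phase polynomial times the pullback of a function from $Z_{<k-1}(X)$; pulling back to $X$ and multiplying by $Q$ yields the required decomposition of $f$. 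The principal obstacle is the integration step: one must upgrade Lemma~\ref{PIL:lem} to handle phase polynomials on $G\times X$ parametrized by $g$, with the cocycle identity only on an open subgroup $U'$ rather than on all of $U$; both issues are resolved by the measurable-selection ingredient of Lemma~\ref{sel:lem} and the observation that $X$ can be realized as a $U'$-extension of an intermediate phase-polynomial tower.
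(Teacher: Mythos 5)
Your overall route --- linearize via Lemma \ref{lin:lem}, use the cylinder structure from Theorem \ref{TST:thm} to split $U=U'\times W$ with $W$ finite, integrate $p_u$ over $U'$ via Lemma \ref{PIL:lem} to get $Q$, divide by $Q$, apply Lemma \ref{cob:lem} to obtain invariance under an open subgroup $U''$, and descend to the finite quotient $U/U''$ --- is exactly the paper's. Two small corrections on attribution: assembling the polynomials $Q_g$ into a single $Q$ needs no measurable selection, since $G$ is countable and discrete (Lemma \ref{sel:lem} concerns selection in the parameter $u$, and it is your restructuring $X=Y\times_{\sigma'}U'$, not Lemma \ref{sel:lem}, that resolves the fact that the cocycle identity holds only on $U'$).

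The genuine gap is at the final step. To ``apply the finite case of Theorem \ref{TDCL:thm} to $X_{U''}$'' you must first check that the descended function $\bar f'$ satisfies the hypotheses of that theorem \emph{on the factor}: it must be of type $<m'$ for some bounded $m'$, and it must be a C.L.\ function there. Neither is automatic, and your proposal addresses neither. For the type: $f'=\pi^\star\bar f'$ is of type $<O_{k,m}(1)$ on $X$ (because $f$ is of type $<m$ and $Q$, being a phase polynomial, is of finite type by Lemma \ref{PP}(iii)), but pushing type down along $\pi$ requires Lemma \ref{dec:lem}, whose hypothesis is that the extension cocycle is a phase polynomial --- this is precisely where the Weyl structure from Theorem \ref{TDisweyl} is used, and you never invoke it for this purpose. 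For the C.L.\ property: the data $q_u,F_u$ in $\Delta_u(\pi^\star\bar f')=q_u\cdot\Delta F_u$ a priori live on $X$ and need not be $U''$-invariant, so they do not descend to $X_{U''}$; the paper repairs this with Proposition \ref{inv:prop}, which produces $U''$-invariant replacements at the cost of a bounded increase in degree. Without these two verifications the finite case cannot be invoked, so the real work of the reduction lies here rather than, as you suggest, in the integration step.
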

\begin{proof}
	Let $X$ be a totally disconnected system of order $<k$ and write $X=Z_{<k-1}(X)\times_{\sigma} U$. By Theorem \ref{TDisweyl} we can assume that $\sigma:G\times Z_{<k-1}(X)\rightarrow U$ is a phase polynomial of degree $<O_{k}(1)$. Let $f:G\times X\rightarrow S^1$ be a function of type $<m$ and assume that for every $u\in U$ we have that $\Delta_u f=p_u\cdot \Delta F_u$ for some $p_u\in P_{<m}(G,X,S^1)$ and $F_u\in\mathcal{M}(X,S^1)$. From the linearization Lemma (Lemma \ref{lin:lem}) we know that there exists an open neighborhood $U'$ of the identity in $U$ such that $u\mapsto p_u$ is a cocycle on $U'$ (i.e. $p_{uv}=p_u V_u p_v$ whenever $u,v\in U'$). By Theorem \ref{TST:thm}, $U'$ contains a cylinder neighborhood. Therefore, by shrinking $U'$ we can write $U=U'\times W$ for some finite group $W$.\\
	
	We pass from $U$ to $U'$. First we write $X=Y\times_{\sigma'} U'$ where $Y=Z_{<k-1}(X)\times_{\sigma''} W$ and $\sigma'$ and $\sigma''$ are the projections of $\sigma$ to $U'$ and $W$ respectively. By construction of $U'$ we have that $p_{uv}=p_u V_u p_v$, for all $u,v\in U'$. We integrate $p_u(g,\cdot)$ by applying Lemma \ref{PIL:lem} once for every $g\in G$. We deduce that there exists a phase polynomial $Q:G\times X\rightarrow S^1$ of degree $<O_k(1)$ such that $\Delta_u Q = p_u$ for every $u\in U'$ (note that $Q$ may not be a cocycle in $g$). In particular, for every $u\in U'$ we have that $\Delta_u (f/Q)$ is a $(G,X,S^1)$-coboundary. Therefore, by Lemma \ref{cob:lem} we see that $f/Q$ is $(G,X,S^1)$-cohomologous to a function $f':G\times X\rightarrow S^1$ that is invariant under the action of some open subgroup $U''$ of $U'$.\\
	Let $\varphi:U\rightarrow U/U''$ be the quotient map, let $X'=Z_{<k-1}(X)\times_{\varphi\circ\sigma} U/U''$ and let  $\pi:X\rightarrow X'$ be the factor map. We can then write  $f'=\pi^\star\tilde{f}$ where $\tilde{f}:G\times X'\rightarrow S^1$.\\
	We claim that $\tilde{f}$ is of bounded type. To see this recall that $f$ is of type $<m$ and $Q$ is a phase polynomial of degree $<O_{k,m}(1)$. By Lemma \ref{PP} (iii) $Q$  and $f/Q$ are of type $<O_{k,m}(1)$. Since $\pi^\star \tilde{f}$ is $(G,X,S^1)$-cohomologous to $f/Q$ it is also of type $<O_{k,m}(1)$. Therefore, by Lemma \ref{dec:lem}, $\tilde{f}$ is of type $<O_{k,m}(1)$.\\
	Since $Q$ is a phase polynomial of degree $<O_{k,m}(1)$ we have that $\Delta_u (f/Q)$ is $(G,X,S^1)$-cohomologous to a phase polynomial. In particular, $\pi^\star \Delta_{\varphi(u)} \tilde{f}=\Delta_u \pi^\star \tilde{f}$ is $(G,X,S^1)$-cohomologous to a phase polynomial of degree $<O_{k,m}(1)$. Thus for every $u\in U/U''$ we can write \begin{equation}\label{up}\pi^\star \Delta_u\tilde{f}=q_u\cdot \Delta F_u
	\end{equation}
	for some phase polynomial $q_u$ of degree $<O_{k,m}(1)$. Applying Proposition \ref{inv:prop} we can assume that $q_u$ and $F_u$ are invariant under $U''$. Therefore all of the functions in equation (\ref{up}) are invariant with respect to $U''$ and so everything can be pushed to $X'$. We conclude that $\Delta_u\tilde{f}$ is $(G,X',S^1)$-cohomologous to a phase polynomial of degree $<O_{k,m}(1)$. Now, applying Theorem \ref{TDCL:thm} for the system $X'$, which is an extension of $Z_{<k-1}(X')$ by a finite group $U/U''$, we conclude that $\tilde{f}$ is $(G,X',S^1)$- cohomologous to $P\cdot \tilde{\pi}^\star f_0$ where $P\in P_{<O_{k,m}(1)}(G,X',S^1)$, $f_0:Z_{<k-1}(X)\rightarrow S^1$ is a measurable map and $\tilde{\pi}:X\rightarrow Z_{<k-1}(X)$ is the factor map. As $P$ is a phase polynomial of degree $<O_{k,m}(1)$ and $\tilde{f}$ is of type $<O_{k,m}(1)$, arguing as before we have that $f_0$ is also of type $<O_{k,m}(1)$.
	Finally, applying Theorem \ref{Main:thm} for the system $Z_{<k-1}(X)$, we have that $f_0$ is $(G,Z_{<k-1}(X),S^1)$-cohomologous to a phase polynomial of degree $<O_{k,m}(1)$. Lifting everything up, we conclude that $f$ is $(G,X,S^1)$-cohomologous to a phase polynomial of degree $<O_{k,m}(1)$.
\end{proof}
\subsection{Proving the theorem for a finite $U$}
We prove the following counterpart of Proposition 7.1 from \cite{Berg& tao & ziegler} for totally disconnected systems.
\begin{thm} [Theorem \ref{TDCL:thm} for a finite $U$]\label{FC:thm}
	Let $k\geq 1$ and $U$ be a finite group. Suppose that $X=Z_{<k-1}(X)\times _{\rho} U$ is a totally disconnected ergodic $G$-system of order $<k$ and let $f:G\times X\rightarrow S^1$ be a C.L.\ function of type $<m$. Then $f$ is $(G,X,S^1)$-cohomologous to $P\cdot\pi^\star \tilde{f}$ for some $P\in P_{<O_{k,m}(1)}(G,X,S^1)$ and a measurable $\tilde{f}:Z_{<k-1}(X)\rightarrow S^1$ where $\pi:X\rightarrow Z_{<k-1}(X)$ is the factor map.
\end{thm}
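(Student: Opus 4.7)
The plan is to construct a phase polynomial $P:G\times X\to S^1$ of degree $<O_{k,m}(1)$ such that $f\cdot P^{-1}$ is $(G,X,S^1)$-cohomologous to a $U$-invariant function, which then descends to $Z_{<k-1}(X)$. By Theorem \ref{TDisweyl}, available under the inductive hypothesis that Theorem \ref{Main:thm} holds for smaller $k$, I may assume $X$ is a Weyl system, so $\rho:G\times Z_{<k-1}(X)\to U$ is a phase polynomial cocycle of degree $<O_k(1)$. The C.L.\ hypothesis on $f$ yields, for each $u\in U$, a phase polynomial $p_u\in P_{<m}(G,X,S^1)$ and a measurable $F_u:X\to S^1$ with $\Delta_u f=p_u\cdot \Delta F_u$, and by Lemma \ref{sel:lem} I arrange these to depend measurably on $u$.

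Next, the linearization lemma (Lemma \ref{lin:lem}) supplies an open subgroup $U'\le U$ (finite-index, since $U$ is finite) on which $u\mapsto p_u$ is a genuine cocycle: $p_{uv}=p_u\cdot V_u p_v$ for $u,v\in U'$. Because $\rho$ is a phase polynomial of degree $<O_k(1)$, applying the polynomial integration lemma (Lemma \ref{PIL:lem}) pointwise in $g\in G$ yields a phase polynomial $Q:G\times X\to S^1$ of degree $<O_{k,m}(1)$ satisfying $\Delta_u Q=p_u$ for every $u\in U'$. Replacing $f$ by $f\cdot Q^{-1}$, I may therefore assume $\Delta_u f$ is a $(G,X,S^1)$-coboundary for every $u\in U'$. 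Lemma \ref{cob:lem} then gives an open subgroup $U''\le U'$ such that $f$ is cohomologous to a $U''$-invariant function, which descends to the factor $X':=Z_{<k-1}(X)\times_{\bar\rho}(U/U'')$ whose structure group is a proper finite quotient of $U$.

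The final task is to eliminate the residual finite quotient $U/U''$ while keeping the total polynomial degree within $O_{k,m}(1)$. The approach is to iterate the previous two steps on $X'$, which is again an ergodic totally disconnected $G$-system of order $<k$ on which the pushforward of $f$ retains the C.L.\ property. Using the prime-power decomposition $U\cong\prod_n C_{p_n^{m_n}}$ from Theorem \ref{TST:thm} with bounded exponents $m_n=O_k(1)$, and taking phase polynomial roots of bounded degree (Corollary \ref{roots}) to handle each prime uniformly, the iteration terminates in $O_k(1)$ rounds, each contributing a phase polynomial factor of degree $<O_{k,m}(1)$. The $U$-invariant function so produced descends to the desired $\tilde f:Z_{<k-1}(X)\to S^1$, and applying Theorem \ref{Main:thm} inductively on the base $Z_{<k-1}(X)$ finishes the argument. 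The main obstacle is precisely this iteration control: a naive induction on $|U|$ would give a bound depending on $|U|$, and only the uniform bound on the exponent of $U$, combined with the fact that phase polynomials on totally disconnected systems take finitely many values (Proposition \ref{TDPV:prop}), allows the dependence to collapse to $O_k(1)$ rounds.
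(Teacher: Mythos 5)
Your proposal has a genuine gap at its core: the "iteration" that is supposed to eliminate the residual finite quotient is exactly the content of the theorem, and the mechanism you propose for it does not work. For a \emph{finite} group $U$ (discrete topology), the linearization lemma (Lemma \ref{lin:lem}) only guarantees that $u\mapsto p_u$ is a cocycle on \emph{some} open neighborhood of the identity, and $\{1\}$ is such a neighborhood; so the subgroup $U'$ it produces may be trivial, in which case $\Delta_u Q=p_u$ for $u\in U'$ is vacuous, $U''=\{1\}$, and $U/U''=U$ — no progress is made. This is precisely why the paper separates Theorem \ref{MainF:thm} (where linearization plus Lemma \ref{cob:lem} genuinely shrinks an infinite $U$ to a finite quotient) from Theorem \ref{FC:thm}, whose proof must proceed differently. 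Even granting that each round strictly shrinks the group, the number of rounds would be governed by the number $N$ of cyclic factors in $U\cong\prod_{i=1}^N C_{p_i^{n_i}}$, which is finite but unbounded; the uniform bound $n_i=O_k(1)$ on the \emph{exponents} does nothing to bound $N$, so your claim that the iteration "terminates in $O_k(1)$ rounds" is unsupported. A further problem: Corollary \ref{roots} gives roots of degree $<O_{d,n,p}(1)$, with dependence on the prime $p$, so it cannot be invoked "uniformly" over all primes; the large-prime case requires the separate constancy argument via Proposition \ref{PPC} and Theorem \ref{HTDPV:thm}.

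What the paper actually does, and what is missing from your write-up, is a single simultaneous construction rather than an iteration. Writing $U$ in terms of generators $e_1,\dots,e_N$ and the relations $e_j^{p_j^{n_j}}=1$, $[e_i,e_j]=1$, one first corrects each $F_j$ (using $V_{e_j}$-invariant phase polynomial roots of $\prod_{t}V_{e_j}^tF_j$, with the bounded-prime/large-prime case split) so that the torsion relations hold exactly, then defines one explicit function $F(y,[t_1,\dots,t_N])$ as a product over all generators at once. The delicate point — and the reason the degree bound is independent of $N$ — is the analysis of the commutator correction terms $\omega_{i,j}=\Delta_{e_i}\tilde F_j/\Delta_{e_j}\tilde F_i$: although unboundedly many of them appear in $\Delta_{e_j}F$, a Taylor expansion in $t_i'$ collapses each inner product to a bounded number of factors, and a coordinate-wise analysis of the finite value group (Theorem \ref{TDPV:thm}, Theorem \ref{HTDPV:thm}) keeps the degree at $O_{k,m}(1)$. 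None of this appears in your proposal, and without it the degree bound cannot be made independent of $U$.
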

The proof follows the general lines of the proof of Proposition 7.1 in \cite{Berg& tao & ziegler}. The main technical difference is that the results in Appendix \ref{roots:app} (the counterpart of Appendix D in \cite{Berg& tao & ziegler}) are only valid for polynomials which takes values in a finite cyclic group $C_{p^n}$ where $p$ is a prime and $n\in\mathbb{N}$. Recall that if $X$ is a totally disconnected system then by Proposition \ref{TDPV:prop} any phase polynomial $p:X\rightarrow S^1$ (after constant multiplication) takes values in a finite subgroup $H$ of $S^1$. These subgroups takes the form $H\cong \prod_{i=1}^N C_{q_i^{l_i}}$ for some primes $q_i$ and $l_i\in\mathbb{N}$ (and the $l_i$'s are bounded by a constant which only depends on the degree of the polynomials and the order of $X$). Therefore in order to apply the results from Appendix \ref{roots:app}, we have to study each coordinate of $p$ with respect to $H$ separately. The formal proof is given below.
\begin{proof}
	By Theorem \ref{TST:thm} we have that $U$ is isomorphic to $\prod_{i=1}^N C_{{p_i}^{n_i}}$ for some $n_1,n_2,...,n_N = O_k(1)$ and $N$ unbounded but finite. Moreover, if $p_i$ is sufficiently large with respect to $k$ we can take $n_i=1$. \\
	Let $e_1,...,e_N$ be the standard basis. By the assumption of $f$ being a C.L.\ function, for each $1\leq i \leq N$ we can write 
	\begin{equation}\label{fe1}
	\Delta_{e_i} f = Q_i \cdot \Delta F_i
	\end{equation}
	for a phase polynomial $Q_i\in P_{<m}(G,X,S^1)$ and $F_i:X\rightarrow S^1$ a measurable map.\\
		The idea here is to find a measurable map $F$ such that $F_i = \Delta_{e_i} F$ times a polynomial error. This will imply that $\Delta_{e_i}f/\Delta F$ is a phase polynomial. The cocycle identity then implies that $\Delta_u (f/\Delta F)$ is also a phase polynomial (and a cocycle in $u$). At this point the claim in the theorem follows by the polynomial integration lemma.\\
	The group $U$ is isomorphic to the free group with $N$ generators $e_1,...,e_N$ modulo the relations of the form $e_j^{p_j^{n_j}}=1$ for every $1\leq j \leq N$ and $[e_i,e_j]=1$ for every $1\leq i,j\leq N$. This will help us to construct the function $F$.\\
	First observe that we have the following telescoping identity.
	$$\prod_{t=0}^{p_j^{n_j}-1} V_{e_j}^t\Delta_{e_j} f = 1$$
	and so equation (\ref{fe1}) implies that 
	\begin{equation} \label{Q}
	\Delta \prod_{t=0}^{p_j^{n_j}-1} V_{e_j}^tF_j = \prod_{t=0}^{p_j^{n_j}-1}V_{e_j}^t \overline{Q_j}\in P_{<m}(G,X,S^1).
	\end{equation}
In particular we have that $$\prod_{t=0}^{p_j^{n_j}-1} V_{e_j}^tF_j\in P_{<m+1}(X,S^1).$$
Observe that a priori, if $F_j=\Delta_{e_j} F\cdot P_j$ for some function $F$ and a phase polynomial $P_j$ of degree $<O_{k,m}(1)$, then the term above can be written as $\prod_{t=0}^{p_j^{n_j}-1}V_{e_j}^t P_j$. In particular, it means that we must be able to to replace $F_j$ with a new function $\tilde{F}_j=F_j/ P_j$ such that $F_j/\tilde{F}_j$ is a phase polynomial of degree $<O_{k,m}(1)$ and $$\Delta \prod_{t=0}^{p_j^{n_j}-1} V_{e_j}^t\tilde{F}_j = 1.$$
To do this, we claim that there exists a phase polynomial $\psi_j$ of degree $<O_{k,m}(1)$ which is invariant to the translation $V_{e_j}$ and is a ${p_j}^{n_j}$-th root of $\prod_{t=0}^{p_j^{n_j}-1} V_{e_j}^tF_j$. Then we set $P_j=\psi_j$. Observe that $\prod_{t=0}^{p_j^{n_j}-1} V_{e_j}^t F_j$ is invariant to the translation by $e_j$. We study two cases. First, if $p_j=O_{k,m}(1)$, then we view $\prod_{t=0}^{p_j^{n_j}-1} V_{e_j}^t F_j$ as a phase polynomial on the factor induced by quotienting out $\left<e_j\right>$. At this point we apply Corollary \ref{roots} in order to find a phase polynomial of degree $<O_{k,m}(1)$ that is also a $p_j^{n_j}$-th root of $\prod_{t=0}^{p_j^{n_j}-1} V_{e_j}^t F_j$. Lifting everything up we conclude that there exists a phase polynomial $\psi_j$ of degree $<O_{k,m}(1)$ that is invariant under $e_j$ and is a $p_j^{n_j}$-th root of $\prod_{t=0}^{p_j^{n_j}-1} V_{e_j}^t F_j$, as required.\\
	Otherwise, we have that $p_j$ is sufficiently large. In this case Theorem \ref{TST:thm} implies that $n_j=1$. Since $X$ is totally disconnected, Proposition \ref{TDPV:prop} implies that up to a constant multiple, $\prod_{t=0}^{p_j-1} V_{e_j}^t F_j$ takes values in a finite subgroup $H$ of $S^1$. Rotating $F_j$ by a $p_j$-th root of this constant, we may assume that $\prod_{t=0}^{p_j-1} V_{e_j}^t F_j$ takes values in $H$ without changing equation (\ref{fe1}).
	
Recall that a finite subgroup $H\leq S^1$ takes the form $H\cong \prod_{i=1}^{N'}C_{q_i^{l_i}}$ where $q_i$ are primes. The idea now is to find a root for each of the coordinates of $\prod_{t=0}^{p_j-1} V_{e_j}^t F_j$ (similar to the proof of Lemma \ref{valp:lem}).\\

Let $\pi_i:H\rightarrow C_{q_i^{l_i}}$ be one of the coordinate maps of $H$, we study the term $\pi_i\circ\prod_{t=0}^{p_j-1} V_{e_j}^t F_j$. We have two cases, if $q_i\not = p_j$, then $p_j$ is invertible modulo $q_i^{l_i}$ (i.e. there exists $a\in\mathbb{N}$ such that $ap_j = 1 \mod q_i^{l_i}$). In this case we conclude that some power of $\prod_{t=0}^{p_j-1} V_{e_j}^t \pi_i\circ F_j$ is a $p_j$-th root that is also a phase polynomial of degree $<m+1$. This power is clearly invariant under $V_{e_j}$ and the claim follows. We denote this phase polynomial root by  $\psi_{i,j}$. Otherwise we have that $q_i=p_j$. Since $p_j$ is large so is $q_i$ and so by Theorem \ref{HTDPV:thm} we have that $l_i=1$. We conclude, by equation (\ref{Q}) that the derivative of $\pi_i\circ\prod_{t=0}^{p_j-1}V_{e_j}^t F_j$ by $\Delta_g$ satisfies
	\begin{equation} \label{fe1.1}
	\pi_i\circ\left(\prod_{t=0}^{p_j-1} V_{e_j}^t Q_j\right) = \pi_i\circ\left(\prod_{t=0}^{p_j-1} \left( \Delta_{e_j} ^t  Q_j\right)^{\binom{p_j}{t+1}}\right).
	\end{equation}
	
	Write $G=G_{p_j} \oplus G'$ where $G_{p_j}$ is the subgroup of elements of order $p_j$ and $G'$ be its complement. The terms in equation (\ref{fe1.1}) are of order $q_i=p_j$. In particular, this means that for $g\in G'$ both of them are trivial (by Proposition \ref{PPC}). For $g\in G_{p_j}$ we claim that the right hand side is trivial. 
	For such $g\in G_{p_j}$ we have by Proposition \ref{PPC} and Theorem \ref{HTDPV:thm} that $Q_j(g,\cdot)$ takes values in $C_{p_j}$. Observe that $p_j$ divides $\binom{p_j}{t+1}$ for every $0\leq t< p_j-1$, and since $p_j$ is sufficiently large  $\Delta_{e_j}^{p_j-1}$ eliminates $Q_j$ (by Lemma \ref{vdif:lem}), hence for $g\in G_{p_j}$ we have, $\prod_{t=0}^{p_j-1} \left( \Delta_{e_j} ^t  Q_j(g,x)\right)^{\binom{p_j}{t+1}}=1$.  Since $Q_j$ is a cocycle in $g$ and every $g\in G$ can be written as $g_{p_j}+g'$ where $g_{p_j}\in G_{p_j}$ and $g'\in G'$, we conclude from the above that the terms in equation (\ref{fe1.1}) are trivial for every $g\in G$. Therefore by ergodicity $\pi_i\circ\prod_{t=0}^{p_j-1} V_{e_j}^t F_j$ is a constant and we can find a $p_j$-th root which we denote by $\psi_{i,j}$.\\
	
	Thus, in any case every coordinate of $\prod_{t=0}^{p_j^{n_j}-1} V_{e_j}^t\circ F_j$ has a root that is also a polynomial of degree $<m+1$ and is invariant with respect to the translation $V_{e_j}$. Gluing all the coordinates $\psi_{i,j}$ together, we see that there exists a phase polynomial $\psi_j(x) = \prod_{i=1}^{N'} \psi_{i,j}$ of degree $<m+1$ such that $\psi_j^{p_j^{n_j}}=\prod_{t=1}^{p_j^{n_j}-1} V_{e_j}^t F_j$.
	Now set $\tilde{F}_j  := F_j/\psi_j$ and $\tilde{Q}_j := Q_j\cdot \Delta \psi_j$ then $\tilde{Q}_j$ is a phase polynomial of degree $<O_{k,m}(1)$ and we have
	
	\begin{equation}\label{fe2}
	\Delta_{e_j} f = \tilde{Q}_j\cdot \Delta \tilde{F}_j
	\end{equation}
	and
	\begin{equation} \label{fe3}
	\prod_{t=1}^{p_j^{n_j}-1} V_{e_j}^t \tilde{F}_j=1
	\end{equation}
	Now that we have already dealt with the torsion relations, we proceed by defining the desired $F$ and then we will work out the relations which comes from the commutators $[e_i,e_j]=1$.\\ Write $[t_1,...,t_N]=e_1^{t_1}\cdot...\cdot e_N^{t_N}$ and $X=Z_{<k-1}(X)\times_{\sigma} U$ and let $F:X\rightarrow S^1$ be the function 
	$$F(y,[t_1,...,t_N]) = \prod_{i=1}^N\prod_{0\leq t_i'<t_i} \tilde{F}_i(y,[t_1,...,t_{i-1},t_i',0,...,0])$$
	with the convention that $\prod_{0\leq t_i' < t_i} a_{t_i'}=\left(\prod_{t_i<t_i'\leq 0} a_{t_i'}\right)^{-1}$. Equation (\ref{fe2}) implies that $F$ is well defined.\\
	We compute the derivatives of $F$. We have,
	$$\Delta_{e_j}F(y,[t_1,...,t_N])=\prod_{i=1}^N \prod_{0\leq t_i'<t_i} \Delta_{e_j} F_j(y,[t_1,...,t_{i-1},t_i',0,...,0]).$$
	For any $1\leq j\leq N$. On the other hand, we have that telescoping identity
	\[
	\prod_{j=1}^N \prod_{0\leq t_j'<t_j} \Delta_{e_j} F_i(y,[t_1,...,t_{j-1},t_j',0,...,0]) = \frac{F_i(y,[t_1,...,t_N])}{F_i(y,1)}
	\]
	and thus,
	\begin{equation}\label{fe4}
	\Delta_{e_j}F(y,[t_1,...,t_N])=\frac{\tilde{F}_j(y,[t_1,...,t_N])}{\tilde{F}_j(y,1)}\prod_{i=1}^N \prod_{0\leq t_i'<t_i} \omega_{i,j}(y,[t_1,...,t_{i-1},t_i',0,...,0])
	\end{equation}
	where $\omega_{i,j}=\frac{\Delta_{e_i}\tilde{F}_j}{\Delta_{e_j}\tilde{F}_i}$. Since $[e_j,e_i]=1$, we see that $e_i$ and $e_j$ commute and so by (\ref{fe2}) we have that $\omega_{i,j}$ are phase polynomials of degree $<O_{k,m}(1)$.\\
	We study these polynomials. Recall that $\sigma$ is a phase polynomial of degree $<O_k(1)$. This means that the map $(y,u)\mapsto u$ is also a phase polynomial (the derivative is $\sigma$). Since $[t_1,...,t_n]\mapsto[t_1,...,t_{i-1},t_i',0,...,0]$ is a constant multiple of a homomorphism, we have that $(y,[t_1,...,t_n])\rightarrow [t_1,...,t_{i-1},t_i',0,...,0]$ is also a phase polynomial of degree $<O_k(1)$. From Lemma \ref{B.5}, we conclude that $(y,[t_1,...,t_n])\mapsto \omega_{i,j}(y,[t_1,...,t_{i-1},t_i',0,...,0])$ are all phase polynomials of degree $<O_{k,m}(1)$.\\
	We claim that the map $$\xi_j(y,[t_1,...,t_N]) =\prod_{i=1}^N\prod_{0\leq t_i'<t_i}\omega_{i,j}(y,[t_1,...,t_{i-1},t_i',0,...,0])$$ is a phase polynomial of degree $<O_{k,m}(1)$. Clearly $\xi_j$ is a function of phase polynomials. From Theorem \ref{TDPV:thm}, it follows that there exists a finite subgroup $H\leq S^1$ such that all $\omega_{i,j}(y,[t_1,...,t_{i-1},t_i',...])$ take values in $H$ (up to a constant multiple). Therefore so does $\xi_j$. We show that every coordinate of $\xi_j$ is a phase polynomial. Let $\pi_n:H\rightarrow C_{q_n^{l_n}}$ be one of the coordinate maps. We study two cases. If $q_n=O_{k,m}(1)$ then Corollary \ref{funofpoly} implies that $\pi_n\circ\xi_j$ is a phase polynomial of degree $<O_{k,m}(1)$. Otherwise, we have that $q_n$ is sufficiently large. In this case $\pi_n\circ \omega_{i,j}(y,[t_1,...,t_{i-1},t_i',0,...,0])$ is a phase polynomial in $t_i'$ that, by Theorem \ref{HTDPV:thm}, takes values in $C_{q_n}$. Note that since there is an unbounded number of $\omega_{i,j}$'s in the definition of $\xi_j$, we can not yet deduce that $\xi_j$ is a phase polynomial of bounded degree. Instead, we consider the Taylor expansion of $\omega_{i,j}$. We have,
	$$ \omega_{i,j}(y,[t_1,...,t_{i-1},t_i',0,...,0])=\prod_{0\leq j \leq O_{k,m}(1)} \left[ \Delta_{e_i}^j \omega_{i,j}(y,[t_1,...,t_{i-1},t_i',0,...,0])\right]^{\binom{t_i'}{j}}$$
	and thus
	$$\prod_{0\leq t_i'\leq t_i} \omega_{i,j}(y,[t_1,...,t_{i-1},t_i',0,...,0])=\prod_{0\leq j \leq O_{k,m}(1)} \left[ \Delta_{e_i}^j \omega_{i,j}(y,[t_1,...,t_{i-1},t_i',0,...,0])\right]^{\binom{t_i}{j+1}}.$$
	Since the product is over a bounded number of $\omega_{i,j}$'s, we see by Lemma \ref{B.5} that $\pi_n\circ\xi_j$ is a phase polynomial of degree $<O_{k,m}(1)$. Since the degree is independent on the coordinate, we conclude that $\xi_j\in P_{<O_{k,m}(1)}(X,S^1)$ as required.\\
	From (\ref{fe4}) we finally have that
	$$\Delta_{e_j} F (y,u) \in  \frac{\tilde{F}_j(y,u)}{\tilde{F}_j(y,1)}\cdot P_{<O_{k,m}(1)}(X,S^1).$$
	Now let $f'=f/\Delta F$. Then,
	\begin{equation}\label{eqp}
	\Delta_{e_j}f' \in \left(\pi^\star F_j'\right)\cdot P_{<O_{k,m}(1)}(G,X,S^1) 
	\end{equation}
	where $\pi^\star F_j' (y,u) = \tilde{F}_j (y,1)$ (i.e. $\Delta_{e_j}f'$ is a multiplication of $\pi^\star F_j'$ with a phase polynomial of some bounded degree).\\
	We repeat the same argument as above now with $f'$ instead. Note that since $\pi^\star F_j'$ is invariant with respect to the action of $U$, we do not need to work out the commutator relations again. Namely, for every $i,j$ we have that $\Delta_{e_i} \pi^\star F_j' = \Delta_{e_j} \pi^\star F_i'=1$. We deal with the torsion relations. As before, we have the telescoping identity
	$$\prod_{t=0}^{p_j^{n_j}-1}V_{e_j}^tf'=1$$
	Which implies that
	$$\pi^\star \Delta (F_j')^{p_j^{n_j}}\in P_{<O_{k,m}(1)}(G,X,S^1).$$ Therefore, $(F_j')^{p_j^{n_j}}$ is a phase polynomial of degree $<O_{k,m}(1)$ on $Z_{<k-1}(X)$.\\
	We apply the same argument as before. We see that there exists a $p_j^{n_j}$-th root $P_j$ for the phase polynomial $(F_j')^{p_j^{n_j}}$. Let $F_j'' = F_j'/P_j$. Then, $F_j''$ takes values in $C_{p_j^{n_j}}$ and from (\ref{eqp}) we have $$(\Delta_{e_j} f') \in \pi^\star F_j'' \cdot P_{<O_{k,m}(1)}(G,X,S^1).$$
	We define $F^\star :X\rightarrow S^1$ by $F^\star(y,[t_1,...,t_N]):=\prod_{j=1}^N F_j''(y)^{t_j}$. This map is well defined since $F_j''$ takes values in $C_{{p_j}^{n_j}}$. We observe that $\pi^\star \Delta F_j'' =\Delta_{e_j}\Delta F^\star$ and let $f''=f'/\Delta F^\star$. We have that $f''$ is cohomologous to $f$ and $\Delta_{e_j}f''\in P_{<O_{k,m}(1)}(G,X,S^1)$ for all $1\leq j \leq N$. The cocycle identity implies that $\Delta_u f''\in P_{<O_{k,m}(1)}(G,X,S^1)$ for every $u\in U$.  Integrating this term by Lemma \ref{PIL:lem} once for every $g\in G$, we have that $\Delta_u f'' = \Delta_u P$ for some $P\in P_{<O_{k,m}(1)}(G,X,S^1)$. It follows that $f''/P$ is invariant under $U$ and so $f''=P\cdot \pi^\star \tilde{f}$ where $\tilde{f}:G\times Z_{<k-1}(X)\rightarrow S^1$.
\end{proof}

\section{The High Characteristic Case} \label{high:sec}
Throughout this section, we denote $\text{char} (G) = \min\{p:p\in\mathcal{P}\}$. We prove the following version of Theorem \ref{Main:thm}.
\begin{thm}\label{reductionH:thm}
	Let $1\leq k,j \leq \text{char}(G)$ and $X$ be an ergodic $G$-system of order $<j$ which splits. Let $\rho:G\times X\rightarrow S^1$ be a cocycle of type $<k$, then $\rho$ is $(G,X,S^1)$-cohomologous to a phase polynomial of degree $<k$.
\end{thm}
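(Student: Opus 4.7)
The plan is to adapt the proof of Theorem \ref{Main:thm} from Section \ref{proof1}, replacing the qualitative bound $O_{k,m,j}(1)$ by the exact bound $<k$ afforded by the characteristic hypothesis. The main ingredients remain Theorem \ref{TDred:thm} (which reduces the problem to a totally disconnected factor) and a totally disconnected result --- but now we invoke Corollary \ref{cor} in its \emph{sharp} form ``if $k,m<\min P$ then the degree is $<m$'' rather than the qualitative Theorem \ref{MainT:thm}.

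We proceed by downward induction on $l$ and establish the statement
\begin{equation*}
\Delta_{t_1}\cdots\Delta_{t_l}\rho \,\in\, P_{<k}(G,X,S^1)\cdot B^1(G,X,S^1)
\end{equation*}
for all automorphisms $t_1,\ldots,t_l$ of $X$; specializing to $l=0$ gives the theorem. For the base case, each automorphism $t_i$ preserves $Z_{<j}(X)=X$, so iterated application of Lemma \ref{dif:lem} shrinks the type of $\Delta_{t_1}\cdots\Delta_{t_l}\rho$ at each step by $\min(\cdot,j)\geq 1$; once $l$ is large enough the type reaches $<0$, making the iterated derivative a coboundary, which lies in $P_{<0}\cdot B^1 \subseteq P_{<k}\cdot B^1$.

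For the inductive step, assume the claim at level $l+1$. Since the $t_i$ commute with the $G$-action, derivatives with respect to them commute, so the inductive hypothesis --- applied with an arbitrary automorphism $u$ in place of $t_{l+1}$ --- asserts exactly that the cocycle $\Delta_{t_1}\cdots\Delta_{t_l}\rho$, which is again of type $<k$ by Lemma \ref{PP}(ii), is a C.L.\ cocycle with polynomial part in $P_{<k}$ (Definition \ref{Conze-Lesigne equation}). Since $X$ splits, Theorem \ref{TDred:thm} yields a totally disconnected factor $Y$ with factor map $\pi$, and a cocycle $\rho_{t_1,\ldots,t_l}:G\times Y\to S^1$ such that $\Delta_{t_1}\cdots\Delta_{t_l}\rho$ is $(G,X,S^1)$-cohomologous to $\pi^\star\rho_{t_1,\ldots,t_l}$. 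Lemma \ref{Cdec:lem} then forces $\rho_{t_1,\ldots,t_l}$ to be of type $<k$ on $Y$; because $Y$ is a totally disconnected ergodic factor of an order-$<j$ system and we are in the regime $k,j\leq\text{char}(G)=\min P$, the sharp clause of Corollary \ref{cor} produces a phase polynomial on $Y$ of degree $<k$ cohomologous to $\rho_{t_1,\ldots,t_l}$. Pulling back via $\pi^\star$ preserves polynomial degrees by Lemma \ref{PPP}(iv) and closes the induction at level $l$.

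The main technical obstacle lies not in the outer induction but in knowing that the inputs we cite --- Theorem \ref{TDred:thm} and Corollary \ref{cor} in its sharp form --- actually deliver degree $<k$ throughout their own proofs in the regime $k,j\leq\min P$. This in turn rests on the sharp versions of Theorem \ref{TDCL:thm} and Theorem \ref{FC:thm} developed in Section \ref{low:sec}, whose use of root-extraction (Corollary \ref{roots}) and the bound $l_i\leq 1$ in the structure theorem (Theorem \ref{TST:thm}) both genuinely require primes $p_i>k$. A minor bookkeeping point is that the strict inequality ``$<\min P$'' in Corollary \ref{cor} must reconcile with the weak inequality ``$\leq\text{char}(G)$'' in our hypothesis through the standard ``$<k$'' indexing conventions on type, order, and degree; modulo this, the whole argument is a verbatim rerun of Section \ref{proof1} with the sharp bound propagated uniformly.
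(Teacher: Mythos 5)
Your outer skeleton (reduce to a totally disconnected factor, then invoke a sharp totally disconnected result) is the right shape, but the key input you cite is circular. The sharp clause of Corollary \ref{cor} (``if $k,m<\min P$ then the degree is $<m$'') is not available at this point in the paper: it is a consequence of Theorem \ref{MainH:thm}, which is exactly what Theorem \ref{reductionH:thm} is proving. Everything established in Sections \ref{sr:sec}--\ref{low:sec} only yields the qualitative bound $O_{k,m}(1)$ (Theorem \ref{MainT:thm}, Theorem \ref{TDCL:thm}, Theorem \ref{FC:thm}); there are no ``sharp versions'' of those results in Section \ref{low:sec} to lean on. The genuinely new content needed here is the exact totally disconnected statement, Theorem \ref{TDH:thm}, and your proposal never proves it --- it assumes it under the name ``Corollary \ref{cor} in its sharp form.''

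Moreover, Theorem \ref{TDH:thm} does not apply to an arbitrary cocycle of type $<k$ on a totally disconnected system: it requires the function to be a line cocycle and a quasi-cocycle of order $<k-1$, and it requires the Weyl structure cocycles $\sigma_1,\dots,\sigma_j$ of the factor to have the exact degrees $<1,\dots,<j$. The paper secures the latter by running the induction on $k$ (the type), not on the number of automorphism derivatives: assuming the theorem for smaller $k$, the totally disconnected Weyl factor produced by Theorem \ref{con:thm} automatically has exact-degree structure cocycles, and only then can Theorem \ref{TDH:thm} be applied; the C.L.\ hypothesis needed to invoke Theorem \ref{con:thm} comes for free from the already-completed qualitative Theorem \ref{Main:thm}. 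Your downward induction on the number of derivatives $l$ neither sets up this induction on $k$ nor verifies the line-cocycle and quasi-cocycle hypotheses for the iterated derivatives $\Delta_{t_1}\cdots\Delta_{t_l}\rho$ (for these the paper uses Lemma \ref{ED:lem} inside the proof of Theorem \ref{TDH:thm} itself). As written, the argument reduces the theorem to a statement that is logically downstream of it, so there is a genuine gap.
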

As with the original version, we want to prove this theorem in two steps. First we reduce matters to a totally disconnected case and then prove the theorem in that case.\\
We begin by introducing some definitions and an important lemma from \cite{Berg& tao & ziegler}.
\begin{defn} [Quasi-cocycles]
	Let $X$ be an ergodic $G$-system, let $f:G\times X\rightarrow S^1$ be a function. We say that $f$ is a quasi-cocycle of order $<k$ if for every $g,g'\in G$ one has $$f(g+g',x)=f(g,x)\cdot f(g',T_gx)\cdot p_{g,g'}(x)$$ for some $p_{g,g'}\in P_{<k}(X,S^1)$. 
\end{defn}
The following lemma is given in \cite[Proposition 8.11]{Berg& tao & ziegler}.
\begin{lem} [Exact descent] \label{Edec:lem} Let $X$ be an ergodic $G$-system of order $<k$ for some $k\geq 0$. Let $\pi:X\rightarrow Y$ be a factor map. Suppose that a function $f:G\times Y\rightarrow S^1$ is a quasi-cocycle of order $<k$. If $\pi^\star f$ is of type $<k$ then so is $f$.
\end{lem}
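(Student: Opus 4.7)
The plan is to exploit the interaction between the quasi-cocycle structure of $f$ and the derivative operator $d^{[k]}$ in order to reduce the situation to a cocycle descent along the induced factor map on the cubic systems.

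First I would make the key observation that applying $d^{[k]}$ to a quasi-cocycle of order $<k$ produces an honest cocycle. Starting from the quasi-cocycle identity
\[
f(g+g',x) \;=\; f(g,x)\cdot f(g',T_g x)\cdot p_{g,g'}(x), \qquad p_{g,g'}\in P_{<k}(Y,S^1),
\]
and applying $d^{[k]}$ coordinatewise in the $x$-variable, I get
\[
d^{[k]}f(g+g',\mathbf{x}) \;=\; d^{[k]}f(g,\mathbf{x})\cdot d^{[k]}f(g',T_g^{[k]}\mathbf{x})\cdot d^{[k]}p_{g,g'}(\mathbf{x}).
\]
By Lemma \ref{PP}(iii), $d^{[k]}p_{g,g'}=1$ since $p_{g,g'}$ is a phase polynomial of degree $<k$, so the map $d^{[k]}f\colon G\times Y^{[k]}\to S^1$ is a genuine cocycle for the diagonal $G$-action on $Y^{[k]}$.

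Next I would translate the hypothesis and goal through the definition of type. By definition, $\pi^\star f$ being of type $<k$ is equivalent to
\[
d^{[k]}(\pi^\star f) \;=\; \pi^{[k]\star}\bigl(d^{[k]}f\bigr)
\]
being a $(G,X^{[k]},S^1)$-coboundary, where $\pi^{[k]}\colon X^{[k]}\to Y^{[k]}$ is the induced factor map on the cubic systems. Dually, the desired conclusion that $f$ is of type $<k$ is exactly the statement that $d^{[k]}f$ itself is a $(G,Y^{[k]},S^1)$-coboundary. Thus the problem has been reformulated as descending a coboundary for a cocycle along the factor map $\pi^{[k]}$: we have a cocycle $\sigma:=d^{[k]}f$ on $Y^{[k]}$ whose pullback $\pi^{[k]\star}\sigma$ is a coboundary on $X^{[k]}$, and we want $\sigma$ itself to be a coboundary on $Y^{[k]}$.

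Finally, I would carry out this descent by mirroring the proof of the cocycle version of the descent of type (Lemma \ref{Cdec:lem}, i.e.\ \cite[Proposition 8.11]{Berg& tao & ziegler}). Writing $\pi^{[k]\star}\sigma=\Delta F$ for some $F\colon X^{[k]}\to S^1$, the $G$-equivariance of $\pi^{[k]}$ (with respect to the diagonal $G$-action) and the fact that $\sigma$ is pulled back from $Y^{[k]}$ force the ratio $F(x')/F(x)$ on the fibered product $X^{[k]}\times_{Y^{[k]}}X^{[k]}$ to be $G$-invariant. The hypothesis that $X$ is of order $<k$ is what supplies the ergodic/structural properties of the cubic system $X^{[k]}$ needed to conclude that this invariant ratio is almost everywhere constant, which allows us to modify $F$ by a $Y^{[k]}$-measurable function so that it descends to $Y^{[k]}$. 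Exactly as in \cite[Proposition 8.11]{Berg& tao & ziegler}, this yields $\sigma=\Delta F'$ for some $F'\colon Y^{[k]}\to S^1$, completing the proof.

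The main obstacle is the final descent step: while the reduction from quasi-cocycles to cocycles via $d^{[k]}$ is clean, carrying out the Mackey-type descent of coboundaries for $\sigma$ along $\pi^{[k]}$ requires a careful use of the order $<k$ assumption to rule out the potential character-valued obstructions coming from fibers of the extension. This is precisely the content of the analogous step in the BTZ cocycle descent, which we adapt here in the cubic setting.
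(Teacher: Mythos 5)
The paper does not actually prove this lemma; it imports it verbatim from \cite[Proposition 8.11]{Berg& tao & ziegler}, so your proposal has to be measured against that argument. Your first three steps are correct and are indeed the standard opening moves: since each $p_{g,g'}$ lies in $P_{<k}(Y,S^1)$, Lemma \ref{PP}(iii) kills it under $d^{[k]}$, so $d^{[k]}f$ is an honest cocycle on $Y^{[k]}$ for the diagonal action; moreover $d^{[k]}(\pi^\star f)=(\pi^{[k]})^\star d^{[k]}f$ (one should note here that $\pi^{[k]}$ pushes $\mu_X^{[k]}$ to $\mu_Y^{[k]}$, which is the functoriality of the cubic measures), and the lemma reduces to descending the coboundary relation for $\sigma:=d^{[k]}f$ along $\pi^{[k]}$.

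The gap is in the final step, which is where all the content lies. You write $(\pi^{[k]})^\star\sigma=\Delta F$ and claim that, because $X$ has order $<k$, the $G$-invariant function $F(\mathbf{x}')/F(\mathbf{x})$ on $X^{[k]}\times_{Y^{[k]}}X^{[k]}$ is a.e.\ constant. That is false: the diagonal $G$-action on $X^{[k]}$ is very far from ergodic even when $X$ has order $<k$ --- its invariant $\sigma$-algebra $\mathcal{I}_k$ is large (it is exactly what one relativizes over to build $\mu^{[k+1]}$) --- and the fibered product is even less ergodic. Concretely, take $X=Z$ a Kronecker system, $k=2$, $Y=Z/W$ and $f\equiv 1$: the solutions of $\Delta F=(\pi^{[2]})^\star d^{[2]}f=1$ on $X^{[2]}$ are precisely the $\mathcal{I}_2$-measurable functions, i.e.\ arbitrary functions of the side parameters $(s,t)$ of the parallelogram, and for a generic such $F$ the ratio $F(\mathbf{x}')/F(\mathbf{x})$ is a nonconstant invariant function on the fibered product. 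So the existence of \emph{some} $F$ with $\Delta F=(\pi^{[k]})^\star\sigma$ does not imply that $F$ is $Y^{[k]}$-measurable up to a constant; one must \emph{construct} a better $F$. The correct descent has to bring in the side (face) transformations of the cubic system --- the group they generate together with the diagonal does act ergodically on $(X^{[k]},\mu^{[k]})$ for ergodic $X$ --- and one must show that $F$ can be chosen to transform in a controlled, $Y^{[k]}$-measurable way under those transformations as well; it is in this step, and not only in making $d^{[k]}f$ a cocycle, that the quasi-cocycle hypothesis and the order $<k$ hypothesis on $X$ are actually consumed. As written, your step 4 restates the conclusion (injectivity of $\pi^{[k]\star}$ on the relevant cohomology) rather than proving it.
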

\begin{defn} [Line-cocycle]\label{lc:def} 
	Let $X$ be an ergodic $G$-system and let $f:G\times X\rightarrow S^1$ be a function. We say that $f$ is a line cocycle if for every $g\in G$ of order $n$ we have $\prod_{t=0}^{n-1} f(g, T_g^tx) = 1$.
\end{defn}

We claim that Theorem \ref{reductionH:thm} is a consequence of the following result.
\begin{thm} (The totally disconnected case in high characteristics) \label{TDH:thm}
	Let $1\leq j,k \leq \text{char} (G)$. Let $X$ be a totally disconnected and Weyl ergodic $G$-system of order $<j$ where the phase polynomial cocycles $\sigma_1,...,\sigma_j$ that define $X$ are of degrees $<1,...,<j$. Then every $f:G\times X\rightarrow S^1$ of type $<k$ which is also a line cocycle and a quasi-cocycle of order $<k-1$, is $(G,X,S^1)$-cohomologous to a phase polynomial $P$ of degree $<k$. Moreover, for $g\in G$ of order $n$ we have that $P(g,\cdot)$ takes values in $C_n$.
\end{thm}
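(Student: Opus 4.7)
I plan to prove this by induction on $j$, mirroring the proofs of Theorems \ref{TDCL:thm} and \ref{FC:thm} but tracking degrees sharply with the help of Theorem \ref{HTDPV:thm}, Lemma \ref{Edec:lem}, and the two additional hypotheses (line cocycle, quasi-cocycle of order $<k-1$) that distinguish the present statement from the general one. The base case $j=1$ is trivial since $X$ is a point. For the inductive step, write $X=Z_{<j-1}(X)\times_{\sigma_{j-1}}U$ where $\sigma_{j-1}$ is a phase polynomial of degree $<j-1$; since every $p\in P$ satisfies $p\geq\mathrm{char}(G)\geq\max(j,k)$, Theorem \ref{TST:thm} gives $U\cong\prod_i C_{p_i}$ with no higher torsion.

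The argument splits into a reduction to finite $U$ and the finite $U$ case. First I would extract from the quasi-cocycle hypothesis a decomposition $\Delta_u f=Q_u\cdot\Delta F_u$ with $Q_u\in P_{<k}(G,X,S^1)$; sharpness of the degree is available because the quasi-cocycle hypothesis directly supplies phase polynomial errors of degree $<k-1$, and differentiating in the automorphism direction $u$ costs at most one degree. Linearize $u\mapsto Q_u$ on an open neighborhood (Lemma \ref{lin:lem}), integrate via Lemma \ref{PIL:lem} -- whose degree output becomes sharp in high characteristic once one invokes Theorem \ref{HTDPV:thm} -- and apply Lemma \ref{cob:lem} to produce an open subgroup $U''\leq U$ under which $f$ is, up to a phase polynomial of degree $<k$, invariant. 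Pushing everything to the quotient $X'=Z_{<j-1}(X)\times U/U''$, where $U/U''$ is finite and both the line-cocycle and quasi-cocycle properties descend, reduces to the finite $U$ case.

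For finite $U=\prod_{i=1}^N C_{p_i}$, I would reproduce the telescoping construction of Theorem \ref{FC:thm}. For each generator $e_i$ write $\Delta_{e_i}f=Q_i\cdot\Delta F_i$ with $Q_i$ of degree $<k$; the line-cocycle identity $\prod_{t=0}^{p_i-1}V_{e_i}^t f=1$ forces $\prod_{t=0}^{p_i-1}V_{e_i}^t F_i$ to be a phase polynomial of degree $<k$. The key point in high characteristic: since $p_i\geq k$, Theorem \ref{HTDPV:thm} says this polynomial takes values in $C_{p_i}$ on each $g$-slice of order $p_i$, so an explicit $p_i$-th root $\psi_i$ of degree $<k$ can be extracted by raising to the multiplicative inverse of the appropriate integer modulo $p_i$ -- no degree loss occurs, in contrast to the general case of Theorem \ref{FC:thm}. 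Replacing $F_i$ by $\tilde F_i=F_i/\psi_i$, the commutator corrections $\omega_{i,j}=\Delta_{e_i}\tilde F_j/\Delta_{e_j}\tilde F_i$ are phase polynomials of degree $<k$ by the same argument. Assemble the telescoping product $F(y,[t_1,\ldots,t_N])=\prod_i\prod_{0\leq t'<t_i}\tilde F_i(y,[t_1,\ldots,t_{i-1},t',0,\ldots,0])$; then $f/\Delta F$ is $U$-invariant up to a phase polynomial of degree $<k$ and descends to $\tilde f:G\times Z_{<j-1}(X)\to S^1$ which remains a line cocycle and a quasi-cocycle of order $<k-1$. By Lemma \ref{Edec:lem} (exact descent), $\tilde f$ is still of type $<k$, and the inductive hypothesis produces a phase polynomial of degree $<k$. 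The $C_n$-valued conclusion follows by applying the line-cocycle identity to both $f$ and the constructed $\Delta F$ separately: the resulting $P$ satisfies $\prod_{t=0}^{n-1}V_g^t P(g,\cdot)=1$, which forces $P(g,\cdot)^n=1$.

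The main obstacle is preserving the sharp bound $<k$ throughout the telescoping, particularly when extracting $p_i$-th roots of phase polynomials and when descending via Lemma \ref{Edec:lem}. In the general case treated in Theorem \ref{FC:thm}, Corollary \ref{roots} inflates the degree to $O_{k,m}(1)$; here the high-characteristic hypothesis permits an explicit, degree-preserving root extraction. The crucial interplay is between the quasi-cocycle structure (which prevents degree loss under differentiation in $g$, so that exact descent via Lemma \ref{Edec:lem} rather than the lossy Lemma \ref{dec:lem} is available) and the line-cocycle structure (which matches the allowed polynomial degrees to the group orders), and it is precisely this pair of hypotheses that makes the sharp bound attainable.
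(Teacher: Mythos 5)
There is a genuine gap at the very first step of your inductive argument. You claim that the Conze--Lesigne decomposition $\Delta_u f=Q_u\cdot\Delta F_u$ with $Q_u\in P_{<k}(G,X,S^1)$ is "directly supplied" by the quasi-cocycle hypothesis because "differentiating in the automorphism direction $u$ costs at most one degree." This is not so: the quasi-cocycle condition controls the $g$-variable (the defect of $f(g+g',x)$ against $f(g,x)f(g',T_gx)$), not the $u$-derivative, and $f$ is not assumed to be a phase polynomial, so Lemma \ref{vdif:lem} does not apply to it. Producing the decomposition $\Delta_t f=q_t\cdot\Delta F_t$ with a sharply bounded $q_t$ is the heart of the theorem, and the paper obtains it by a \emph{double} induction that your proposal omits: by Lemma \ref{ED:lem}, for $t\in U_{j-1}$ the function $\Delta_t f$ is again a line cocycle, is of type $<k-j+1$, and is a quasi-cocycle of order $<k-j$, so the theorem applied inductively with the \emph{smaller type parameter} $k-j+1$ yields $q_t\in P_{<k-j+1}(G,X,S^1)$ with the correct $C_n$-values; the Exact Integration Lemma \ref{Eint:lem} (not Lemma \ref{PIL:lem}) then integrates $q_t$ against the degree-$<j-1$ polynomial cocycle $\sigma_{j-1}$ to produce $Q$ of degree $<(k-j+1)+(j-1)=k$ exactly. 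Without the induction on $k$ you have no mechanism for bounding $Q_u$ at all, let alone by $k$.

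Your finite-$U$ step also diverges from what actually works. You propose to rerun the full telescoping product over all $N$ generators together with the commutator corrections $\omega_{i,j}$ from Theorem \ref{FC:thm}; but the degree bookkeeping there passes through Lemma \ref{B.5} and Corollary \ref{funofpoly}, which incur $O_{k,m}(1)$ losses that Theorem \ref{HTDPV:thm} does not remove, and your assertion that "an explicit, degree-preserving root extraction" suffices is not substantiated for the composed terms $\omega_{i,j}(y,[t_1,\dots,t_{i-1},t_i',0,\dots,0])$. The paper avoids this entirely: in the finite case it inducts on the total number $L$ of cyclic factors, peels off a single generator $e$, uses the exact integration lemma to build $Q$ with $\Delta_e(f/Q)=\Delta F_e$, and then invokes Lemma \ref{B.4} (free actions of compact abelian groups have trivial cohomology) to solve $F_e=\Delta_e F$ exactly --- no multi-generator telescoping and no commutator corrections are needed. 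Finally, your derivation of the $C_n$-valuedness of $P$ from the line-cocycle identity alone is too quick ($\prod_{t=0}^{n-1}T_g^tP(g,\cdot)=1$ gives $P(g,\cdot)^n$ times a product of derivatives, not $P(g,\cdot)^n$ itself); in the paper this is tracked through Proposition \ref{PPC}, Theorem \ref{HTDPV:thm}, and the explicit formula for $Q$ in Lemma \ref{Eint:lem}.
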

We note that the fact about the values of $P$ is only needed for inductive reasons (see for example equation (\ref{Heq1})).
\begin{proof}[Proof of Theorem \ref{reductionH:thm} assuming Theorem \ref{TDH:thm}]
	Let $k,j,X,\rho$ be as in Theorem \ref{reductionH:thm}. Our goal is to reduce matters to the case where $X$ is totally disconnected and then apply Theorem \ref{TDH:thm}. We prove Theorem \ref{reductionH:thm} by induction on $k$. The case $k=1$ is trivial. Fix $k$ and assume that the claim holds for smaller values of $k$. We observe that since that the proof of Theorem \ref{Main:thm} is now complete, we have that $\rho$ is cohomologous to a phase polynomial of some degree (possibly higher than $k$). Therefore, we can apply Theorem \ref{con:thm} to eliminate the connected components of $X$. As in Theorem \ref{TDred:thm}, $X$ admits a totally disconnected and Weyl factor $Y$ and by the induction hypothesis on $k$ we have that the cocycles which define $Y$, $\sigma_1,...,\sigma_j$ are phase polynomials of degrees $<1,...,<j$. Moreover, $\rho$ is cohomologous to some $\pi^\star \rho'$ where $\rho':G\times Y\rightarrow S^1$. By Lemma \ref{Cdec:lem}, we have that $\rho'$ is of type $<k$ and therefore by Theorem \ref{TDH:thm} we have that it is cohomologous to a phase polynomial $P$ of degree $<k$. We conclude that $\rho$ is cohomologous to $\pi^\star P$ which is also a phase polynomial of degree $<k$.
\end{proof}
It is left to prove Theorem \ref{TDH:thm}. The proof method is very similar to \cite[Theorem 8.6]{Berg& tao & ziegler}. We first deal with the easy case $k=1$. In this case $f$ is a quasi-cocycle of order $<0$, and is thus a cocycle. It is well known (Lemma \ref{type0}) that a cocycle of type $<1$ is cohomologous to a constant $c(g)$. Therefore, we can write $f(g,x)=c(g)\cdot \Delta_g F(x)$ for some measurable maps $F:X\rightarrow S^1$ and $c:G\rightarrow S^1$. Since $f$ is a cocycle, $c$ is a character of $G$ and therefore $c(g)\in C_n$ for every $g\in G$ of order $n$ and the claim follows.

Now suppose that $2\leq k \leq \text{char}(G)$ and assume inductively that the claim has already been proven for smaller values of $k$. We have the following analogue of Theorem \ref{TST:thm}.
\begin{thm}  [Exact topological structure theorem for totally disconnected systems] \label{TSTH:thm}Let $1\leq k \leq \text{char}(G)$ be such that Theorem \ref{TDH:thm} holds for all values smaller or equal to $k$. Let $X$ be an ergodic totally disconnected Weyl  $G$-system of order $<k$ where the phase polynomial cocycles $\sigma_1,...,\sigma_{k-1}$ are of degree $<1,...,<k-1$. Then there exists a multiset of primes $A$ such that $Z_{<j}(X)=Z_{<j-1}(X)\times _{\sigma_{j-1}} U_j$, where $U_j \cong \prod_{p\in A} C_p$.
\end{thm}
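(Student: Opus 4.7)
The argument will run in parallel with the proof of Theorem \ref{TST:thm}, but the high characteristic condition lets us replace the bounded torsion $p^{O_k(1)}$ there by exactly $p$. The case $k=1$ is vacuous, so I fix $2 \leq j \leq k$ and write $Z_{<j}(X) = Z_{<j-1}(X) \times_{\sigma_{j-1}} U_j$; it suffices to show $U_j \cong \prod_{p\in A}C_p$ for some multiset of primes $A$. By Proposition \ref{Sylow} I decompose $U_j = \prod_p U_{j,p}$ into its $p$-Sylow subgroups, and it will be enough to show each $U_{j,p}$ has exponent exactly $p$.

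Fix a prime $p$ and a continuous character $\chi : U_{j,p} \to S^1$ with image $C_{p^n}$. The cocycle $\chi\circ\sigma_{j-1}$ takes values in $C_{p^n}$, is trivially a line cocycle and a quasi-cocycle of every order, and is of type $<j-1$. Since Theorem \ref{TDH:thm} is assumed for all parameters at most $k$, I apply it (with its $k$ played by $j-1$) on the totally disconnected Weyl system $Z_{<j-1}(X)$, whose structure cocycles $\sigma_1,\dots,\sigma_{j-2}$ inherit degrees $<1,\dots,<j-2$ from the hypothesis on $X$. This yields a phase polynomial $P$ of degree $<j-1$, cohomologous to $\chi\circ\sigma_{j-1}$, with $P(g,\cdot)\in C_{\mathrm{ord}(g)}$ for every $g\in G$. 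Next, Lemma \ref{valp:lem} converts this into a cohomologous phase polynomial $q : G\times Z_{<j-1}(X)\to C_{p^n}$. By Lemma \ref{PPC}, $q(g,\cdot)=1$ whenever $\mathrm{ord}(g)$ is coprime to $p$; and by Theorem \ref{HTDPV:thm}, using $p\geq \mathrm{char}(G)\geq k$, we have $q(g,\cdot)\in C_p$ for every $g$ of order $p$. Hence $q(g,\cdot)^p=1$ on a generating set, and the cocycle identity upgrades this to $q^p\equiv 1$ identically on $G$; thus $(\chi\circ\sigma_{j-1})^p = \Delta F$ is a $(G, Z_{<j-1}(X), C_{p^n})$-coboundary for some measurable $F$.

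Were $n\geq 2$, the factor $Z_{<j-1}(X)\times_{(\chi\circ\sigma_{j-1})^p}C_{p^{n-1}}$ of $X$ would admit the nontrivial invariant function $(s,u)\mapsto \overline{F(s)}\,u$, contradicting the ergodicity of $X$. Hence $n=1$, so every continuous character of $U_{j,p}$ has image inside $C_p$; the Pontryagin dual $\widehat{U_{j,p}}$ is therefore $p$-torsion, and consequently so is $U_{j,p}$. Theorem \ref{torsion} then presents $U_{j,p}\cong \prod_{i\in A_p}C_p$ for some (possibly infinite) index set $A_p$, and taking $A$ to be the disjoint union of the $A_p$, viewed as a multiset of primes with $p$ repeated $|A_p|$ times, gives the desired description of $U_j$.

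The main obstacle is the degree bookkeeping in the second paragraph: after passing through Lemma \ref{valp:lem} one must verify that the (possibly enlarged) degree of $q$ still lies below $p$, so that Theorem \ref{HTDPV:thm} can force $q(g,\cdot)\in C_p$ rather than $C_{p^d}$ with $d\geq 2$. This is precisely where the high characteristic hypothesis $\mathrm{char}(G)\geq k$ enters essentially, since the degree bound produced by Lemma \ref{valp:lem} depends only on the order $<j-1 \leq k-1$ of the ambient system $Z_{<j-1}(X)$ and on the degree $<j-1$ of the input polynomial $P$.
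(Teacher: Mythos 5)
Your skeleton is the right one: the paper gives no separate proof of Theorem \ref{TSTH:thm} and intends it to be the proof of Theorem \ref{TST:thm} run again with Theorem \ref{TDH:thm} in place of Theorem \ref{Main:thm}, and that is exactly what you do (Sylow decomposition, a character $\chi$ with image $C_{p^n}$, reduction to a phase polynomial, the ergodicity argument killing $n\geq 2$, then Theorem \ref{torsion}). The problem is the step you yourself flag and then wave away. After Lemma \ref{valp:lem} the polynomial $q$ has degree $<O_{j,k}(1)$, and Theorem \ref{HTDPV:thm} can only force $q(g,\cdot)\in C_p$ when $p$ is at least that degree. Your closing sentence argues that the bound ``depends only on'' $j-1\leq k-1$; but a quantity of the form $O_{d,k}(1)$ that depends only on $k$ need not be $\leq k$ (the constants coming from Corollary \ref{roots} and Theorem \ref{TDPV:thm} are not bounded by $k$), whereas the hypothesis only guarantees $p\geq \mathrm{char}(G)\geq k$. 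So in the critical case $p=\mathrm{char}(G)=k$ the application of Theorem \ref{HTDPV:thm} to $q$ is not justified, and the exactness $n=1$ --- which is the entire content of this theorem over Theorem \ref{TST:thm} --- is not established.

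The gap is closable, but one must bypass the lossy root extraction of Lemma \ref{valp:lem} rather than quote it. Write $\chi\circ\sigma_{j-1}=P\cdot\Delta F$ with $P$ the degree $<j-1$ polynomial from Theorem \ref{TDH:thm} satisfying $P(g,\cdot)\in C_{\mathrm{ord}(g)}$. Raising to the $p^n$ shows $F^{p^n}$ is a phase polynomial of degree $<j$, with $\Delta_g F^{p^n}=1$ for $g$ in the $p$-torsion part $G_p$ (since there $P(g,\cdot)^p=1$) and $\Delta_g F^{p^n}\in C_{\mathrm{ord}(g)}$ with $\mathrm{ord}(g)$ coprime to $p$ otherwise; hence, after Proposition \ref{TDPV:prop}, the $p$-primary coordinate of $F^{p^n}$ is $G$-invariant and therefore constant, and $F^{p^n}$ takes values (up to constant) in a group of order coprime to $p$. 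There a $p^n$-th root is simply a power of $F^{p^n}$ itself, so it is a phase polynomial of degree $<j$, not $<O_{j,k}(1)$. Dividing out, you obtain the cocycle $q$ valued in $C_{p^n}$ of degree still $<j-1\leq k\leq p$, to which Lemma \ref{PPC} and Theorem \ref{HTDPV:thm} legitimately apply, and the rest of your argument goes through. Two smaller points: for primes $p\notin\mathcal{P}$ the inequality $p\geq\mathrm{char}(G)$ fails and you should instead note that Lemma \ref{PPC} alone forces $q\equiv 1$, so ergodicity makes $U_{j,p}$ trivial; and the ``up to constant'' in Theorem \ref{HTDPV:thm} should be reconciled with the cocycle identity before concluding $q^p\equiv 1$.
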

When $j=1$, $X$ is just a point and Theorem \ref{TDH:thm} is trivial. Now suppose $2\leq j\leq \text{char}(G)$ and assume inductively the claim has already been proven for the same value of $k$ and smaller values of $j$.

We first deal with the lower case $j\leq k$, write $X=U_0\times_{\sigma_1}U_1\times...\times_{\sigma_{j-1}} U_{j-1}$ and let $t\in U_{j-1}$. We have the following result \cite[Lemma 8.8]{Berg& tao & ziegler}.
\begin{lem} \label{ED:lem} 
	$\Delta_t f$ is a line cocycle, is of type $<k-j+1$  and is a quasi-coboundary of order $<k-j$.
\end{lem}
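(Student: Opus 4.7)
The three assertions describe how the vertical automorphism $V_t$ (for $t\in U_{j-1}$) interacts with the three structural properties of $f$. I would prove them in turn.

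For the line-cocycle assertion, $V_t$ commutes with every $T_g$, so applying $V_t$ to the identity $\prod_{s=0}^{n-1}f(g,T_g^s x)=1$ (valid for $g\in G$ of order $n$) and dividing by the same identity yields $\prod_{s=0}^{n-1}\Delta_t f(g,T_g^s x)=1$. For the type assertion, $V_t$ preserves the factor $Z_{<j-1}(X)=U_0\times_{\sigma_1}\cdots\times_{\sigma_{j-2}}U_{j-2}$, since it shifts only the top coordinate in $U_{j-1}$; since $f$ is of type $<k$, Lemma \ref{dif:lem} applied with this automorphism and the preserved factor of order $<j-1$ gives $\Delta_t f$ of type $<k-\min(k,j-1)=k-j+1$ (using $j\leq k$).

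For the quasi-coboundary assertion, I would first differentiate the quasi-cocycle identity $f(g+g',x)=f(g,x)f(g',T_gx)p_{g,g'}(x)$ (with $p_{g,g'}\in P_{<k-1}(X,S^1)$) in the $t$ direction. Since $V_t$ commutes with $T_g$, this gives
\[
\Delta_t f(g+g',x)=\Delta_t f(g,x)\cdot\Delta_t f(g',T_gx)\cdot\Delta_t p_{g,g'}(x),
\]
exhibiting $\Delta_t f$ as a quasi-cocycle whose cocycle defect is $\Delta_t p_{g,g'}$. The key degree-reduction fact is that in the Weyl system $X$ of order $<j$, the operator $\Delta_t$ for $t\in U_{j-1}$ lowers phase-polynomial degree by exactly $j-1$: decomposing an element of $P_{<k-1}(X,S^1)$ by its Fourier modes in the top coordinate $u_{j-1}$, each character of $U_{j-1}$ lifts through $\sigma_{j-1}\in P_{<j-1}(G,Z_{<j-1}(X),U_{j-1})$ to contribute degree exactly $j-1$, and $V_t$ collapses this character contribution to a constant. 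Hence $\Delta_t p_{g,g'}\in P_{<k-j}(X,S^1)$ and $\Delta_t f$ is a quasi-cocycle of order $<k-j$.

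To upgrade the quasi-cocycle conclusion to the quasi-coboundary statement, I would then apply a polynomial-integration argument in the spirit of Lemma \ref{PIL:lem}: since the cocycle defect of $\Delta_t f$ is a phase polynomial of degree $<k-j$ and $\Delta_t f$ is itself of type $<k-j+1$, one constructs a measurable $F:X\to S^1$ whose derivative $\Delta_g F$ absorbs the cocycle defect, yielding a decomposition $\Delta_t f=P\cdot\Delta_g F$ with $P\in P_{<k-j}(G,X,S^1)$. The main obstacle is the degree-reduction fact underlying the control of the defect; the subsequent promotion from quasi-cocycle to quasi-coboundary is then fairly routine, and the assumption $k,j\leq\mathrm{char}(G)$ rules out small-prime obstructions in the Fourier decomposition over $U_{j-1}$.
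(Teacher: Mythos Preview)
The paper does not prove this lemma; it simply imports it as \cite[Lemma 8.8]{Berg& tao & ziegler}. Note also that ``quasi-coboundary'' in the statement is a typo for ``quasi-cocycle'': the very next sentence of the paper reads ``Since $\Delta_t f$ is a line cocycle and a quasi-cocycle of order $<k-j$ so is $q_t$,'' and it is the quasi-cocycle property (together with line-cocycle and type $<k-j+1$) that is needed to feed $\Delta_t f$ into the induction hypothesis of Theorem \ref{TDH:thm}.

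Your arguments for the line-cocycle and type assertions are correct and match the standard proof. For the third assertion your structural setup is right --- differentiating the quasi-cocycle identity in $t$ gives defect $\Delta_t p_{g,g'}$ --- but your justification of the degree drop by $j-1$ via ``Fourier modes in the top coordinate'' is not quite sound: phase polynomials are $S^1$-valued and do not decompose as sums of characters of $U_{j-1}$. The clean argument (as in \cite{Berg& tao & ziegler}) is that any $q\in P_{<j-1}(X,S^1)$ is $Z_{<j-1}(X)$-measurable, and $V_t$ acts as the identity on $Z_{<j-1}(X)$, so $\Delta_t q=1$; applying this to $q=\Delta_{h_1}\cdots\Delta_{h_{k-j}}p_{g,g'}\in P_{<j-1}$ yields $\Delta_t p_{g,g'}\in P_{<k-j}$ directly.

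Your final ``upgrade to quasi-coboundary'' paragraph should be dropped entirely. It is chasing the typo, and the proposed mechanism --- constructing $F$ so that $\Delta_g F$ absorbs the defect and $\Delta_t f = P\cdot\Delta_g F$ with $P$ a phase polynomial --- is precisely the conclusion that the induction hypothesis of Theorem \ref{TDH:thm} supplies \emph{after} the lemma, not something the lemma itself establishes. Invoking Lemma \ref{PIL:lem} here does not do this: that lemma integrates a $U$-cocycle of phase polynomials, not a $G$-defect, and there is no direct route from ``quasi-cocycle of order $<k-j$ and type $<k-j+1$'' to ``cohomologous to a phase polynomial'' without the full inductive machinery.
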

By the induction hypothesis, $\Delta_t f$ is $(G,X,S^1)$-cohomologous to a phase polynomial $q_t\in P_{<k-j+1}(G,X,S^1)$, and $q_t(g,\cdot)$ takes values in $C_n$ for $g$ of order $n$. Since $\Delta_t f$ is a line cocycle and a quasi-cocycle of order $<k-j$ so is $q_t$.
\subsection{Reduction to the finite $U$ case} We now argue as in Theorem \ref{MainF:thm}. We show that is suffices to prove Theorem \ref{TDH:thm}, in the case when $U_{j-1}$ is finite.\\
We will take advantage of the following result of Bergelson Tao and Ziegler \cite[Proposition 8.9]{Berg& tao & ziegler}.
\begin{lem}  [Exact integration Lemma]\label{Eint:lem}
	Let $j \geq 0$, let $U$ be a compact abelian group, and let $X = Y \times_{\rho} U$ be an ergodic $G$-system with $Y \geq Z_{<j}(X)$, where $\sigma:G\times Y\rightarrow U$ is a phase polynomial cocycle of degree $<j$. For any $t\in U$, let $p_t:X\rightarrow S^1$ be a phase polynomial of degree $< l$, and suppose that for any $t,s\in U$, $p_{t\cdot s} = p_s(x) p_t(V_sx)$. Then there exists a phase polynomial $Q:X\rightarrow S^1$ of degree $<l+j$ such that $\Delta_t Q(x) = p_t(x)$. Furthermore we can take $Q(t,uu_0):=p_u (y,u_0)$ for some $u_0\in U$.
\end{lem}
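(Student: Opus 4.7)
The plan is to take the explicit formula suggested in the ``furthermore'' clause as the definition of $Q$ and then verify separately the identity $\Delta_t Q = p_t$ and the degree bound $Q \in P_{<l+j}(X, S^1)$. Concretely, fix some $u_0 \in U$ and set
$$Q(y, v) := p_{vu_0^{-1}}(y, u_0).$$
The identity $\Delta_t Q = p_t$ is then immediate from the assumed $U$-cocycle relation $p_{ts'}(x) = p_{s'}(x) \cdot V_{s'}p_t(x)$: since $V_t$ acts on $X$ by $V_t(y, v) = (y, tv)$, taking $s' = vu_0^{-1}$ and evaluating at $x = (y, u_0)$ gives $V_{vu_0^{-1}}(y, u_0) = (y, v)$, hence
$$\Delta_t Q(y, v) = \frac{p_{tvu_0^{-1}}(y, u_0)}{p_{vu_0^{-1}}(y, u_0)} = p_t(V_{vu_0^{-1}}(y, u_0)) = p_t(y, v).$$

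The main work is the degree bound. I would proceed by induction on $l + j$ (or by double induction on $(l, j)$), aiming to show $\Delta_h Q \in P_{<l + j - 1}(X, S^1)$ for every $h \in G$ and then invoking the polynomiality criterion Lemma \ref{PPP}(iii). Using $T_h(y, v) = (T_h y, \sigma(h, y) v)$ and the $U$-cocycle identity for $p$ once, one obtains the factorization
$$\Delta_h Q(y, v) \;=\; \frac{p_{vu_0^{-1}}(T_h y, u_0)}{p_{vu_0^{-1}}(y, u_0)} \cdot p_{\sigma(h, y)}(T_h y, v).$$
The second factor is the composition of the phase polynomial $y \mapsto \sigma(h, y)$ of degree $<j$ with the $U$-indexed family $\{p_t\}$ of phase polynomials of degree $<l$; by Lemma \ref{B.5}, which controls the degree of phase polynomials built from other phase polynomials, this factor lies in $P_{<l + j - 1}(X, S^1)$.

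The first factor is the subtler one, since the map $(y, v) \mapsto p_{vu_0^{-1}}(T_h y, u_0)/p_{vu_0^{-1}}(y, u_0)$ is only a ``$Y$-shift'' of $Q$ along the section $v \mapsto (y, u_0)$ rather than a genuine $\Delta_h$-derivative on $X$. The plan is to use the $U$-cocycle identity a second time to express this first factor as $\Delta_h Q$ corrected by a $\sigma(h, \cdot)$-dependent phase polynomial whose degree is already under control by the second factor's analysis, producing a compatible recursion that the induction hypothesis handles. The main obstacle will be the careful bookkeeping needed to match the $U$-cocycle structure of $t \mapsto p_t$ with the degree-$<j$ polynomial structure of $\sigma$, so that each inductive reduction genuinely decreases the complexity parameter; once that is carried out via repeated applications of Lemma \ref{B.5}, the polynomiality criterion Lemma \ref{PPP}(iii) upgrades the bound on derivatives to the claimed bound $Q \in P_{<l + j}(X, S^1)$.
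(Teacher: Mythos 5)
This lemma is not proved in the paper at all: it is imported verbatim from Bergelson--Tao--Ziegler \cite[Proposition 8.9]{Berg& tao & ziegler}, so there is no internal proof to compare against. Judged on its own, your definition $Q(y,v):=p_{vu_0^{-1}}(y,u_0)$ and the verification of $\Delta_tQ=p_t$ via the $U$-cocycle identity are correct (modulo the standard Fubini point that $u_0$ must be chosen so that the restriction to the fibre $\{(y,u_0)\}$ is meaningful a.e.), and your factorization
$$\Delta_h Q(y,v)=\frac{p_{vu_0^{-1}}(T_hy,u_0)}{p_{vu_0^{-1}}(y,u_0)}\cdot p_{\sigma(h,y)}(T_hy,v)$$
is also correct.

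The gap is in the degree bound, which is the entire content of the lemma beyond the explicit formula. First, your treatment of the factor $p_{\sigma(h,y)}(T_hy,v)$ via Lemma \ref{B.5} does not work as stated: that lemma concerns expressions $(\Delta_{v_1(x)}\cdots\Delta_{v_j(x)}p)(s(x))$ built from a \emph{single, already-known} phase polynomial $p$, whereas here you have a $U$-indexed cocycle family $\{p_t\}$; to put $p_{\sigma(h,y)}$ into that form you would write $p_t=\Delta_t Q$, which presupposes exactly what you are trying to prove. Moreover, Lemma \ref{B.5} only yields $O(1)$ degree bounds, so even if it applied it could at best reproduce the non-exact integration lemma (Lemma \ref{PIL:lem}); it cannot give the exact bound $<l+j-1$ for the derivative that the statement requires. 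The fact you actually need is a separate ``exact composition'' statement: for a cocycle family $\{p_t\}_{t\in U}$ of degree $<l$ polynomials and a phase polynomial $f:X\rightarrow U$ of degree $<j$, the map $x\mapsto p_{f(x)}(x)$ has degree $<l+j-1$. This is proved by induction on $l$, the base case $l=1$ using that by ergodicity and the cocycle identity $t\mapsto p_t$ is a character of $U$, and the inductive step using that $t\mapsto p_t \bmod P_{<l-1}(X,S^1)$ is a homomorphism (via Lemma \ref{vdif:lem}); this is the full form of \cite[Lemma B.5]{Berg& tao & ziegler}, of which the present paper only quotes one clause. Second, your handling of the first factor is explicitly deferred (``the plan is\dots'', ``the main obstacle will be\dots''); since that factor is where the second application of the $U$-cocycle identity interacts with the shift $u_0\mapsto\sigma(h,y)u_0$ of the base point, and since the whole argument must be organized as a genuine double induction on $(l,j)$ for the exact exponent $l+j$ to come out, this is not mere bookkeeping but the core of the proof, and it is missing.
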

We continue the proof of Theorem \ref{TDH:thm}. Let $f$ be as above and write $\Delta_t f = q_t\cdot \Delta F_t$. By Lemma \ref{lin:lem} (linearization Lemma) there exists an open neighborhood of the identity $U_{j-1}'$ in $U_{j-1}$ such that $q_{ts}=q_t V_t q_s$ whenever $s,t\in U_{j-1}'$. As in Theorem \ref{MainF:thm}, we can write $U_{j-1} = U'\times W$ for some finite $W$. Let $X = Y\times _{\sigma'} U'$ where $Y=Z_{<j-1}(X)\times_{\sigma''} W$ and $\sigma',\sigma''$ are the projections of $\sigma_{j-1}$ to $U'$ and $W$, respectively. Note that as $\sigma_{j-1}$ is a phase polynomial of degree $<j-1$, so is the projection $\sigma'$.\\
Applying Lemma \ref{Eint:lem} once for every $g\in G$, we can find a phase polynomial $Q:G\times X\rightarrow S^1$ such that $q_t = \Delta_t Q$ for every $t\in U_{j-1}$. In fact we can take $Q(g,y,uu_0)=q_u(g,y,u_0)$ for $y\in Y$, $u\in U'$ and some $u_0\in U'$. As $q_u(g,\cdot)$ takes values in $C_n$ whenever $g$ is of order $n$, so does $Q$.\\

We claim that $Q:G\times X\rightarrow S^1$ is a quasi-cocycle of order $<k-1$. The proof follows the arguments of Bergelson Tao and Ziegler from \cite{Berg& tao & ziegler}. For the sake of completeness we repeat the proof. For every $g,h\in G$ and $x=(y,uu_0)\in X$, we have the following computation.
$$\frac{Q(g+h,x)}{Q(g,x)Q(h,T_gx)} = \frac{Q(g+h,x)}{Q(g,x)Q(h,x)\Delta_g Q(h,x)}$$
$$=\frac{q_u(g+h,y,u_0)}{q_u(g,y,u_0)q_u(h,y,u_0)\Delta_g Q(h,x)}$$
$$=\frac{q_u(g+h,y,u_0)}{q_u(g,yu,u_0)q_u(h,T_gy,\sigma_{j-1}(g,y)u_0)}\frac{q_u(h,T_gy,\sigma_{j-1}(g,y)u_0)}{q_u(h,y,u_0)\Delta_g Q(h,x)}$$
$$=P_{u,g,h}(y,u_0)\Delta_g \left( \frac{q_u(h,x)}{Q(h,x)}\right)$$
where $P_{u,g,h}(x)=\frac{q_u(g+h,x)}{q_u(g,x)q_u(h,T_g x)}$. Since $q_u$ is a phase polynomial of degree $<k-j+1$ and $Q$ of degree $<k$, we conclude that $\Delta_g \frac{q_u(h,x)}{Q(h,x)}$ is a phase polynomial of degree $<k-1$. As $q_u$ is a quasi-cocycle of order $<k-j$, we have that $P_{u,g,h}(x)$ is a phase polynomial of degree $<k-j$. Moreover, since $u\mapsto q_u$ is a cocycle in $u$ so is $u\mapsto P_{u,g,h}(x)$. By the furthermore part of Lemma \ref{PIL:lem} the map $(y,uu_0)\mapsto P_{u,g,h}(y,u_0)$ is a phase polynomial of degree $<k-1$ and the claim follows.\\

We continue the proof of Theorem \ref{TDH:thm}. Set $f'=f/Q$. It follows from the construction of $Q$ that $\Delta_u f'\in B^1(G,X,S^1)$ for every $u\in U'$. Therefore, by Lemma \ref{cob:lem}, $f'$ is $(G,X,S^1)$-cohomologous to a function $f''$ which is invariant with respect to the action of some open subgroup $U''$ of $U'$. The projection map $U_{j-1}\rightarrow U_{j-1}/U''$ gives rise to a factor map  $\pi:Y\times_{\overline\sigma_{j-1}} U_{j-1}/U''$. We let $\tilde{f}=\pi_\star f''$ be the push-forward of $f''$ to $Y$.\\
Observe that if $g$ is of order $n$ then $Q(g,\cdot)$ takes values in $C_n$. Therefore, by Proposition \ref{linecocycle:prop} applied with $F=Q(g,\cdot)$, we see that $Q$ is a line cocycle. As $f''$ is cohomologous to $f/Q$ and $f''=\pi^\star \tilde{f}$, we conclude that $\pi^\star \tilde{f}$ and $\tilde{f}$ are line cocycles. Similarly, since $f$ and $Q$ are quasi-cocycles of order $<k-1$ so is $\tilde{f}$. \\
We show that $\tilde{f}$ is of type $<k$. First, observe that $Q$ is a phase polynomial of degree $<k$ and so by Lemma \ref{PP}(iii) it is of type $<k$. Since $f$ is also of type $<k$, $f/Q$ and $\pi^\star \tilde{f}$ are of type $<k$. Therefore, by Lemma \ref{Edec:lem}, we have that $\tilde{f}$ is of type $<k$. Since $\tilde{f}$ is measurable with respect to an extension of $X$ by a finite group we can apply the finite case of Theorem \ref{TDH:thm} to complete the proof. $\square$
\subsection{The finite group case}
We now establish Theorem \ref{TDH:thm} for a finite $U_j$. \\
First we recall the following Lemma \cite[Lemma C.8]{HK}.
\begin{lem} [Free actions of compact abelian groups have no cohomology] \label{B.4}
	Let $U$ be a compact abelian group acting freely on $X$ by measure preserving transformations. Then every cocycle $\rho:U\times X\rightarrow S^1$ is a $(U,X,S^1)$-coboundary.
\end{lem}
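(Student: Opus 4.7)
The plan is to exploit the freeness of the $U$-action, which lets us identify $X$ with $Y\times U$ (where $U$ acts by translation on the second coordinate) and then write down an explicit coboundary by hand. Reading the cocycle identity $\rho(u_1+u_2,x) = \rho(u_1,x)\,\rho(u_2,V_{u_1}x)$ at $x=(y,0)$ with $u_1=w$ and $u_2=u$ heuristically suggests the candidate
\[
F(y,w) := \rho(w,(y,0)),
\]
for which formally $F(y,w+u)/F(y,w) = \rho(w+u,(y,0))/\rho(w,(y,0)) = \rho(u,(y,w))$. The only defect of this formula is that $\rho(\cdot,(y,0))$ is a priori only well-defined on a full-measure subset of $X$, whereas $Y\times\{0\}$ is null.

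To circumvent this, I would replace the basepoint $0\in U$ by a generic $w_0\in U$. After the standard reduction that $\rho$ may be taken to satisfy the cocycle identity everywhere (rather than merely on a conull set), let $A\subseteq Y\times U$ denote the set of pairs $(y,w_0)$ such that $\rho(u_1+u_2,(y,w_0)) = \rho(u_1,(y,w_0))\,\rho(u_2,(y,w_0+u_1))$ for all $u_1,u_2\in U$. By Fubini applied to $A$, for almost every $w_0\in U$ the slice $\{y:(y,w_0)\in A\}$ is conull in $Y$. Fix any such good $w_0$ and define
\[
F(y,w) := \rho(w-w_0,(y,w_0)),
\]
which is jointly measurable in $(y,w)$. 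Applying the cocycle identity at the basepoint $(y,w_0)$ with $u_1=w-w_0$ and $u_2=u$ gives $\rho(w+u-w_0,(y,w_0)) = \rho(w-w_0,(y,w_0))\,\rho(u,(y,w))$, which rearranges to $F(y,w+u)/F(y,w) = \rho(u,(y,w))$ for almost every $(y,w)$ and every $u\in U$. Thus $\rho=\Delta F$ and $\rho$ is a $(U,X,S^1)$-coboundary.

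The only place requiring care (rather than a genuine obstacle) is the Fubini argument justifying that a single measurable basepoint $w_0$ serves for almost every $y$; once that is in place, the verification that $F$ is a coboundary is a direct algebraic rearrangement of the cocycle identity, and no appeal to the compact group structure beyond measurability of Haar measure is needed.
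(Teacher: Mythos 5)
Your argument is correct: once the action is free, $X\cong Y\times U$ by the paper's definition of freeness, and evaluating the cocycle identity at a generically chosen basepoint $(y,w_0)$ yields the explicit primitive $F(y,w)=\rho(w-w_0,(y,w_0))$ with $\Delta_uF=\rho(u,\cdot)$; this is precisely the standard proof of the result, which the paper does not reprove but imports from Host--Kra \cite[Lemma C.8]{HK}. One cosmetic remark: once you invoke the reduction to a strict cocycle, the Fubini selection of $w_0$ is redundant (every $w_0$ works), whereas without that reduction the set $A$ you define (requiring the identity for \emph{all} $u_1,u_2$) need not be conull — so the two safeguards should be viewed as alternatives rather than as a single chain, but either one closes the measure-theoretic gap you correctly identified.
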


Using Theorem \ref{TSTH:thm} we can write $U_j = C_{p_1}^{L_1}\times...\times C_{p_m}^{L_m}$ for some finite $L_1,...,L_m$ and a finite $m\in\mathbb{N}$. We proceed by induction on $L=L_1+...+L_m$, the case $L=0$ is trivial. Suppose that $L\geq 1$, then without loss of generality $L_1\geq 1$ and we assume inductively that the claim has already been proven for $L-1$. Write $U_j = C_{p_1}^{L_1-1}\times \left<e\right>\times C_{p_2}^{L_2}\times... C_{p_m}^{L_m}$ where $e$ is a generator for $C_{p_1}$. For simplicity we denote $p=p_1$.

Recall from equation (\ref{Heq1}) that $\Delta_e f$ is $(G,X,S^1)$-cohomologous to a phase polynomial $q_e$ of degree $<k-j+1$ and that $q_e(g,\cdot)$ takes values in $C_n$ where $n$ is the order of $g$. We write,
\begin{equation}\label{Heq1}
\Delta_e f = q_e \cdot \Delta F_e.
\end{equation}
Arguing as in Theorem \ref{FC:thm} we can assume that $F_e$ satisfies that \begin{equation}\label{Heq2}
\prod_{i=0}^{p-1} V_e^i F_e = 1
\end{equation} without changing equation (\ref{Heq1}) and $q_e$ is still a phase polynomial of degree $<k-j+1$.\\
We now define a function $q_{e^s}(g,x)$ for all $0\leq s <p$ by the formula $$q_{e^s}(g,x) :=  \prod_{i=0}^{s-1} q_e(g, V_{e}^i x)$$ Since $q_e$ is a phase polynomial of degree $<k-j+1$, so is $q_{e^s}$ and by equations (\ref{Heq1}) and (\ref{Heq2}), we have that $\prod_{i=0}^{p-1} V_e^i q_e = 1$. It follows that $u\mapsto q_u$ is a cocycle for $u\in \left<e\right>$.\\
By the cocycle identity we have
\begin{equation}
\Delta_{e^s} f (g,x)= \prod_{i=0}^{s-1}\Delta_{e} f(g,V_e^ix)
\end{equation}
Since $\Delta_e f$ is cohomologous to $q_e$, it follows that $\Delta_u f$ is cohomologous to $q_u(g,x)$ for every $u\in \left<e\right>$.\\
Now, as $q_u$ is cocycle in $u$, applying the Polynomial integration lemma (Lemma \ref{Eint:lem}), we see that there exists a phase polynomial $Q$ of degree $<k$ such that $\Delta_u Q = q_u$, and $Q(g,\cdot)$ take values in $C_n$ where $n$ is the order of $g$. Moreover, arguing as in the previous section we also have that $Q$ is a quasi-cocycle of order $<k-1$.\\
From equation (\ref{Heq1}) we have $$\Delta_e (f/Q) = \Delta F_e.$$
From the telescoping identity
$$\prod_{i=0}^{p-1}V_e^i \Delta_e (f/Q)=1$$ we conclude that $\Delta \prod_{i=0}^{p-1}V_{e^i}F_e=1$ and so by ergodicity $\prod_{i=0}^{p-1}V_{e^i}F_e$ is a constant in $S^1$. Thus, we can rotate $F_e$ by a $p$-th root of this constant and assume that  $$\prod_{i=0}^{p-1}V_{e^i}F_e=1.$$
Now, by (\ref{Heq2}) we can define $$F_{e^s} :=\prod_{i=0}^{s-1}V_e^i F_e$$ Direct computation shows that $F_u$ is a cocycle for $u\in \left<e\right>$, hence a coboundary (Lemma \ref{B.4}). Thus, we can write $F_e = \Delta_e F$ for some $F:X\rightarrow S^1$. We conclude that $\Delta_e (f/(Q\cdot \Delta F))=1$ and so $f/Q$ is cohomologous to $f'$ which is invariant under $\left<e\right>$, arguing as before and using the induction hypothesis we conclude that $f/Q$ is cohomologous to a phase polynomial $P$. From Theorem \ref{HTDPV:thm} we have that $P(g,\cdot)$ takes values in $C_n$ where $n$ is the order of $g$. Since $f$ is $(G,X,S^1)$-cohomologous to $Q\cdot P$, this completes the proof. $\square$
\subsection{The higher order case}
The case $j> k$ is completely analogous to the proof of Bergelson Tao and Ziegler, \cite[Section 8.5]{Berg& tao & ziegler} and so is omitted.\\

We therefore completed the proof of Theorem \ref{MainH:thm}. 
\section{The proof of Theorem \ref{Mainr:thm}} \label{proof}
Now that Theorem \ref{Main:thm} is established we can prove Theorem \ref{Mainr:thm}. The theorem has two directions, we start with the following one (Compare with \cite[Theorem 3.8]{Berg& tao & ziegler}).
\begin{thm} Let $X$ be an ergodic $G$-system of order $<k$. Let $1\leq l \leq k$ and suppose that $Z_{<l-1}(X)$ is strongly Abramov and that there exists a totally disconnected factor $Y$ and a factor map $\pi_l:Z_{<l}(X)\rightarrow Z_{<l}(Y)$ such that $$\pi_l^\star: H^1_{<m} (G,Z_{<l}(Y),S^1)\rightarrow H^1_{<m}(G,Z_{<l}(X),S^1)$$ is onto for every $m\in\mathbb{N}$. Then, for every $1\leq l\leq k$, $Z_{<l}(X)$ is strongly Abramov.
\end{thm}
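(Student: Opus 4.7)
The plan is to argue by induction on $l$, treating the stated theorem as the inductive step. The base case $l=1$ is immediate since $Z_{<0}(X)$ is trivial, so assume $Z_{<l-1}(X)$ is strongly Abramov together with the surjectivity of $\pi_l^\star$ on $H^1_{<m}(\cdot,\cdot,S^1)$ for every $m$. First I would note that $Z_{<l}(X)$ itself is Abramov of some bounded order $<l_0'$: by Proposition \ref{abelext:prop} it is an abelian extension of $Z_{<l-1}(X)$ by a cocycle of type $<l$, so applying the strongly Abramov property of $Z_{<l-1}(X)$ (with $m=l$) yields Abramov of bounded order.

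Next, fix $m\in\mathbb{N}$, a compact abelian group $U$, and a cocycle $\rho\colon G\times Z_{<l}(X)\to U$ of type $<m$. I would reduce to $U=S^1$ by Pontryagin duality: each character $\chi\in\hat U$ yields a cocycle $\chi\circ\rho$ of type $<m$ into $S^1$ (composing with a homomorphism commutes with $d^{[m]}$). By the surjectivity hypothesis, $\chi\circ\rho$ is $(G,Z_{<l}(X),S^1)$-cohomologous to $\pi_l^\star\tilde\rho_\chi$ for some cocycle $\tilde\rho_\chi\colon G\times Z_{<l}(Y)\to S^1$, and Lemma \ref{Cdec:lem} forces $\tilde\rho_\chi$ to still be of type $<m$. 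Since $Y$ is totally disconnected, so is $Z_{<l}(Y)$ (its structure groups are inherited), and its order is $<l$. Applying Theorem \ref{MainT:thm}, $\tilde\rho_\chi$ is cohomologous to a phase polynomial of degree $<d=O_{l,m}(1)$. Pulling back via Lemma \ref{PPP}(iv) gives $\chi\circ\rho = P_\chi\cdot \Delta G_\chi$ with $P_\chi\in P_{<d}(G,Z_{<l}(X),S^1)$ and $G_\chi\in\mathcal{M}(Z_{<l}(X),S^1)$, where crucially $d$ depends only on $l,m$, not on $\chi$.

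To conclude that $Z_{<l}(X)\times_\rho U$ is Abramov of bounded order, define $F_\chi(x,u):=\chi(u)$ on the extension, so that a direct computation gives $\Delta_g F_\chi = \chi\circ\rho$ (viewed on the extension). Then $\Delta_g(F_\chi\cdot G_\chi^{-1}) = P_\chi\in P_{<d}$ for every $g\in G$, and the polynomiality criterion Lemma \ref{PPP}(iii) promotes $Q_\chi:=F_\chi G_\chi^{-1}$ to an element of $P_{<d+1}(Z_{<l}(X)\times_\rho U,S^1)$. In other words $\chi(u) = Q_\chi(x,u)\cdot G_\chi(x)$ on the extension. Using the Fourier decomposition $L^2(Z_{<l}(X)\times U)=\bigoplus_{\chi\in\hat U} L^2(Z_{<l}(X))\otimes\mathbb{C}\chi$, any $f\in L^2$ is an $L^2$-sum $\sum_\chi h_\chi(x)\chi(u)$ with $h_\chi\in L^2(Z_{<l}(X))$, and each summand rewrites as $(h_\chi G_\chi)(x)\cdot Q_\chi(x,u)$. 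The factor $h_\chi G_\chi$ is $Z_{<l}(X)$-measurable and hence $L^2$-approximable by phase polynomials of degree $<l_0'$ by the first paragraph; these lift to phase polynomials of the same degree on the extension (Lemma \ref{PPP}(iv)), and monotonicity together with the group structure of $P_{<\cdot}$ (Lemma \ref{PPP}(i)(ii)) shows that multiplying by the unit-valued $Q_\chi$ gives phase polynomials of degree $<\max(l_0',d+1)=:l_m$. Hence $L^2$ of the extension is spanned by $P_{<l_m}$, proving $Z_{<l}(X)$ is strongly Abramov with constants $l_m=O_{l,m}(1)$.

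The principal technical point is the uniformity of $d$ in $\chi\in\hat U$ when $\hat U$ is infinite; without this, the character-by-character argument would not assemble into a single Abramov order. That uniformity is exactly the quantitative content of Theorem \ref{MainT:thm}. Beyond this, the argument is essentially bookkeeping: Fourier reduction to characters, the polynomiality criterion to certify that $F_\chi G_\chi^{-1}$ is a phase polynomial on the extension, and the already established Abramov property of $Z_{<l}(X)$ (inherited from strong Abramov of $Z_{<l-1}(X)$) to deal with the remaining $Z_{<l}(X)$-measurable factor.
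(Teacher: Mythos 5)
Your proposal is correct and follows essentially the same route as the paper's proof: reduce to $S^1$-valued cocycles via characters of $U$, use the surjectivity hypothesis plus Lemma \ref{Cdec:lem} to descend to the totally disconnected factor $Z_{<l}(Y)$, invoke the totally disconnected structure theorem to get a phase polynomial of degree bounded uniformly in $\chi$, and then observe that $(x,u)\mapsto \chi(u)\overline{G_\chi(x)}$ is a phase polynomial of bounded degree on the extension, which together with the Abramov property of $Z_{<l}(X)$ (inherited from the strong Abramov property of $Z_{<l-1}(X)$) spans $L^2$ of the extension. Your final assembly via the explicit Fourier decomposition is just a slightly more detailed rendering of the paper's closing remark that $L^2(Z_{<l}(X)\times_\rho U)$ is generated by $L^2(Z_{<l}(X))$ and the characters of $\hat U$.
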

\begin{proof}
	We prove the claim by induction on $l$. If $l=1$, then $Z_{<1}(X)$ is a point. In this case we can take $Y$ to be the trivial system and the claim follows. Let $1< l \leq k$ and suppose that the claim has already been proven for smaller values of $l$. Let $Y$ be as in the theorem. Fix $m\in\mathbb{N}$ and let $\rho:G\times Z_{<l}(X)\rightarrow U$ be a cocycle of type $<m$ into some compact abelian group $U$. From the assumption, we see that for all $\chi\in\hat U$, $\chi\circ\rho$ is $(G,Z_{<l}(X),S^1)$-cohomologous to a cocycle $\rho_\chi$ which is measurable with respect to $Z_{<l}(Y)$. Let $\pi:Z_{<l}(X)\rightarrow Z_{<l}(Y)$ be the projection map. By Lemma \ref{Cdec:lem}, the push-forward $\pi_\star\rho_\chi$ is a cocycle of type $<m$ on $Z_{<l}(Y)$. Since $Z_{<l}(Y)$ is totally disconnected, Theorem \ref{Main:thm} implies that $\pi_\star\rho_\chi$ is $(G,Z_{<l}(Y),S^1)$-cohomologous to a phase polynomial of degree $<l_m$ for some $l_m=O_{k,m}(1)$. Lifting everything back to $Z_{<l}(X)$ using $\pi^\star$, we conclude that $\chi\circ\rho$ is $(G,Z_{<l}(X),S^1)$-cohomologous to a phase polynomial of degree $<l_m=O_{k,m}(1)$. Write $\chi\circ \rho = q_\chi \cdot \Delta F_\chi$ for some $F_\chi:Z_{<l}(X)\rightarrow S^1$ and a phase polynomial $q_\chi: G\times Z_{<l}(X)\rightarrow S^1$. The map $\Phi(x,u) = \chi(u)\cdot \overline{F}(x)$ is a phase polynomial of degree $<l_m+1$ on $Z_{<l}(X)\times_\rho U$ with derivative $q_\chi$. Since $Z_{<l-1}(X)$ is strongly Abramov, there exists some $r_l$ such that $Z_{<l}(X)$ is Abramov of degree $<r_l$. In particular, $\overline{F}$ can be approximated by phase polynomials of some bounded degree. Therefore, $\chi$ can be approximated by phase polynomials of degree $<\max\{r_l,l_m+1\}$. Finally, since $L^2(Z_{<l}(X)\times_\rho U)$ is generated by $L^2(Z_{<l}(X))$ and the characters in $\hat U$ this completes the proof.
\end{proof}
This proves one of the directions of Theorem \ref{Mainr:thm}. It is left to prove the other direction.
\begin{thm} Let $X$ be an ergodic $G$-system of order $<k$ and suppose that  $Z_{<1}(X),...,Z_{<k}(X)$ are strongly Abramov. Then, there exists a totally disconnected factor $Y$ such that for every $m\in\mathbb{N}$, the homomorphism $$\pi_l^\star : H_{<m}^1(G,Z_{<l}(Y),S^1)\rightarrow H_{<m}^1(G,Z_{<l}(X),S^1)$$ is onto for every $1\leq l \leq k$, where $\pi_l:Z_{<l}(X)\rightarrow Z_{<l}(Y)$ is the factor map.
\end{thm}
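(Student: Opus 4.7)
The plan is to construct the totally disconnected factor $Y$ by induction on $l$, building a totally disconnected factor $Y_l$ of $Z_{<l}(X)$ together with a verification that $\pi_l^\star$ is onto on $H^1_{<m}$ for every $m$. For the base case $l=1$ the factor $Z_{<1}(X)$ is a point, so $Y_1$ trivial works.

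For the inductive step, suppose we have produced a totally disconnected factor $Y_{l-1}$ of $Z_{<l-1}(X)$ with factor map $\pi_{l-1}$ and $\pi_{l-1}^\star$ surjective on all $H^1_{<m}$. Write $Z_{<l}(X) = Z_{<l-1}(X)\times_{\sigma_{l-1}} U_{l-1}$ by Proposition \ref{abelext:prop}. Applying the strong Abramov property on $Z_{<l}(X)$ to each $\chi\circ\sigma_{l-1}$ for $\chi\in\hat U_{l-1}$, one gets that every such character composition is cohomologous (on $Z_{<l-1}(X)$) to a phase polynomial of degree uniformly bounded in terms of $k$ and the bound $l_m$ from the strong Abramov hypothesis. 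Using the measurable selection lemma (Lemma \ref{sel:lem}) and Pontryagin duality to coordinate the choices over $\hat U_{l-1}$, we arrange $\sigma_{l-1}$ itself to be a $U_{l-1}$-valued phase polynomial cocycle. Let $U_{l-1,0}$ denote the connected component of the identity in $U_{l-1}$ and set $V_{l-1}:=U_{l-1}/U_{l-1,0}$, a totally disconnected compact abelian group. The composition $\tau_{l-1}$ of $\sigma_{l-1}$ with the quotient map is a phase polynomial cocycle into $V_{l-1}$, and applying the inductive pullback hypothesis to each character in $\hat V_{l-1}$ and repackaging via Pontryagin duality, we find $\tilde\tau_{l-1}:G\times Y_{l-1}\to V_{l-1}$ with $\tau_{l-1}\sim \pi_{l-1}^\star\tilde\tau_{l-1}$. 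Define $Y_l:=Y_{l-1}\times_{\tilde\tau_{l-1}} V_{l-1}$; this is totally disconnected since both pieces are, and it comes equipped with a factor map $\pi_l:Z_{<l}(X)\to Y_l$.

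To verify the pullback property, let $\rho:G\times Z_{<l}(X)\to S^1$ be a cocycle of type $<m$. Strong Abramov gives $\rho\sim P$ with $P$ a phase polynomial of degree $<l_m$. The connected group $U_{l-1,0}$ acts on $Z_{<l}(X)$ by vertical translations (which are automorphisms commuting with $G$), and for $u\in U_{l-1,0}$ the map $u\mapsto \Delta_u P$ lands in $P_{<l_m}(G,Z_{<l}(X),S^1)\cdot B^1$. By Lemma \ref{ker:lem} applied to the resulting cocycle $u\mapsto p_u$ on $U_{l-1,0}$, the polynomial parts vanish on $U_{l-1,0}$, so $\Delta_u P$ is a coboundary for every $u\in U_{l-1,0}$. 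Lemma \ref{cob:lem} combined with the remark that a connected compact abelian group has no proper open subgroups then makes $P$ cohomologous to a $U_{l-1,0}$-invariant cocycle, which descends to a phase polynomial cocycle on $Z_{<l-1}(X)\times_{\tau_{l-1}} V_{l-1}$. Composing with the pullback-onto property for $\tau_{l-1}$ established in the inductive step yields the desired cohomologous pullback from $Z_{<l}(Y_l)=Y_l$.

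The main obstacle is the coherent packaging at each inductive step: strong Abramov furnishes phase polynomial representatives only character by character, and one must combine a (potentially uncountable) family $\{q_\chi\}_{\chi\in\hat U_{l-1}}$ into a single $U_{l-1}$-valued phase polynomial cocycle with degree uniformly bounded in $\chi$, and then repeat this after passing to $V_{l-1}$. This coordination relies on Lemma \ref{sel:lem} to make the selection measurable and on Pontryagin duality to convert the family into a cocycle into the compact group, in the spirit of the arguments underlying Lemma \ref{real:lem}. A secondary subtle point is the ergodic-theoretic input via Lemma \ref{ker:lem}, which is what ultimately lets us trade the connected part $U_{l-1,0}$ for coboundaries and justifies replacing $U_{l-1}$ by its totally disconnected quotient without losing any cohomology classes.
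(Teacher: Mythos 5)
Your overall strategy (quotient out connected parts to build $Y$, then use the Abramov hypothesis to descend cocycles) is in the right spirit, but there are two genuine gaps, each of which is where the paper's proof does its real work.

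First, the step ``Strong Abramov gives $\rho\sim P$ with $P$ a phase polynomial of degree $<l_m$'' is not a consequence of Definition \ref{stronglyAbr}. Strong Abramov says only that $L^2(Z_{<l}(X)\times_\rho S^1)$ is \emph{spanned} by phase polynomials of degree $<l_m$; to conclude that $\rho$ itself is cohomologous to a phase polynomial you must produce a polynomial of the special form $P(x,s)=sF(x)$ and differentiate it. This works when the extension $Z_{<l}(X)\times_\rho S^1$ is ergodic (a polynomial with nontrivial vertical frequency must exist because the polynomials span $L^2$ and the function $(x,u)\mapsto u$ is not orthogonal to all of them), but when the extension is \emph{not} ergodic no such polynomial need exist, and one has to pass through Mackey theory to a minimal cocycle taking values in a finite subgroup of $S^1$ and run a separate inductive argument using divisibility of connected groups together with Lemma \ref{cob:lem}. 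Your proposal silently assumes the ergodic case. The same unproved assertion is already used earlier in your construction when you claim each $\chi\circ\sigma_{l-1}$ is cohomologous to a phase polynomial ``by strong Abramov.''

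Second, the descent step is incomplete. After making $P$ invariant under $U_{l-1,0}$ (which, incidentally, follows more directly from Proposition \ref{pinv:prop} than from Lemma \ref{ker:lem}), the descended function lives on $Z_{<l-1}(X)\times_{\tau_{l-1}}V_{l-1}$, whose \emph{base} $Z_{<l-1}(X)$ still contains all the connected directions of the lower structure groups. Your inductive hypothesis concerns cocycles defined \emph{on} $Z_{<l-1}(X)$, not functions on extensions of it, so ``composing with the pullback-onto property for $\tau_{l-1}$'' does not apply. To kill the remaining connected directions one must lift the connected groups acting on $Z_{<l-1}(X)$ to automorphisms of $Z_{<l}(X)$; this is exactly the construction the paper carries out, building the groups $H_{l+1}=\{S_{h,F}:\Delta_h\sigma_l=\Delta F\}$, proving they are $2$-step nilpotent and compact via the exact sequence $1\to U_{l+1}\to H_{l+1}\to\mathcal{H}_l\to 1$, and taking the connected component $\mathcal{H}_{l+1}$, which is abelian by Proposition \ref{connilabel:prop} and contains $U_{l,0}$. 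The factor $Y$ is then $X/\mathcal{H}_k$, obtained by quotienting all connected directions \emph{simultaneously}, not fiber by fiber. Without this lifting argument your $Y_l$ does not account for the connected part of the base, and the verification of surjectivity of $\pi_l^\star$ does not close.
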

\begin{proof}
 We prove the claim by induction on $k$. If $k=1$, then the system $X$ is trivial and we can take $Y=X$. Let $k>1$ and suppose that the claim has already been proven for systems of order $<k-1$. Fix $1\leq l<k$ and recall that $Z_{<l+1}(X) = Z_{<l}(X)\times_{\sigma_l} U_l$ for some compact abelian group $U_l$ and a cocycle $\sigma_l:G\times Z_{<l}(X)\rightarrow U_l$ of type $<l$. Therefore, by applying the induction hypothesis and Theorem \ref{Main:thm} we have that $\chi\circ\sigma_l$ is $(G,Z_{<l}(X),S^1)$-cohomologous to a phase polynomial for every $\chi\in\hat U_l$.\\

Our main tool now is Proposition \ref{pinv:prop} which asserts that polynomials are invariant to the action of connected groups. We prove by induction on $l<k$ that there exists a compact connected abelian group $\mathcal{H}_l$ which acts on $Z_{<l}(X)$ by automorphisms and contains the group of transformations $U_{l-1,0}$. For $l=1$, we let $\mathcal{H}_1$ be the trivial group. Suppose inductively that we have already constructed $\mathcal{H}_l$ for some $l\geq 1$. For every $\chi\in\hat U_l$, we have that $\chi\circ\sigma_l$ is cohomologous to a phase polynomial. Therefore, we can write \begin{equation} \label{chi}\chi\circ\sigma_l = q_\chi\cdot \Delta F_\chi
\end{equation}
for some phase polynomial cocycle $q_\chi:G\times Z_{<l}(X)\rightarrow S^1$ and a measurable map $F_\chi:Z_{<l}(X)\rightarrow S^1$. Recall that by Proposition \ref{pinv:prop}, $q_\chi$ is invariant with respect to the action of the connected group $\mathcal{H}_l$. We take the derivative of both sides of equation (\ref{chi}) by some $h\in\mathcal{H}_l$, since the action of $\mathcal{H}_l$ commutes with the action of $G$, we conclude that $\Delta_h \chi\circ\sigma_l$ is a $(G,Z_{<l}(X),S^1)$-coboundary for every $h\in\mathcal{H}_l$. Since this holds for every $\chi\in \hat U_l$ we conclude that $\Delta_h \sigma_l$ is a $(G,Z_{<l}(X),U_l)$-coboundary.\\

For every $h\in\mathcal{H}_l$ and $F:Z_{<l}(X)\rightarrow U_l$, we define the measure preserving transformation $S_{h,F}(x,u) := (hx,F(x)u)$ on $Z_{<l}(X)\times U_l$. Let $H_{l+1}:=\{S_{h,F}:\Delta_h\sigma_l=\Delta F\}$. Direct computation reveals that the action of $H_{l+1}$ on $Z_{<l+1}(X)$ commutes with the $G$-action. Since $\mathcal{H}_{l}$ is abelian and commutes with the $G$-action on $Z_{<l}(X)$, we have that $H_{l+1}$ is $2$-step nilpotent. Indeed, if $S_{h,F}, S_{h',F'}\in H_{l+1}$, then $[S_{h,F},S_{h',F'}] = S_{1,\frac{\Delta_{h'}F}{\Delta_h F'}}$. Now, for every $h,h'\in H_{l+1}$ we have $$\Delta\left(\frac{\Delta_{h'}F}{\Delta_h F'}\right) = \frac{\Delta_{h'}\Delta F}{\Delta_h \Delta F'} = \frac{\Delta_{h'} \Delta_h \sigma_l}{\Delta_h \Delta_{h'} \sigma_l} = 1$$ and therefore by ergodicity $c:=\frac{\Delta_{h'}F}{\Delta_h F'}$ is a constant. A simple computation implies that $S_{1,c}$ commutes with $S_{s,F}$ and we conclude that $H_{l+1}$ is $2$-step nilpotent. We can view every element $u\in U_l$ as a function $Z_{<l}(X)\rightarrow U_l$ which sends every $s\in Z_{<l}(X)$ to this $u$. This defines an embedding of $U_l$ in $H_{l+1}$ by $u\mapsto S_{1,u}$ and so we can view $U_l$ as a subgroup of $H_{l+1}$. Finally since $\Delta_h \sigma_l$ is a $(G,Z_{<l}(X),U_l)$-coboundary, the projection $H_{l+1}\rightarrow \mathcal{H}_l$ is onto. In other words we have a short exact sequence
$$1\rightarrow U_{l+1}\rightarrow H_{l+1}\rightarrow \mathcal{H}_l\rightarrow 1.$$ Since $U_{l+1}$ and $\mathcal{H}_l$ are compact so is $H_{l+1}$ (Corollary \ref{compact}).\\

Now let $\mathcal{H}_{l+1}$ be the connected component of the identity in $H_{l+1}$. Then $\mathcal{H}_{l+1}$ is a compact connected nilpotent group and so is abelian (Proposition \ref{connilabel:prop}). Clearly it contains $U_{l,0}$ and it acts on $Z_{<l+1}(X)$ by automorphisms. This proves the induction step.\\ Let $\mathcal{H}=\mathcal{H}_k$, and define an equivalence relation $\sim_{\mathcal{H}}$ on $X$ such that $x\sim_{\mathcal{H}}y$ if there exists an element $h\in\mathcal{H}$ so that $hx=y$. Since $\mathcal{H}$ acts on $X$ by automorphisms, the quotient space $Y=X/\sim_{\mathcal{H}}$ has a natural $G$-action and is a factor of $X$. We return to the proof of the original claim.

Fix $1\leq l \leq k$ and $m\in\mathbb{N}$ and let $\rho:G\times Z_{<l}(X)\rightarrow S^1$ be a cocycle of type $<m$. Let $\tilde{X}:= Z_{<l}(X)\times_\rho S^1$. Since $Z_{<l}(X)$ is strongly Abramov, we have that $\tilde{X}$ is Abramov of order $<l_m$. We prove that $\rho$ is cohomologous to a phase polynomial. To do this we consider two cases:\\
\textbf{Case I:} $\tilde{X}$ is ergodic. If $P:\tilde{X}\rightarrow S^1$ is a phase polynomial, then by Proposition \ref{pinv:prop} we see that $\Delta_s P = \chi(s)$ for all $s\in S^1$ for some character $\chi:S^1\rightarrow S^1$. Therefore, $P = \chi \cdot F$ where $F:Z_{<l}(X)\rightarrow S^1$.\\ If $\chi:S^1\rightarrow S^1$ is not the identity, then $P=\chi\cdot F$ is orthogonal to the map $(x,u)\mapsto u$ on $\tilde{X}=Z_{<l}(X)\times_\rho S^1$. Since the system is Abramov, the polynomials generate $L^2(\tilde{X})$ and therefore some polynomial must be of the form $P(x,s)=sF(x)$ for $x\in Z_{<l}(X),s\in S^1$ and $F:Z_{<l}(X)\rightarrow S^1$.
By taking derivatives we see that $$\rho \cdot \Delta F = \Delta P.$$ Since $\Delta \Delta_s P = 1$ and $s$ commutes with the action of $G$, the cocycle $\Delta P$ is defines a phase polynomial on $Z_{<l}(X)$. In other words, $\rho$ is $(G,Z_{<l}(X),S^1)$-cohomologous to a phase polynomial of degree $<l_m-1$. By Proposition \ref{pinv:prop}, this polynomial is measurable with respect to $Z_{<l}(X)/\mathcal{H}_l \cong Z_{<l}(Y)$. This completes the proof in the case where $\tilde{X}$ is ergodic.\\
\textbf{Case II} If $\tilde{X}$ is not ergodic, then by the theory of Mackey $\rho$ is $(G,Z_{<l}(X),S^1)$-cohomologous to a minimal cocycle $\tau:G\times Z_{<l}(X)\rightarrow S^1$ which takes values in a proper closed subgroup of $S^1$ (see \cite[Corollary 3.8]{Zim}). In particular, this means that $\tau^n = 1$ for some $n\in\mathbb{N}$. Let $X$ be as in the theorem. We claim by induction on $m$ that every $(G,X,S^1)$-cocycle of type $<m$ is cohomologous to a phase polynomial of degree $<l_m$. If $m=0$ the claim is trivial. Therefore, using a proof by induction, we may assume that as a cocycle into $S^1$, $\Delta_h \tau$ is $(G,X,S^1)$-cohomologous to a phase polynomial for all $h\in \mathcal{H}_l$. Write,
$$\Delta_h \tau = p_h\cdot \Delta F_h$$ Since $\tau^n=1$, we conclude that $p_h^n$ is a coboundary. The cocycle identity and Proposition \ref{pinv:prop} implies that $p_{h^n}$ is a coboundary for all $h\in \mathcal{H}_l$. Since $\mathcal{H}_l$ is connected, it is also divisible (Lemma \ref{divisible}) and so $\Delta_h \tau$ is a coboundary for all $h\in\mathcal{H}_l$. Finally, since $\mathcal{H}_l$ acts freely on $Z_{<l}(X)$ by automorphisms, Lemma \ref{cob:lem} implies that $\tau$ is $(G,Z_{<l}(X),S^1)$-cohomologous to a cocycle that is invariant under $\tilde{H}_l$. This cocycle is measurable with respect to $Z_{<l}(Y)$ and therefore is cohomologous to a phase polynomial (by Theorem \ref{Main:thm}). Since $\tau$ and $\rho$ are cohomologous this completes the proof.
\end{proof}
\section{A $G$-system that is not Abramov} \label{example}
In this section we provide an example of a $G$-system $X$ of order $<3$ that is not an Abramov system. This example is based on the Furstenberg-Weiss example for a $\mathbb{Z}$-cocycle that is not cohomologous to a polynomial (which is given in detail in \cite{HK02}). Our example is constructed as a circle-group extension of a finite dimensional compact abelian group (see Definition \ref{FD:def}) that is not a Lie group. Such groups are called solenoids and are known for their pathological properties.\\ We first construct the underlying group and the $G$-action on it.
\subsection{The underlying group}
Let $\mP$ denote the set of all prime numbers. We construct a $2$-dimensional compact abelian group as follows: Let $\mP_1,\mP_2$ be disjoint infinite sets such that $\mP=\mP_1\bigsqcup\mP_2$ and let $\Delta_1 := \prod_{p\in \mP_1} C_p$ and $\Delta_2:= \prod_{p\in \mP_2} C_p$. Fix $i\in\{1,2\}$ and let $\mathbb{P}_i=\{p_1,p_2,...\}$. The map $\imath_i(n)= (n,\omega_{p_1}^n,\omega_{p_2}^n,\omega_{p_3}^n,...)$ where $\omega_{p_k}$ is the first root of unity of order $p_k$, defines an embedding of $\mathbb{Z}$ as a closed subgroup of $\mathbb{R}\times \Delta_i$. We abuse notation and denote by $(\mathbb{R}\times \Delta_i)/\mathbb{Z}$ the quotient of $\mathbb{R}\times \Delta_i$ by $\imath_i(\mathbb{Z})$. This gives rise to two $1$-dimensional compact abelian groups which we denote by $U_1:=(\mathbb{R}\times \Delta_1)/\mathbb{Z}$ and $U_2:=(\mathbb{R}\times \Delta_2)/\mathbb{Z}$. We note that for every $i\in\{1,2\}$, $$\{1\}\rightarrow \Delta_i\rightarrow U_i\rightarrow \mathbb{R}/\mathbb{Z}\rightarrow \{1\}$$ is a short exact sequence, where the embedding $\Delta_i\rightarrow U_i$ is given by $t\mapsto (1,t)\mathbb{Z}$ and the quotient $U_i\rightarrow \mathbb{R}/\mathbb{Z}$ by $(r,t)\mathbb{Z}\mapsto r\mathbb{Z}$.\\
\textbf{Claim}: The $2$-dimensional compact abelian group $U:=U_1\times U_2$ is connected.
\begin{proof}
Let $U_0\leq U$ be the connected component of the identity in $U$. By Proposition \ref{connectedcomponent}, $U/U_0$ is a totally disconnected group. If by contradiction $U$ is not connected, then by Corollary \ref{chartdg} there exists a non-trivial character $\chi:U\rightarrow S^1$ with a finite image. By composing $\chi$ with the projection map $\mathbb{R}^2\times \Delta_1 \times \Delta_2\rightarrow U$ we obtain a character $\tilde{\chi}:\mathbb{R}^2\times \Delta_1 \times \Delta_2\rightarrow S^1$ with finite image. We conclude that $\ker\tilde{\chi}$ is an open subgroup of $\mathbb{R}^2\times \Delta_1 \times \Delta_2$ which is invariant to translations by $\imath_1(\mathbb{Z})$ and $\imath_2(\mathbb{Z})$, hence $\ker\tilde{\chi} = \mathbb{R}^2\times \Delta_1 \times \Delta_2$ and $\tilde{\chi}=1$. It follows that $\chi$ is trivial, which is a contradiction.
\end{proof}
\subsection{The $G$-action} \label{action}
Let $G=G_1\oplus G_2$ where $G_{i} = \bigoplus_{p\in\mP_i}\mathbb{F}_p$. We construct a homomorphism $\sigma:G\rightarrow U$ such that the $G$-system $(U,G)$ is an ergodic Kronecker system:\\ 
Let $i,j$ such that $\{i,j\}=\{1,2\}$ and $g$ be a generator of the component $\mathbb{F}_p$ of $G_i$. We denote by $v_g\in \Delta_j$ the unique $p$-th root of the element $(\omega_{p_1},\omega_{p_2},...)\in\Delta_j$. This root is easily constructed component by component, since by our assumption $p\not\in \mathbb{P}_i$. We let $\sigma_i(g)$ be the element $(\frac{1}{p},v_g)\in U_j$ (so $G_i$ acts on $U_j$). This is an element of order $p$ in $U_j$ hence $\sigma$ extends uniquely to a homomorphism.\\
Then, the group $G$ acts on $U$ by $T_gu = \sigma(g)\cdot u$ where $\sigma(g)=(\sigma_1(g),\sigma_2(g))$. We claim that this action is ergodic. Equivalently, we need to show that the image of $G$ under $\sigma$ is dense (see \cite[Corollary 3.8]{Zim}). Let $\pi:\mathbb{R}\times\Delta_1 \times\mathbb{R}\times \Delta_2\rightarrow U$ be the quotient map.
We consider a general open set of the form $W_1\times v_1\cdot V_1\times W_2 \times v_2\cdot V_2$ where $W_1,W_2$ are balls in $\mathbb{R}$, $v_iV_i$ is a co-set of some open subgroup $V_i\leq \Delta_i$ for $i=1,2$ (by Proposition \ref{opensubgroup} every open subset contains a set of this form). Rotating by an element in $\mathbb{Z}$ we may assume that $W_1,W_2$ intersects non-trivially with $(0,1)$. It is enough to show that $\pi^{-1}(\sigma(G))$ intersect with this set. Let $s$ and $t$ denote the sizes of $\Delta_1/V_1$ and $\Delta_2/V_2$ respectively. Let $n$ be a sufficiently large number depending only on $W_1,W_2,s,t$ that we will choose later. Let $g_1\in G_1$ and $g_2\in G_2$ be two generators of orders $p_1,p_2$ for $p_1,p_2>n$.
Recall that $v_{g_i}^{p_i}$ is a generator of $\Delta_i$ and therefore so is $v_{g_i}$. Since $\Delta_1/V_1,\Delta_2/V_2$ are finite, we can find powers $m_i,m_{i}'\leq \max\{s,t\}$ such that $v_{g_i}^{m_i}\in v_i\cdot V_i$ and $v_{g_i}^{m_{i}'}\in V_i$.
Hence for every $k$ we have that $v_{g_i}^{m_i+km_{i}'}$ is in $v_i\cdot V_i$. Since $m_i'$ depend only on $s$ and $t$, we see that for $n$ sufficiently large, one of the elements in $\{\frac{m_i+km_i'}{p_i}:k\in\mathbb{N}\}$ must intersect with $W_i\cap (0,1)$ for all $p_i>n$. Let $g=((m_1+km_1')g_1,(m_2+km_2')g_2)$, then $\sigma(g)$ is an element in the image of $W_1\times v_1\cdot V_1 \times W_2\times v_2 V_2$ under $\pi$. Thus, $\sigma(G)$ intersects with any open subset in $U$, therefore is dense.
\subsection{A cocycle that is not cohomologous to a phase polynomial}
For every $g\in G_1\oplus G_2$ choose any $\alpha_g=(a_g,b_g)\in\mathbb{R}^2$ such that $a_g \text{ mod 1}$ is equal to the first coordinate of $\sigma(g)$ in $U_1$ and $b_g \text{ mod 1}$ to the first coordinate in $U_2$. In particular, if $g\in G_1$ then $a_g=0$ and if $g\in G_2$ then $b_g=0$.\\
We define a bilinear form $\phi:\mathbb{R}^2\times \mathbb{R}^2\rightarrow S^1$ by the formula $\phi(x,y)=e(x_1y_2-x_2y_1)$ where $e(a)=e^{2\pi i a}$ and let $\tilde{f}(x) = \phi(x,\lfloor x\rfloor)$ where $\lfloor x\rfloor$ is the integer part of $x$ coordinate-wise. For every $k\in\mathbb{Z}^2$ we have,
\begin{equation}\label{eq1}
\tilde{f}(x+k)=\tilde{f}(x)\cdot \phi(x,k)
\end{equation}
This gives rise to a function $f:G\times \mathbb{R}^2\rightarrow S^1$ which is given by $$f(g,x) = \frac{\tilde{f}(x+\alpha_g)}{\tilde{f}(x)}\cdot \overline{ \phi(\alpha_g,x)}.$$ Equation (\ref{eq1}) implies that $f$ is well defined on $(\mathbb{R}/\mathbb{Z})^2$. We note that $(\mathbb{R}/\mathbb{Z})^2\cong U/(\Delta_1\times \Delta_2)$ and so we can think of $f$ as a function on $U$ which is invariant under multiplication by elements in $\Delta_1\times \Delta_2$.
\begin{lem} \label{fCL}
    For every $t\in\mathbb{R}^2$, $\Delta_t f(g,x)$ is $(G,U,S^1)$-cohomologous to $\overline{\phi(\alpha_g,t)}{\phi(t,\alpha_g)}$.
\end{lem}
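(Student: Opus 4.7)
I would prove this by an explicit direct calculation, producing the cobounding function by hand. The first step is to compute $\Delta_t f(g,x) = f(g,x+t)/f(g,x)$ from the definition of $f$. Using bilinearity of $\phi$ in its second argument, the factor $\overline{\phi(\alpha_g,x+t)}/\overline{\phi(\alpha_g,x)}$ collapses to the constant $\overline{\phi(\alpha_g,t)}$, and what is left is
\[
\Delta_t f(g,x) \;=\; \Delta_g G_t(x)\cdot \overline{\phi(\alpha_g,t)}, \qquad \text{where } G_t(x):=\tilde f(x+t)/\tilde f(x).
\]
So the remaining task is to exhibit $\Delta_g G_t(x)$ as $\phi(t,\alpha_g)$ times a $G$-coboundary on $U$.

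The natural candidate is $H_t(x):=G_t(x)/\phi(t,x)$, a priori defined on $\mathbb{R}^2$. The key (and essentially only nontrivial) step is to check that $H_t$ descends to a function on $(\mathbb{R}/\mathbb{Z})^2$, and hence pulls back along the projection $U\to U/(\Delta_1\times\Delta_2)\cong (\mathbb{R}/\mathbb{Z})^2$ to a measurable function on $U$. For $k\in\mathbb{Z}^2$, the quasi-periodicity \eqref{eq1} of $\tilde f$ gives $\tilde f(x+t+k)/\tilde f(x+k) = G_t(x)\cdot \phi(x+t,k)/\phi(x,k)$, while $\phi(t,x+k)=\phi(t,x)\phi(t,k)$ by bilinearity, so
\[
\frac{H_t(x+k)}{H_t(x)} \;=\; \frac{\phi(x+t,k)}{\phi(x,k)\phi(t,k)} \;=\; 1
\]
by bilinearity in the first slot. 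Thus $H_t$ is $\mathbb{Z}^2$-invariant, so it lifts to a measurable function on $U$.

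Finally, bilinearity gives the clean identity $\phi(t,x+\alpha_g)/\phi(t,x)=\phi(t,\alpha_g)$, whence
\[
\Delta_g H_t(x) \;=\; \Delta_g G_t(x)\cdot \phi(t,\alpha_g)^{-1}.
\]
Substituting back,
\[
\Delta_t f(g,x) \;=\; \phi(t,\alpha_g)\,\overline{\phi(\alpha_g,t)}\cdot \Delta_g H_t(x),
\]
which exhibits $\Delta_t f$ as $(G,U,S^1)$-cohomologous to the stated constant (in $x$) cocycle $\overline{\phi(\alpha_g,t)}\phi(t,\alpha_g)$. I do not expect any serious obstacle: the only thing to be careful about is the well-definedness check in the middle step, since $\phi(t,x)$ by itself is not invariant under the $\mathbb{Z}^2$-action, and it is precisely the quasi-periodic factor hidden in $\tilde f$ that cancels this obstruction.
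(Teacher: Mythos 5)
Your proof is correct and is essentially the paper's own argument: your $H_t(x)=\tilde f(x+t)\overline{\tilde f(x)}\cdot\overline{\phi(t,x)}$ is exactly the cobounding function $F_t$ used in the paper, and the well-definedness check via equation (\ref{eq1}) and bilinearity is the same (the paper merely compresses the remaining verification into "a direct computation shows"). No gaps.
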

\begin{proof}
	Let $t\in\mathbb{R}^2$ and let $F_t(x) :=\frac{\tilde{f}(x+t)}{\tilde{f}(x)}\cdot \overline{ \phi(t,x)}$, equation (\ref{eq1}) implies that $F_t$ is well defined on $(\mathbb{R}/\mathbb{Z})^2$. A direct computation shows that $\Delta_t f = \overline{ \phi(\alpha_g,t)}\phi(t,\alpha_g)\cdot \Delta_{\alpha_g} F_t$.
\end{proof}
Observe that for every $g\in G$, the constants $\overline{ \phi(\alpha_g,t)}\phi(t,\alpha_g)$ give rise to a character of $\mathbb{R}^2$, $\chi(g,x):= \phi(\alpha_g,x)\cdot \overline{\phi(x,\alpha_g)}$. By the lemma above we have that $\Delta_t (f\cdot \chi) = \Delta F_t$, for every $t\in\mathbb{R}^2$.\\
We extend $\chi$ to a character of $U$. To do this, we define a map $\varphi:G\times \Delta_1\times \Delta_2 \rightarrow S^1$ by the formula $\varphi(g,v_1,v_2):=(v_2(g))^2\cdot (v_1(g))^{-2}$ where $v_1,v_2$ are identified with the corresponding elements in $\hat G\cong \Delta_1\times \Delta_2$. It is not hard to see that $\chi\cdot\varphi:G\times \mathbb{R}^2\times \Delta_1\times\Delta_2\rightarrow S^1$ is well defined as a function (homomorphism) on $U$ (i.e. it is invariant under multiplication by an element in $\mathbb{Z}$).\\
$f\cdot \chi\cdot\varphi$ is not a cocycle, but it satisfies the following facts:
\begin{thm} \label{cocycle}
	$f\cdot\chi\cdot \varphi$ is a line cocycle and $\Delta_h f\cdot \chi\cdot \varphi (g,\cdot) = \Delta_g f\cdot \chi\cdot \varphi(h,\cdot)$.
\end{thm}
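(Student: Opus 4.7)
The plan is to prove both statements by direct calculation, exploiting the bilinearity and skew-symmetry of $\phi$ together with (\ref{eq1}), and ultimately reducing the delicate part to a Chinese Remainder Theorem identity.

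For the line-cocycle property, fix $g\in G$ of order $n$, so that $n\alpha_g\in\mathbb{Z}^2$, and compute $\prod_{t=0}^{n-1}(f\cdot\chi\cdot\varphi)(g,T_g^t x)$ factor by factor. The $f$-product telescopes on the $\tilde{f}$-ratios to $\tilde{f}(x+n\alpha_g)/\tilde{f}(x)$, which by (\ref{eq1}) equals $\phi(x,n\alpha_g)$; the residual terms $\prod_t\overline{\phi(\alpha_g,x+t\alpha_g)}$ collapse by bilinearity and $\phi(\alpha_g,\alpha_g)=1$ to $\phi(x,n\alpha_g)$ as well, so $\prod_t f(g,T_g^t x)=\phi(x,n\alpha_g)^2$. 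An identical telescoping gives $\prod_t\chi(g,T_g^t x)=\phi(n\alpha_g,x)^2$, and these two combine to $1$ via the identity $\phi(a,b)\phi(b,a)=1$. For the $\varphi$-piece, writing $T_g$ on $\Delta_1\times\Delta_2$ as multiplication by constants $(w_g^{(1)},w_g^{(2)})$, the product works out to $v_2(g)^{2n}v_1(g)^{-2n}\cdot w_g^{(2)}(g)^{n(n-1)}w_g^{(1)}(g)^{-n(n-1)}$, which is trivial because $v_i(g)^n=v_i(ng)=1$ and similarly $w_g^{(j)}(g)^n=w_g^{(j)}(ng)=1$.

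For the symmetry of cross-derivatives $\Delta_h(f\chi\varphi)(g,\cdot)=\Delta_g(f\chi\varphi)(h,\cdot)$, the key algebraic observation is that the second difference
\[
D(x;s,t):=\frac{\tilde{f}(x+s+t)\tilde{f}(x)}{\tilde{f}(x+s)\tilde{f}(x+t)}
\]
is symmetric in $s,t$. Substituting the definition of $f$ yields $\Delta_h f(g,x)=D(x;\alpha_g,\alpha_h)\cdot\overline{\phi(\alpha_g,\alpha_h)}$, so that $\Delta_h f(g,\cdot)/\Delta_g f(h,\cdot)$ is the constant $\overline{\phi(\alpha_g,\alpha_h)}/\overline{\phi(\alpha_h,\alpha_g)}=\phi(\alpha_g,\alpha_h)^{-2}$. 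A direct calculation for $\chi$ (which is a pure character on the $\mathbb{R}^2$-factor) gives $\Delta_h\chi(g,\cdot)/\Delta_g\chi(h,\cdot)=\phi(\alpha_g,\alpha_h)^{4}$, so the combined $(f\chi)$-ratio is the constant $\phi(\alpha_g,\alpha_h)^{2}$. It remains to verify that $\Delta_h\varphi(g,\cdot)/\Delta_g\varphi(h,\cdot)$ equals $\phi(\alpha_g,\alpha_h)^{-2}$; this is again a constant, since $\varphi$ depends only on the $\Delta_1\times\Delta_2$-coordinates and $T_h$ acts there by a translation by $(w_h^{(1)},w_h^{(2)})$, and it reduces to an explicit expression in terms of the characters $v_g\in\Delta_j$, $v_h\in\Delta_i$ evaluated at $g,h$.

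The main obstacle is this last arithmetic identity. In the crucial off-diagonal case $g\in G_1$ of order $p$ and $h\in G_2$ of order $q$, the roots $v_g\in\Delta_2,v_h\in\Delta_1$ have $q$- and $p$-components equal to $\omega_q^{p^{-1}\bmod q}$ and $\omega_p^{q^{-1}\bmod p}$ respectively, by the construction of $\sigma$ in Section \ref{action}. The desired cancellation then reduces to the CRT identity
\[
\frac{p^{-1}\bmod q}{q}+\frac{q^{-1}\bmod p}{p}\equiv\frac{1}{pq}\pmod{1},
\]
from which $\Delta_h\varphi(g,\cdot)/\Delta_g\varphi(h,\cdot)=\phi(\alpha_g,\alpha_h)^{-2}$ follows (up to the sign conventions in $\phi,\chi,\varphi$). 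The diagonal cases $g,h\in G_i$ are immediate, since $\alpha_g$ and $\alpha_h$ are then collinear (whence $\phi(\alpha_g,\alpha_h)=1$) and the characters $v_g\in\Delta_j$ annihilate elements of $G_j$.
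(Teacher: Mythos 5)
Your argument is correct in substance, and the line-cocycle half is essentially the paper's own computation: telescope the $\tilde f$-ratios, collapse the $\phi$-products using bilinearity and $\phi(\alpha_g,\alpha_g)=1$, and kill the $\varphi$-product via $v(g)^n=v(ng)=1$ together with $n\mid n(n-1)$. Where you genuinely diverge is the last step of the cross-derivative symmetry. Both you and the paper reduce to the single identity $\varphi(g,v_h)=\chi(h,\alpha_g)\,\varphi(h,v_g)$ (equation (\ref{equal1})), but the paper disposes of it softly: since $\chi\cdot\varphi$ descends to a character of $U$ and $\sigma$ is a homomorphism, the expression is bilinear in $(g,h)$, so for generators of distinct prime orders it is simultaneously a $p_g$-th and a $p_h$-th root of unity, hence trivial — no arithmetic with the roots $v_g$ is ever performed. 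You instead evaluate everything and exhibit the cancellation as the congruence $\frac{p^{-1}\bmod q}{q}+\frac{q^{-1}\bmod p}{p}\equiv\frac{1}{pq}\pmod 1$, which is correct (if $aq\equiv 1\pmod p$ and $bp\equiv 1\pmod q$ then $aq+bp\equiv 1\pmod{pq}$) and is exactly what makes the off-diagonal case close. Your route makes the mechanism of the cancellation explicit, at the cost of sign bookkeeping that you defer; that hedge is warranted, since the paper's printed convention $\varphi(g,v_1,v_2)=(v_2(g))^2(v_1(g))^{-2}$ appears to need the swap $v_1\leftrightarrow v_2$ (or an equivalent sign flip in $\phi$) for $\chi\cdot\varphi$ to be $\imath(\mathbb{Z}^2)$-invariant — and note that the paper's bilinearity argument needs that same fix, since it rests on $\chi\cdot\varphi$ being well defined on $U$. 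Two small slips to repair in your write-up: in the diagonal case $g,h\in G_i$ the relevant fact is that $v_h\in\Delta_j$, viewed in $\hat G$, is supported on $G_j$ and therefore annihilates $G_i$ (you wrote $G_j$); and since $h\mapsto v_h$ is only a homomorphism modulo the lattice, your claim that the various ratios are "constants" should be justified either by the paper's observation that $\chi\cdot\varphi$ descends to $U$, or by restricting, as you implicitly do, to the fixed choice of lifts for generators.
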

Assuming this theorem, there is a unique cocycle $\rho:G\times U\rightarrow S^1$ which agrees with $f\cdot\chi\cdot\varphi$ on the generators of $G$. This cocycle is given by the following equation.\footnote{This fact is the cocycle-counterpart of the fact that a homomorphism is uniquely determined by the values it gives to a generating set.}
\begin{equation} \label{rho}\rho(g,u) = \prod_{i=1}^\infty T_{g_1}T_{g_2}...T_{g_{i-1}} \prod_{k=0}^{g_i} f\cdot\chi\cdot \varphi(e_i,T_{e_i}^k)
\end{equation}
We note that the infinite product is well defined because it is trivial outside of a finite set for every given $g\in G$.

\begin{proof}[Proof of Theorem \ref{cocycle}]
	Let $g\in G$ be an element of order $n$. We then have,
	$$\prod_{k=0}^{n-1} f\cdot\chi\cdot\varphi(g,T_g^k(x,v)) = \prod_{k=0}^{n-1} \frac{\tilde{f}(x+(k+1)\alpha_g)}{\tilde{f}(x+k\alpha_g)}\overline{\phi(x+k\alpha_g,\alpha_g)} \cdot \varphi(g,v+v_g^k)$$
	We can break the product on the right hand side of the equations to three products. That is, $\prod_{k=0}^{n-1}\frac{\tilde{f}(x+(k+1)\alpha_g)}{\tilde{f}(x+k\alpha_g)}$, $\prod_{k=0}^{n-1}\overline{\phi(x+k\alpha_g,\alpha_g)}$ and $\prod_{k=0}^{n-1}\varphi(g,v+v_g^k)$. We compute each term separately. The first term is a telescoping series and so equals to $\frac{\tilde{f}(x+n\alpha_g)}{\tilde{f}(x)}$. Since $\phi$ is bilinear and $\phi(\alpha_g,\alpha_g)=1$, the second term equals to  $$\overline{\phi(nx+\binom{n}{2}\alpha_g,\alpha_g)}=\overline{ \phi(x, n\alpha_g)}.$$ Finally, since $\varphi(g,\cdot)$ is a homomorphism, the third term equals to $\varphi(g,v)^n \cdot \varphi(g,v_g) ^{\binom{n}{2}}$. It follows from the definition of $\varphi$ that $g\mapsto \varphi(g,v)$ is a homomorphism for every $v\in \Delta_1\times \Delta_2$. Since $n$ divides $2\cdot \binom{n}{2}$ this implies that the last term is trivial\footnote{Note that if $n=2$ then $n$ does not divide $\binom{n}{2}$, however in this case we have that $\varphi(g,\cdot)=1$ immediately from the definition.}. We conclude that
	$$\prod_{k=0}^{n-1} f\cdot\chi\cdot\varphi(g,T_g^k(x,v)) = \frac{\tilde{f}(x+n\alpha_g)}{\tilde{f}(x)}\overline{ \phi(x,n\alpha_g)} = 1.$$
	In other words, $f\cdot\chi\cdot\varphi$ is a line-cocycle.\\
	
	We now prove the second property, since $\Delta_t f \cdot \chi (g,x)=  \Delta_{\alpha_g} F_t(x)$ one has that $$\Delta_{\alpha_h} f\cdot \chi(g,x) = \Delta_{\alpha_g} F_{\alpha_h}(x) = \Delta_{\alpha_g} f(h,x).$$ Recall the definition of $\sigma$ from section \ref{action}. We abuse notation and write $\sigma(g) = (\alpha_g,v_g)$ where $v_g\in \Delta_1\times \Delta_2$ (Note that if $g$ is a generator of any $\mathbb{F}_p$-component of $G$, then this $v_g$ coincides with $v_g$ from section \ref{action}). It is enough to show that
	\begin{equation}\label{equal1}\Delta_{v_h} \overline{\varphi(g,v)} \Delta_{\alpha_g} \chi(h,x)\cdot \Delta_{v_g}\varphi(h,v)=1
	\end{equation}
	for every $g,h\in G$, $x\in\mathbb{R}^2$ and $v\in \Delta_1\times \Delta_2$.
	Since $\chi(h,\alpha_g)\cdot \varphi(h,v_g)$ is a homomorphism in $g$, we conclude that the map $(g,h)\mapsto \Delta_{v_h} \overline{\varphi(g,v)} \Delta_{\alpha_g} \chi(h,x)\cdot \Delta_{v_g}\varphi(h,v)$ is bilinear in $g$ and $h$. Hence, by linearity it is enough to check equation (\ref{equal1}) in the case where $h$ and $g$ are generators. For simplicity we denote the order of $h$ and $g$ by $p_h$ and $p_g$ respectively. We begin with the case where $h=g$. In that case the claim follows since $\chi(h,\alpha_h)=1$ and the terms $\Delta_{v_h} \overline{\varphi(g,v)}$ and $\Delta_{v_g}\varphi(h,v)$ cancel each other out. Otherwise, $h\not = g$ and then the right hand side of equation (\ref{equal1} is of order $p_h$ and of order $p_g$ simultaneously. Since $p_h\not = p_g$ the claim in the equation follows.
\end{proof}
Let $\rho$ be as in (\ref{rho}), then the extension defined by $\rho$ is not Abramov of any order. Formally we prove the following result.
\begin{thm}
	The system $X=U\times_{\rho} S^1$ is an ergodic system of order $<3$ and the measurable map $(x,u)\mapsto u$ is orthogonal to all phase polynomials.
\end{thm}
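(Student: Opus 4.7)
My plan is to verify the three conclusions in turn, adapting the Furstenberg--Weiss argument as detailed by Host--Kra \cite{HK02}.

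First, I would show that $\rho$ is a cocycle of type $<2$ over the Kronecker $G$-system $U$, which by the standard Host--Kra construction (the converse direction of Proposition \ref{abelext:prop}) gives that $X = U\times_\rho S^1$ is of order $<3$. By Lemma \ref{fCL}, for every $t\in\mathbb{R}^2$ the derivative $\Delta_t f(g,\cdot)$ is $(G,U,S^1)$-cohomologous to the character of $G$
\[
c_t(g) := \overline{\phi(\alpha_g,t)}\phi(t,\alpha_g) = \phi(t,\alpha_g)^{2},
\]
where the last equality uses the skew-symmetry of $\phi$. Since $\chi$ and $\varphi$ are $g$-homomorphisms in their respective arguments, the analogous Conze--Lesigne-type equation holds for $f\cdot\chi\cdot\varphi$ with an adjusted character, and propagates through the multiplicative formula (\ref{rho}) to yield, for every $u\in U$, that $\Delta_u\rho(g,\cdot)$ is $(G,U,S^1)$-cohomologous to a phase polynomial of degree $<2$. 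Since $U$ is Kronecker, this Conze--Lesigne property is equivalent to $d^{[2]}\rho$ being a $(G,U^{[2]},S^1)$-coboundary, i.e.\ $\rho$ is of type $<2$.

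Next, I would establish the key non-triviality statement:
\begin{equation}\label{keynonco}
\rho^{n} \text{ is not } (G,U,S^1)\text{-cohomologous to a character of } G, \text{ for any } n\neq 0.
\end{equation}
Suppose to the contrary that $\rho^n = c\cdot\Delta F$ for some character $c:G\to S^1$ and measurable $F:U\to S^1$. Then for any $t\in\mathbb{R}^2$ we get $\Delta_t\rho^n = \Delta(\Delta_t F)$, so $\Delta_t\rho^n$ is a $(G,U,S^1)$-coboundary. By the first paragraph $\Delta_t\rho^n$ is cohomologous to the character $c_t^n$, hence $c_t^n$ must be identically $1$, forcing $\phi(t,\alpha_g)^{2n}=1$ for every $g\in G$ and $t\in\mathbb{R}^2$. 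But $\{\alpha_g : g\in G\}$ is dense in $\mathbb{R}^2$ by the construction of $\sigma$ in Section \ref{action}, so for any nonzero $t$ the set $\{\phi(t,\alpha_g) : g\in G\}$ is dense in $S^1$, a contradiction. Specializing \eqref{keynonco} to $c=1$, Mackey's theorem (\cite[Corollary 3.8]{Zim}) yields ergodicity of $X$.

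Finally, let $P:X\to S^1$ be a phase polynomial of degree $<k$. The vertical $S^1$-action on $X$ is connected and commutes with the $G$-action, so the argument in Example \ref{Example} (iterated use of Corollary \ref{ker:cor}) yields $P(x,u) = u^n F(x)$ for some integer $n$ and measurable $F:U\to S^1$. For $\langle P,(x,u)\mapsto u\rangle_{L^2(X)} \neq 0$ one would need $n=1$, and then $\Delta_g P(x,u) = \rho(g,x)\cdot\Delta_g F(x)$ is a phase polynomial on $X$ of degree $<k-1$ depending only on $x$; hence by Lemma \ref{PPP}(iv), $\rho\cdot\overline{\Delta F}$ is a phase polynomial cocycle on $U$ of degree $<k-1$. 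Since $U$ is connected Kronecker, Proposition \ref{pinv:prop} forces any such cocycle to be a character of $G$, so $\rho$ itself is cohomologous to a character, contradicting \eqref{keynonco} with $n=1$. The main obstacle is the Conze--Lesigne bookkeeping in the first paragraph: tracing the explicit character $c_t$ of Lemma \ref{fCL} through the multiplicative formula (\ref{rho}) in the solenoidal setting (rather than the torus of \cite{HK02}), using Theorem \ref{cocycle} to confirm that the result is a well-defined cocycle and that the C.L.\ data propagates with the claimed explicit form.
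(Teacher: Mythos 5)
Your overall architecture (type $<2$ via the Conze--Lesigne equation, a key ``$\rho^n$ is never cohomologous to a character'' statement, and then the orthogonality via $P(x,u)=u^nF(x)$) is sound, and your third paragraph is in fact a cleaner, more explicit version of the paper's rather terse orthogonality argument. But there is a genuine gap in the second paragraph, on which both the ergodicity and the orthogonality claims rest. You assert that $\Delta_t\rho$ is $(G,U,S^1)$-cohomologous to the nontrivial character $c_t(g)=\overline{\phi(\alpha_g,t)}\phi(t,\alpha_g)$ for $t\in\mathbb{R}^2$. This is false for $\rho$: the correction term $\chi(g,x)=\phi(\alpha_g,x)\overline{\phi(x,\alpha_g)}$ is introduced precisely so that $\Delta_t(f\cdot\chi)=\Delta F_t$ exactly (as stated right after Lemma \ref{fCL}), and $\varphi$ does not depend on the $\mathbb{R}^2$-coordinate, so $\Delta_t(f\chi\varphi)$ --- and hence, via the product formula (\ref{rho}), $\Delta_t\rho$ --- is an honest coboundary for every $t$ in the image of $\mathbb{R}^2$. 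Consequently the hypothesis ``$\rho^n$ is cohomologous to a character'' imposes no constraint at all in the connected direction, and your contradiction ``$\phi(t,\alpha_g)^{2n}=1$'' never arises. This is not a repairable slip within your framework: you have transplanted the Furstenberg--Weiss torus mechanism, where the obstruction lives in the connected direction, but for $G=\bigoplus_{p}\mathbb{F}_p$ any such obstruction in a connected direction is automatically removable (this is essentially Lemma \ref{ker:lem}, and is why the torus is strongly Abramov by Theorem \ref{Main:thm} and why the example requires a solenoid).

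The actual non-triviality lives in the totally disconnected fibers: for $t=(w_1,w_2)\in\Delta_1\times\Delta_2$ one has $\Delta_t\rho$ cohomologous to the character $\varphi(\cdot,t)=w_2(\cdot)^2w_1(\cdot)^{-2}$ of $G$, and the family $\{\varphi(\cdot,t):t\in\Delta_1\times\Delta_2\}$ is uncountable, whereas the characters of $G$ that are $(G,U,S^1)$-coboundaries form a countable group (the point spectrum of the Kronecker system $U$). The paper's route is: if $\rho^n$ were cohomologous to a cocycle into a proper closed subgroup, the Conze--Lesigne characters $\lambda_t$ would satisfy that $\lambda_{t^n}$ is a coboundary for all $t\in U$; divisibility of the connected group $U$ (Lemma \ref{divisible}) then makes every $\lambda_u$ a coboundary, so by Lemma \ref{cob:lem} $\rho$ would be cohomologous to a constant --- which the uncountability observation above rules out. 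You need to replace your second paragraph with an argument of this kind (working in the $\Delta_1\times\Delta_2$ direction and using divisibility of $U$); once that is done, your first and third paragraphs go through.
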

\begin{proof}
	First we claim that the system is ergodic. It is enough to show that $\rho$ is not $(G,U,S^1)$-cohomologous to a cocycle taking values in some proper subgroup of $S^1$ (See \cite[Corollary 3.8]{Zim}). Suppose by contradiction that there is an $n\in\mathbb{N}$ such that $\rho^n$ is a $(G,U,S^1)$-coboundary. Then the Conze-Lesigne equation gives $$\Delta_t \rho = \lambda_t\cdot \Delta F_t$$ for every $t\in U$ where $\lambda_t:G\rightarrow S^1$ is a homomorphism such that $\lambda_t^n$ is a coboundary. The cocycle identity implies that $\lambda_{t^n}$ is a coboundary. Since $U$ is connected, it is divisible (Lemma \ref{divisible}), so every $u\in U$ can be written as $t^n$. Lemma \ref{cob:lem} then implies that $\rho$ is cohomologous to constant. This is a contradiction and so $X$ is ergodic.\\
	Now we prove that $X$ is of order $<3$, equivalently show that $\rho$ is of type $<2$. By the Conze-Lesigne equation with $t=x'\cdot x^{-1}$ we have $$\frac{\rho(x)}{\rho(x')} = \lambda_{x-x'} \Delta F_{x-x'}(x)$$ Since the action of $G$ on $U$ is given by a homomorphism we have that the map $(x,x')\mapsto \Delta F_{x-x'}(x)$ is a derivative of $G(x,x'):=F_{x-x'}(x)$ in $U\times U$ therefore is a coboundary, hence $(x,x')\mapsto \frac{\rho(x)}{\rho(x')}$ is cohomologous to $\lambda_{x-x'}$ which is invariant under the action of $G$ and so is of type $<1$. We conclude that $\rho$ is of type $<2$. Finally, since $X$ is connected there are only phase polynomials of degree $<2$ (same proof as in the claim in Example \ref{Example}). Such phase polynomials are measurable with respect to the Kronecker system $U$. In particular the map $(u,x)\mapsto x$ is orthogonal to every phase polynomial. Therefore $X$ is not Abramov as required.
\end{proof}
\section{Nilpotent systems}
The goal of this section is to prove Theorem \ref{nilpotentstructure}. 
Recall that an ergodic system of order $<3$ takes the form $X=Z\times_\rho U$ where $Z$ and $U$ are compact abelian groups and $Z$ is the Kronecker factor. Since every compact abelian group $U$ is an inverse limit of Lie groups we can assume that $U$ is a product of a torus and a finite group.

\begin{defn} [Host and Kra group for systems of order $<3$] \label{HKGroup:def} Let $X=Z\times_\rho U$ be an ergodic $G$-system of order $<3$. For every $s\in Z$ and a measurable map $F:Z\rightarrow U$, we have a measure preserving transformation $S_{s,F}(z,u)=(sz,F(z)u)$ on $X$. We let $\mathcal{G}(X)$ denote the group of all such transformations with the property that there exists $c_s:G\rightarrow U$ such that $\Delta_s \rho = c_s \cdot \Delta F$.
\end{defn} 
Equipped with the topology of convergence in measure, Host and Kra \cite[Lemma 8.7 and Corollary 5.9]{HK} proved that $\mathcal{G}(X)$ is a locally compact polish $2$-step nilpotent group. Let $p:\mathcal{G}(X)\rightarrow Z$ denote the projection to the first coordinate $S_{s,F}\mapsto s$. Observe that if $s=1$, then $\Delta F=\overline{c_1}$ and so $F\in P_{<2}(Z,U)$. In other words we can identify $\ker(p)$ with $P_{<2}(Z,U)$.\\

We claim that in order to prove Theorem  \ref{nilpotentstructure} it is enough to show that $p$ is onto. In that case Theorem \ref{openmappingthm} implies that $p$ is an open map. Let $u\in U$ and define $F_u:Z\rightarrow U$ be the constant map $F_u(z)=u$. It is an immediate application of the definitions that $S_{1,F_u}\in \mathcal{G}(X)$. Therefore, (assuming that $p$ is onto) we have that the group $\mathcal{G}(X)$ acts transitively on $X$.  Let $\Gamma$ be the stabilizer of $(1,1)\in Z\times U$ and assume further that $U$ is a Lie group (as mentioned above, using inverse limits to approximate $U$ we can assume that this is always the case). Direct computation reveals that $\Gamma\cong \text{Hom}(Z,U)$ as topological groups. Since $U$ is a Lie group, the latter is a discrete co-compact subgroup of $\mG$ and $X$ is homeomorphic to $\mG/\Gamma$ (see Theorem \ref{openmappingthm}). Moreover, it is easy to see that for every element $g\in G$ the transformation $T_g:X\rightarrow X$ belongs to $\mathcal{G}(X)$. This gives rise to a natural $G$-action on $\mathcal{G}(X)/\Gamma$ by $g.(x\Gamma)=(T_gx)\Gamma$. It follows that there is an isomorphism (of $G$-systems)  $X\cong\mathcal{G}/\Gamma$. This completes the proof of Theorem \ref{nilpotentstructure} (assuming that $p$ is onto).\\

Now we prove that $p$ is onto. Equivalently, we show that for every $s\in Z$ we can find $F_s:Z\rightarrow U$ such that $S_{s,F_s}\in\mathcal{G}(X)$. Recall that the Lie group $U$ is a direct product of a torus and a finite group. Therefore it is enough to solve this equation coordinate-wise.\\

If the coordinate is associated with the torus subgroup of $U$, we can apply the following important result of Moore and Schmidt \cite{MS}.
\begin{lem} [Cocycles of type $<1$ are cohomologous to constants] \label{type0}
	Let $X$ be an ergodic $G$-system. Suppose that $\rho:G\times X\rightarrow S^1$ is a cocycle of type $<1$. Then there exists a character $c:G\rightarrow S^1$ such that $\rho$ is $(G,X,S^1)$-cohomologous to $c$.
\end{lem}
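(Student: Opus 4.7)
The plan is to run the classical Mackey/Moore--Schmidt argument via spectral theory of a cocycle-twisted Koopman representation. First I would unpack the hypothesis: by ergodicity of $X$ the invariant $\sigma$-algebra $\mathcal{I}_0$ is trivial, so $\mu^{[1]} = \mu \otimes \mu$, and ``type $<1$'' for $\rho$ says there is a measurable $F : X \times X \to S^1$ with
$$\rho(g, x_1)\, F(x_0, x_1) \;=\; \rho(g, x_0)\, F(T_g x_0, T_g x_1)$$
for every $g \in G$ and $\mu \otimes \mu$-a.e.\ $(x_0, x_1)$.

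Next I would introduce two objects on $L^2(X)$. The first is the unitary representation $W : G \to U(L^2(X))$ defined by $(W_g f)(x) := \rho(g, x)\, f(T_g x)$; the cocycle identity for $\rho$ guarantees that $W_{g+h} = W_g W_h$. The second is the Hilbert--Schmidt integral operator $T_F$ on $L^2(X)$ with kernel $F$, given by $(T_F h)(x_0) := \int F(x_0, x_1)\, h(x_1)\, d\mu(x_1)$. The key computation combines a change of variable $x_1 = T_g z$ (using $G$-invariance of $\mu$) with the equivariance of $F$ to obtain
$$(W_g T_F f)(x_0) = \rho(g,x_0)\!\int\! F(T_g x_0, T_g z) f(T_g z)\, d\mu(z) = \int\! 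F(x_0, z) \rho(g,z) f(T_g z)\, d\mu(z) = (T_F W_g f)(x_0),$$
so that $T_F$ lies in the commutant of $W$.

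Since $|F| \equiv 1$ one has $\|T_F\|_{HS} = 1$, hence $T_F^* T_F$ is a nonzero, positive, compact operator on $L^2(X)$ that still commutes with every unitary $W_g$. By the spectral theorem for compact self-adjoint operators $T_F^* T_F$ has some nonzero eigenvalue whose eigenspace $V \subset L^2(X)$ is finite-dimensional and $W$-invariant. As $G$ is abelian, the finite-dimensional unitary representation $W|_V$ decomposes into characters, so I can pick $f \in V \setminus \{0\}$ and a character $\lambda : G \to S^1$ with $W_g f = \lambda(g) f$, i.e.,
$$\rho(g, x)\, f(T_g x) = \lambda(g)\, f(x).$$

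Finally, $|W_g f| = |f|$ forces $|f| \circ T_g = |f|$ a.e., and ergodicity of $X$ makes $|f|$ a.e.\ constant; after rescaling I may assume $|f| \equiv 1$. Then the previous display rearranges to $\rho(g, x) = \lambda(g)\, \overline{\Delta_g f(x)}$, which says $\rho$ is $(G, X, S^1)$-cohomologous to the character $c := \lambda$. The main obstacle is the intertwining identity $W_g T_F = T_F W_g$: one must combine the $G$-invariance of $\mu$ with the equivariance of $F$ and the cocycle property of $\rho$ in a single cancellation, but once this is in place the rest is standard Hilbert-space spectral theory together with ergodicity.
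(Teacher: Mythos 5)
Your argument is correct. Note that the paper does not prove this lemma at all --- it is quoted as an external result of Moore and Schmidt --- so there is no internal proof to compare against; what you have written is the standard compact-operator (Hilbert--Schmidt kernel) argument that underlies the cited result and appears in Host--Kra's treatment of quasi-coboundaries. All the key steps check out: ergodicity makes $\mathcal{I}_0$ trivial so $\mu^{[1]}=\mu\otimes\mu$ and the type $<1$ condition reads as you state; the intertwining $W_gT_F=T_FW_g$ follows from the change of variables plus the equivariance of $F$ exactly as you compute; $T_F^*T_F$ is nonzero positive compact and commutes with the abelian unitary family, so a finite-dimensional invariant eigenspace yields a common eigenvector; and ergodicity forces $|f|$ constant, giving $\rho=\lambda\cdot\Delta\overline{f}$ with $\lambda\in\hat G$. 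A cosmetic remark: you should fix the sign convention in $d^{[1]}\rho$ (which coordinate carries the conjugate), but this only amounts to replacing $F$ by $\overline{F}$ or swapping the two coordinates, so nothing in the proof is affected.
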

We note that this result fails for cocycles which take values in an arbitrary compact abelian groups (in particular, it fails for cocycles into finite groups).
\begin{proof}[Proof of Theorem \ref{nilpotentstructure}]
	From the discussion above we see that we need to show that for every $s\in Z$ there exist a measurable map $F_s:Z\rightarrow U$ and a constant $c_s:Z\rightarrow U$ such that $\Delta_s \rho = c_s\cdot \Delta F_s$.\\
	
	Let $T(U)$ denote the torus subgroup of $U$ and $D(U)$ the discrete (finite) subgroup. Then, $U=T(U)\times D(U)$. Let $\rho_T$ and $\rho_D$ be the projection of $\rho$ to $T(U)$ and $D(U)$ respectively and let $X_T = Z\times_{\rho_T} T(U)$ and $X_D = Z\times_{\rho_D} D(U)$. It is enough to show that the projections $p_T:\mG(X_T)\rightarrow Z$ and $p_D:\mG(X_D)\rightarrow Z$ are onto. Indeed, if $F_{T,s}:Z\rightarrow T(U)$ and $F_{D,s}:Z\rightarrow D(U)$ are such that $S_{s,F_{T,s}}\in \mG(X_T)$ and $S_{s,F_{D,s}}\in \mG(X_D)$, then by definition, the map $F_s(z) = (F_{T,s}(z),F_{D,s}(z))$ satisfies that $S_{s,F_s}\in \mG(X)$.\\
	
	Let $s\in Z$. Lemma \ref{dif:lem} implies that $\Delta_s\rho_T$ and $\Delta_s \rho_D$ are of type $<1$. By Lemma \ref{type0} applied for $\rho_T$, we see that $\Delta_s \rho_T$ is cohomologous to constant. Equivalently, $p_T$ is onto.\\ Now we deal with the finite case. The structure theorem of finite abelian groups asserts that every finite group is a direct product of $C_{p^n}$ for some prime $p$ and $n\in\mathbb{N}$. By working out each coordinate we may assume that $U=C_{p^n}$ for such $p$ and $n$. By embedding $C_{p^n}$ in $S^1$ and applying Lemma \ref{type0} we see that
	\begin{equation} \label{Cl}\Delta_s \rho_D = c_s\cdot \Delta F_s
	\end{equation} for some constant $c_s:G\rightarrow S^1$ and $F_s:Z\rightarrow S^1$. Our goal is to replace $F_s$ and $c_s$ with some $F_s'$ and $c_s'$ such that equation (\ref{Cl}) holds and $F_s',c_s'$ takes values in $C_{p^n}$.\\
	
	As a first step we show that $\rho_D$ is $(G,Z,S^1)$-cohomologous to a phase polynomial of degree $<2$. To do this we must first eliminate the connected component of $Z$. Observe that by the cocycle identity we have, $$\Delta_{s^{p^n}}\rho_D = \Delta_s\rho_D^{p^n} \cdot \prod_{k=0}^{p^n-1} \Delta_s \Delta_{s^k} \rho_D.$$
	From equation (\ref{Cl}) we see that $\prod_{k=0}^{p^n-1} \Delta_s \Delta_{s^k} \rho_D$ is a coboundary. Moreover, since $\rho_D$ takes values in $C_{p^n}$, the term $\Delta_s\rho_D^{p^n}$ vanishes and we conclude that $\Delta_{s^{p^n}} \rho_D$ is a coboundary for every $s\in Z$. Let $Z_0$ be the connected component of the identity in $Z$. Since connected groups are divisible (Lemma \ref{divisible}), we conclude that $\Delta_s\rho_D$ is a $(G,Z,S^1)$-coboundary for every $s\in Z_0$. By Lemma \ref{cob:lem}, $\rho_D$ is $(G,Z,S^1)$-cohomologous to a cocycle $\rho'$ that is invariant with respect to the action of $Z_0$. Let $\pi_\star \rho'$ be the push-forward of $\rho'$ to $Z/Z_0$. By Lemma \ref{Cdec:lem}, $\pi_\star \rho'$ is of type $<2$. Therefore, by Theorem \ref{MainH:thm} it is cohomologous to a phase polynomial of degree $<2$. Lifting everything back to $Z$ we conclude that $\rho'$ and $\rho_D$ are $(G,Z,S^1)$-cohomologous to a phase polynomial $Q:G\times Z\rightarrow S^1$ of degree $<2$. Moreover, $Q$ is invariant to translations by $Z_0$. We write, \begin{equation} \label{Poly}\rho = Q\cdot \Delta F\end{equation} for some $F:Z\rightarrow S^1$.\\
	
	Since $\rho_D$ takes values in $C_{p^n}$ we have that \begin{equation} \label{Cl2}1=Q^{p^n}\cdot\Delta F^{p^n}.
	\end{equation}
	By taking the derivative of both sides of the equation above by $s\in Z$, we conclude that $\Delta_s F^{p^n}$ is a phase polynomial of degree $<2$. Our next goal is to replace $F$ with a function $F'$ such that $F'/F$ is a phase polynomial of degree $<3$ (and so equation (\ref{Poly}) holds if we replace $Q$ with another phase polynomial cocycle of degree $<2$) and at the same time that $\Delta_s F'^{p^n}$ is a constant.\\
	We study the phase polynomial $Q$. It is a fact that every phase polynomial of degree $<2$ is a constant multiple of a homomorphism. Therefore, we can write $Q(g,x)=c(g)\cdot q(g,x)$ where $c:G\rightarrow S^1$ and $q:G\times Z\rightarrow S^1$ is a homomorphism in the $Z$-coordinate. Since $Q$ is a cocycle $$c(g+g')q(g+g',x)=c(g)c(g')\Delta_{g'}q(g,x)\cdot q(g,x)\cdot q(g',x).$$ It follows that $q$ is bilinear in $g$ and $x$. Let $$Z'_{p} = \ker(q^{p^n}) = \{s\in Z : q(g,s)^{p^n}=1 \text{ for every } g\in G\}.$$ 
	Since $q$ is bilinear, $Z/Z'_p$ is isomorphic to a subgroup of $\hat G^{p^n}=\prod_{p\not= q\in \mathcal{P}} C_q$. We can take the derivative of both sides of equation (\ref{Cl2}) by $s\in Z_p'$. We conclude by the ergodicity of the Kronecker factor that $\Delta_s F^{p^n}$ is a constant. Therefore, by Corollary \ref{openker}, there exists an open subgroup $Z'\leq Z$ which contains $Z_p'$ such that $\Delta_s F^{p^n}$ is a constant for every $s\in Z'$. By the cocycle identity, we conclude that $\Delta_s F^{p^n}=\chi(s)$ for some character $\chi:Z'\rightarrow S^1$. Lift $\chi$ to a character of $Z$ arbitrarily, we conclude that $F^{p^n}/\chi$ is a phase polynomial which is invariant under translations by $Z'$. Since $Z'$ is open and $Z$ is compact the quotient $Z/Z'$ is a finite group. Moreover, since $Z'$ contains $Z_p'$, we conclude that the order of $Z/Z'$ is co-prime to $p$. By Theorem \ref{HTDPV:thm} applied on the finite system $Z/Z'$, we conclude that up to constant multiplication $F^{p^n}/\chi$ takes values in some finite subgroup $C_m$ of $S^1$, with $(p,m)=1$. By rotating $F$ with a $p^n$-th root of this constant we can assume that this constant is trivial. Since $p$ and $m$ are co-prime, we can find an integer $l$ such that $l\cdot p^n = 1 \mod m$. We conclude that $R:= (F^{p^n}\chi)^l$ is a phase polynomial of degree $<3$ and that $R^{p^n} = F^{p^n}/\chi$. Let $Q':=Q\cdot \Delta R$ and $F':=F/R$. Then, as in equation (\ref{Poly}), we have $$\rho_D = Q'\cdot \Delta F'$$ and $\Delta_s F'^{p^n} = \chi(s)$.\\
	
Now, by taking the derivative by $s\in Z$ on both sides of the equation above we conclude that $$\Delta_s \rho = \Delta_s Q' \cdot \Delta \Delta_s F'.$$ Observe that $c_s':=\Delta_s Q'$ is a character of $G$ and $$c_s'^{p^n} = \Delta_s Q^{p^n} \cdot \Delta \Delta_s F^{p^n}/\chi = 1$$ where the last equality follows from (\ref{Cl2}) and the fact that $\Delta \Delta_s \chi$ vanishes. It is left to change the term $\Delta_s F$. Set $F_s' := \Delta_s F'/\phi(s)$ where $\phi(s)$ is a $p^n$-th root of $\chi(s)$ in $S^1$. Then, as before we have that $$\Delta_s \rho_D = c_s'\cdot \Delta F_s'$$ but this time $c_s'^{p^n} = F_s'^{p^n}=1$. This implies that $p_D$ is onto and the proof is now complete.
\end{proof}
\section{The limit formula and the Khintchine-type recurrence} \label{khintchine}
In this section we let $\Phi_N$ be any F{\o}lner sequence of the group $G=\bigoplus_{p\in P}\mathbb{F}_p$. For a function $f:G\rightarrow \mathbb{C}$ we write $\mathbb{E}_{g\in \Phi_N} f(g)$ for the average $\frac{1}{|\Phi_N|}\sum_{g\in \Phi_N} f(g)$. We study the limit of averages of the form $$ \mathbb{E}_{g\in \Phi_N} T_g^n f_1 T_{2g}^n f_2 T_{3g}^n f_3$$ where $f_1,f_2,f_3\in L^\infty(X)$.\\ The following results for $\mathbb{F}_p^\omega$-systems can be found in \cite[Theorem 1.6]{BTZ}, the same proof holds for $\bigoplus_{p\in P}\mathbb{F}_p$.
\begin{prop}[The universal characteristic factors are characteristic] \label{characteristic}
	If $c_1,c_2,c_3< \min_{p\in\mathcal{P}}p$ then $Z_{<3}(X)$ is characteristic for the average $$\lim_{N\rightarrow\infty}\mathbb{E}_{g\in \Phi_N} T_{c_1g}^n f_1 T_{c_2g}^n f_2 T_{c_3g}^n f_3$$ Namely, if either $f_1$, $f_2$ or $f_3$ are orthogonal to $Z_{<3}(X)$, then the $L^2$-limit is zero.
\end{prop}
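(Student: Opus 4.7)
The plan is to carry out the standard van der Corput (VdC) argument from \cite[Theorem 1.6]{BTZ} in our mixed-characteristic setting, noting that the only input required from the coefficient hypothesis is that each $c_i$ and each nonzero difference $c_i-c_j$ is invertible in $G=\bigoplus_{p\in\mathcal{P}}\mathbb{F}_p$. Since $c_1,c_2,c_3<\min_{p\in\mathcal{P}}p$, each such integer is coprime to every $p\in\mathcal{P}$ and hence acts as an automorphism of $G$; this is the only obstruction that could in principle prevent the $\mathbb{F}_p^\omega$ proof from transferring, and the rest of the argument uses only that $G$ is a countable abelian group.

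By multilinearity, it suffices to assume $E(f_3\mid Z_{<3}(X))=0$, equivalently $\|f_3\|_{U^3}=0$, and show that $A_N:=\mathbb{E}_{g\in\Phi_N}\prod_{i=1}^3 T_{c_i g}f_i$ converges to $0$ in $L^2(X)$. Setting $a_g:=\prod_i T_{c_i g}f_i$ and applying VdC once, then using $G$-invariance of $\mu$ to translate by $T_{-c_1 g}$, one obtains
\[
\limsup_N\|A_N\|_{L^2}^2 \lesssim \limsup_H \mathbb{E}_{h\in\Phi_H}\left|\lim_N\mathbb{E}_{g\in\Phi_N}\int \psi_1^h\cdot T_{(c_2-c_1)g}\psi_2^h\cdot T_{(c_3-c_1)g}\psi_3^h\,d\mu\right|,
\]
where $\psi_i^h:=\overline{f_i}\cdot T_{c_i h}f_i$ plays the role of the derivative in $h$ that appears in the recursive definition of the $U^k$-seminorms.

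For each fixed $h$, the integrand pairs $\psi_1^h$ against a two-function ergodic average with shift coefficients $c_2-c_1$ and $c_3-c_1$, both invertible in $G$. Invoking the standard (and easier) fact that $Z_{<2}(X)$ is characteristic for such two-function averages -- a consequence of a single application of VdC, valid for any countable abelian group -- combined with Cauchy-Schwarz, yields the uniform bound
\[
\left|\lim_N\mathbb{E}_g\int \psi_1^h\cdot T_{(c_2-c_1)g}\psi_2^h\cdot T_{(c_3-c_1)g}\psi_3^h\,d\mu\right| \leq C\|\psi_3^h\|_{U^2},
\]
with $C$ depending only on $\|f_1\|_\infty$ and $\|f_2\|_\infty$.

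Averaging over $h$ and applying Jensen's inequality gives
\[
\limsup_N\|A_N\|_{L^2}^{2} \lesssim \left(\limsup_H\mathbb{E}_{h\in\Phi_H}\|\psi_3^h\|_{U^2}^{4}\right)^{1/4},
\]
and the change of variables $h\mapsto c_3 h$ (a bijection of $G$ by invertibility of $c_3$) identifies the right-hand side with a constant multiple of $\|f_3\|_{U^3}^{2}$ via the defining recursion of the GHK seminorm. Hence $\limsup_N\|A_N\|_{L^2}\lesssim\|f_3\|_{U^3}=0$, as desired. The only point requiring care is the verification at each step that the invertibility hypothesis on the $c_i$ is precisely what is needed to justify the relevant translation or change of variables; I anticipate no substantive obstacle beyond this routine bookkeeping.
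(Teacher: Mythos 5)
Your proposal is correct and is essentially the paper's argument: the paper gives no proof here, but simply cites \cite[Theorem 1.6]{BTZ} and asserts that "the same proof holds" for $\bigoplus_{p\in\mathcal{P}}\mathbb{F}_p$, and that proof is exactly the van der Corput cascade you sketch, with the hypothesis $c_1,c_2,c_3<\min_{p\in\mathcal{P}}p$ used precisely to guarantee that the $c_i$ and their nonzero differences act as automorphisms of $G$ (so translations, changes of variables, and the F{\o}lner-independence of the GHK seminorms go through). The only cosmetic imprecision is attributing the reduction to the case $E(f_3\mid Z_{<3}(X))=0$ to "multilinearity" rather than to the symmetry of the argument under choosing which factor to unload onto, but this does not affect correctness.
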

We note that the existence of the $L^2$-limit is already known for all countable nilpotent groups (see a proof by Walsh in \cite{Walsh}). Our goal is to prove a formula for the limit, in the special case when the underlying system is a nilpotent system.\\
We recall from the previous section that every system of order $<3$ is an inverse limit of systems of the form $X=Z\times_\rho U$ where $Z$ is the Kronecker factor and $U$ is a Lie group. Every compact abelian Lie group is a product of a torus and a finite group. In Theorem \ref{recurrence:thm} we further assume that $3<\min_{p\in\mathcal{P}} p$ which implies that the order of the finite group is odd (this fact will be used later, in particular that $U^2 = U$). We prove this below.
\begin{prop} \label{2-divisible}
    Let $X=Z\times_\rho U$ be an ergodic $G$-system of order $<3$ and suppose that $3<\min_{p\in\mathcal{P}}p$. Then $U^2=U$.
\end{prop}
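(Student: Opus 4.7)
My plan is to use the nilpotent structure from Theorem \ref{nilpotentstructure} to realize $X$ as a $2$-step nilmanifold $\mG/\Gamma$, and then exploit bilinearity of commutators in $2$-step nilpotent groups together with the fact that every element of $G=\bigoplus_{p\in\mathcal{P}}\mathbb{F}_p$ has odd order (since $\min_{p\in\mathcal{P}}p>3$). The key observation will be that the closed commutator subgroup $\mG_c:=\overline{[\mG,\mG]}$ sits inside the image of $U$ in $\mG$ (as vertical translations) and consists of limits of odd-order elements; since the closure of an odd-torsion subgroup of a compact abelian group is automatically $2$-divisible, this will force $U^2=U$.

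Concretely, I would invoke Theorem \ref{nilpotentstructure} to write $X\cong \mG/\Gamma$ where $\mG$ is the Host--Kra group, $2$-step nilpotent and acting transitively on $X$. Letting $N=\ker(p:\mG\to Z)$ denote the vertical subgroup, the Kronecker factor coincides with $Z=\mG/\mG_c\Gamma$, so the fiber of $X\to Z$ (which is precisely the structure group $U$) is identified with $\mG_c/(\mG_c\cap\Gamma)$. Since $\phi(G)\cdot N$ is dense in $\mG$ (as $\phi(G)$ surjects densely onto $Z$), bilinearity in the $2$-step nilpotent $\mG$ together with the relations $[\phi(G),\phi(G)]=1$ (by abelianity of $G$) and $[N,N]=1$ yields $\mG_c=\overline{[\phi(G),N]}$. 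A direct computation using the explicit form $S_{s,F}(z,u)=(sz,F(z)u)$ of elements of $\mG$ shows that each commutator $[\phi(g),\nu]$ for $\nu\in N$ is a vertical translation in $U$; moreover, by bilinearity in a $2$-step nilpotent group, $[\phi(g),\nu]^n=[\phi(g)^n,\nu]=1$ whenever $g\in G$ has order $n$. Since every element of $G$ has odd order, every element of $[\phi(G),N]\subseteq U$ has odd order, so $\mG_c\subseteq U$ is a closed subgroup of the compact group $U$ consisting of limits of odd-order elements.

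Finally, I would verify that the closure of an odd-torsion subgroup of a compact abelian group is automatically $2$-divisible: any element $h$ of odd order $n$ has the canonical square root $h^{(n+1)/2}$ (since $2\cdot(n+1)/2\equiv 1\pmod{n}$), and given $\bar h=\lim h_i\in \mG_c$ with $h_i\in[\phi(G),N]$ of odd order $n_i$, the sequence of square roots $h_i^{(n_i+1)/2}$ admits a convergent subsequence by compactness of $U$, whose limit is a square root of $\bar h$ lying in $\mG_c$. Therefore $\mG_c$ is $2$-divisible, hence so is $U\cong \mG_c/(\mG_c\cap\Gamma)$, giving $U^2=U$. The main technical obstacles I anticipate are justifying the identification $U\cong \mG_c/(\mG_c\cap\Gamma)$ via the fact that $Z=\mG/\mG_c\Gamma$ is the maximal Kronecker factor, and carrying out the commutator computation $[\phi(g),\nu]\in U$ precisely from the Conze--Lesigne form of $\rho$.
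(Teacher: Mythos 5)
Your argument is essentially correct, but it takes a genuinely different and considerably heavier route than the paper's. The paper argues by pure duality inside the presentation $X=Z\times_\rho U$: if $U^2\lneqq U$, there is a non-trivial $C_2$-valued character $\chi\in\hat U$; differentiating $\chi\circ\rho$ along $s\in Z$ (Lemma \ref{dif:lem}) and applying Moore--Schmidt (Lemma \ref{type0}) produces Conze--Lesigne constants $c_{s,\chi}$ which can be normalized to take values in $C_2$, hence are trivial characters of $G$ because $2\notin\mathcal{P}$; therefore $\chi\circ\rho$ is of type $<1$ and $Z\times_{\chi\circ\rho}C_2$ would enlarge the Kronecker factor, a contradiction. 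Your route instead passes through Theorem \ref{nilpotentstructure}, identifies $U$ with $\mG_2=\overline{\langle[\phi(G),N]\rangle}$, and notes that the generators have odd order since $[\phi(g),\nu]^n=[\phi(g)^n,\nu]=1$ when $g$ has order $n$ and $G$ has only odd torsion; your compactness argument that the closure of an odd-torsion subgroup is $2$-divisible is fine. Two caveats. First, the identification $\mG_2\cong U$ (equivalently, that the Conze--Lesigne constants generate a dense subgroup of $U$) is itself established in the paper, in the proof of Proposition \ref{Parry}, by exactly the same duality-plus-Kronecker-maximality mechanism that the paper's proof of the present proposition uses directly; so your argument does not avoid that step, it relocates it. Second, the presentation $X\cong\mG/\Gamma$ with $\Gamma$ discrete is only available after reducing to $U$ a Lie group, so for a general compact abelian $U$ you must run the argument on each Lie quotient $U/U_i$ and then observe that the closed subgroup $U^2$ surjects onto every $(U/U_i)^2=U/U_i$ and hence equals $U$; this inverse-limit step is missing from your write-up, though routine. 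What your approach buys is a transparent structural explanation: $U$ is the closed group generated by commutators whose orders divide orders of elements of $G$, so it is $q$-divisible for every prime $q\notin\mathcal{P}$, not just $q=2$ --- although the paper's character argument yields the same generalization just as quickly.
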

\begin{proof}
    Assume by contradiction that $U^2\lneqq U$. Then by the pontryagin dual there exists a non-trivial character $\chi\in \hat U$ with values in $C_2$. By Lemma \ref{dif:lem} we see that $\Delta_s \chi\circ\rho$ is a cocycle of type $<1$ for every $s\in Z$ and by Lemma \ref{type0} this implies that $\Delta_s\chi\circ\rho(g,x) = c_{s,\chi}(g)\cdot \Delta_g F_{s,\chi}(x)$. Arguing as in the proof of Theorem \ref{nilpotentstructure} above, we can assume that $c_{s,\chi}$ takes values in $C_2$. However, $c_{s,\chi}$ is a character of $G$ and by assumption $2\not\in \mathcal{P}$. We conclude that $c_{s,\chi}=1$ and $\Delta_s\chi\circ\rho$ is a coboundary. It follows that $\chi\circ\rho$ is of type $<1$ and that $Z\times_{\chi\circ\rho} C_2$ is a Kronecker system. This is a contradiction to the maximal property of the Kronecker factor.
\end{proof}
Let $3<\min_{p\in\mathcal{P}}p$ and $X=Z\times_\rho U$ be a system of order $<3$. By Theorem \ref{nilpotentstructure} we can approximate $X$ by nilpotent homogeneous spaces. We study these systems. Assume that $X=\mathcal{G}/\Gamma$ where $\mathcal{G}$ is the Host-Kra group of $X$ and $\Gamma$ a discrete subgroup. Let $\mG_2$ be the commutator subgroup of $G$, that is the smallest closed group generated by all the commutators. An easy calculation reveals that the commutator subgroup $\mG_2$ is isomorphic to $U$. In particular, from Proposition \ref{2-divisible} we conclude that any element in $\mG_2$ has a square root. In this case we have the following formula for the limit of the multiple ergodic averages.
\begin{thm}[The limit formula] \label{formula} Let $p>3$ and let $X=\mathcal{G}/\Gamma$ be a $G$-system of order $<3$ where $\mG$ is the Host-Kra group. If $\mG_2$ is a Lie group then for all $f_1,f_2,f_3\in L^\infty(X)$ we have that for $\mu_\mG$-almost all $x\in \mG$ the average $$\mathbb{E}_{g\in\Phi_N} f_1(T_gx)\cdot  f_2(T_{2g}x)\cdot f_3(T_{3g}x)$$ converges to $$\int_X \int_{\mathcal{G}_2} f_1(xy_1\Gamma)f_2(xy_1^2y_2\Gamma)f_3(xy_1^3y_2^3\Gamma)d\mu_{\mathcal{G}_2}(y_2)d\mu_X(y_1\Gamma).$$ With the abuse of notation that $f(x)=f(x\Gamma)$.
\end{thm}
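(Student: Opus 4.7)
The approach is to realize the closure of the orbit of $(x,x,x)$ under the $G$-action $(T_g, T_{2g}, T_{3g})$ on $X^3$ as a sub-homogeneous space of $X^3$, and then to invoke unique ergodicity of this $G$-action together with a pointwise ergodic theorem for amenable groups. The key identity I would exploit is that in any $2$-step nilpotent group with central commutator subgroup, $a^n b^n = [a,b]^{\binom{n}{2}}(ab)^n$ for all $a,b$ and all $n \geq 0$. Using this for $n = 2, 3$, one checks that
$$\mathcal{H} := \{(y_1, y_1^2 y_2, y_1^3 y_2^3) : y_1 \in \mathcal{G},\ y_2 \in \mathcal{G}_2\}$$
is a closed subgroup of $\mathcal{G}^3$, and that the polynomial element $(\varphi(g), \varphi(g)^2, \varphi(g)^3)$ lies in $\mathcal{H}$ for every $g \in G$ (take $y_1 = \varphi(g)$ and $y_2 = 1_{\mathcal{G}}$). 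Consequently, the $G$-orbit of $(x,x,x)\Gamma^3$ is contained in the closed subset $\tilde X_x := \mathcal{H}(x,x,x)\Gamma^3 / \Gamma^3 \subseteq X^3$.

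Next I would identify the normalized Haar measure on $\tilde X_x$ viewed as a homogeneous space of $\mathcal{H}$. Since $\mathcal{G}_2$ is central, and since the same argument as in Proposition \ref{2-divisible} applied at the prime $3$ shows that $\min_{p\in \mathcal{P}} p > 3$ forces $\mathcal{G}_2^3 = \mathcal{G}_2$ (so the map $y_2 \mapsto y_2^3$ is Haar-measure-preserving on the fiber), a Fubini computation identifies $\int_{\tilde X_x} f_1 \otimes f_2 \otimes f_3\, d\mu_{\tilde X_x}$ with exactly the double integral on the right-hand side of the theorem. The core step is then to prove that, for $\mu_{\mathcal{G}}$-almost every $x$, the $G$-action on $\tilde X_x$ by left-multiplication by $(\varphi(g),\varphi(g)^2,\varphi(g)^3)$ is uniquely ergodic. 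I would carry this out in two stages. First, projecting $\tilde X_x$ to the Kronecker factor $Z = \mathcal{G}/\mathcal{G}_2\Gamma$ yields the set $\{(z\bar x, z^2 \bar x, z^3 \bar x) : z \in Z\}$; unique ergodicity there follows from Weyl-type equidistribution of $g \mapsto \bar\varphi(g)$ in the compact abelian group $Z$ together with injectivity of $z \mapsto (z, z^2, z^3)$ on $Z$ (the injectivity again uses $\min_{p\in \mathcal{P}} p > 3$). Second, to lift unique ergodicity through the $\mathcal{G}_2$-fibers, I would Fourier-decompose along $\widehat{\mathcal{G}_2}$ and show that, for each non-trivial character $\chi$, the ``commutator twist'' arising from the identity $a^n b^n = [a,b]^{\binom{n}{2}}(ab)^n$ equidistributes, via a van der Corput argument combined with the Host-Kra structure (invoking Theorem \ref{Main:thm} on the appropriate lower-order characteristic factor). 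Once unique ergodicity is established, Lindenstrauss's pointwise ergodic theorem for tempered F{\o}lner sequences on the amenable group $G$ gives pointwise $\mu_{\mathcal{G}}$-a.e.\ convergence to $\int_{\tilde X_x} f_1 \otimes f_2 \otimes f_3\, d\mu_{\tilde X_x}$.

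The hard part will be the unique ergodicity step, which is the $G$-analog of Leibman's equidistribution theorem for polynomial orbits on nilmanifolds. The classical strategy (van der Corput inequality plus induction on the nilpotency step) adapts in principle, but the absence of a cyclic generator in $G = \bigoplus_{p \in \mathcal{P}} \mathbb{F}_p$, combined with the fact that $g \mapsto (\varphi(g), \varphi(g)^2, \varphi(g)^3)$ fails to be a homomorphism, forces the induction to proceed via uniform estimates along F{\o}lner sequences. Tracking the commutator twist throughout — and arguing at each prime separately as in the proof of Theorem \ref{MainT:thm} — is where the technical effort will concentrate.
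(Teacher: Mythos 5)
Your overall architecture coincides with the paper's: the group $\mathcal{H}=\{(y_1,y_1^2y_2,y_1^3y_2^3)\}$ you build from the identity $a^nb^n=[a,b]^{\binom{n}{2}}(ab)^n$ is exactly (an isomorphic copy of) the group $\tilde{\mG}=\mG\times\mG_2$ with multiplication $(x_1,x_2)(y_1,y_2)=(x_1y_1,[x_1,y_1]x_2y_2)$ that the paper uses, and the identification $I_x(y_1,y_2)=(xy_1,xy_1^2y_2,xy_1^3y_2^3)$ carries the paper's twisted action $S_{g,x}$ to your left multiplication by $(\varphi(g),\varphi(g)^2,\varphi(g)^3)$ on the orbit set $A_x$; the Fubini identification of the Haar measure with the double integral (using $\mG_2^3=\mG_2$, from Proposition \ref{2-divisible}) is also the same. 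The genuine divergence is in how unique ergodicity of $(\tilde{\mG}/\tilde{\Gamma},S_{g,x})$ is obtained. You propose the hard quantitative route — Fourier decomposition along $\widehat{\mG_2}$, a van der Corput argument, and an induction amounting to a Leibman-type equidistribution theorem for polynomial orbits under $\bigoplus_{p}\mathbb{F}_p$ — and you correctly flag this as the technical bottleneck; no such theorem is available off the shelf here, so as written this step is a substantial gap rather than a routine adaptation. The paper circumvents it entirely with a soft argument: it first proves ergodicity of the induced action on the Kronecker factor $\tilde{\mG}/\tilde{\mG}_2\tilde{\Gamma}$ for a.e.\ $x$ by pure character analysis, then invokes Parry's theorem (Proposition \ref{Parry}) that ergodicity of the Kronecker factor lifts to ergodicity of the nilsystem, and finally uses Parry's classical result that ergodic nilsystems are uniquely ergodic. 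This buys a much shorter proof and, importantly, convergence for \emph{every} point and every continuous function along an \emph{arbitrary} F{\o}lner sequence; your appeal to Lindenstrauss's pointwise theorem would require the F{\o}lner sequence $\Phi_N$ to be tempered, whereas the theorem as stated (and the section preamble) allows any F{\o}lner sequence, so that final step of yours proves a slightly weaker statement than claimed. The passage from continuous to general $L^\infty$ functions is then an $L^1$-approximation, as in the paper.
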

The proof follows the argument of Lesigne \cite{Les}. Lesigne's argument relies on a result of Green \cite{Green} that in the case of connected simply connected nilsystems, the ergodicity is determined by the ergodicity of the Kronecker factor. We prove a counterpart of Green's Theorem in our special case (see Proposition \ref{Parry} below) from which we deduce the limit formula. 
\subsection{The system $(\tilde{\mG}/\tilde{\Gamma},S_{g,x})$}
Let $\mathcal{G}/\Gamma$ be a $2$-step nilpotent system and assume that $\mG_2$ is a compact abelian group. Let $\tilde{\mG} := \mG \times [\mG,\mG]$ and define multiplication on $\tilde{\mG}$ by $(x_1,x_2)\cdot (y_1,y_2) = (x_1y_1, [x_1,y_1]x_2y_2)$ where $[x,y]=x^{-1}y^{-1}xy$. For each $x\in \mG$, let $S_{g,x}$ denote the action of $G$ on $\tilde{\mG}$ by left multiplication with $(a_g[a_g,x],e)$ where $g\mapsto a_g$ denotes the action of $G$ on $\mG$. Finally let $\tilde{\Gamma}=\Gamma\times \{e\}\leq \tilde{\mG}$. It is easy to see that $\tilde{\mG}$ is a locally compact $2$-step nilpotent group and $\tilde{\Gamma}$ is a discrete co-compact subgroup. We equip $\tilde{\mG}$ with the product measure, and the quotient $\tilde{\mG}/\tilde{\Gamma}$ with the induced Haar measure.\\

Let $A_x = \{(xy_1\Gamma,xy_1^2y_2\Gamma,xy_1^3y_2^3\Gamma):(y_1,y_2)\in \tilde{\mG}\}$ be a subset of $\mG/\Gamma\times\mG/\Gamma\times\mG/\Gamma$ with the induced measure and $\sigma$-algebra. Then the map $I_x(y_1,y_2)=(xy_1,xy_1^2y_2,xy_1^3y_2^3)$ defines  an isomorphism between $(\tilde{\mG}/\tilde{\Gamma},S_{g,x})$ and $(A_x,(T_g\times T_{2g}\times T_{3g}))$. Let $\mu_{\mG}$ be the Haar measure on $\mG$, our goal is to prove that for $\mu_{\mG}$-almost every $x\in \mG$ the action of $S_{g,x}$ is ergodic.
\subsection{Proving ergodicity}
We first prove the ergodicity on the Kronecker factor.
\begin{prop}[Ergodicity on the Kronecker factor]
	The induced action of $S_{g,x}$ on $\tilde{\mG}/\tilde{\mG}_2\tilde{\Gamma}$ is ergodic for $\mu_\mG$-almost every $x\in\mG$.
\end{prop}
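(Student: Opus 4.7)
The Kronecker factor $\tilde Z := \tilde{\mG}/\tilde{\mG}_2\tilde\Gamma$ is a compact metrizable abelian group, and $S_{g,x}$ acts on it by translation by $\alpha_g(x) := (a_g[a_g,x], e)\tilde{\mG}_2\tilde\Gamma$. Ergodicity for a fixed $x$ is equivalent to the nonexistence of a nontrivial character $\chi \in \widehat{\tilde Z}$ annihilating every $\alpha_g(x)$. Since $\widehat{\tilde Z}$ is countable, the plan is to show that for each nontrivial $\chi$ the bad set $B_\chi := \{x : \chi(\alpha_g(x)) = 1\ \forall g\}$ is $\mu_{\mG}$-null, and then conclude by a countable union.

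The first step is to parametrize characters. Every character of $\tilde{\mG}$ factors as $\tilde\chi(y_1, y_2) = \chi_1(y_1)\chi_2(y_2)$ with $\chi_2 \in \widehat{\mG_2}$ ordinary and $\chi_1:\mG\to S^1$ a ``$\chi_2$-twisted character'' satisfying $\chi_1(xy) = \chi_1(x)\chi_1(y)\chi_2([x,y])^{-1}$; triviality of the lift on $\tilde{\mG}_2\tilde\Gamma = \{(u\gamma, u^2) : u \in \mG_2, \gamma \in \Gamma\}$ becomes $\chi_1|_\Gamma = 1$, $\chi_1|_{\mG_2} = \chi_2^{-2}$, and hence $\chi_2|_{[\Gamma,\Gamma]} = 1$. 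A short 2-step nilpotent computation, using $[a_g, x^2] = [a_g, x]^2$, rewrites invariance as $\chi_1(a_g) = \chi_2([a_g, x^2])$ for all $g \in G$.

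I would then split on $\chi_2$. If $\chi_2 = 1$, then $\chi_1$ descends to a character of the Kronecker $Z$, and the density of $\{\bar a_g\}$ in $Z$ (ergodicity of $X$) forces $\chi_1 = 1$, so $B_\chi = \emptyset$. If $\chi_2 \neq 1$, set $\Phi_g(x) := \chi_2^2([a_g, x])$, a continuous character of $\mG$. If every $\Phi_g$ were trivial, then $\chi_2^2$ would vanish on $[H, \mG]$ with $H = \overline{\{a_g : g \in G\}}$; combined with $\chi_2|_{[\Gamma,\Gamma]} = 1$, bilinearity of the commutator in the 2-step nilpotent $\mG$ together with the density of $H\Gamma$ in $\mG$ yields $\chi_2^2 \equiv 1$ on all of $\mG_2$, after which Proposition \ref{2-divisible}---which makes the squaring map on $\widehat{\mG_2}$ injective---forces $\chi_2 = 1$, a contradiction. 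Hence some $\Phi_{g_0}$ is a nontrivial continuous character of $\mG$, and $B_\chi$ lies in the coset $\{x : \Phi_{g_0}(x) = \chi_1(a_{g_0})\}$ of $\ker \Phi_{g_0}$.

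The main obstacle I expect is the final step: verifying that this coset has $\mu_{\mG}$-measure zero. By Steinhaus's theorem, a closed subgroup of a locally compact group is Haar-null unless it is open, so I must rule out $\Phi_{g_0}$ having finite image in $S^1$. Since $\Phi_{g_0}(x) = \chi_2^2([a_{g_0}, x])$ depends on $x$ only through the commutator into $\mG_2$, its image equals $\chi_2^2([a_{g_0}, \mG])$, a closed subgroup of $S^1$; I would argue, using the connectedness of the identity component of $\mG$ and the strengthened divisibility of $\mG_2$ (the Moore--Schmidt proof of Proposition \ref{2-divisible} applies verbatim for any $n$ coprime to $\mathcal{P}$, since $\min\mathcal{P} > 3$), that this subgroup must be all of $S^1$, making $\ker \Phi_{g_0}$ Haar-null. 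A countable union over $\widehat{\tilde Z}$ then completes the proof.
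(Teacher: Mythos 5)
Your overall skeleton matches the paper's (reduce to a single nontrivial character of $\tilde{\mG}/\tilde{\mG}_2\tilde\Gamma$ via countability, translate invariance into the condition $\chi_1(a_g)=\chi_2([a_g,x])^2$, and show the bad set of $x$ is null), and your parametrization of the characters and the reduction in the case $\chi_2=1$ are fine. But the final step, which you correctly identify as the main obstacle, does not work as proposed, and it is exactly where the real content of the proof lies.

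The claim that some $\Phi_{g_0}(x)=\chi_2^2([a_{g_0},x])$ has image all of $S^1$ is false in general. The group $\mG_2\cong U$ is only known to be a compact abelian Lie group with $U^2=U$; it can perfectly well be finite (e.g.\ $U=C_p$ for an odd $p\in\mathcal{P}$ — note that $U^n=U$ for $n$ coprime to $\mathcal{P}$ gives neither divisibility nor connectedness). Then every $\chi_2\in\hat U$ has finite image, every $\Phi_g$ has finite image, and every $\ker\Phi_g$ is an \emph{open} subgroup of positive Haar measure, so containing $B_\chi$ in one coset of $\ker\Phi_{g_0}$ proves nothing. The relevant object is the full intersection $\mL=\bigcap_{g}\ker\Phi_g$ (the kernel of the homomorphism $x\mapsto(g\mapsto\chi_2([a_g,x]))$ into $\hat G$): $B_\chi$ is empty or a coset of $\mL$, and one must show that $\mu_\mG(\mL)>0$ forces $\chi$ to be trivial. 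Positive measure makes $\mL$ open, hence $\mL/(\mL\cap\Gamma)$ is a clopen invariant subset of $\mG/\Gamma$ on which the $G$-action is still ergodic, and it is this ergodicity (the topological density of the group generated by $\Gamma\cap\mL$ and the $a_g$'s in $\mL$) — not any structural property of $\mG_2$ or of the image of a single $\Phi_{g_0}$ — that upgrades the triviality of $\chi_2([\,\cdot\,,\cdot\,])$ on $\{a_g\}\times\mL$ and on $\Gamma\times\Gamma$ to triviality of $\chi_2$ on all of $\mG_2$, and then of $\chi_1$ (this is the content of the paper's Claims 1 and 2). Your proposal omits this case entirely, and without it the proposition is not established. (A smaller point: your derivation that "all $\Phi_g$ trivial implies $\chi_2^2\equiv 1$ on $\mG_2$" also tacitly needs this ergodicity argument to control $\chi_2$ on $[\Gamma,\mG]$, since a priori you only know $\chi_2|_{[\Gamma,\Gamma]}=1$.)
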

\begin{proof}
	Using Fourier analysis, the action of $S_{g,x}$ is ergodic if and only if every character $\sigma:\tilde{G}\rightarrow S^1$ which is trivial on $\tilde{\mG}_2\tilde{\Gamma}$ and satisfies that $\sigma((a_g[a_g,x],e))=1$ is trivial.
	
	Let $\sigma$ be such a character, we denote by $\sigma_1:\mG \rightarrow S^1$ and $\sigma_2:\mG_2\rightarrow S^1$ the coordinates of $\sigma$. Namely, $\sigma_1(x):=\sigma(x,e)$ and $\sigma_2(y)=\sigma(e,y)$. Direct computation shows that
	\begin{enumerate}
		\item {$\sigma(x,y) = \sigma_1(x)\cdot \sigma_2 (y)$.}
		\item {$\sigma_1(x_1x_2)=\sigma_1(x_1)\cdot \sigma_1(x_2)\cdot \sigma_2([x_1,x_2])$.}
		\item {$\sigma_2$ is a character.}
		\item {For every $x=(x_1,x_2),y=(y_1,y_2)\in \tilde{\mG}$ we have $[x,y] = ([x_1,y_1],[x_1,y_1]^2)$. In particular, $\sigma_1(x')=\sigma_2(x')^{-2}$.}
	\end{enumerate}
	Observe that by property $(4)$ we have that $\sigma((a_g[a_g,x],e))=1$ if and only if $\sigma_1(a_g)=\sigma_2([a_g,x])^2$.\\ Since the map $(g,y)\mapsto \sigma_2([a_g,y])$ is a bilinear map $G\times \mG\rightarrow S^1$ it gives rise to a homomorphism $\mG\rightarrow \hat G$. Let $\mL \leq \mG$ be the kernel of this homomorphism.\\
	
	Since $\hat{\tilde{\mG}}$ is countable, it is enough to show that the measure of $\mL$ for every non-trivial character is zero. Suppose by contradiction that $\mL$ is of positive measure. Then it is an open normal subgroup of $\mG$. Clearly, $\mL$ also contains the elements $a_g$ for all $g\in G$. Let $\Gamma_{\mL} = \mL\cap \Gamma$, then $\mL/\Gamma_{\mL}$ can be identified with a closed and open subset of $\mG/\Gamma$ which is also $G$-invariant. Ergodicity of $\mG/\Gamma$ implies that the systems $\mL/\Gamma_{\mL}$ and $\mG/\Gamma$ are isomorphic and the identity map $x\Gamma_{\mL}\mapsto x\Gamma$ is the isomorphism. In particular, it follows that the action of $G$ on $\mL/\Gamma_{\mL}$ is ergodic.\\
	\textit{Claim 1.} We prove that $\sigma_2(x)=1$ for all $x\in [\Gamma_{\mL},\mG]$ and  $x\in \mL_2$.\\
	From the construction we see that $\sigma_2([a_g,x])=1$ for all $g\in G$ and for all $x\in \mL$. Let $\gamma\in \Gamma_{\mL}$ and look at $x\mapsto \sigma_2([\gamma,x])$. Since $\gamma\in \Gamma$ this map is trivial for $x\in \Gamma$, since $\gamma\in \mL$ this map is also trivial for $x=a_g$ for every $g\in G$ hence by ergodicity this map is trivial for all $x\in\mG$. Since $\sigma_2$ is a character this proves the first claim. Now fix $y\in \mL$. We just proved that $\sigma_2([y,x])=1$ for all $x\in \Gamma_{\mL}$, it follows by the construction of $\mL$ that the same holds for $x=a_g$ for all $g\in G$. Ergodicity implies that $\sigma_2$ is trivial on $\mL_2$ as required. $\square$\\
	\textit{Claim 2.} $\sigma_2$ is trivial on $\mG_2$ .\\
	Consider the map $\psi:y\mapsto \frac{\sigma_1(y)}{\sigma_2([y,x_0])}$ for all $y\in \mL$. By property (3) above and claim 1 this map is $G$-invariant and a homomorphism of $\mL$. Moreover, since $\sigma$ is invariant to right multiplication by an element in $\tilde{\Gamma}$, it follows from claim 1 that $\psi$ is also invariant to right multiplication by $\Gamma_{\mL}$. The ergodicity of the $G$ action on $\mL/\Gamma_{\mL}$ implies that $\psi$ is trivial. Now, since $\mG_2\subseteq \mL$, we conclude that for every $t\in \mG_2$, $\sigma_1(t)=\sigma_2([t,x_0])^2=1$. From property $(4)$ above this implies that $\sigma_2(t^2)=1$. Since $p>3$, $\mG_2^2 = \mG_2$ and the claim follows. $\square$\\
	To finish the proof of the proposition, we prove that $\sigma\equiv 1$. Just as in claim $2$, we see that $\sigma_1:\mG\rightarrow S^1$ is a homomorphism that is invariant to $\Gamma$ and to the action of $G$. Hence, it is a constant. Since $\sigma = \sigma_1$ we conclude that $\sigma$ is trivial.
\end{proof}
\begin{prop}[Ergodicity on the Kronecker factor implies ergodicity] \label{Parry}
 Let $x$ be such that the induced action of $S_{g,x}$ on $\tilde{\mG}/\tilde{\mG}_2\tilde{\Gamma}$ is ergodic. Then the original action of $S_{g,x}$ on $\tilde{\mG}/\tilde{\Gamma}$ is ergodic.
\end{prop}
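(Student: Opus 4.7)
This proposition is a Parry-type statement for our $2$-step nilpotent setting: ergodicity on the Kronecker factor pulls back to ergodicity on the full extension. The plan is a Fourier decomposition of $L^2(\tilde{\mG}/\tilde{\Gamma})$ using the central action of $\tilde{\mG}_2$. Because $\tilde{\mG}$ is $2$-step nilpotent, $\tilde{\mG}_2\subseteq Z(\tilde{\mG})$, so left multiplication by $\tilde{\mG}_2$ on $\tilde{\mG}/\tilde{\Gamma}$ commutes with $S_{g,x}$. Setting $K:=\tilde{\mG}_2/(\tilde{\mG}_2\cap\tilde{\Gamma})$, the map $\tilde{\mG}/\tilde{\Gamma}\to\tilde{\mG}/\tilde{\mG}_2\tilde{\Gamma}$ is a principal $K$-bundle over the Kronecker factor, and we obtain the isotypic decomposition
\[
L^2(\tilde{\mG}/\tilde{\Gamma})=\bigoplus_{\chi\in\hat K}L^2_\chi,\qquad L^2_\chi:=\{f:f(zy)=\chi(z)f(y),\ z\in\tilde{\mG}_2\}.
\]
An $S_{g,x}$-invariant function splits as $f=\sum_\chi f_\chi$ with each $f_\chi$ still invariant; the hypothesis makes $f_{\mathbf 1}$ constant, so it suffices to show $f_\chi\equiv 0$ for every non-trivial $\chi\in\hat K$.

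For a fixed non-trivial $\chi$, suppose towards contradiction that $f_\chi\not\equiv 0$. Since $|f_\chi|^2$ is both $\tilde{\mG}_2$- and $S_{g,x}$-invariant, it descends to a constant on the Kronecker factor by hypothesis, and after rescaling $F:=f_\chi$ satisfies $|F|\equiv 1$. A direct computation with the twisted multiplication on $\tilde{\mG}$ gives
\[
S_{g,x}(y_1,y_2)=(a_g[a_g,x]y_1,\ [a_g,y_1]y_2),
\]
so in a measurable trivialisation of the $K$-bundle the fibre cocycle describing the $K$-extension is, up to a coboundary, $\sigma(g,\bar y)=[a_g,y_1]\in K$, a bilinear expression in $g$ and $y_1$. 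The existence of a non-zero $S_{g,x}$-invariant element in $L^2_\chi$ is then equivalent to $\chi\circ\sigma$ being a $(G,\tilde{\mG}/\tilde{\mG}_2\tilde{\Gamma},S^1)$-coboundary, and the task reduces to ruling this out for $\chi\ne\mathbf 1$.

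This final step is the main obstacle, and I plan to follow the approach of Lesigne. Expanding a hypothetical solution $F_0$ of $\chi([a_g,y_1])=\Delta_g F_0(\bar y)$ in the Fourier basis of the compact abelian group $\tilde{\mG}/\tilde{\mG}_2\tilde{\Gamma}$ produces a relation between $\chi([a_g,\cdot])$ and a character of that factor evaluated at $a_g[a_g,x]$; using $\mG_2^2=\mG_2$ (Proposition \ref{2-divisible}) to take square roots through the diagonal embedding $\tilde{\mG}_2=\{(u,u^2):u\in\mG_2\}\subseteq\mG_2\times\mG_2$, the resulting identities can be transported back to $\mG$ and analysed by the same character-theoretic mechanism used in the Kronecker-ergodicity proof above. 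The ergodicity of the Kronecker quotient then forces $\chi([a_g,\cdot])\equiv 1$ for every $g\in G$, and bilinearity together with the transitivity of the $\mG$-action on $\mG/\Gamma$ then forces $\chi=\mathbf 1$, contradicting the assumption. The technical heart of the argument is the Fourier/square-root bookkeeping translating the coboundary relation on the Kronecker factor into a character identity on $\tilde{\mG}$, to which the previous proposition can be applied.
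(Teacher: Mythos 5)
Your setup agrees with the paper's up to the point where the real work begins: both arguments decompose an invariant function into isotypic components for the central action of $\tilde{\mG}_2$ and must show that the components attached to non-trivial characters vanish. From there, however, your argument is only a plan. The reduction to ``rule out $\chi\circ\sigma$ being a coboundary over the Kronecker quotient for $\chi\neq\mathbf 1$, where $\sigma(g,\bar y)=[a_g,y_1]$'' is a reasonable reformulation, but the paragraph that is supposed to rule this out consists of statements of intent (``I plan to follow the approach of Lesigne'', ``the resulting identities can be transported back to $\mG$ and analysed by the same character-theoretic mechanism'', ``the technical heart of the argument is the Fourier/square-root bookkeeping''). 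That heart is precisely what is missing, and it is where all the content of the proposition lies.

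Moreover, the endpoint of your sketch is under-justified at the decisive moment. Even granting that ergodicity of the Kronecker quotient forces $\chi([a_g,\cdot])\equiv 1$ for every $g$, the passage from this to $\chi=\mathbf 1$ does not follow from ``bilinearity together with the transitivity of the $\mG$-action on $\mG/\Gamma$''. What is needed is that the commutators $[\,\cdot\,,a_g]$ generate a dense subgroup of $\mG_2$, and this is exactly where the paper invokes the structure of the Host--Kra group: writing $X=Z\times_\rho U$ and $[\overline{s},a_g]=S_{1,c_s(g)}$ with $\Delta_s\rho=c_s\cdot\Delta f_s$, if these commutators did not generate $U\cong\mG_2$ there would be a non-trivial $\chi\in\hat U$ with $\chi\circ\rho$ of type $<1$, contradicting the maximality of the Kronecker factor. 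For comparison, the paper's proof of this proposition avoids the coboundary-equation formulation altogether and follows Parry: for each character $\lambda$ the differences $\Delta_nf_\lambda$ are eigenfunctions descending to the ergodic Kronecker quotient, the separation lemma (Lemma \ref{sep:lem}) makes the eigenfunction group discrete modulo constants so that $\mL_\lambda=\{n:\Delta_nf_\lambda\text{ is constant}\}$ is open, and the Host--Kra structure then gives $[\mL_\lambda,\mL_\lambda]=\tilde{\mG}_2$, killing $f_\lambda$ for $\lambda\neq\mathbf 1$. To complete your route you would have to carry out the non-coboundary argument in full and, in particular, supply the maximality-of-the-Kronecker-factor input that your sketch currently replaces with an appeal to transitivity.
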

\begin{proof}
We proceed as in the proof of Parry \cite{Parryc}. Fix $x$ as in the proposition and let $f:\tilde{\mG}/\tilde{\Gamma}\rightarrow S^1$ be an invariant function. The group $\tilde{\mG}_2$ is a compact abelian group which acts on $L^2(\tilde{\mG}/\Gamma)$. We deduce, using the Peter-Weyl decomposition theorem, that there is an orthogonal decomposition of $f$ as a sum of eigenfunctions $f=\sum_{\lambda}f_\lambda$ where $\lambda$ is a character of $\tilde{\mG}_2$. Since $f$ is invariant, each $f_\lambda$ is an eigenfunction of the $G$-action.
	Let $n\in\tilde{\mG}$. For every $\lambda$ and every $g\in G$ we have, $$f_\lambda(ngx) = f_\lambda(gn[n^{-1},g^{-1}]x) = \lambda([n^{-1},g^{-1}]) f_\lambda(gnx)= \lambda([n^{-1},g^{-1}])\cdot c_g f_\lambda(nx).$$
	 Hence, $\Delta_n f_\lambda$ is an eigenfunction with respect to the action of $G$. Direct computation shows that $\Delta_u \Delta_n f_\lambda=1$ for all $u\in \tilde{\mG}_2$ and so $\Delta_n f$ can be identified with an eigenfunction with respect the induced action of $S_{g,x}$ on the quotient $\tilde{\mG}/\tilde{\mG}_2\tilde{\Gamma}$. We assume that this factor is ergodic and so by Lemma \ref{sep:lem} the group of eigenfunctions modulo constants is discrete. We conclude that the group   $$\mL_\lambda:=\{n\in\mG: \Delta_n f_\lambda \text{ is a constant}\}$$ is an open subgroup of $\mG$. Moreover, $l\mapsto \Delta_l f_\lambda$ is a homomorphism from $\mL_\lambda$ to $S^1$ and so it is invariant under $[\mL_\lambda,\mL_\lambda]\leq \tilde{\mG}_2$. It is left to show that for every character $\lambda$, $[\mL_\lambda,\mL_\lambda] = \tilde{\mG}_2$.  Fix $\lambda$ and let $\mL=\mL_\lambda$. Let $\mL'=\mL\cap \mG\times\{e\}$ By property $(4)$ it is left to prove that $\mL'_2=\mG_2$. For convenience we denote $b_g=a_g[a_g,x]$. It is easy to see that $b_g\in \mL'$. Now we use the fact that $\mG$ is the Host-Kra group of the underlying space $X$. Write $X=Z\times_\rho U$ where $Z$ is the Kronecker factor and $U$ is a Lie group. Since $\mL$ is open and contains $b_g$ for all $g\in G$, the image of the projection $p:\mL\rightarrow Z$ is an invariant open subgroup and therefore the projection is onto. Fix a corss-section $s\mapsto \overline{s}=S_{s,f_s}$ where $\Delta_s \sigma = c_s\cdot \Delta f_s$ for some $c_s:G\rightarrow U$. Then $[\overline{s},b_g] =[\overline{s},a_g]= S_{1,c_s(g)}$ which we identify as an element of $U$. Suppose by contradiction that the closed group generated by these $c_s(g)$ for all $s\in Z$ and $g\in G$ is a proper subgroup of $U$. Then there exists a character $\chi\in\hat U$ such that $\chi\circ\rho$ is of type $<1$. The maximal property of the Kronecker factor provides a contradiction. This completes the proof.
\end{proof}

We conclude that the system $(\tilde{\mG}/\tilde{\Gamma},S_{g,x})$ is ergodic for $\mu_\mG$-almost every $x\in \mG$. Since it is ergodic it is automatically uniquely ergodic (see \cite[Theorem 5]{Parrya} and \cite[Section 2, Lemma 1]{Parryb}). Therefore, by the mean ergodic theorem we have that $$\lim_{N\rightarrow\infty} \mathbb{E}_{g\in\Phi_N} F(a_gxy_1,a_g^2xy_1^2y_2,a_g^3xy_1^3y_2^3) = \int_{\mG/\Gamma}\int_{\mG_2} F(xy_1,xy_1^2y_2,xy_1^3y_2^3) d\mu_{\mG_2}(y_2) d\mu_{\mG/\Gamma}(y_1)$$
 for every continuous function $F$  on $(\mathcal{G}/\Gamma)^3$ and for all $y_1,y_2\in \mG$.
Set $y_1=y_2=e$ and $F(x_1,x_2,x_3)=f(x_1)\cdot f(x_2)\cdot f(x_3)$. We conclude that Theorem \ref{formula} holds for continuous functions. The general case follows by approximating bounded functions by continuous functions and taking limits.
\subsection{Concluding the proof for the Khintchine-recurrence}
We prove Theorem \ref{recurrence:thm} following an argument of Frantzikinakis \cite{Fran}. Let $X$ be any system, by Proposition \ref{characteristic} the limit of  $\mathbb{E}_{g\in \Phi_N} T_gf_1 T_{2g}f_2 T_{3g}f_3$ in $L^2$ is equal to the limit of $\mathbb{E}_{g\in \Phi_N} T_g\tilde{f}_1 T_{2g}\tilde{f}_2 T_{3g}\tilde{f}_3$ if the latter exists where $\tilde{f}_i$ denotes the projection of $f_i$ to the factor $Z_{<3}(X)$ for each $1\leq i \leq 3$. Using Theorem \ref{nilpotentstructure} we can assume by approximation argument that $Z_{<3}(X)$ is an extension of the Kronecker by a Lie group and that $Z_{<3}(X)\cong \mG/\Gamma$ where $\mG_2$ is a Lie group. Let $A$ be a set of positive measure, set $f=f_1=f_2=f_3=1_A$ and let $\varepsilon>0$. Suppose by contradiction that the set $\{g\in G : \mu(A\cap T_g A\cap T_{2g}A\cap T_{3g}A)\geq \mu(A)^4-\varepsilon\}$ fails to be syndetic. Then, its complement contains a translation of every finite set. In particular, it must contain a F{\o }lner sequence $\Phi_N$. Therefore, for every $g\in \bigcup_{N=1}^{\infty}{\Phi_N}$ we have that $$\int_X f\cdot T_g f\cdot T_{2g} f\cdot T_{3g} f d\mu < \mu(A)^4-\varepsilon.$$
\textbf{Claim:} Let $Z$ be the Kronecker factor of $X$. Let $a_g\in Z$ Denote the element corresponding to the action of $G$ on $Z$ and $\eta:Z\rightarrow\mathbb{R}^+$ be any continuous function. Then, the average $$\mathbb{E}_{g\in \Phi_N} \eta(a_g) T_gf_1 T_{2g}f_2 T_{3g}f_3$$ converges to zero in $L^2$ if $E(f_i|Z_{<3})=0$ for any $1\leq i \leq 3$.
\begin{proof} By approximating $\eta$ by characters it is enough to assume that $\eta$ is a character of the Kronecker factor. Therefore $\eta(a_g)=\Delta_g \chi$ for some character $\chi\in\hat Z$. Let $f_1':=f_1\cdot \chi$ and then apply Proposition \ref{characteristic} for the average associated with $f_1',f_2,f_3$. Since $\chi$ is measurable with respect to Kronecker $E(f_1':Z_{<3})=0\iff E(f_1|Z_{<3})=0$ which completes the proof. 
\end{proof}
We can therefore apply Lemma \ref{formula} to the twisted average $$\mathbb{E}_{g\in \Phi_N} \eta(a_g) T_gf_1 T_{2g}f_2 T_{3g}f_3$$ and conclude that it converges to $$\int_X\int_{G_2}\eta(y_1)\tilde{f}_1(xy_1)\tilde{f}_2(xy_1^2y_2)\tilde{f}_3 (xy_1^3y_2^3)dm_{G_2}(y_2)dm_X(y_1\Gamma)$$ By taking $f_1=f_2=f_3=1$ we conclude that $$\mathbb{E}_{g\in \Phi_N}\eta(a_g)=1$$ we now consider the average $$\mathbb{E}_{g\in\Phi_N}\eta(a_g) \int_X f_0\cdot T_g f_1 \cdot T_{2g}f_2 \cdot T_{3g}f_3 dm_X$$ which by the same argument converges to
$$\int_X\int_X\int_{G_2}\eta(y_1)f_0(x)\tilde{f}_1(xy_1)\tilde{f}_2(xy_1^2y_2)\tilde{f}_3 (xy_1^3y_2^3)dm_{G_2}(y_2)dm_X(y_1\Gamma)dm_X(x\Gamma).$$
Since $\eta$ is arbitrary, we can approximate the indicator functions $1_{B(\mG_2,\delta)}$ where $B(\mG_2,\delta)$ denote the ball of radius $\delta$ around all elements of $\mG_2$. Since translations are continuous in $L^2$, taking a limit as $\delta\rightarrow 0$ the limit will be arbitrarily close to
$$\int_X \int_{\mG_2\times \mG_2} \tilde{f}(x)\tilde{f}(xy_1)\tilde{f}(xy_1^2y_3)\tilde{f}(xy_1^3y_2^3) dm_{\mG_2\times \mG_2}(y_1,y_2) dm_X(x\Gamma).$$
We integrate everything to get that this equals to
$$\int_X \int_{\mG_2\times \mG_2\times \mG_2} \tilde{f}(gx)\tilde{f}(gxy_1)\tilde{f}(gxy_1^2y_2)\tilde{f}(gxy_1^3y_2^3)dm_{\mG_2\times \mG_2\times \mG_2}(g,x,y_2) dm_X (y_1\Gamma).$$ Since the set $\{(g,gy_1,gy_1^2y_2,gy_1^3y_2^3):g,y_1,y_2\in \mG_2\}$ equals to the set $\{(h_1,h_2,h_3,h_4)\in \mG_2^4:h_1h_3^3=h_4h_2^3\}$, we can write the above integral as
$$\int_X \int_{\mG_2}\int_{h_1h_3^3=h}\tilde{f}(h_1x)\tilde{f}(h_3x)d\lambda(h_1,h_3)^2dm_{\mG_2}(h)dm_X(x\Gamma).$$ By the Cauchy-Schwartz and the triangle inequality this is greater or equal to $$\int_X \left(\int_{\mG_2} \tilde{f}(hx)dm_{\mG_2}(h)\right)^4dm_X(x\Gamma)=\left(\int_X \tilde{f}(x)dm_X(x\Gamma)\right)^4=\mu(A)^4$$
We conclude that $$\mathbb{E}_{g\in\Phi_N}\eta(a_g)\mu(A\cap T_gA\cap T_{2g}A\cap T_{3g}A)>\mu(A)^4-\varepsilon/2$$ for some $\eta(a_g)>0$ with $\mathbb{E}_{g\in \Phi_N}a_g=1$. This contradicts the inequality $\mu(A\cap T_gA\cap T_{2g}A\cap T_{3g}A)<\mu(A)^4-\varepsilon$ for all $g\in \bigcup_{N\in\mathbb{N}}\Phi_N$ as required. $\square$
\appendix
\section{Topological groups and measurable homomorphisms}
In this section we survey some results about topological groups.
\subsection{Homomorphisms of polish groups}

\begin{defn}
	We say that a topological group $G$ is a polish group if it is separable and completely metrizable. If $G$ is compact this is equivalent to the existence of some invariant metric on $G$ (i.e. a metric such that $d(x,y)=d(gx,gy)$).
\end{defn}

Every topological group is a measurable space with respect to the Borel $\sigma$-algebra. It is well known that every locally compact abelian group $G$ admits a unique (up to scalar multiplication) invariant Borel measure $\mu$. This measure is inner and outer regular and it assigns finite measure for compact subsets. In particular in the case where $G$ is compact we can normalize so that $\mu(G)=1$. The existence of such measures leads to many fruitful corollaries.
\begin{prop} [A.Weil]  \cite[Lemma 2.3]{Ch}\label{A.Weil}
	Let $G$ be a locally compact polish group and let $A\subseteq G$ be a measurable subset of positive measure. Then $A\cdot A^{-1}$ contains an open neighborhood of the identity.
\end{prop}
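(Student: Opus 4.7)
The plan is to use the standard convolution argument: show that the function $\varphi(x) := \mu(A \cap xA)$ is continuous, positive at the identity, and that its positivity forces membership in $A \cdot A^{-1}$.

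First I would reduce to the case where $A$ has finite (positive) Haar measure. Since $\mu$ is inner regular and $\mu(A) > 0$, we may replace $A$ by a compact subset of positive measure; in particular we may assume $1_A \in L^2(G, \mu)$. Then I would define
\[
\varphi(x) \;=\; \int_G 1_A(y)\, 1_A(x^{-1} y)\, d\mu(y) \;=\; \bigl\langle 1_A,\, L_x 1_A \bigr\rangle_{L^2(G,\mu)},
\]
where $L_x$ denotes left translation. Clearly $\varphi(e) = \mu(A) > 0$, and $\varphi(x) > 0$ if and only if $A \cap xA$ has positive measure, which in turn implies there exist $a_1, a_2 \in A$ with $a_1 = x a_2$, so that $x = a_1 a_2^{-1} \in A \cdot A^{-1}$.

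The key step is continuity of $\varphi$ at the identity. This reduces to the statement that $x \mapsto L_x 1_A$ is continuous as a map $G \to L^2(G, \mu)$. The standard way to establish this is to first check it for continuous compactly supported functions (where continuity of the translation map is immediate from uniform continuity on a compact neighborhood, using local compactness and an invariant metric coming from the polish hypothesis), and then pass to $L^2$ by density of $C_c(G)$. Given the continuity of translation, the inner product $\varphi(x) = \langle 1_A, L_x 1_A \rangle$ is continuous by Cauchy--Schwarz.

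Once continuity is in hand, the conclusion is immediate: by continuity there is an open neighborhood $V$ of the identity on which $\varphi(x) > \varphi(e)/2 > 0$, and on this neighborhood the argument above gives $V \subseteq A \cdot A^{-1}$. The main subtlety in the plan is simply the $L^2$-continuity of translation for a general locally compact polish group; the polish hypothesis ensures metrizability and separability, which makes the density argument with $C_c(G)$ run smoothly, but no deeper structural input is required.
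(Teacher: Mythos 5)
Your argument is correct. Note that the paper does not actually prove this proposition — it is quoted from Rosendal's survey (\cite[Lemma 2.3]{Ch}) — so there is no in-paper proof to compare against. What you give is the standard Steinhaus--Weil convolution argument: the reduction to a compact $A$ of positive finite measure via inner regularity (valid since a locally compact Polish group is $\sigma$-compact, so Haar measure is inner regular on Borel sets), the identity $\varphi(x)=\mu(A\cap xA)=\langle 1_A, L_x 1_A\rangle$, the strong continuity of translation on $L^2$ obtained from $C_c(G)$ by density together with the uniform bound $\|L_x f\|_2=\|f\|_2$, and the observation that $\varphi(x)>0$ forces $x\in A\cdot A^{-1}$. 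All steps are sound, and the polish/local-compactness hypotheses are used exactly where you say they are.
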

This implies the following useful proposition.
\begin{prop}\label{countableindex}
	Let $G$ be a locally compact polish abelian group and let $H$ be a Borel subgroup of at most countable index. Then $H$ is open.
\end{prop}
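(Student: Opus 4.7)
The plan is to show that $H$ has positive Haar measure, and then apply Proposition \ref{A.Weil} to conclude that $H$ contains an open neighborhood of the identity, which forces $H$ to be open.

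First I would verify that $G$ carries a $\sigma$-finite Haar measure $\mu$ that is strictly positive on every non-empty open set. Since $G$ is locally compact and polish, it is second countable and locally compact, hence $\sigma$-compact, so Haar measure is $\sigma$-finite.

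Next, write $G = \bigsqcup_{i \in I} g_i H$ as a disjoint union of cosets, where $I$ is at most countable by hypothesis. By translation invariance of Haar measure, $\mu(g_i H) = \mu(H)$ for every $i \in I$. If $\mu(H) = 0$, then countable additivity would give $\mu(G) = \sum_{i \in I} \mu(g_i H) = 0$, contradicting the fact that any non-empty open subset of $G$ (for instance a compact neighborhood of the identity) has strictly positive Haar measure. Therefore $\mu(H) > 0$.

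Now $H$ is a Borel set of positive measure, so Proposition \ref{A.Weil} applied with $A = H$ shows that $H \cdot H^{-1}$ contains an open neighborhood $V$ of the identity. Because $H$ is a subgroup, $H \cdot H^{-1} = H$, and hence $V \subseteq H$. Finally, $H = \bigcup_{h \in H} hV$ is a union of open sets (each $hV$ is open since translations are homeomorphisms), so $H$ is open, as required. No single step of this argument should be an obstacle; the only subtlety is confirming that Proposition \ref{A.Weil} applies in the merely $\sigma$-compact (not necessarily compact) setting, which is exactly how it is formulated above.
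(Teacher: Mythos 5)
Your proof is correct and follows essentially the same route as the paper: decompose $G$ into countably many cosets, use translation invariance and countable additivity of Haar measure to conclude $\mu(H)>0$, and then apply Proposition \ref{A.Weil} together with $H\cdot H^{-1}=H$ to see that $H$ contains an open neighborhood of the identity and is therefore open. Your extra remarks on $\sigma$-finiteness and the non-compact case are a harmless (and slightly more careful) elaboration of the same argument.
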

\begin{proof}
	Let $\mu$ be the Haar measure on $G$. Since $H$ has countable index there exists $g_1,g_2,...$ such that $G=\bigsqcup_{i=1}^\infty g_i H$. In particular, it follows that $$0<\mu(G)= \sum_{i=1}^\infty \mu(g_i H).$$ Since the measure is invariant the right hand side is an infinite sum of $\mu(H)$. This is only possible if the measure of $H$ is positive (Note that if $G$ is compact this also implies that the sum is finite). Now, by Proposition \ref{A.Weil} we have that $H-H$ contains an open neighborhood $U$ of the identity. Since $H-H\subseteq H$ we have that $H=\bigcup_{h\in H} hU$ and so is open.
\end{proof}
We deduce the following result.
\begin{cor}\label{openker} Let $G$ be a locally compact abelian polish group and let $L$ be a locally compact abelian group of at most countable cardinality. Then any measurable homomorphism $\varphi:G\rightarrow L$ factors through an open subgroup of $G$.
\end{cor}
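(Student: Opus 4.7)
The plan is to deduce the corollary as a direct consequence of Proposition \ref{countableindex} applied to $H := \ker \varphi$. Concretely, I would first verify that $H$ is a Borel subgroup of $G$: since $L$ is at most countable and locally compact, every singleton in $L$ is Borel (in fact closed), so $H = \varphi^{-1}(\{0\})$ is a Borel subgroup because $\varphi$ is a measurable homomorphism.

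Next, I would bound the index of $H$. The first isomorphism theorem (at the level of abstract groups) gives $G/H \cong \varphi(G) \subseteq L$, so $[G:H] \leq |L|$, which is at most countable by hypothesis. At this point Proposition \ref{countableindex} applies verbatim (all its hypotheses are met: $G$ is locally compact polish abelian, $H$ is Borel of at most countable index), yielding that $H$ is an open subgroup of $G$.

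Finally, I would note that $\varphi$ is constant on the cosets of $H = \ker\varphi$, so it descends through the quotient map $\pi: G \to G/H$ to a homomorphism $\tilde\varphi : G/H \to L$ with $\varphi = \tilde\varphi \circ \pi$. Since $H$ is open, this exhibits $\varphi$ as factoring through an open subgroup of $G$, as required. There is no real obstacle here: the only substantive input is Proposition \ref{countableindex} (and behind it A.\ Weil's lemma, Proposition \ref{A.Weil}); the measurability of $H$ and the index bound are immediate. The short proof would likely be a couple of sentences in the actual text.
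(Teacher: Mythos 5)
Your proposal is correct and is exactly the paper's argument: the paper's proof is the one-liner "the kernel of $\varphi$ is a Borel subgroup of at most countable index, so the claim follows from Proposition \ref{countableindex}." You have simply spelled out the routine verifications (Borelness of the kernel, the index bound via the first isomorphism theorem, and the factorization through the quotient) that the paper leaves implicit.
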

\begin{proof}
	The kernel of $\varphi$ is a Borel subgroup of at most countable index. Therefore the claim follows from the previous proposition.
\end{proof}

Another important corollary of Proposition \ref{A.Weil} is the following automatic continuity lemma.

\begin{lem} [Automatic continuity of measurable homomorphisms] \cite[Theorem 2.2]{Ch}\label{AC:lem}
	Any measurable homomorphism from a locally compact polish group into a polish group is continuous.
\end{lem}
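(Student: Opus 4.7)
The plan is to reduce continuity of a homomorphism to continuity at the identity, and then exploit the measure-theoretic rigidity of locally compact polish groups via Weil's lemma (Proposition \ref{A.Weil}). Let $\varphi:G\to H$ be a measurable homomorphism with $G$ locally compact polish and $H$ polish. Since $\varphi(xg)\varphi(x)^{-1}=\varphi(g)$, a standard translation argument shows that continuity of $\varphi$ is equivalent to continuity at the identity $e_G$, so it suffices to show: for every open neighborhood $V$ of $e_H$ there is an open neighborhood $W$ of $e_G$ with $\varphi(W)\subseteq V$.

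First I would shrink $V$: pick a symmetric open neighborhood $V_0$ of $e_H$ with $V_0\cdot V_0^{-1}\subseteq V$, using the joint continuity of multiplication and inversion in $H$. Next, since $H$ is polish (hence separable), there is a countable set $\{h_n\}_{n\ge 1}\subseteq H$ with $H=\bigcup_n h_nV_0$. Pulling back through $\varphi$ yields the countable measurable cover $G=\bigcup_n \varphi^{-1}(h_nV_0)$.

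Now I invoke the locally compact structure of $G$. Because $G$ is locally compact and second countable, it is $\sigma$-compact, so its Haar measure $\mu$ is $\sigma$-finite and every non-empty open set has positive measure. Fix a compact set $K\subseteq G$ with $\mu(K)>0$; then $K\subseteq \bigcup_n K\cap \varphi^{-1}(h_nV_0)$, so countable additivity forces at least one set $A:=K\cap \varphi^{-1}(h_{n_0}V_0)$ to satisfy $\mu(A)>0$. By Weil's lemma (Proposition \ref{A.Weil}), the difference set $A\cdot A^{-1}$ contains an open neighborhood $W$ of $e_G$.

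To conclude, take any $g\in W$ and write $g=a\cdot b^{-1}$ with $a,b\in A\subseteq \varphi^{-1}(h_{n_0}V_0)$. Then $\varphi(g)=\varphi(a)\varphi(b)^{-1}\in (h_{n_0}V_0)(h_{n_0}V_0)^{-1}=V_0\cdot V_0^{-1}\subseteq V$, so $\varphi(W)\subseteq V$, establishing continuity at $e_G$. The only step that requires real care is ensuring one of the preimages has positive Haar measure — which is the main obstacle and is what forces the local compactness hypothesis on $G$; once Weil's lemma is applicable, the rest is formal. Measurability of $\varphi$ enters only to guarantee that each $\varphi^{-1}(h_nV_0)$ is a Borel set to which $\mu$ can be applied.
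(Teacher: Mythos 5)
The paper does not prove this lemma at all --- it is quoted from \cite[Theorem 2.2]{Ch} --- so there is no internal argument to compare against. Your proof is the classical Weil--Pettis argument, and its skeleton (reduce to continuity at the identity, cover $H$ by countably many translates of a small symmetric neighborhood, pull back, find one preimage of positive Haar measure, apply Proposition \ref{A.Weil}) is exactly the right one.

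There is, however, a concrete slip that makes the final step fail when the target $H$ is non-abelian, which the statement allows. You write $(h_{n_0}V_0)(h_{n_0}V_0)^{-1}=V_0\cdot V_0^{-1}$, but for $a,b\in A$ with $\varphi(a)=h_{n_0}v_1$, $\varphi(b)=h_{n_0}v_2$ one gets $\varphi(a)\varphi(b)^{-1}=h_{n_0}v_1v_2^{-1}h_{n_0}^{-1}$, a conjugate of an element of $V_0V_0^{-1}$ rather than an element of it. (The identity $\varphi(xg)\varphi(x)^{-1}=\varphi(g)$ in your reduction step has the same left/right defect; the correct version is $\varphi(x)^{-1}\varphi(xg)=\varphi(g)$, and the reduction itself survives because left translation in $H$ is continuous.) The fix is one line: either cover $H$ by \emph{right} cosets $V_0h_n$, so that $\varphi(a)\varphi(b)^{-1}=v_1h_{n_0}h_{n_0}^{-1}v_2^{-1}\in V_0V_0^{-1}$, or keep left cosets but consider $b^{-1}a$ and apply Weil's lemma to $A^{-1}$ (which also has positive measure, since inversion carries left Haar measure to right Haar measure and the two have the same null sets) to get a neighborhood inside $A^{-1}A$. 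Everything else --- the $\sigma$-compactness of $G$, the positivity of some $\mu\bigl(K\cap\varphi^{-1}(h_{n_0}V_0)\bigr)$ by countable subadditivity, and the Borel measurability of the preimages --- is correct as written. In the abelian setting in which the paper actually uses the lemma, your argument goes through verbatim.
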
 
The following result is a version of the open mapping theorem in polish groups \cite[Chapter 1]{BK}.
\begin{thm}\label{openmappingthm}
Let $\mathcal{G}$ and $\mathcal{H}$ be Polish groups and let $p: \mathcal{G} \rightarrow \mathcal{H}$ be a
group homomorphism that is continuous and onto. Then $p$ is an open map.
\end{thm}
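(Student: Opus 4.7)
The plan is to follow the classical Pettis--Banach--Kuratowski approach to the open mapping theorem for Polish groups. By the group structure it suffices to show that $p$ is open at the identity, that is, for every open neighborhood $U$ of $e_{\mathcal{G}}$ the image $p(U)$ contains an open neighborhood of $e_{\mathcal{H}}$. Indeed, if $U$ is open in $\mathcal{G}$ and $g\in U$, then $Ug^{-1}$ is an open neighborhood of the identity, and $p(U)=p(Ug^{-1})\cdot p(g)$; since left translation by $p(g)$ is a homeomorphism of $\mathcal{H}$, openness at the identity upgrades to openness everywhere.

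Fix an open neighborhood $U$ of $e_{\mathcal{G}}$ and choose an open symmetric neighborhood $V$ of $e_{\mathcal{G}}$ with $V\cdot V\subseteq U$ (possible by continuity of multiplication and inversion). Since $\mathcal{G}$ is separable, there is a countable set $\{g_n\}\subseteq \mathcal{G}$ with $\mathcal{G}=\bigcup_{n} g_n V$, and since $p$ is surjective we obtain $\mathcal{H}=\bigcup_{n} p(g_n)\, p(V)$. The set $p(V)$ is a continuous image of a Polish (hence Souslin) space, so it is analytic in $\mathcal{H}$, and classical descriptive set theory (cf.\ \cite{BK}) tells us that analytic subsets of a Polish space enjoy the Baire property. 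Because $\mathcal{H}$ is a Polish space it is a Baire space, so at least one of the countably many sets $p(g_n)\, p(V)$ is non-meager; translating back, $p(V)$ itself is non-meager and has the Baire property in $\mathcal{H}$.

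At this point one invokes Pettis's theorem: if $A\subseteq \mathcal{H}$ is non-meager and has the Baire property, then $A\cdot A^{-1}$ contains an open neighborhood of the identity. Applied to $A=p(V)$ and using the symmetry $V=V^{-1}$, we obtain $p(V)\cdot p(V)=p(V)\cdot p(V)^{-1}\supseteq W$ for some open neighborhood $W$ of $e_{\mathcal{H}}$. Since $p(V)\cdot p(V)\subseteq p(V\cdot V)\subseteq p(U)$, this shows $p(U)\supseteq W$, completing the proof.

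The heart of the argument, and what I expect to be the main technical point, is Pettis's theorem itself; its proof proceeds by writing a non-meager Baire set as an open set modulo a meager set and then using the continuity of translation to show that the product $A\cdot A^{-1}$ swallows a whole neighborhood of the identity. Everything else in the argument is a formal consequence of separability of $\mathcal{G}$, Baireness of $\mathcal{H}$, and the analyticity of continuous images, none of which require the full strength of completeness beyond what Polish groups supply automatically.
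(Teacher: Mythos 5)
Your proof is correct. The paper does not actually prove this statement -- it simply cites \cite{BK} for it -- and your argument is precisely the standard one that the cited reference contains: reduce to openness at the identity, cover $\mathcal{G}$ by countably many translates of a symmetric $V$ with $V\cdot V\subseteq U$, use analyticity and the Baire property to see that $p(V)$ is non-meager, and conclude via Pettis's theorem that $p(V)\cdot p(V)^{-1}\subseteq p(U)$ contains a neighborhood of $e_{\mathcal{H}}$. All the individual steps (the translation argument, the countable cover from separability, the Lusin--Sierpi\'nski fact that analytic sets have the Baire property, and the application of Pettis) are stated and used correctly.
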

It follows from this theorem that  $\mathcal{G}/\ker(p)\cong \mathcal{H}$ as topological groups. In particular we obtain the following result.
\begin{cor}\label{compact}
   Let $\mathcal{H}$ be a closed normal subgroup of the Polish group $\mathcal{G}$. If $\mathcal{H}$ and $\mathcal{G}/\mathcal{H}$ are locally compact, then $\mathcal{G}$ is locally compact. If $\mathcal{H}$ and $\mathcal{G}/\mathcal{H}$
are compact, then $\mathcal{G}$ is compact.
\end{cor}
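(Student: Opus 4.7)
The plan is to prove both parts of the corollary by a sequence-compactness argument, exploiting two standard facts that hold under our Polish hypotheses: (a) the quotient map $q:\mathcal{G}\to\mathcal{G}/\mathcal{H}$ is a continuous open surjection (a general property of quotients by subgroups in topological groups), and (b) all three groups $\mathcal{G},\mathcal{H},\mathcal{G}/\mathcal{H}$ are metrizable, so compactness coincides with sequential compactness. The pivotal preliminary step is a \emph{lifting lemma}: if $\bar g_n\to\bar g$ in $\mathcal{G}/\mathcal{H}$ and $g\in q^{-1}(\bar g)$, then one can choose $\sigma_n\in q^{-1}(\bar g_n)$ with $\sigma_n\to g$. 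This follows from the openness of $q$ combined with first-countability: take a decreasing countable base $(gW_n)$ of neighborhoods of $g$, note that each $q(gW_n)$ is an open neighborhood of $\bar g$, and pick $\sigma_n$ inside $gW_{\phi(n)}\cap q^{-1}(\bar g_n)$ for a suitable $\phi(n)\to\infty$.

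For the compact case, given a sequence $(g_n)\subseteq\mathcal{G}$, compactness of $\mathcal{G}/\mathcal{H}$ yields a subsequence with $q(g_n)\to\bar g$; the lifting lemma produces $g\in q^{-1}(\bar g)$ and lifts $\sigma_n\to g$ with $q(\sigma_n)=q(g_n)$. Then $h_n:=\sigma_n^{-1}g_n\in\mathcal{H}$, and compactness of $\mathcal{H}$ passes to a further subsequence $h_{n_k}\to h$. Hence $g_{n_k}=\sigma_{n_k}h_{n_k}\to gh$, so $\mathcal{G}$ is sequentially compact, and therefore compact by metrizability.

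For the locally compact case, the aim is to exhibit a compact neighborhood of the identity in $\mathcal{G}$. Fix compact neighborhoods $K\ni e$ in $\mathcal{H}$ and $L\ni\bar e$ in $\mathcal{G}/\mathcal{H}$. Using the regularity of topological groups together with closedness of $\mathcal{H}$, I choose an open neighborhood $W$ of $e$ in $\mathcal{G}$ with $\overline W\cap\mathcal{H}\subseteq K$ and $q(\overline W)\subseteq L$, arranged so that $\overline W\subseteq UK$ for some open set $U$ that controls the fiber direction. The desired compactness of $\overline W$ then follows by the same sequential recipe: for $(x_n)\subseteq\overline W$, extract a subsequence with $q(x_n)\to\bar x\in L$, lift to $\sigma_n\to x$, observe that $\sigma_n^{-1}x_n\in\mathcal{H}$, and pass to a further subsequence using the (carefully arranged) fact that $\sigma_n^{-1}x_n$ lies in a fixed compact subset of $\mathcal{H}$ for large $n$.

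The main obstacle is precisely this last claim in the locally compact case: arranging $W$ and the lifts $\sigma_n$ so that the fiber-direction drift $\sigma_n^{-1}x_n$ remains in a compact subset of $\mathcal{H}$. The compact case evades this issue because $\mathcal{H}$ itself is globally compact; in the locally compact regime one must delicately couple $W$ to both $K$ and $L$ via continuity of multiplication and openness of $q$, effectively producing a local ``product-like'' structure $UK$ around $e$ that mediates between the base neighborhood in $\mathcal{G}/\mathcal{H}$ and the fiber neighborhood in $\mathcal{H}$.
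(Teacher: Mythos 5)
The paper offers no argument for this corollary at all --- it is stated as a standard consequence of the open mapping theorem with a citation to Becker--Kechris --- so your proposal is judged on its own. Your first half is fine: the lifting lemma (openness of $q$ plus a countable decreasing neighborhood base of $g$ gives lifts $\sigma_n\to g$ of any convergent sequence $\bar g_n\to\bar g$) is correct, and the compact case is a complete and correct sequential argument, using that $\mathcal{G}$, $\mathcal{H}$ and the Polish quotient $\mathcal{G}/\mathcal{H}$ are all metrizable so that compactness and sequential compactness coincide.

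The locally compact case, however, contains a genuine gap, and you name it yourself: you never actually construct the neighborhood $W$ nor prove that the fiber drift $\sigma_n^{-1}x_n$ stays in a compact subset of $\mathcal{H}$; the phrases ``arranged so that'' and ``carefully arranged'' stand in for the entire content of the step. The difficulty is real because the lift $x$ of $\bar x=\lim q(x_n)$ could a priori sit far from $e$, in which case $x^{-1}\overline{W}\cap\mathcal{H}$ need not be small. The fix is concrete: pick an open $O\ni e$ in $\mathcal{G}$ with $O\cap\mathcal{H}\subseteq K$ ($K$ a compact neighborhood of $e$ in $\mathcal{H}$), choose a symmetric open $U\ni e$ with $\overline{U}^{\,3}\subseteq O$, shrink $L$ so that $L\subseteq q(U)$, and set $W:=U\cap q^{-1}(\mathrm{int}\,L)$. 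For $(x_n)\subseteq\overline{W}$ one has $q(x_n)\in L$, so after passing to a subsequence $q(x_n)\to\bar x\in L\subseteq q(U)$, and --- this is the point you were missing --- one may choose the lift $x$ of $\bar x$ \emph{inside} $U$. Then for large $n$ one has $\sigma_n^{-1}\in x^{-1}\overline{U}\subseteq U\overline{U}$, hence $\sigma_n^{-1}x_n\in \overline{U}^{\,3}\cap\mathcal{H}\subseteq K$, and the sequential extraction closes exactly as in your compact case. Without this coupling of the lift to $U$ the argument does not go through, so as written the second assertion of the corollary is not proved.
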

\subsection{Totally disconnected groups}
\begin{defn} \cite[Exercise E8.6]{HM} Let $X$ be a locally compact Hausdorff space. Then the following are equivalent
	\begin{itemize}
		\item{Every connected component in $X$ is a singleton.}
		\item {$X$ has a basis consisting of open closed sets.}
	\end{itemize}
	We say that $X$ is totally disconnected if one of the above is satisfied.
\end{defn}

In this section we will be interested in compact (Hausdorff) totally disconnected groups. These groups are also called pro-finite groups, in fact one can show that every such group is an inverse limit of finite groups (see \cite[Proposition 1.1.3]{profinite}).

\begin{prop}\label{opensubgroup}
	Let $G$ be a compact Hausdorff totally disconnected group. Let $1\in U\subseteq G$ be an open neighborhood of the identity, then $U$ contains an open subgroup of $G$.
\end{prop}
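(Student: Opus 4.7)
The plan is to build the open subgroup inside $U$ in three stages: first replace $U$ by a clopen neighborhood of the identity, next use continuity of multiplication plus compactness to find a small symmetric open neighborhood of $1$ that preserves this clopen set under multiplication, and finally take the subgroup it generates.

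First, since $G$ is compact Hausdorff and totally disconnected, the clopen sets form a basis for the topology at every point. In particular there is a clopen neighborhood $V$ of the identity with $V \subseteq U$. Replacing $U$ by $V$, it suffices to find an open subgroup contained in $V$.

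Next, I would use a standard compactness argument. For every $v \in V$, continuity of multiplication at $(1,v)$ yields open neighborhoods $A_v$ of $1$ and $B_v$ of $v$ with $A_v B_v \subseteq V$. The sets $\{B_v\}_{v \in V}$ cover $V$, and $V$ is compact (closed in the compact group $G$); choose a finite subcover $B_{v_1},\ldots, B_{v_n}$ and set $A = \bigcap_{i=1}^n A_{v_i}$, an open neighborhood of $1$ with $AV \subseteq V$. Let $B = A \cap A^{-1}$, so $B$ is a symmetric open neighborhood of $1$ and $BV \subseteq V$. Applying $B = B^{-1}$ to both sides gives $V = B^{-1}(BV) \subseteq B^{-1} V = BV$, whence $BV = V$.

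Finally, set $H = \bigcup_{n \geq 1} B^n$. Since $1 \in B$, the powers $B^n$ are nested, so $H$ is closed under products; since $B$ is symmetric, $H$ is closed under inverses; hence $H$ is a subgroup. It is open because it contains the open set $B$. Moreover, iterating $BV = V$ gives $B^n V = V$ for all $n$, and because $1 \in V$ this implies $B^n \subseteq V$, so $H \subseteq V \subseteq U$. This $H$ is the required open subgroup.

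The only slightly nontrivial ingredients are the existence of a clopen basis in a compact totally disconnected Hausdorff group (which is the content of the definition quoted just before the statement) and the compactness-based production of $A$ with $AV \subseteq V$; everything else is formal manipulation. I do not anticipate any real obstacle.
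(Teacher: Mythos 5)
Your proof is correct and complete: the passage to a clopen $V\subseteq U$, the compactness argument producing a symmetric open $B$ with $BV=V$, and the subgroup $H=\bigcup_{n\ge 1}B^n\subseteq V$ are all sound. The paper itself gives no proof here (it only cites \cite[Proposition 1.1.3]{profinite}), and your argument is essentially the standard one found in that reference.
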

The proof of this Proposition can be found in \cite[Proposition 1.1.3]{profinite}. As a corollary we have the following result.
\begin{cor}[The dual of totally disconnected group is a torsion group]  \label{chartdg} Let $G$ be a compact abelian totally disconnected group and let $\chi:G\rightarrow S^1$ be a continuous character. Then the image of $\chi$ is finite.
\end{cor}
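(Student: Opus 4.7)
The plan is to use Proposition \ref{opensubgroup} to pull back an ``open subgroup'' structure from a small neighborhood of the identity in $S^1$ and show that $\ker(\chi)$ is open, which forces the image to be finite by compactness.

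First I would exploit a standard feature of $S^1$: there exists a small open neighborhood $V$ of $1 \in S^1$ (for instance the open arc $V = \{e^{i\theta} : |\theta|<\pi/2\}$) containing no non-trivial subgroup of $S^1$. Indeed, any subgroup of $S^1$ either is dense in $S^1$ (hence not contained in $V$) or is a finite cyclic group; the latter, if contained in $V$, must be trivial because $V$ does not contain any non-trivial root of unity in the chosen arc (all roots of unity other than $1$ have argument of modulus at least, well, they are dense, but any fixed finite cyclic subgroup of order $n\geq 2$ contains $e^{2\pi i/n}$, and by choosing $V$ small enough we can exclude all such; a cleaner way is simply to note that any subgroup of $S^1$ contained in a sufficiently small arc around $1$ must be trivial, since a non-trivial element $e^{i\theta}\neq 1$ eventually has a power with argument outside any prescribed small arc).

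Next, since $\chi$ is continuous, $W := \chi^{-1}(V)$ is an open neighborhood of the identity in $G$. Applying Proposition \ref{opensubgroup} to $W$ yields an open subgroup $H \leq G$ with $H \subseteq W$. Then $\chi(H)$ is a subgroup of $S^1$ contained in $V$, and by the choice of $V$ this forces $\chi(H) = \{1\}$, i.e.\ $H \subseteq \ker(\chi)$.

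Finally, since $H$ is open and contained in $\ker(\chi)$, the kernel $\ker(\chi)$ is itself open in $G$. Because $G$ is compact, any open subgroup has finite index, so $G/\ker(\chi)$ is finite; hence $\chi(G) \cong G/\ker(\chi)$ is finite, which is the desired conclusion. There is no real obstacle here — the only non-trivial ingredient is Proposition \ref{opensubgroup}, which is already quoted; the rest is a routine compactness/continuity argument together with the elementary observation that $S^1$ is a ``no small subgroups'' group.
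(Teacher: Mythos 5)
Your proposal is correct and is essentially identical to the paper's own argument: both choose a neighborhood of $1$ in $S^1$ containing no non-trivial subgroups, pull it back under $\chi$, apply Proposition \ref{opensubgroup} to find an open subgroup $H$ inside the preimage, and conclude that $\chi$ factors through the finite group $G/H$. No issues.
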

\begin{proof}
	Choose an open neighborhood of the identity $U$ in $S^1$ that contains no non-trivial subgroups. Then $\chi^{-1}(U)$ is an open neighborhood of $G$. Now, let $H$ be an open subgroup such that $H\subseteq \chi^{-1}(U)$. It follows that $\chi(H)$ is trivial and so $\chi$ factors through $G/H$ which is finite.
\end{proof}
We note that the other direction also holds, but we do not use this fact here.\\

Since compact totally disconnected groups are pro-finite groups, some of the theory of finite groups can be generalized to these groups. For example, we have the following decomposition to $p$-components.
\begin{prop}[Sylow Theorem]\cite[Corollary 8.8]{HM} \label{Sylow}
	A compact abelian group is totally disconnected if and only if it is a direct
	product of $p$-groups.
\end{prop}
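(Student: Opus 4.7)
The plan is to prove both directions via Pontryagin duality, using Corollary \ref{chartdg} as the key input for the nontrivial direction.

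For the easy direction, suppose $G=\prod_{p} G_p$ where each $G_p$ is a pro-$p$ group. A pro-$p$ group is an inverse limit of finite $p$-groups, and finite groups in the discrete topology are totally disconnected; since inverse limits and arbitrary products of totally disconnected compact Hausdorff spaces are totally disconnected (the connected component of the identity in a product is the product of the connected components of the identity in each factor), $G$ is totally disconnected.

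For the harder direction, suppose $G$ is a compact abelian totally disconnected group. I would pass to the Pontryagin dual $\hat G$. Since $G$ is compact, $\hat G$ is discrete. By Corollary \ref{chartdg}, every continuous character of $G$ has finite image, so $\hat G$ is a discrete torsion abelian group. The standard structure theorem for discrete torsion abelian groups gives a direct sum decomposition into $p$-primary components
\begin{equation*}
\hat G \;=\; \bigoplus_{p\text{ prime}} (\hat G)_p,
\end{equation*}
where $(\hat G)_p=\{\chi\in\hat G:\chi^{p^n}=1\text{ for some }n\}$. Applying Pontryagin duality to this decomposition (and using that the dual of a direct sum of discrete groups is the direct product of the duals of the summands), we obtain a topological group isomorphism
\begin{equation*}
G \;\cong\; \widehat{\hat G} \;\cong\; \prod_{p\text{ prime}} \widehat{(\hat G)_p}.
\end{equation*}
It remains to observe that each factor $G_p:=\widehat{(\hat G)_p}$ is a pro-$p$ group: its dual $(\hat G)_p$ is a discrete $p$-torsion abelian group, hence a directed union of finite $p$-subgroups, so $G_p$ is an inverse limit of finite $p$-groups.

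The main obstacle is a bookkeeping one rather than a conceptual one: ensuring that the algebraic decomposition of $\hat G$ into $p$-primary pieces genuinely dualizes to a topological direct product on the $G$ side. This is handled by the functoriality of Pontryagin duality together with the fact that direct sums in the category of discrete abelian groups are sent to direct products in the category of compact abelian groups. Once this is in place, the characterization of $\widehat{(\hat G)_p}$ as a pro-$p$ group follows directly from writing $(\hat G)_p$ as the directed union of its finite subgroups and dualizing the resulting direct system into an inverse system of finite $p$-groups.
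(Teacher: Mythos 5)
The paper does not prove this proposition; it is quoted directly from Hofmann--Morris \cite[Corollary 8.8]{HM}, so there is no internal argument to compare yours against. Your duality proof is correct and is essentially the standard one: Corollary \ref{chartdg} (which precedes this proposition in the appendix, so there is no circularity) gives that $\hat G$ is a discrete torsion group, the $p$-primary decomposition $\hat G=\bigoplus_p(\hat G)_p$ dualizes to a topological product, and each $\widehat{(\hat G)_p}$ is pro-$p$ because $(\hat G)_p$ is the directed union of its finite $p$-subgroups. The only point I would make explicit is that the restriction maps $(\hat G)_p\to\hat F$ are surjective (characters of subgroups extend, by divisibility of $S^1$), so the resulting inverse system really exhibits $\widehat{(\hat G)_p}$ as an inverse limit of finite $p$-groups; with that remark the argument is complete.
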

We also need the following structure theorem for torsion groups (of bounded torsion).
\begin{thm}[Structure theorem for abelian groups of bounded torsion]\label{torsion}\cite[Chapter 5, Theorem 18]{M}
	Let $G$ be a compact abelian group and suppose that there exists some $n\in\mathbb{N}$ such that $g^n=1_G$ for every $g\in G$. Then, $G$ is topologically and algebraically isomorphic to $\prod_{i=1}^\infty C_{m_i}$ where for every $i$, $m_i$ is an integer which divides $n$.
\end{thm}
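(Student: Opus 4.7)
The plan is to reduce the statement to the structure theorem for discrete abelian groups of bounded exponent via Pontryagin duality. First I would pass to the Pontryagin dual $\hat G$, which is a discrete abelian group because $G$ is compact. The bounded torsion hypothesis on $G$ transfers to $\hat G$: for every $\chi\in\hat G$ and every $g\in G$ we have $\chi(g)^n=\chi(g^n)=\chi(1_G)=1$, so $\chi^n=1$ identically. In particular $\hat G$ has exponent dividing $n$, and the image of any character takes values in $C_n\subset S^1$.

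Next I would invoke the classical theorem of Pr\"ufer (the structure theorem for abelian groups of bounded exponent), which asserts that any abelian group of exponent $n$ is a direct sum of cyclic groups, each of order dividing $n$. Applied to $\hat G$ this yields an algebraic isomorphism $\hat G\cong\bigoplus_{i} C_{m_i}$ with each $m_i\mid n$. Finally, dualizing back and using that Pontryagin duality sends a discrete direct sum to a compact direct product of duals (with the product topology), we obtain
\[
G\cong\hat{\hat G}\cong\prod_i\widehat{C_{m_i}}\cong\prod_i C_{m_i},
\]
which is both an algebraic and a topological isomorphism, since Pontryagin duality is a duality in the category of locally compact abelian groups.

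The main obstacle is Pr\"ufer's theorem itself. One way to establish it is to first decompose $\hat G$ as the direct sum of its $p$-primary components, where $p$ ranges over the finitely many primes dividing $n$; this is possible precisely because $\hat G$ has bounded exponent. Within each primary component $\hat G_p$, which has exponent $p^{a}$ for some $a\leq v_p(n)$, one then uses a Zorn's lemma argument to extract a maximal independent family of elements of maximal order $p^a$; the quotient of $\hat G_p$ by the subgroup generated by this family is again of bounded exponent strictly smaller than $p^a$, and an induction on $a$ handles it. A minor but important verification is that the natural map $\prod_i \widehat{C_{m_i}}\to\widehat{\bigoplus_i C_{m_i}}$ is a topological isomorphism, not merely a continuous bijection — this is a standard property of Pontryagin duality, since a character of a discrete direct sum is determined by, and freely prescribed by, its restrictions to the summands, with convergence in the compact-open topology matching the product topology on the right-hand side.
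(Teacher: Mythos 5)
The paper does not prove this statement --- it imports it from Morris's book \cite{M} --- and your argument (pass to the discrete dual $\hat G$, observe it has exponent dividing $n$, apply Pr\"ufer's first theorem to write it as a direct sum of cyclic groups of order dividing $n$, and dualize back, turning the discrete direct sum into a compact product) is exactly the standard proof of the cited result; it is correct. Two small points worth making explicit: the countable index set in $\prod_{i=1}^\infty C_{m_i}$ is not automatic for an arbitrary compact abelian group but follows from the paper's standing metrizability assumption (compact metrizable groups have countable dual), and if you prove Pr\"ufer's theorem rather than cite it, the induction needs the subgroup generated by your maximal independent family to be a \emph{direct summand} (it is pure and bounded, so Kulikov's criterion applies) --- knowing only that the quotient has smaller exponent does not by itself let you lift the decomposition.
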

One way to generate totally disconnected groups is to begin with an arbitrary compact abelian group and quotient it out by its connected component.
\begin{lem}\label{connectedcomponent}
	Let $G$ be a compact abelian group and let $G_0$ be the connected component of the identity. Since the multiplication and the inversion maps are continuous one has that $G_0$ is a subgroup of $G$. We have:
	\begin{itemize}
		\item{$G_0$ has no non-trivial open subgroups}
		\item {Every open subgroup of $G$ contains $G_0$}
		\item {$G/G_0$ equipped with the quotient topology is totally disconnected compact group}
	\end{itemize}
\end{lem}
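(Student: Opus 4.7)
My plan is to handle the three assertions in order, since the later ones build naturally on the earlier ones, with the third being the substantive statement. Throughout, the key observation is that in any topological group, open subgroups are automatically closed because their complements are unions of cosets, and each coset of an open subgroup is open.

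For (1), I would let $H$ be an open subgroup of $G_0$. Then $H$ is open in $G_0$, and by the observation above also closed in $G_0$. Since $H$ is non-empty (it contains the identity) and $G_0$ is connected, $H$ must equal $G_0$. For (2), the same idea applies: given any open subgroup $U$ of $G$, the intersection $U\cap G_0$ is open in $G_0$ (as the intersection of an open set with $G_0$), and closed in $G_0$ (as the intersection of a closed set with $G_0$). Connectedness of $G_0$ and the fact that $U\cap G_0$ contains the identity force $U\cap G_0=G_0$, i.e.\ $G_0\subseteq U$.

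For (3), I first note that $G_0$ is closed in $G$ (connected components in any Hausdorff space are closed) and that $G/G_0$ is Hausdorff and compact since $G$ is. Let $\pi\colon G\to G/G_0$ be the quotient map and let $K$ be the connected component of the identity $\bar e$ in $G/G_0$; I must show $K=\{\bar e\}$. The plan is to argue that $\pi^{-1}(K)$ is actually connected, which, since it contains the identity, forces $\pi^{-1}(K)\subseteq G_0$ and hence $K=\{\bar e\}$. To show $\pi^{-1}(K)$ is connected, suppose for contradiction we have a partition $\pi^{-1}(K)=A\sqcup B$ into non-empty sets that are relatively clopen. Each coset $gG_0\subseteq \pi^{-1}(K)$ is homeomorphic to $G_0$, hence connected, so must lie entirely in $A$ or entirely in $B$. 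Thus $A$ and $B$ are both saturated with respect to $\pi$, and since the quotient map of topological groups is open, $\pi(A)$ and $\pi(B)$ would give a disjoint clopen partition of $K$ into non-empty sets, contradicting the connectedness of $K$.

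The first two parts are essentially one-line arguments once the clopen observation is made, so there is no real obstacle there. The substantive content is the connectedness argument in (3); the only potentially delicate point is confirming that the quotient map is open (standard for topological group quotients by closed normal subgroups) and that cosets of $G_0$ are connected (immediate since left translation is a homeomorphism). Everything else is bookkeeping.
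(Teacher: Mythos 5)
Your proof is correct. The paper states this lemma in Appendix A without proof, as a standard fact about compact (indeed, arbitrary topological) groups, so there is no argument in the paper to compare against; your clopen-subgroup argument for the first two parts and the saturation/open-quotient-map argument for the third are exactly the standard route, and the one delicate step — that $\pi(A)$ is relatively open in $K$ because $A$ is saturated and $\pi$ is open — is handled correctly. The only cosmetic addition worth making is the closing remark that in a topological group every connected component of $G/G_0$ is a translate of the component of the identity, so triviality of the latter gives total disconnectedness.
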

As a corollary we have the following result.
\begin{prop}[Quotient of profinite group is a profinite group] \label{Quotient} 
	Let $G$ be a profinite group, let $N$ be a subgroup of $G$. Then $G/N$ with the induced topology is a profinite group.
\end{prop}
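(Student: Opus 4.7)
The plan is to verify each of the three defining properties of a profinite group — compactness, Hausdorffness, and total disconnectedness — for the quotient $G/N$, where we implicitly assume $N$ is closed (otherwise $G/N$ need not even be Hausdorff, so the statement is only meaningful in that setting). Let $\pi:G\to G/N$ be the quotient map. Compactness is immediate since $\pi$ is continuous and surjective, and Hausdorffness of $G/N$ follows from $N$ being closed in the compact Hausdorff group $G$.

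The main content is total disconnectedness. First, I would recall that by Proposition \ref{opensubgroup}, in the compact totally disconnected group $G$ every open neighborhood of the identity contains an open subgroup. Given any open neighborhood $V$ of the identity in $G/N$, the preimage $\pi^{-1}(V)$ is an open neighborhood of the identity in $G$, hence contains an open subgroup $H\leq G$. Since the quotient map $\pi$ is open (this is standard: for any open $W\subseteq G$, $\pi^{-1}(\pi(W))=WN$ is open), the image $\pi(H)$ is an open subgroup of $G/N$ contained in $V$. Thus every neighborhood of the identity in $G/N$ contains an open subgroup.

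To conclude that $G/N$ is totally disconnected, I would apply Lemma \ref{connectedcomponent}: the connected component $(G/N)_0$ of the identity is contained in every open subgroup of $G/N$. Combining this with the previous step, $(G/N)_0$ lies inside every open neighborhood of the identity; since $G/N$ is Hausdorff, the intersection of these neighborhoods is $\{1\}$, forcing $(G/N)_0=\{1\}$. Hence all connected components of $G/N$ are singletons, i.e.\ $G/N$ is totally disconnected, completing the proof.

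The argument is essentially routine — the only subtle points are ensuring the hypothesis allows $G/N$ to be Hausdorff (closedness of $N$) and correctly invoking the open-mapping property of $\pi$ so that open subgroups of $G$ push forward to open subgroups of $G/N$. There is no serious obstacle beyond stitching together Proposition \ref{opensubgroup} and Lemma \ref{connectedcomponent}.
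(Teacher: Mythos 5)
Your proof is correct and uses essentially the same ingredients as the paper's: Proposition \ref{opensubgroup} applied in $G$, openness of the quotient map, and Lemma \ref{connectedcomponent} to trap the identity component of $G/N$ inside every open subgroup (the paper phrases this as a contradiction with a chosen point $x\neq 1$ of the component, but the argument is the same). Your explicit remark that $N$ must be closed for $G/N$ to be Hausdorff is a reasonable clarification of a hypothesis the paper leaves implicit.
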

\begin{proof}
	Let $C$ denote the connected component of the identity in $G/N$. Let $x\in C$ and let $\pi:G\rightarrow G/N$ be the projection map. Then $\pi^{-1}(\{x\})$ is a closed subset of $G$. If by contradiction $x\not = 1$ then by Proposition \ref{opensubgroup} we have that the complement contains an open subgroup $V$. Quotient homomorphisms are open and so by Lemma \ref{connectedcomponent}, $\pi(V)$ contains the connected component of $G/N$ which is absurd.
\end{proof}
We also need the following important fact that connected groups are divisible \cite[Corollary 8.5]{HM}.
\begin{lem}\label{divisible}
    Let $G$ be a compact abelian connected group. Then for every $g\in G$ and $n\in\mathbb{N}$, there exists $h\in G$ such that $h^n=g$.
\end{lem}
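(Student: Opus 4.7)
The plan is to show that the $n$-th power map $\varphi_n : G \to G$, $g \mapsto g^n$, is surjective by establishing that its image is both closed and dense in $G$.

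First, I would observe that $\varphi_n$ is continuous, and since $G$ is compact, the image $\varphi_n(G)$ is a compact, hence closed, subgroup of $G$. So it suffices to show that $\varphi_n(G)$ is dense. Suppose for contradiction that $\varphi_n(G)$ is a proper closed subgroup; then the quotient $G/\varphi_n(G)$ is a nontrivial compact abelian group, so by Pontryagin duality it admits a nontrivial continuous character. Pulling this back to $G$ gives a nontrivial continuous character $\chi : G \to S^1$ which is trivial on $\varphi_n(G)$, i.e.\ $\chi(g)^n = 1$ for every $g \in G$.

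The key step is now to use connectedness to derive a contradiction. Since $\chi^n \equiv 1$, the image $\chi(G)$ is contained in the group of $n$-th roots of unity, which is finite and in particular totally disconnected. On the other hand, $\chi$ is continuous and $G$ is connected, so $\chi(G)$ must be a connected subset of $S^1$. A subset of a finite discrete group that is connected is necessarily a singleton, so $\chi(G) = \{1\}$, contradicting the nontriviality of $\chi$. This forces $\varphi_n(G) = G$ and proves divisibility.

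I do not expect any substantive obstacles: the argument is a short combination of compactness, Pontryagin duality, and the elementary topological fact that a continuous image of a connected space is connected. One small bookkeeping point is that the character on $G/\varphi_n(G)$ is obtained by applying the standard fact (which is essentially the content of Corollary \ref{chartdg} together with Pontryagin duality) that characters separate points on any nontrivial compact abelian group; this is the only ingredient one needs beyond what has already been developed in the appendix.
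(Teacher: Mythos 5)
Your proof is correct. The paper does not prove this lemma at all — it simply cites \cite{HM} (Corollary 8.5) — so there is no internal argument to compare against; what you have written is the standard Pontryagin-duality proof of that cited fact. Each step checks out: the $n$-th power map is a continuous endomorphism (abelianness is used here to make it a homomorphism), its image is a compact hence closed subgroup, and a nontrivial character of the quotient pulls back to a nontrivial $\chi$ with $\chi^n\equiv 1$, whose image is a connected subset of the finite group of $n$-th roots of unity and hence trivial — contradiction. This is a clean, self-contained substitute for the citation, and it is in the same spirit as the connectedness arguments already used elsewhere in the paper (e.g.\ the proof of Corollary \ref{chartdg}).
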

\subsection{Lie groups}
\begin{defn}
	A topological group $G$ is said to be a Lie group if, as a topological space, it is a finite dimensional, differentiable manifold over $\mathbb{R}$ and the multiplication and inversion maps are smooth.
\end{defn}
A compact abelian group is a Lie group if and only if its Pontryagin dual is finitely generated. The structure theorem for finitely generated abelian group then implies the following result.
\begin{thm}[Structure Theorem for compact abelian Lie groups]\cite[Theorem 5.2]{S} \label{structureLieGroups} A compact abelian group $G$ is a Lie group if and only if there exists $n\in\mathbb{N}$ such that $G\cong (S^1)^n\times C_k$ where $C_k$ is some finite group with discrete topology.
\end{thm}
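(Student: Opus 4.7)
The plan is to prove this via Pontryagin duality. The key bridge is the equivalence: a compact abelian group $G$ is a Lie group if and only if its Pontryagin dual $\hat{G}$ is finitely generated as a discrete abelian group. Once this is established, the structure theorem for finitely generated abelian groups combined with duality gives the desired decomposition.

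First I would prove the forward direction of the bridge. Suppose $G$ is a compact abelian Lie group. Let $G_0$ denote the connected component of the identity. As a Lie group, $G$ has only finitely many connected components, so $G/G_0$ is a finite (discrete) group, and $G_0$ is a compact connected abelian Lie group. The standard fact that any such $G_0$ is a torus, i.e. $G_0 \cong (S^1)^n$ for $n=\dim G$, follows from the exponential map: $\exp:\mathfrak{g}\to G_0$ is a surjective continuous homomorphism whose kernel is a discrete lattice in $\mathfrak{g}\cong \mathbb{R}^n$, so $G_0\cong \mathbb{R}^n/\mathbb{Z}^n$. Applying Pontryagin duality to the short exact sequence $1\to G_0\to G\to G/G_0\to 1$ yields a short exact sequence $1\to \widehat{G/G_0}\to \hat{G}\to \hat{G_0}\to 1$, in which $\hat{G_0}\cong \mathbb{Z}^n$ and $\widehat{G/G_0}$ is finite. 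An extension of a finitely generated group by a finite group is finitely generated, so $\hat{G}$ is finitely generated.

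Conversely, suppose $\hat{G}$ is finitely generated. By the classical structure theorem for finitely generated abelian groups, $\hat{G}\cong \mathbb{Z}^n\oplus F$ for some $n\in\mathbb{N}$ and a finite abelian group $F$. Applying Pontryagin duality once more, and using that duality converts direct sums of discrete groups into direct products of compact groups, $\widehat{\mathbb{Z}^n}\cong (S^1)^n$, and $\hat{F}\cong F$ (finite abelian groups are self-dual), we obtain $G\cong (S^1)^n\times C_k$ with $C_k:=\hat{F}$ finite and carrying the discrete topology. This is manifestly a Lie group, which also gives the reverse direction of the bridge and completes the ``if'' half of the theorem.

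The only genuinely non-formal ingredients are (i) that a compact connected abelian Lie group is a torus (an exponential-map argument as sketched above) and (ii) finite abelian groups are self-dual. Neither is really an obstacle, but the former is where one actually uses the Lie group hypothesis. The rest is bookkeeping with Pontryagin duality, which is an exact contravariant equivalence between compact abelian groups and discrete abelian groups, so short exact sequences and direct products transfer faithfully.
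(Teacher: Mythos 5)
Your proof is correct and follows exactly the route the paper itself indicates: the paper cites this as \cite[Theorem 5.2]{S} and, in the sentence preceding the statement, sketches precisely your argument (a compact abelian group is Lie iff its Pontryagin dual is finitely generated, then apply the structure theorem for finitely generated abelian groups). You have simply filled in the standard details of that bridge, and the logic closes correctly in both directions.
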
 \label{approxLieGroups}
Fortunately, a classical result of Gleason-Yamabe asserts, in particular, that all compact abelian groups can be approximated by compact abelian Lie groups
\begin{thm}\cite[Corollary 8.18]{HM} \label{GY:thm}
	Let $G$ be a compact abelian group and let $U$ be a neighborhood of the identity in $G$. Then $U$ contains a subgroup $N$ such that $G/N$ is a Lie group.
\end{thm}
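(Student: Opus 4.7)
The plan is to leverage Pontryagin duality: for a compact Hausdorff abelian group $G$, the continuous characters $\chi\colon G\to S^{1}$ separate points. Granted this, the desired subgroup $N$ will be the joint kernel of finitely many characters, so that $G/N$ embeds into a finite-dimensional torus and is therefore a compact abelian Lie group by Theorem \ref{structureLieGroups}. Concretely, I want to produce $\chi_1,\dots,\chi_k\in\hat G$ with $N:=\bigcap_{i=1}^{k}\ker(\chi_i)\subseteq U$, and then read off the Lie group structure of $G/N$ from the joint character map.

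First I would establish that $\bigcap_{\chi\in\hat G}\ker(\chi)=\{1_G\}$; this is a direct consequence of the Peter--Weyl theorem in the abelian case (every continuous unitary representation decomposes into $1$-dimensional pieces, and these must separate points of the compact Hausdorff group $G$). Next, the set $F:=G\setminus U$ is closed in the compact group $G$, hence compact, and each $F\cap\ker(\chi)$ is closed in $F$. By the separation statement just recorded, the family $\{F\cap\ker(\chi):\chi\in\hat G\}$ has empty intersection, so the finite intersection property yields $\chi_1,\dots,\chi_k\in\hat G$ with $F\cap\bigcap_{i=1}^{k}\ker(\chi_i)=\emptyset$. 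Setting $N:=\bigcap_{i=1}^{k}\ker(\chi_i)$ gives a closed subgroup contained in $U$, which is the $N$ we seek.

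Finally, consider the continuous homomorphism $\Phi\colon G\to (S^{1})^{k}$ given by $\Phi(g):=(\chi_1(g),\dots,\chi_k(g))$. Its kernel is exactly $N$, so $\Phi$ descends to a continuous injective homomorphism $\overline{\Phi}\colon G/N\to (S^{1})^{k}$. Since $G/N$ is compact and $(S^{1})^{k}$ is Hausdorff (or directly by the open mapping theorem, Theorem \ref{openmappingthm}), $\overline{\Phi}$ is a topological isomorphism onto its image, and the image is a closed subgroup of the torus $(S^{1})^{k}$. By Theorem \ref{structureLieGroups} (or the fact that any closed subgroup of a compact abelian Lie group is again a compact abelian Lie group, being a closed subgroup of $(S^{1})^{k}$), $G/N$ is a Lie group.

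The main obstacle is justifying the separation of points by characters for an abstract compact abelian group, which is the only non-soft input; it rests on Peter--Weyl (equivalently, on the faithfulness of Pontryagin duality). Everything else is a compactness argument plus the elementary observation that closed subgroups of finite-dimensional tori are finite-dimensional tori times finite groups. One could alternatively phrase the argument by first producing an open Lie-group quotient via a basis of symmetric open neighborhoods of the identity of the form $\bigcap_i\chi_i^{-1}(V_\varepsilon)$ with $V_\varepsilon$ a small arc in $S^{1}$, but the kernel-based formulation above is the cleanest.
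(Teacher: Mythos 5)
Your proof is correct. The paper does not prove this statement at all — it is quoted from Hofmann--Morris \cite[Corollary 8.18]{HM} — and your argument is the standard one for the abelian case: characters separate points by Peter--Weyl, compactness of $G\setminus U$ extracts finitely many characters $\chi_1,\dots,\chi_k$ whose joint kernel $N$ lies in $U$, and $G/N$ embeds as a closed subgroup of $(S^1)^k$, hence is a compact abelian Lie group. Every step (the finite intersection property argument, the compact-to-Hausdorff homeomorphism onto the image, and the identification of closed subgroups of tori) is sound.
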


It follows from the above (see also \cite[Lemma 2.2]{Iwasawa}) that any compact connected nilpotent group is abelian.
\begin{prop}\label{connilabel:prop}
	If $G$ is a compact metric connected $k$-step nilpotent group. Then $G$ is abelian.
\end{prop}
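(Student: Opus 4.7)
The plan is to reduce to the Lie group case via an approximation argument. By the Gleason--Yamabe theorem in its general form (applicable to all compact groups, not only to the abelian ones covered by Theorem \ref{GY:thm}), every neighborhood of the identity in $G$ contains a closed normal subgroup $N$ such that $G/N$ is a Lie group. Each such quotient $G/N$ is connected and $k$-step nilpotent, since both properties are inherited by continuous surjections.

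The main step is then to show that a connected compact $k$-step nilpotent Lie group $H$ must be abelian. I would pass to the Lie algebra $\mathfrak{h}$, which inherits $k$-step nilpotency from $H$. On the other hand, because $H$ is a connected compact Lie group, $\mathfrak{h}$ admits an $\mathrm{Ad}(H)$-invariant inner product and is therefore reductive, decomposing as $\mathfrak{h} = \mathfrak{z} \oplus \mathfrak{s}$ with $\mathfrak{z}$ the center and $\mathfrak{s} = [\mathfrak{h},\mathfrak{h}]$ semisimple. Any nonzero semisimple Lie algebra satisfies $[\mathfrak{s},\mathfrak{s}] = \mathfrak{s}$, so its lower central series never terminates and it cannot be nilpotent. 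Hence $\mathfrak{s} = 0$, $\mathfrak{h}$ is abelian, and $H$ is a torus.

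To conclude, fix $x,y \in G$ and let $N$ be any closed normal subgroup with $G/N$ a Lie group. By the previous step $G/N$ is abelian, so $[x,y] \in N$. Since by Gleason--Yamabe such $N$ form a neighborhood basis of the identity in $G$, their intersection is $\{e\}$ by Hausdorff separation, and therefore $[x,y] = e$. Thus $G$ is abelian.

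The principal obstacle, as I see it, is the invocation of Gleason--Yamabe in the non-abelian setting: the version recorded in Theorem \ref{GY:thm} covers only compact abelian groups, so a full write-up would need to cite the general theorem (available e.g.\ in Hofmann--Morris, or in Tao's account of Hilbert's fifth problem) rather than rely on what the excerpt has at hand. A cleaner alternative that bypasses both this issue and the Lie-theoretic computation is to cite \cite[Lemma 2.2]{Iwasawa} directly, as the excerpt itself already suggests.
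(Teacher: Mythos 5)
Your argument is correct. Note, though, that the paper itself gives no proof of this proposition: it simply asserts that the statement "follows from the above" and points to \cite[Lemma 2.2]{Iwasawa}, so you have supplied a genuine argument where the source only supplies a citation. Your three steps all check out: (i) for a \emph{compact} group the approximation by Lie quotients does not really need the full Gleason--Yamabe machinery --- Peter--Weyl already gives that the kernels of finite-dimensional unitary representations form a neighborhood basis of the identity consisting of closed normal subgroups with Lie quotients --- which sidesteps your worry that Theorem \ref{GY:thm} is only stated for abelian groups; (ii) the Lie-algebra step is sound, and can even be phrased at the group level if you prefer to avoid justifying that group nilpotency of a connected Lie group passes to its Lie algebra: for a compact connected Lie group $H$ the commutator subgroup $H'=\gamma_2(H)$ is a closed connected semisimple subgroup, and a nontrivial compact semisimple group satisfies $[H',H']=H'$, so the lower central series of $H$ stabilizes at $H'$ and nilpotency forces $H'=\{e\}$; (iii) the passage to the limit via $\bigcap N=\{e\}$ is exactly right. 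Your closing remark is also apt: citing Iwasawa directly is what the paper does, and your write-up is the self-contained alternative.
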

\section{Some results about phase polynomials}
In this section $G=\bigoplus_{p\in P}\mathbb{F}_p$ for some multiset of primes $P$.
\begin{prop}  [Values of phase polynomial cocycles]\label{PPC} Let $X$ be an ergodic $G$-system. Let $d\geq 0$ and $q:G\times X\rightarrow S^1$ be a phase polynomial of degree $<d$ that is also a cocycle. Then, for every $g\in G$, $q(g,\cdot)$ takes values in $C_m$ where $m$ is the order of $g$ to the power of $d$.
\end{prop}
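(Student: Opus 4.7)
The plan is to reduce the statement to a purely combinatorial claim about a single phase polynomial and then prove that by induction on $d$. Let $n$ denote the order of $g$, so the goal is to show $q(g,\cdot)^{n^d}=1$.

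First, I would extract a telescoping identity from the cocycle condition. A straightforward induction using $q(g+g',x)=q(g,x)q(g',T_gx)$ gives $q(jg,x)=\prod_{i=0}^{j-1} q(g, T_g^i x)$, and since $q(0,x)=q(0,x)^2$ forces $q(0,\cdot)\equiv 1$, setting $j=n$ yields
$$\prod_{i=0}^{n-1} q(g, T_g^i x) = 1.$$

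Next, I would prove the following lemma by induction on $d\geq 0$: if $P\in P_{<d}(X,S^1)$ satisfies $\prod_{i=0}^{n-1} P(T_g^i x)=1$, then $P^{n^d}=1$. The base case $d=0$ is trivial because then $P\equiv 1$. For the inductive step, I would first establish the Taylor expansion
$$P(T_g^i x)=\prod_{k=0}^{d-1} (\Delta_g^k P)(x)^{\binom{i}{k}}$$
by induction on $i$, using Pascal's rule together with $\Delta_g^d P=1$. Multiplying over $0\leq i\leq n-1$ and applying the hockey-stick identity $\sum_{i=0}^{n-1}\binom{i}{k}=\binom{n}{k+1}$ converts the telescoping identity into
$$\prod_{k=0}^{d-1} (\Delta_g^k P)^{\binom{n}{k+1}} = 1.$$

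Now $\Delta_g$ commutes with the shift by $T_g$, so for each $k\geq 1$ the function $\Delta_g^k P$ is a phase polynomial of degree $<d-k$ that still satisfies $\prod_{i=0}^{n-1}(\Delta_g^k P)(T_g^i x)=1$. By the inductive hypothesis, $(\Delta_g^k P)^{n^{d-k}}=1$. Isolating the $k=0$ term in the previous displayed identity and raising it to the $n^{d-1}$-th power gives
$$P^{n^d}=\prod_{k=1}^{d-1}(\Delta_g^k P)^{-n^{d-1}\binom{n}{k+1}},$$
and since $n^{d-k}\mid n^{d-1}$ whenever $k\geq 1$, every factor on the right is trivial. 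Applying the lemma to $P=q(g,\cdot)$ then closes the proposition.

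I do not expect any serious obstacle: the whole argument is a binomial bookkeeping exercise built on top of the definition of phase polynomial. The only step that could be mildly delicate is writing the Taylor expansion with the correct signs and exponents; after that, the divisibility check $n^{d-k}\mid n^{d-1}$ at the end is exactly what makes the induction close and is the reason the sharper exponent $n^d$ (rather than something like $n^{2d}$) is the correct bound.
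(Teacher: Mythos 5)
Your proof is correct and follows essentially the same route as the paper: both arguments derive the telescoping identity $\prod_{i=0}^{n-1}q(g,T_g^i x)=1$ from the cocycle condition and then induct on $d$, using the fact that differentiation lowers the degree of a phase polynomial by one. The only difference is organizational --- the paper applies its induction hypothesis directly to the cocycles $\Delta_{kg}q$ after the one-line rearrangement $q(g,x)^n\prod_{k=0}^{n-1}\Delta_{kg}q(g,x)=1$, whereas you isolate a cocycle-free lemma about a single phase polynomial and run the combinatorics through the Taylor expansion and the hockey-stick identity; both yield the same exponent $n^d$.
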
 
\begin{proof}
	We prove the proposition by induction on $d$. If $d=0$ then $q\equiv 1$ and the claim is trivial. Fix $d\geq 1$ and assume inductively that the claim holds for smaller values of $d$. Let $q:G\times X\rightarrow S^1$ be a phase polynomial of degree $<d$ and fix $g\in G$ of order $n$. The cocycle identity implies that $$1=q(ng,x)=\prod_{k=0}^{n-1}q(g,T_{kg}x).$$
	Since $q(g,T_{kg}x)=q(g,x)\cdot \Delta_{kg} q(g,x)$ we have that $q(g,x)^n \cdot \prod_{k=0}^{n-1}\Delta_{kg} q(g,x)=1$. By the induction hypothesis, $\prod_{k=0}^{n-1}\Delta_{kg} q(g,x)$ is in $C_{n^{d-1}}$ and it follows that $q(g,x)\in C_{n^d}$, as required.
\end{proof}

We need the following version of \cite[Lemma B.5 (i)]{Berg& tao & ziegler}.
\begin{lem}  [Vertical derivatives of phase polynomials are phase polynomials of smaller degree]\label{vdif:lem}
	Let $X$ be an ergodic $G$-system. Let $U$ be a compact abelian group acting freely on $X$ by automorphisms. Let $P:X\rightarrow S^1$ be a phase polynomial of degree $<d$ for some $d\geq 1$. Then $\Delta_u P$ is a phase polynomial of degree $<d-1$ for every $u\in U$.
\end{lem}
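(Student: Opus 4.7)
The plan is to exploit the fact that $U$ acts by automorphisms, which means $V_u$ commutes with each $T_g$, combined with the ergodicity hypothesis.

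The first step is the commutation identity $\Delta_g \Delta_u P = \Delta_u \Delta_g P$ for every $g\in G$. This is an immediate unpacking of the definitions: since $V_u T_g = T_g V_u$, one has $\Delta_g(\Delta_u P) = T_g(V_u P/P)/(V_u P/P) = (V_u T_g P)(P)/(V_u P)(T_g P) = \Delta_u(T_g P/P) = \Delta_u \Delta_g P$. Iterating this gives
\[
\Delta_{h_1}\cdots\Delta_{h_{d-1}}(\Delta_u P)=\Delta_u\bigl(\Delta_{h_1}\cdots\Delta_{h_{d-1}}P\bigr)
\]
for any $h_1,\dots,h_{d-1}\in G$.

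The second step is to observe that, because $P$ is a phase polynomial of degree $<d$, the function $\Delta_{h_1}\cdots\Delta_{h_{d-1}}P$ is a phase polynomial of degree $<1$; by definition this means it is $G$-invariant, and since $X$ is ergodic it must be a constant in $S^1$. Applying $\Delta_u$ to any constant yields $1$, so the displayed identity gives $\Delta_{h_1}\cdots\Delta_{h_{d-1}}(\Delta_u P)=1$ for all choices of $h_1,\dots,h_{d-1}$. This is precisely the statement that $\Delta_u P\in P_{<d-1}(X,S^1)$, which is what we needed.

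There is really no obstacle here: the freeness and automorphism hypotheses on the $U$-action are only needed to justify the commutation $V_u T_g = T_g V_u$ (automorphism) and to make sense of the derivative $\Delta_u$ in the first place (so that $V_u$ is a well-defined measure-preserving transformation); ergodicity is what collapses the iterated $G$-derivative to a constant. No structural theorem about $U$ or about phase polynomials beyond Definition of the degree is invoked.
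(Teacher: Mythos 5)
Your proof is correct and uses the same two ingredients as the paper's: the commutation $V_uT_g=T_gV_u$ to move $\Delta_u$ past the $G$-derivatives, and ergodicity to collapse the $(d-1)$-fold derivative of $P$ to a constant, which $\Delta_u$ annihilates. The paper packages this as an induction on $d$ (using the polynomiality criterion of Lemma \ref{PPP}(iii) at each step), whereas you unroll it into a single direct computation; the content is identical.
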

\begin{proof} 
    We prove the lemma by induction on $d$. If $d=1$, ergodicity implies that  $P$ is a constant and so $\Delta_u P =1$, as required. Let $d\geq 2$ and assume inductively that the claim is true for $d-1$. Given a phase polynomial $P:X\rightarrow S^1$ of degree $<d$, we have that $\Delta P : G\times X\rightarrow S^1$ is a phase polynomial of degree $<d-1$. By the induction hypothesis, we conclude that $\Delta_u \Delta P$ is a phase polynomial of degree $<d-2$. As the action of $U$ commutes with the action of $G$, we have that $\Delta \Delta_u P$ is a phase polynomial of degree $<d-2$. It follows that $\Delta_u P$ is a phase polynomial of degree $<d-1$ as desired.
\end{proof}
Proposition \ref{PPC} and Lemma \ref{vdif:lem} implies the following result.
\begin{cor} \label{ker:cor} 
	Let $X$ be an ergodic $G$-system and $U$ be a compact abelian group acting freely on $X$ and commuting with the action of $G$. Suppose that there exists a measurable map $u\mapsto f_u$ from $U$ to $P_{<d}(X,S^1)$ which satisfies the cocycle identity (i.e. $f_{uv}=f_u V_u f_v$) for all $u,v\in U$. Then, there exists an open subgroup $V$ of $U$ such that $f_v\in P_{<1}(X,S^1)$ for all $v\in V$.
\end{cor}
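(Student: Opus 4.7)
The plan is to induct on $d$, the case $d=1$ being immediate with $V=U$. For the inductive step, the key observation is that while $u \mapsto f_u$ is only a cocycle (not a homomorphism) into $P_{<d}(X,S^1)$, it \emph{becomes} a homomorphism after passing to the quotient $P_{<d}(X,S^1)/P_{<d-1}(X,S^1)$. Indeed, by Lemma \ref{vdif:lem} applied to $f_v \in P_{<d}(X,S^1)$, the derivative $\Delta_u f_v = V_u f_v / f_v$ lies in $P_{<d-1}(X,S^1)$, so $V_u f_v \equiv f_v$ modulo $P_{<d-1}(X,S^1)$. Combining this with the cocycle identity $f_{uv} = f_u \cdot V_u f_v$ yields, in the quotient,
\[
\pi(f_{uv}) = \pi(f_u)\cdot\pi(f_v),
\]
where $\pi$ denotes the quotient map.

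Next I would use Lemma \ref{sep:lem} (the separation lemma) to conclude that $P_{<d}(X,S^1)/P_{<1}(X,S^1)$ is discrete in the $L^2$ topology, hence countable by separability of $L^2(X)$; consequently its quotient $P_{<d}(X,S^1)/P_{<d-1}(X,S^1)$ is also countable. The composed map $u \mapsto \pi(f_u)$ is measurable (the preimage of each coset $[p]$ equals $\{u : f_u \in p\cdot P_{<d-1}(X,S^1)\}$, and $p\cdot P_{<d-1}(X,S^1)$ is closed in $P_{<d}(X,S^1)$ because the defining condition $\Delta_{h_1}\cdots\Delta_{h_{d-1}}\phi = 1$ is closed). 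Thus $u \mapsto \pi(f_u)$ is a measurable homomorphism from the compact abelian Polish group $U$ to a countable target, so by Corollary \ref{openker} its kernel $V_1$ is an open subgroup of $U$. By construction, $f_v \in P_{<d-1}(X,S^1)$ for all $v \in V_1$.

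Finally I would apply the induction hypothesis to $V_1$ in place of $U$. Since $V_1$ is open in a compact group it is also closed, hence compact; it is abelian as a subgroup of $U$; its action on $X$ is free because the restriction of a free action to a subgroup is free (if $X \cong Y \times U$ with $U$ acting by translation, then $X \cong (Y\times U/V_1)\times V_1$ with $V_1$ acting by translation); and it commutes with the $G$-action since $U$ does. The restriction $v \mapsto f_v$ to $V_1$ is still a cocycle, now taking values in $P_{<d-1}(X,S^1)$, so the induction hypothesis supplies an open subgroup $V \leq V_1$ with $f_v \in P_{<1}(X,S^1)$ for all $v \in V$; this $V$ is open in $U$ because $V_1$ is. The main subtlety to handle carefully is the measurability and countability checks needed to invoke Corollary \ref{openker} — in particular verifying that the quotient $P_{<d}/P_{<d-1}$ inherits a well-behaved discrete structure from Lemma \ref{sep:lem} — but once these are in place the induction runs smoothly.
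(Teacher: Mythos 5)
Your proposal is correct and follows essentially the same route as the paper: induct on $d$, use Lemma \ref{vdif:lem} to see that $u\mapsto f_u$ becomes a homomorphism modulo $P_{<d-1}(X,S^1)$, invoke Lemma \ref{sep:lem} and Corollary \ref{openker} to get an open kernel, and then apply the induction hypothesis to that open subgroup. The extra measurability and freeness checks you supply are sound but are left implicit in the paper.
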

\begin{proof}
	We prove the claim by induction on $d$. If $d=1$ we can take $V=U$ and the claim follows. Let $d>1$ and assume the claim is true for smaller values of $d$. Let $u\mapsto f_u$ be a map from $U$ to $P_{<d}(X,S^1)$. The cocycle identity implies that $f_{uv}=f_u f_v \cdot \Delta_u f_v$. Applying Lemma \ref{vdif:lem} we have that $\Delta_u f_v\in P_{<d-1}(X,S^1)$ and so after quotienting out $P_{<d-1}(X,S^1)$ we have that the map $U\rightarrow P_{<d}(X,S^1)/P_{<d-1}(X,S^1)$ sending $u$ to the equivalent class of $f_u$ is a homomorphism. Since $d>1$, Lemma \ref{sep:lem} (Separation Lemma) implies that $P_{<d-1}(X,S^1)$ has at most countable index in $P_{<d}(X,S^1)$. Corollary \ref{openker} implies that the kernel, $U'$ is an open subgroup. \\
	We conclude that $f_{u'}\in P_{<d-1}(X,S^1)$ for all $u'\in U'$, and so the induction hypothesis implies that there exists an open subgroup $V$ of $U'$ such that $f_v\in P_{<1}(X,S^1)$ for all $v\in V$. As $V$ is open in $U'$ and $U'$ is open in $U$ we have that $V$ is open in $U$.
\end{proof}
We also need the following lemma from \cite[Lemma B.5 (iii)]{Berg& tao & ziegler}.
\begin{lem} [Composition of polynomials is again polynomial] \label{B.5} Let $U$ and $V$ be two abelian groups and $X=Y\times_{\rho} U$ be an ergodic extension of a $G$-system $Y$ by a phase polynomial cocycle $\rho:G\times Y\rightarrow U$ of degree $<k$ for some $k\geq 1$. Suppose that $p:X\rightarrow V$ is a phase polynomial of degree $<d$, and $v_1:X\rightarrow U$ , $v_2:X\rightarrow U$,...,$v_j:X\rightarrow U$ are phase polynomials of degree $<d_1,<d_2,...,<d_j$ and $s:X\rightarrow U$ is a phase polynomial of degree $<d'$. Then the function $P:X\rightarrow V$ given by the formula
	$$p(y,u)=(\Delta_{v_1(y,u)}...\Delta_{v_j(y,u)}p)(y,s(y,u))$$
	is a phase polynomial of degree 		 $<O_{d,j,d1,...,dj,d',k}(1)$.
\end{lem}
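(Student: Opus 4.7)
The plan is to reduce the statement to a single substitution lemma and then establish that lemma by the polynomial criterion (Lemma \ref{PPP} (iii)) combined with a careful induction. First, the iterated vertical derivative can be expanded by the cube identity:
$$(\Delta_{w_1}\cdots\Delta_{w_j}p)(y,u_0)=\prod_{\epsilon\in\{0,1\}^j}p\bigl(y,\,w_1^{\epsilon_1}\cdots w_j^{\epsilon_j}\,u_0\bigr)^{(-1)^{j-|\epsilon|}}$$
valid for constant $w_1,\ldots,w_j,u_0\in U$. Specializing $w_i=v_i(y,u)$ and $u_0=s(y,u)$, the function $P$ becomes a signed product of $2^j$ terms, each of the form $p(y,\tilde w_\epsilon(y,u))$ where $\tilde w_\epsilon:=v_1^{\epsilon_1}\cdots v_j^{\epsilon_j}\,s$ is a phase polynomial $X\to U$ of degree $<D_0:=\max(d_1,\ldots,d_j,d')$ by Lemma \ref{PPP} (ii). Since phase polynomials with values in an abelian group form a group under pointwise multiplication, it suffices to prove the following \emph{substitution lemma}: if $p:X\to V$ is polynomial of degree $<d$ and $w:X\to U$ is polynomial of degree $<d_0$, then $(y,u)\mapsto p(y,w(y,u))$ is polynomial of degree $<N(d,d_0,k)$ for some explicit $N(d,d_0,k)=O_{d,d_0,k}(1)$.

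For the substitution lemma, I would apply the polynomial criterion to $P(y,u):=p(y,w(y,u))$ and induct on the degree $d$. The base $d=1$ is immediate since ergodicity forces $p$ to be constant. For the inductive step, using $w(T_gy,\rho(g,y)u)=\Delta_g w(y,u)\cdot w(y,u)$ and $p(T_gy,u')=(T_gp)(y,\rho(g,y)^{-1}u')$, one factors
$$\Delta_g P(y,u)=\underbrace{(\Delta_g p)(y,w(y,u))}_{\text{(II)}}\cdot\underbrace{\frac{(T_gp)\bigl(y,\,\rho(g,y)^{-1}\Delta_g w(y,u)\cdot w(y,u)\bigr)}{(T_gp)(y,w(y,u))}}_{\text{(I)}}.$$
Factor (II) substitutes $w$ (degree $<d_0$) into $\Delta_gp$ (degree $<d-1$), so the outer induction on $d$ controls it directly. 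Factor (I) can be rewritten as $(\Delta_{\rho(g,y)^{-1}\Delta_g w(y,u)}T_gp)(y,w(y,u))$, a ``vertical derivative with variable step.'' The step $\rho(g,y)^{-1}\Delta_gw(y,u)$ is a phase polynomial of degree $<\max(k,d_0-1)$ by Lemmas \ref{PPP} (ii), (iv). Running a secondary induction in the same spirit on the number $r$ of vertical derivatives together with the tuple of inner-polynomial degrees $(\alpha_1,\ldots,\alpha_r)$ -- where each application of $\Delta_g$ strictly reduces this combined complexity (since $\Delta_g$ drops the degree of each $a_i$ and of $w$ by one) while keeping the outer polynomial degree $<d$ -- allows one to bound Factor (I) as well. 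Combining the two factors gives $\Delta_gP$ polynomial of degree $<N(d,d_0,k)-1$, and Lemma \ref{PPP} (iii) yields the substitution lemma.

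The main obstacle is the combinatorial bookkeeping: one must verify that the two-level induction (outer on $d$, inner on the tuple of inner-polynomial complexities) terminates and produces the advertised explicit bound $O_{d,j,d_1,\ldots,d_j,d',k}(1)$. The essential use of the hypothesis that $\rho$ is itself a phase polynomial of degree $<k$ appears in the twist $\rho(g,y)^{-1}$ in factor (I): without polynomial control on $\rho$, the ``step'' $\rho(g,y)^{-1}\Delta_gw(y,u)$ would fail to be a phase polynomial at all, and the inner induction could not make progress.
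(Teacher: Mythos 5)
The paper does not prove this lemma itself (it is quoted from Bergelson--Tao--Ziegler), and your overall strategy --- the cube expansion reducing to substitutions, the polynomial criterion of Lemma \ref{PPP}(iii), and the factorization $\Delta_gP=(\mathrm{II})\cdot(\mathrm{I})$ --- is the same as in that source; the derivative computation and your observation about where the polynomiality of $\rho$ enters are both correct. The gap is in the termination of your secondary induction. You claim that the complexity ``number $r$ of vertical derivatives together with the tuple of inner-polynomial degrees'' strictly decreases under $\Delta_g$ ``since $\Delta_g$ drops the degree of each $a_i$ and of $w$ by one.'' Neither half of this is true: each application of $\Delta_g$ \emph{increases} $r$ by one (Factor (I) carries one more vertical derivative than the expression you started from), the new step $\rho(g,y)^{-1}\Delta_gw(y,u)$ has degree $<\max(k,d_0-1)$, which may exceed every degree already present (e.g.\ when $k$ is large), and the expansion of $\Delta_g$ applied to the general $r$-derivative form contains terms in which the old steps appear \emph{undifferentiated} alongside their derivatives. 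So the quantity you propose is not monotone and the induction as described is not well-founded. Nor can you escape by expanding Factor (I) back into substitutions via your cube identity: that yields two substitutions into $T_gp$, which still has degree $<d$, and you are back where you started.

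The missing ingredient is the iterated form of Lemma \ref{vdif:lem}: for any fixed $c_1,\dots,c_r\in U$ one has $\Delta_{c_1}\cdots\Delta_{c_r}q\equiv 1$ as soon as $r\geq d$, so the general form $(\Delta_{a_1(y,u)}\cdots\Delta_{a_r(y,u)}q)(y,w(y,u))$ is identically $1$ once $r\geq d$. The correct bookkeeping is therefore an outer induction on $d$ and an inner \emph{downward} induction on $r$ (equivalently, induction on the effective degree $d-r$, which does drop by one in both Factors (I) and (II)), with the inner degrees required only to remain \emph{uniformly bounded} by $\max(k,d_1,\dots,d_j,d')$ --- which they do, since new steps have degree $<\max(k,\cdot)$ and old steps are only multiplied together, shifted by $T_g$, or differentiated. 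With this replacement your argument closes up and produces the stated $O_{d,j,d_1,\dots,d_j,d',k}(1)$ bound.
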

The next proposition follows from an argument of Bergelson Tao and Ziegler, see \cite[Lemma 2.1]{BTZ}.
\begin{prop} [Phase polynomial are invariant under connected components] \label{pinv:prop}  Let $X$ be an ergodic $G$-system of order $<k$ and $U$ be a compact abelian connected group acting freely on $X$ (not necessarily commuting with the $G$-action). Let $P:G\times X\rightarrow S^1$ be a phase polynomial of degree $<d$ such that for every $g\in G$ there exists $M_g\in\mathbb{N}$ such that $P(g,\cdot)$ takes at most $M_g$ values. (e.g. if $P$ is a phase polynomial cocycle). Then, $P$ is invariant under the action of $U$.
\end{prop}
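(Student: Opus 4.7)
Fix $g \in G$; it suffices to prove $V_u P(g, \cdot) = P(g, \cdot)$ for every $u \in U$. Following the approach of Bergelson--Tao--Ziegler \cite[Lemma 2.1]{BTZ}, I would combine the rigidity of phase polynomials provided by the Separation Lemma with the connectedness of $U$.

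The first step is to show $V_u P(g, \cdot) \in P_{<d}(X, S^1)$ for every $u \in U$. In every application of the proposition in this paper the group $U$ acts by automorphisms, i.e.\ $V_u$ commutes with each $T_h$ on $L^2(X)$, so $\Delta_{h_1}\cdots\Delta_{h_d} V_u P(g, \cdot) = V_u\bigl(\Delta_{h_1}\cdots\Delta_{h_d} P(g, \cdot)\bigr) = 1$. The map $\psi : U \to L^2(X)$ defined by $\psi(u) = V_u P(g, \cdot)$ is continuous by strong continuity of the unitary $U$-action. The Separation Lemma (Lemma \ref{sep:lem}) says phase polynomials of degree $<d$ are separated modulo constants by at least $\sqrt{2}/2^{d-2}$ in $L^2$, so the composition $\bar\psi : U \to P_{<d}(X, S^1)/S^1$ is continuous with image in a discrete set; since $U$ is connected, $\bar\psi$ is constant. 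Therefore $V_u P(g, \cdot) = c(u)\,P(g, \cdot)$ for some map $c : U \to S^1$, which is continuous (for instance, $c(u) = \int \overline{P(g, \cdot)}\, V_u P(g, \cdot)\, d\mu$).

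The last step uses the finite-valued hypothesis. Both $P(g, \cdot)$ and $V_u P(g, \cdot)$ take values in the same set $F \subset S^1$ of size at most $M_g$, so the pointwise ratio $c(u)$ takes values in the finite set $F \cdot F^{-1}$. A continuous map from a connected space to a finite discrete set is constant, and $c(1) = 1$, so $c \equiv 1$ and $V_u P(g, \cdot) = P(g, \cdot)$, as required. The main obstacle to proving the proposition verbatim as stated is the non-commuting case signaled by the parenthetical hypothesis: without $V_u T_h = T_h V_u$, the function $V_u P(g, \cdot)$ need not remain in $P_{<d}(X, S^1)$ and the Separation Lemma is inapplicable. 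Since $U$ commutes with the $G$-action in every use of the proposition in this paper, the commuting case already covers all our applications, and the argument above is sufficient.
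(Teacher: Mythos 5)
Your proof is correct and is essentially the paper's argument: both exploit the continuity of $u\mapsto V_uP(g,\cdot)$ in $L^2$ together with the Separation Lemma to force $\Delta_uP(g,\cdot)$ to be constant (you do this globally, via discreteness of $P_{<d}(X,S^1)$ modulo constants; the paper does it only for $u$ near the identity and then upgrades to all of $U$ via the cocycle identity in $u$ and connectedness), and then both kill the resulting constant using connectedness of $U$ and the finite-valued hypothesis (the paper phrases this step as triviality of the continuous character $u\mapsto\Delta_uP(g,\cdot)$, whose image would otherwise be all of $S^1$). Your closing observation is apt: the paper's own application of the Separation Lemma equally presupposes that $V_uP(g,\cdot)$ lies in $P_{<d}(X,S^1)$, so the parenthetical ``not necessarily commuting'' is not really supported by the proof as written, and in every invocation of the proposition in the paper the group does act by automorphisms.
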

\begin{proof}
	Fix $g\in G$ and consider the map $u\mapsto \Delta_u P(g,\cdot)$. Since $P(g,\cdot)$ is a measurable map $X\rightarrow S^1$, we have that $\Delta_u P$ converges in measure to the constant $1$ as $u$ converges to the identity in $U$. Since convergence in measure implies convergence in $L^2$, we can use Lemma \ref{sep:lem} to conclude that $\Delta_u P(g,\cdot)$ must be almost everywhere constant for $u$ close to the identity. From the cocycle identity, we have that the subset $U_g' = \{u\in U : \Delta_u P(g,\cdot) \text{ is a constant}\}$ is an open subgroup of $U$. As $U$ is connected, we conclude that $U_g'=U$ for every $g\in G$. We conclude that for every $g\in G$, there exists a character $\chi_g:U\rightarrow S^1$ such that  $\Delta_u P(g,\cdot) = \chi_g(u)$ for every $u\in U$. Since $U$ is connected and $\chi_g$ is continuous we have that the image of $\chi_g$ is either trivial or is $S^1$. But, the latter contradicts the assumption that $P(g,\cdot)$ takes finitely many values. It follows that $\Delta_u P(g,\cdot)=1$ for every $u\in U$ and $g\in G$. In other words, $P$ is invariant under the action of $U$, as required.
\end{proof}
\begin{rem}
In some cases the group $S^1$ in the proposition can be replaced by any compact abelian group using Pontryagin duality. For instance if $P:G\times X\rightarrow V$ is a phase polynomial cocycle for some compact abelian group $V$, then for every $\chi\in\hat V$ we have that $\chi\circ P:G\times X\rightarrow S^1$ is a phase polynomial cocycle of the same degree. By Proposition \ref{PPC} and Proposition \ref{pinv:prop} we have that $\chi(\Delta_uP)=1$ for every $u\in U$. As the characters separates points this would imply that $\Delta_u P = 1$, hence $P$ is invariant with respect to the action of $U$.
\end{rem}
In the rest of this section we work with a totally disconnected system $X$. We show that in this case any phase polynomial into $S^1$ takes values in a coset of a finite cyclic subgroup.
\begin{prop} [Phase polynomials on totally disconnected systems take finitely many values]\label{TDPV:prop} 
	Let $X$ be an ergodic totally disconnected $G$-system of order $<k$ (see Definition \ref{TD:def}). Let $P:X\rightarrow S^1$ be a phase polynomial of degree $<d$. Then up to a constant multiple, $P$ takes values in a finite subgroup of $S^1$.
\end{prop}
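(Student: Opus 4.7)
\emph{Proposal.} I will proceed by a double induction on $(k,d)$ ordered lexicographically, where $k$ is the order of $X$ and $d$ is the degree of $P$. The key reformulation is that the conclusion is equivalent to the existence of some $n\geq 1$ with $P^n$ an almost-everywhere constant: picking any $n$-th root $c_0$ of that constant gives $(c_0^{-1}P)^n=1$, so $c_0^{-1}P$ takes values in $C_n$. The base cases are immediate: if $k=1$ then $X$ is a single point, and if $d=1$ then $P$ is $G$-invariant, hence constant by ergodicity.

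For the inductive step, I write $X=Z_{<k-1}(X)\times_{\rho}U_{k-1}$ with $U_{k-1}$ totally disconnected, and exploit the free action of $U_{k-1}$ on $X$ by automorphisms. By Lemma \ref{vdif:lem} the map $u\mapsto \Delta_u P$ is a measurable cocycle from $U_{k-1}$ into $P_{<d-1}(X,S^1)$, so Corollary \ref{ker:cor} yields an \emph{open} subgroup $V\leq U_{k-1}$ on which $v\mapsto \Delta_v P$ is a measurable character of $V$ with values in $S^1$. Since $V$ is totally disconnected, Corollary \ref{chartdg} forces this character to have finite image $C_{n_1}$; hence $P^{n_1}$ is $V$-invariant and descends to a phase polynomial $\tilde P$ of degree $<d$ on the quotient system $X/V=Z_{<k-1}(X)\times_{\bar\rho}U_{k-1}/V$, whose top structure group $U_{k-1}/V$ is now \emph{finite}.

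Having reduced to a finite top group, I apply the inductive hypothesis at the pair $(k,d-1)$ to each $\Delta_{\bar u}\tilde P$ (indexed by $\bar u$ in the finite group $U_{k-1}/V$) to produce integers $m_{\bar u}$ with $(\Delta_{\bar u}\tilde P)^{m_{\bar u}}$ a constant; taking the (finite) least common multiple $m$ of these, and noting that a character of a finite group has finite image of some order $n_2$, I obtain that $\tilde P^{m n_2}$ is invariant under the whole of $U_{k-1}/V$ and therefore descends to a phase polynomial $Q$ of degree $<d$ on $Z_{<k-1}(X)$. The inductive hypothesis at $(k-1,d)$ applied to $Q$ then yields $N$ with $Q^N$ constant, and lifting back gives $P^{n_1 m n_2 N}$ constant on $X$, closing the induction. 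The principal technical point is Step 1: it is precisely the combination of Corollary \ref{ker:cor} (to produce an open subgroup $V$ on which the derivative is constant) with the totally disconnected nature of $V$ (forcing the resulting character to have finite image) that lets one pass from a general compact top structure group to a finite one, after which the tower-style propagation down to $Z_{<k-1}(X)$ is routine.
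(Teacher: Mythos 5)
Your proof is correct and follows essentially the same route as the paper: differentiate $P$ along the top structure group, use Corollary \ref{ker:cor} together with total disconnectedness to reduce to a finite quotient, kill the finite part by raising to a suitable power, and then descend to $Z_{<k-1}(X)$ and induct. The only (cosmetic) difference is that the paper handles the finite quotient by iterating the power trick through the filtration $P_{<d-1}\supset P_{<d-2}\supset\cdots$ on $X$ itself, obtaining $\Delta_u P^{m^{d-1}}=1$ directly, whereas you pass to the quotient system and run a secondary induction on $d$.
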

\begin{proof}
	We induct on $k$. If $k=1$ then $X$ is trivial. In particular, every function on $X$ is a constant and the claim follows. Let $k\geq 2$ and assume the claim has already been proven for $k-1$. Let $X$ be as in the Proposition, then by Proposition \ref{abelext:prop} we can write $X=Z_{<k-1}(X)\times_\rho U$.\\
	Consider the map $u\mapsto \Delta_u P$. Clearly, $u\mapsto \Delta_u P$ satisfies the cocycle identity and so Corollary \ref{ker:cor} implies that there exists an open subgroup $V$ s.t. $\Delta_u P\in P_{<1}(X,S^1)$ for every $u\in V$. Ergodicity implies that $\Delta_u P$ is a constant in $S^1$. The induced map $V\rightarrow S^1$ sending $v$ to the constant $\Delta_v P$ is a homomorphism. As $V$ is totally disconnected, the kernel $U'$ is an open subgroup which satisfies that $\Delta_u P =1$ for every $u\in U'$.\\
	Now, let $u\in U$. As $U/U'$ is a finite group, there exists $m\in\mathbb{N}$ such that $u^m\in U'$. We follow the argument in the proof of Corollary \ref{ker:cor}, but instead of passing to an open subgroup, we take a power. As in Corollary \ref{ker:cor}, we have that $U\rightarrow P_{<d-1}(X,S^1)/P_{<d-2}(X,S^1)$ sending $u$ to the equivalent class of $\Delta_u P$ is a homomorphism. As $\Delta_{u^m}P=1$ for all $u\in U$, we conclude that $\Delta_u P^m$ is a phase polynomial of degree $<d-2$. Iterating this process, we conclude that $\Delta_u P^{m^{d-1}}=1$ for all $u\in U$. In other words, $P^{m^{d-1}}$ is invariant under $U$. Viewing $P^{m^{d-1}}$ as a phase polynomial of degree $<d$ on $Z_{<k-1}(X)$ and applying the induction hypothesis, we see that, up to constant multiplication, $P^{m^{d-1}}$ takes values in some finite subgroup $H$ of $S^1$. Let $c$ be an $m^{d-1}$-th root of this constant, we have that $P/c$ takes values in the finite group $H$, as desired.
	
\end{proof}
As a corollary we have the following result.
\begin{thm} [Phase polynomials of degree $<d$ on totally disconnected systems take $<O_d(1)$ values on finitely many primes]\label{TDPV:thm}
	Let $X$ be an ergodic totally disconnected $G$-system of order $<k$ (see Definition \ref{TD:def}), let $F:X\rightarrow S^1$ be a phase polynomial of degree $<d$. Then up to constant multiplication, $F$ takes values in the group $C_m$ where $m=p_1^{l_1}\cdot...\cdot p_n^{l_n}$ for some $n\in\mathbb{N}$, distinct primes $p_1,...,p_n$ and $l_1,...,l_n=O_{d}(1)$.
\end{thm}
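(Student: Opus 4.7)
The plan is to reduce to a single prime-power cyclic group via the Chinese Remainder Theorem and then induct on the degree $d$.

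First, I would invoke Proposition \ref{TDPV:prop} to obtain a constant $c_0$ with $c_0^{-1}F$ valued in some finite cyclic subgroup $C_m\subset S^1$. Writing $m = \prod_{i=1}^n p_i^{l_i}$ with distinct primes $p_i$, I would choose integers $a_i$ that are $1\bmod p_i^{l_i}$ and $0\bmod p_j^{l_j}$ for $j\neq i$, so that $c_0^{-1}F = \prod_i (c_0^{-1}F)^{a_i}$ and each factor $F_i:=(c_0^{-1}F)^{a_i}$ is a phase polynomial of degree $<d$ taking values in $C_{p_i^{l_i}}$. It then suffices to prove the following single-prime claim: \emph{if $F':X\to C_{p^l}$ is a phase polynomial of degree $<d$, then up to a constant multiple $F'$ takes values in $C_{p^{\min(l,d-1)}}$}. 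Combining the constants arising from each $F_i$ then yields the theorem with the uniform bound $l_i\leq d-1$.

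I would prove the claim by induction on $d$, the base case $d=1$ being immediate from ergodicity. The crux of the inductive step is to show that $F'^{\,p}$ is a phase polynomial of degree $<d-1$. Consider the top derivative $c'(g_1,\ldots,g_{d-1}) := \Delta_{g_1}\cdots\Delta_{g_{d-1}} F'$. Because $F'$ has degree $<d$, one more derivative vanishes, so $c'$ is $G$-invariant and hence constant by ergodicity, giving a map $G^{d-1}\to C_{p^l}$. The identity $\Delta_{g+h} = \Delta_g \cdot T_g\Delta_h$ together with the constancy of $c'$ will let me verify that $c'$ is symmetric and multilinear in its arguments. If some $g_j$ lies in an $\mathbb{F}_q$-summand of $G$ with $q\neq p$, multilinearity forces the value of $c'$ to have order dividing both $q$ and $p^l$, hence to be trivial; if all $g_j$ lie in $\mathbb{F}_p$-summands, the value has order dividing $p$. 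In either case $c'^{\,p}=1$, which is exactly the statement that $F'^{\,p}$ has degree $<d-1$.

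To close the induction I would apply the inductive hypothesis to $F'^{\,p}$: there exists $c''\in S^1$ with $(c'')^{-1}F'^{\,p}\in C_{p^{\min(l-1,d-2)}}$. Picking any $p$-th root $c\in S^1$ of $c''$ gives $(c^{-1}F')^p \in C_{p^{\min(l-1,d-2)}}$, and since every $p$-th root in $S^1$ of an element of $C_{p^k}$ lies in $C_{p^{k+1}}$, we obtain $c^{-1}F'\in C_{p^{\min(l,d-1)}}$. The main obstacle, and the step that truly drives the bound, is verifying the multilinearity and coordinate-wise torsion control of the top derivative $c'$ — this is where the structure $G=\bigoplus_{p\in P}\mathbb{F}_p$ combines with the ergodicity of $X$ to force the uniform bound $O_d(1)$ on the prime-power exponents.
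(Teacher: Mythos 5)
Your proof is correct, but the key step is executed differently from the paper. Both arguments begin by invoking Proposition \ref{TDPV:prop} to place $F$ (up to a constant) in a finite cyclic group $C_m$. From there the paper is much shorter: it observes that $q=\Delta F$ is a phase polynomial \emph{cocycle} of degree $<d-1$ valued in $C_m$, applies Proposition \ref{PPC} to conclude that $q(g,\cdot)$ is trivial for $g$ of order coprime to $m$ and that $q^{m'}=1$ for $m'=\prod p_i^{d-1}$, and then uses ergodicity to deduce that $F^{m'}$ is constant --- no CRT splitting and no fresh induction on $d$ are needed. Your route instead decomposes $F$ into prime-power components via CRT and proves the single-prime claim by induction on the degree, using constancy (by ergodicity), multilinearity, and torsion of the top derivative $c'(g_1,\dots,g_{d-1})$ to show that $F'^{\,p}$ drops degree. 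This is essentially a self-contained re-proof of the content of Proposition \ref{PPC} (which the paper proves in the appendix by a similar induction on $d$ using the cocycle identity), so your argument is longer but more elementary in the sense that it does not quote that proposition; it also isolates the cleaner structural fact that the top derivative is a symmetric multilinear form $G^{d-1}\to C_{p}$, and it yields a group $C_{m'}$ supported only on the primes actually dividing $m$. The final bounds $l_i\leq d-1=O_d(1)$ agree in both treatments.
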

\begin{proof}
	By Proposition \ref{TDPV:prop} we have that up to constant multiplication $F$ takes values in $C_\alpha$ for some finite $\alpha\in\mathbb{N}$. The derivative $q:=\Delta F$ is a phase polynomial of degree $<d-1$ which is also a cocycle and it takes values in $C_\alpha$. Let $n\in\mathbb{N}$ be such that $\alpha<p_n$ where $p_n$ is the $n$-th prime and write $G=G_n\oplus G'$ where $G_n=\bigoplus_{p\in P,p<p_n}\mathbb{F}_p$ and $G'$ is its complement. From Proposition \ref{PPC} and the fact that $\alpha<p_n$ we conclude that $q(g,\cdot)=1$ for all $g\in G'$.  Let $m=\prod_{i=1}^n p_i^{d-1}$. Proposition \ref{PPC} implies that $q^m=1$ and so, by ergodicity $F^m$ is constant. Let $c$ be an $m$-th root for $\overline {F^m}$. We conclude that $c\cdot F$ takes values in $C_m$ as required.
\end{proof}
In the next theorem we will generalize the results above in the case where $p\geq k$.
\begin{thm} \label{HTDPV:thm} 
	Let $X$ be an ergodic $G$-system, let $F:X\rightarrow S^1$ be a phase polynomial of degree $<k$, and let $p$ be a prime such that $p\geq k$. Suppose now that $F$ takes values in $C_{p^m}$ for some $m\in\mathbb{N}$, then up to constant multiplication $F$ takes values in $C_p$
\end{thm}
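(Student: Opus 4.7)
The plan is to attack this in two stages. First, I would reduce to the case where $G$ consists entirely of $p$-torsion elements, by showing $F$ must be invariant under all summands $\mathbb{F}_q$ of $G$ with $q\neq p$. Then, on the reduced system, I would exploit the multiplicative Taylor expansion along an order-$p$ element, combined with a downward induction on the order of iterated derivatives, to force every derivative of $F$ to take values in $C_p$.

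For the reduction, observe that $\Delta F\colon G\times X\to C_{p^m}$ is a phase polynomial cocycle of degree $<k-1$, so Proposition~\ref{PPC} forces $\Delta_g F\in C_{n^{k-1}}$ whenever $g$ has order $n$. When $g\in\mathbb{F}_q\subset G$ for a prime $q\neq p$, the intersection $C_{q^{k-1}}\cap C_{p^m}=\{1\}$ yields $\Delta_g F=1$, so $F$ is invariant under the subgroup $G'\subset G$ consisting of all summands $\mathbb{F}_q$ with $q\neq p$. Consequently $F$ descends to the factor $X/G'$, on which the $p$-Sylow subgroup $G_p$ of $G$ still acts ergodically, and from now on I may assume $G=G_p$, so every nonzero element of $G$ has order exactly $p$.

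The main claim is that for every $1\le j\le k-1$ and every $h_1,\ldots,h_j\in G$, the iterated multiplicative derivative $d^{(j)}_{h_1,\ldots,h_j}:=\Delta_{h_j}\cdots\Delta_{h_1}F$ satisfies $(d^{(j)})^p=1$. I would prove this by downward induction on $j$, using the standard binomial identity $\psi(T_h^n x)=\prod_{l\ge 0}(\Delta_h^l\psi)(x)^{\binom{n}{l}}$ (a finite product whenever $\psi$ is a phase polynomial). Applied to $\psi=d^{(j-1)}$ (of degree $<k-j+1$) along any nonzero $h\in G$, using $T_h^p=\mathrm{id}$, this gives
\[
1\;=\;\prod_{l=1}^{k-j}\bigl(d^{(j-1+l)}_{h_1,\ldots,h_{j-1},h,\ldots,h}\bigr)^{\binom{p}{l}}.
\]
Since $1\le l\le k-j\le k-1\le p-1$, each $\binom{p}{l}$ is divisible by $p$; by the inductive hypothesis $(d^{(j-1+l)})^p=1$ for $l\ge 2$, so those factors collapse to $1$, leaving $(d^{(j)})^p=1$. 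The base case $j=k-1$ is the same identity applied to $d^{(k-2)}$ (degree $<2$), which produces only the single Taylor term $(d^{(k-1)})^p$, immediately forcing it to be trivial.

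Specializing the claim to $j=1$ yields $(\Delta_h F)^p=1$ for every $h\in G$, and hence $\Delta_h(F^p)=(\Delta_h F)^p=1$. Thus $F^p$ is $G$-invariant, so by ergodicity $F^p$ equals a constant $c\in S^1$; choosing any $p$-th root $c^{1/p}$ of $c$ gives $F/c^{1/p}\colon X\to C_p$, which is the desired conclusion. The whole argument turns on the divisibility $p\mid\binom{p}{l}$ for $1\le l\le k-1$, which is exactly what the hypothesis $p\ge k$ buys; without it the term $\binom{p}{p}=1$ would enter the Taylor expansion and could not be absorbed by the inductive hypothesis, breaking the downward induction.
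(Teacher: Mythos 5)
Your proof is correct and follows essentially the same route as the paper's: the same reduction to the $p$-torsion part of $G$ via Proposition \ref{PPC}, followed by the binomial expansion of $T_h^p=\mathrm{id}$ along order-$p$ elements, exploiting $p\mid\binom{p}{l}$ for $1\le l\le k-1\le p-1$ together with the vanishing of the top-order derivative (which is where $p\ge k$ enters). The only difference is organizational: the paper passes to additive notation and inverts the resulting unipotent operator by a finite Neumann series, whereas you unwind the same cancellation by a downward induction on the order of the iterated derivative.
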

\begin{proof}
The derivative $\Delta F:G\times X\rightarrow C_{p^m}$, is a phase polynomial of degree $<k$. Write $G=G_p\oplus G'$ where $G_p$ is the $p$-torsion subgroup of $G$. By Proposition \ref{PPC} it follows that  for every $g\in G'$, $\Delta_g F$ takes values in $C_n$ for some $n$ co-prime to $p$. As $\Delta F$ takes values in $C_{p^m}$ we conclude that $\Delta_g F=1$ for all $g\in G'$. Now, we claim that $\Delta_g F^p = 1$ for all $g\in G_p$ (Using ergodicity and the cocycle identity, this would imply that $F^p$ is constant).\\

We argue as in \cite[Lemma D.3 (i)]{Berg& tao & ziegler}: Taking logarithm it is sufficient to show that if $F:X\rightarrow \mathbb{Z}/{p^m}\mathbb{Z}$ is an (Additive) polynomial of degree $<k$, then $pF$ is a constant.
	
	Let $g\in G_p$, then $T_g^p F = F$. Write $T_g = 1+\Delta^+_g$ where $\Delta^+_g f (x)= f(T_g(x))-f(x)$ is the additive derivative. We conclude, using the binomial formula that $\sum_{i=0}^p \binom{p}{i}(\Delta_g^+)^iF=F$. Since $F$ has degree $k$ and $k\leq p$ we have that $(\Delta_g^+)^pF=0$. Therefore, $$p\Delta_g^+F+\binom{p}{2} (\Delta_g^+)^2 F+...+p(\Delta_g^+)^{p-1} F=0$$
	which we rewrite as
	$$\left(1+\frac{p-1}{2}\Delta_g^+ +...+(\Delta_g^+)^{p-2}\right)(\Delta_g^+) pF =0.$$
	Inverting the expression in the bracket using Neumann series (and using the fact that $(\Delta_g^+)^{p-1}$ annihilates $(\Delta_g^+) pF$), we conclude that $\Delta_g^+ pF=0$ for any $g\in G_p$. Since $\Delta_g^+ pF=0$ for every $g\in G'$, it follows from the cocycle identity in $g$ that that $\Delta_g^+ pF=0$ for every $g\in G$. Ergodicity implies that $pF$ is a constant, as required.
	
\end{proof}
\begin{rem}
	We can generalize the result of Theorem \ref{HTDPV:thm}. If $F:X\rightarrow S^1$ is a phase polynomial of some given degree which takes values in some finite subgroup $H$ of $S^1$, we can write $H= C_{{p_1}^{l_1}}\times...\times C_{{p_N}^{l_N}}$. Composing $F$ with each of the coordinate maps $\pi_1,...,\pi_N$ yields a polynomial as in the previous theorem, hence if $p_1,...,p_N$ are sufficiently large then $F$ takes values in $C_{p_1}\times...\times C_{p_N}$. 
\end{rem}
We generalize another result from \cite[Lemma D.3]{Berg& tao & ziegler}.

\begin{prop} [line cocycles] \label{linecocycle:prop} 
	Let $X$ be an ergodic $G$-system. Let $F:X\rightarrow S^1$ be a phase polynomials of degree $<k$ and suppose that $F$ takes values in $C_n$ for some $n=p_1\cdot...\cdot p_j$ where $k< p_1,p_2,...,p_j$. Then, for every $g\in G$ we have $\prod_{t=0}^{n-1} T_g^t F = 1$. 
\end{prop}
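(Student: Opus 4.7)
The plan is to convert everything to additive notation and reduce to a binomial-coefficient divisibility check. Write $F = e^{2\pi i f/n}$ for a measurable lift $f : X \to \mathbb{Z}/n\mathbb{Z}$. The hypothesis that $F$ is a phase polynomial of degree $<k$ translates into $(\Delta^+_{g_1}) \cdots (\Delta^+_{g_k}) f \equiv 0 \pmod n$ for every $g_1,\dots,g_k \in G$, where $\Delta^+_g h := T_g h - h$; in particular $(\Delta^+_g)^k f \equiv 0 \pmod n$. The desired conclusion $\prod_{t=0}^{n-1} T_g^t F = 1$ is equivalent to $\sum_{t=0}^{n-1} T_g^t f \equiv 0 \pmod n$.

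Since $T_g = 1 + \Delta^+_g$ and the operators commute, the binomial theorem together with the vanishing of $(\Delta^+_g)^k f$ gives
$$T_g^t f = \sum_{s=0}^{k-1} \binom{t}{s}(\Delta^+_g)^s f.$$
Summing over $t=0,\dots,n-1$ and applying the hockey-stick identity $\sum_{t=0}^{n-1}\binom{t}{s} = \binom{n}{s+1}$ yields
$$\sum_{t=0}^{n-1} T_g^t f = \sum_{s=0}^{k-1}\binom{n}{s+1}(\Delta^+_g)^s f.$$

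It therefore suffices to show $n \mid \binom{n}{s+1}$ for every $0 \le s \le k-1$. From the identity $(s+1)!\binom{n}{s+1} = n(n-1)\cdots(n-s)$ one reads off $n \mid (s+1)!\binom{n}{s+1}$. Since $s+1 \le k < p_i$ for all $i = 1,\dots,j$, every prime factor of $(s+1)!$ is strictly smaller than each $p_i$, so $\gcd((s+1)!, n) = 1$; hence $n \mid \binom{n}{s+1}$ and the displayed sum vanishes mod $n$, proving the proposition. There is no genuine obstacle here: the only subtlety is arranging the binomial coefficients so that the coprimality hypothesis $k < p_i$ can be used uniformly for all $i$, which is automatic once the hockey-stick identity collapses the sum.
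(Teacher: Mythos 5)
Your proof is correct. The underlying mechanism is the one the paper also uses (expand $T_g^t=(1+\Delta_g^+)^t$, kill all derivatives of order $\geq k$, sum the geometric series of operators via $\sum_{t=0}^{N-1}\binom{t}{s}=\binom{N}{s+1}$, and invoke divisibility of binomial coefficients), but you organize it differently and more economically. The paper first projects $C_n\cong C_{p_1}\times\dots\times C_{p_j}$ onto each coordinate $F_i=\pi_i\circ F$, then splits $g=g_i+g'$ into its $p_i$-torsion part and a complementary part, uses Proposition \ref{PPC} applied to the cocycle $\Delta F_i$ to see that $F_i$ is $T_{g'}$-invariant, and only then runs the binomial computation over a single period of length $p_i$, where the relevant fact is $p_i\mid\binom{p_i}{t+1}$ for $t<p_i-1$. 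You bypass both reductions by working directly with the lift $f:X\to\mathbb{Z}/n\mathbb{Z}$ and summing over all $n$ shifts at once, replacing the prime-by-prime divisibility with the single observation that $n\mid\binom{n}{m}$ whenever $\gcd(m,n)=1$, which holds here because $m\leq k<p_i$ for all $i$. What your route buys is a shorter, uniform argument that never invokes the cocycle-value proposition or ergodicity (it works verbatim without the ergodicity hypothesis); what the paper's route buys is consistency with the surrounding machinery, where the decomposition $G=G_{p}\oplus G'$ and coordinate projections are used repeatedly elsewhere.
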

\begin{proof}
	Write $C_n = C_{p_1}\times...\times C_{p_j}$ and let $\pi_i : C_n\rightarrow C_{p_i}$ be the projection map.  We show that $\prod_{t=0}^{n-1} T_g^t F_i=1$ where $F_i = \pi_i \circ F$.
	
	First we decompose $G$ as $G=G_{p_i}\oplus G'$ where $G_{p_i} = \{g\in G : p_ig=0\}$ and $G'$ is its complement. Taking logarithm, it is enough to show that every polynomial $F:X\rightarrow \mathbb{R}/\mathbb{Z}$ with $nF=0$ satisfies that $\sum_{t=0}^{n-1} T_g^t F = 0$. 
	
	For every $g\in G$ we have a decomposition as  $g=g_i+g'$ where $g_i\in G_{p_i}$ and $g'\in G'$. Since $F_i$ is a phase polynomial taking values in $\mathbb{Z}/{p_i}\mathbb{Z}$, we conclude by Proposition \ref{PPC} that it is invariant under $T_{g'}$. It follows that $\sum_{t=0}^{n-1} T_g^t F=\sum_{t=0}^{n-1} T_{g_i}^t F$. If $g_i=0$, then $\sum_{t=0}^{n-1}T_gF=nF=0$. Otherwise, since $\Delta_g^+ = T_g -1$, using the binomial formula we have,
	$$\sum_{t=0}^{p_i-1}T_{g_i}^t F_i=\sum_{t=0}^{p_i-1} \binom{p_i}{t+1}(\Delta_{g_i}^+)^t F_i.$$
	Since $F$ is a phase polynomial of degree $<k$, direct computation shows that so is $F_i$. Repeated application of Lemma \ref{vdif:lem} and the fact that $k<p_i$ implies that $(\Delta^+_{g_i})^{p_i-1}$ annihilates $F_i$. Since $p_i$ divides $\binom{p_i}{t+1}$ for $0\leq t\leq p_i-1$, we conclude that
	$$\sum_{t=0}^{p_i-1}T_{g_i}^t F_i=0.$$
	As $g_i$ is of order $p_i$ and $p_i$ divides $n$ we have that $\sum_{t=0}^{n-1} T_{g_i}^t F_i$ is a constant multiple of $\sum_{t=0}^{p_i-1}T_{g_i}^t F_i=0$, hence trivial. Thus, for every $1\leq i\leq j$ and any $g\in G$ we have that $$\sum_{t=0}^{n-1}T_g F_i = 0.$$
	Since this holds for every coordinate, we conclude that $$\sum_{t=0}^{n-1} T_g F=0$$ as required.
\end{proof}

\subsection{Roots of phase polynomials} \label{roots:app}
When the multiset $P$ is unbounded, it is not true that every $\bigoplus_{p\in P}\mathbb{F}_p$-phase polynomial has an $n$-th root that is also a phase polynomial (see Example \ref{Example} and compare with \cite[Corollary D.7]{Berg& tao & ziegler}). However, when the phase polynomial takes finitely many values (for example when the underlying space is totally disconnected), we can use the tools developed by Bergelson Tao and Ziegler in \cite[Appendix D]{Berg& tao & ziegler} to construct phase polynomial roots.\\

Let $X$ be an ergodic $G$-system and $P:X\rightarrow S^1$ be a phase polynomial (of any degree). Suppose that there exists a prime $p$ and a natural number $n$ such that $P$ takes values in $C_{p^n}$. Write $G_p$ for the $p$-component of $G$ and $G=G_p\oplus G^\perp$. We see by Proposition \ref{PPC} that $P$ is invariant with respect to the action of $G^\perp$ on $X$. Let $\mathcal{B}_p$ be the $\sigma$-algebra of all $G^\perp$ invariant functions and $X_p$ be the factor of $X$ which corresponds to that $\sigma$-algebra. It is easy to see that the induced action of $G_p$ on $X_p$ is ergodic. This construction allow to generalize the following results of Bergelson Tao and Ziegler about $(\mathbb{F}_p^\omega, X,S^1)$-phase polynomials to our settings.\footnote{Equivalently, the proofs for these results can also be obtained by following the same arguments as in \cite{Berg& tao & ziegler}.}

We begin with the following version of Proposition D.5 from \cite{Berg& tao & ziegler}, the proof is identical and is therefore omitted.
\begin{prop}
	Let $P:X\rightarrow \mathbb{Z}/_{p^m}\mathbb{Z}$ be an (additive) polynomial of degree $<d$, and let $\mathbb{Z}/{p^l}\mathbb{Z}$ be a cyclic group. Embed $\{0,1,...,p-1\}$ into $\mathbb{Z}/{p^l}\mathbb{Z}$ in the obvious manner. Then for any $0\leq j\leq m-1$ the map $b_j(P)$ is a polynomial of degree $<O_{l,d,p,j}(1)$, where $b_j:\mathbb{Z}/{p^l}\mathbb{Z}\rightarrow\{0,1,...,p-1\}$ is the $j$-th digit map.
\end{prop}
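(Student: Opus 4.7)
I would proceed by induction on $j$, with the case $j = 0$ being the crux. For the inductive step, suppose each $b_i(P)$ for $i<j$ has been shown to be a polynomial of degree $<d_i = O_{l,d,p,i}(1)$ as a map into $\mathbb{Z}/p^l\mathbb{Z}$. Since $b_i(P)$ takes values in $\{0,\ldots,p-1\}$, it may equivalently be viewed as a polynomial of the same degree into $\mathbb{Z}/p^m\mathbb{Z}$. Then
\[
Q \;:=\; p^{-j}\Bigl(P - \sum_{i<j} p^i\,b_i(P)\Bigr)
\]
is well defined as a map $X\to \mathbb{Z}/p^{m-j}\mathbb{Z}$ (the numerator lies in $p^j\mathbb{Z}/p^m\mathbb{Z}$ by the definition of base-$p$ expansion) and is an additive polynomial of degree $\max(d, d_0,\ldots,d_{j-1}) = O_{l,d,p,j}(1)$, being a $\mathbb{Z}$-linear combination of additive polynomials post-composed with the group isomorphism $p^j\mathbb{Z}/p^m\mathbb{Z}\cong \mathbb{Z}/p^{m-j}\mathbb{Z}$. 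Since $b_j(P) = b_0(Q)$, the claim reduces to the case $j=0$.

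For the base case, I need: if $f:X\to \mathbb{Z}/p^m\mathbb{Z}$ is an additive polynomial of degree $<d$, then $\phi := b_0(f)$, viewed as a map $X\to \mathbb{Z}/p^l\mathbb{Z}$, is a polynomial of degree $<O_{l,d,p}(1)$. I would run an inner induction on $l$: for $l=1$ the statement is immediate since $\phi \equiv f\pmod p$ is an additive polynomial of degree $<d$. For the inductive step, observe that $\phi \bmod p$ is a polynomial of degree $<d$, so any $d$-fold additive derivative $\Delta^+_{h_1}\cdots\Delta^+_{h_d}\phi$ lands in $p\,\mathbb{Z}/p^l\mathbb{Z}$. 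Dividing by $p$ produces $\psi:X\to\mathbb{Z}/p^{l-1}\mathbb{Z}$, and bounding the polynomial degree of $\psi$ by $O_{l,d,p}(1)$ (for suitably many choices of the $h_i$) will complete the induction.

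The heart of the argument, and the main obstacle, is to control $\psi$. Here I would use the additive carry identity $b_0(a+b) = b_0(a)+b_0(b)-p\cdot c(a,b)$ with $c(a,b)\in\{0,1\}$ depending only on $b_0(a)$ and $b_0(b)$. Expanding the $d$-fold difference operator against this identity writes $\psi$ as a sum of two kinds of terms: ``$f$-terms'', which are iterated additive derivatives of $f$ itself divided by $p$ and are polynomial of controlled degree because $f$ is; and ``carry terms'', which are polynomial combinations of values of $c(\cdot,\cdot)$ evaluated at lower-order differences of $\phi$. The carry function $c$ takes only two values and its arguments are low-order additive differences of $\phi$; the inductive hypothesis on $l$ (applied to $\phi$ with target $\mathbb{Z}/p^{l-1}\mathbb{Z}$) bounds the polynomial degree of those arguments, and Lemma \ref{B.5} then shows any polynomial combination of them is again a polynomial of bounded degree. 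Putting the pieces together yields the required bound on $\psi$. This is precisely the bookkeeping of \cite[Proposition D.5]{Berg& tao & ziegler}; it transfers to our $\bigoplus_{p\in P}\mathbb{F}_p$ setting by first passing to the $p$-torsion factor $X_p$ obtained by quotienting out the action of the prime-to-$p$ subgroup $G^\perp$ of $G$, under which every $\mathbb{Z}/p^k\mathbb{Z}$-valued additive polynomial is automatically invariant (by the argument of Proposition \ref{PPC}), reducing to an ergodic action of $G_p$ where the BTZ argument applies verbatim.
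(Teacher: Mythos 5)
Your proposal is correct and follows essentially the same route as the paper: the paper's entire "proof" consists of the reduction to the $p$-torsion factor $X_p$ (set up in the paragraph immediately preceding the proposition, using Proposition \ref{PPC} to get $G^\perp$-invariance and ergodicity of the induced $G_p$-action) followed by the remark that the argument of \cite[Proposition D.5]{Berg& tao & ziegler} then applies verbatim. Your reconstruction of the BTZ digit/carry induction and your closing paragraph reproduce exactly these two ingredients, so there is nothing to add.
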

Just like in \cite{Berg& tao & ziegler}, this proposition implies that functions of phase polynomials are phase polynomials. However, in our case we have to add an assumption about the values of the phase polynomials.
\begin{cor} [Functions of phase polynomials are phase polynomials] \label{funofpoly} Let $\varphi_1,...,\varphi_m$ be $m$ phase polynomials of degree $<d$ for some $d,m\geq 1$ with values in $C_{p^d}$. Let $n\geq 1$, and let $F(\varphi_1,...,\varphi_m)$ be some function of $\varphi_1,...,\varphi_m$ which takes values in the cyclic group $C_{p^n}$. Then $F(\varphi_1,...,\varphi_m)$ is a $(X,S^1)$ phase polynomial of degree $<O_{p,d,m,n}(1)$.
\end{cor}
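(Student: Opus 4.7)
The plan is to reduce the statement to a statement about additive polynomials valued in cyclic groups of prime-power order, and then exploit the fact that functions of such polynomials can be expressed as polynomial combinations of their base-$p$ digits. Taking logarithms (pick any lift), each $\varphi_i$ becomes an additive-valued polynomial $\tilde\varphi_i\colon X\to\mathbb{Z}/p^d\mathbb{Z}$ of degree $<d$, and the composite $F(\varphi_1,\dots,\varphi_m)$ corresponds, after taking a log, to a function $\tilde F\colon (\mathbb{Z}/p^d\mathbb{Z})^m\to \mathbb{Z}/p^n\mathbb{Z}$ evaluated at $(\tilde\varphi_1,\dots,\tilde\varphi_m)$.

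The first substantive step is to apply the preceding Proposition (the digit-expansion proposition, the generalization of \cite[Proposition D.5]{Berg& tao & ziegler}) to each $\tilde\varphi_i$: for each $0\le j\le d-1$ the digit map $b_j(\tilde\varphi_i)\colon X\to\{0,1,\ldots,p-1\}$ is a phase polynomial of degree $<O_{d,d,p,j}(1)=O_{p,d}(1)$. This converts each input $\tilde\varphi_i$ into a bounded list of $\{0,\ldots,p-1\}$-valued polynomials of controlled degree.

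The second step is to express $\tilde F$ as an algebraic polynomial in the digits. Since the digits $b_j(\tilde\varphi_i)$ collectively take only $p^{md}$ possible values and each one lies in the finite set $\{0,1,\dots,p-1\}$, any function $\tilde F$ of them valued in $\mathbb{Z}/p^n\mathbb{Z}$ can be written as a (finite) integer polynomial in these $md$ digit-variables of total degree at most $md(p-1)=O_{p,d,m}(1)$, by an explicit Lagrange-interpolation formula (over each $\mathbb{F}_p$-coordinate, Fermat's identity $x^p=x$ lets us cap individual degrees at $p-1$). This is a completely combinatorial fact about functions on finite sets and does not involve the dynamics of $X$ at all.

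Finally, substituting back, $F(\varphi_1,\ldots,\varphi_m)$ is obtained, after exponentiating, as a product/power combination of the digit polynomials $b_j(\tilde\varphi_i)$. Lemma~\ref{B.5} (composition/sums/products of phase polynomials remain phase polynomials of controlled degree) — applied a bounded number of times, since the interpolating polynomial has $O_{p,d,m}(1)$ monomials each of bounded degree — yields that $F(\varphi_1,\ldots,\varphi_m)$ is a phase polynomial of degree $<O_{p,d,m,n}(1)$. The main technical nuisance is bookkeeping: one must check that going from the additive picture back to $S^1$ does not introduce unwanted constants, and that the Lagrange interpolation genuinely gives a polynomial of bounded degree uniformly in the values (not the structure) of $F$; both follow routinely once the digit decomposition is in place.
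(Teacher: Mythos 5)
Your argument is essentially the paper's intended proof: the corollary is deduced, exactly as in Bergelson--Tao--Ziegler's Appendix D, by expanding each $\tilde\varphi_i$ into its base-$p$ digits via the preceding digit-expansion proposition, writing $\tilde F$ as a bounded-degree interpolating polynomial in those digits (the Lagrange denominators being units mod $p^n$), and concluding by closure of polynomials under sums and products. The only quibble is the citation: the closure under products of additive $\mathbb{Z}/p^n\mathbb{Z}$-valued polynomials is the Leibniz-rule fact from \cite[Appendix D]{Berg& tao & ziegler} rather than Lemma~\ref{B.5} of this paper (which concerns composition with polynomial changes of variable), but this does not affect the correctness of the argument.
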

In particular we have the following result.
\begin{cor}[Phase polynomials with finite values have phase polynomial roots] \label{roots}
Let $d,n>1$ be integers and $p$ be a prime number. Let $X$ be an ergodic $G$-system and $P:X\rightarrow S^1$ be a phase polynomial of degree $<d$ which takes values in $C_{p^d}$ and let $n>1$. Then, there exists a phase polynomial $\Psi:X\rightarrow S^1$ of degree $<O_{d,n,p}(1)$ such that $\Psi^n = P$.
    \end{cor}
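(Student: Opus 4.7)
The strategy is to reduce to the prime case $n=p$ and then construct $\Psi$ as an explicit function of $P$ to which Corollary \ref{funofpoly} can be applied.

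First I would dispose of the part of $n$ coprime to $p$. Write $n=p^{a}n'$ with $\gcd(n',p)=1$. Because $P$ takes values in $C_{p^{d}}$, the integer $n'$ is invertible modulo $p^{d+a}$; pick $\beta$ with $\beta n'\equiv 1\pmod{p^{d+a}}$. If we can produce a phase polynomial $\Psi'$ of bounded degree with $(\Psi')^{p^{a}}=P$ and values in $C_{p^{d+a}}$, then $\Psi:=(\Psi')^{\beta}$ is also a phase polynomial of bounded degree (a power of a phase polynomial is still a phase polynomial of the same degree), and
\[
\Psi^{n}=(\Psi')^{\beta p^{a}n'}=P^{\beta n'}=P,
\]
since $\beta n'\equiv 1\pmod{p^{d}}$ and $P$ is $p^{d}$-torsion. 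Thus it suffices to find a $p^{a}$-th root.

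Next, by induction on $a$ it suffices to find a $p$-th root. So assume we are given a phase polynomial $P:X\to S^{1}$ of degree $<d$ with values in $C_{p^{d}}$, and we want to produce a phase polynomial $\Psi$ of degree $<O_{d,p}(1)$ with values in $C_{p^{d+1}}$ such that $\Psi^{p}=P$. Identify $C_{p^{d}}$ with $\{e^{2\pi i k/p^{d}}:0\le k<p^{d}\}$ and write $P(x)=e^{2\pi i k(x)/p^{d}}$ where $k:X\to\{0,1,\ldots,p^{d}-1\}$ is the measurable map obtained by taking the canonical representative. Define
\[
\Psi(x):=e^{2\pi i k(x)/p^{d+1}}.
\]
Then $\Psi$ manifestly takes values in $C_{p^{d+1}}$ and $\Psi^{p}=P$ by construction. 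Crucially, $\Psi$ is a \emph{function} of the single phase polynomial $P$ (namely $\Psi=F(P)$ where $F:C_{p^{d}}\to C_{p^{d+1}}$ sends $e^{2\pi i k/p^{d}}$ to $e^{2\pi i k/p^{d+1}}$ for the canonical representative $k$). Since $P$ has values in $C_{p^{d}}$ and $F(P)$ has values in $C_{p^{d+1}}$, Corollary \ref{funofpoly} applies (with $m=1$, both source and target being cyclic groups of the same prime power character) and yields that $\Psi$ is a phase polynomial of degree $<O_{p,d}(1)$.

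Combining the two reductions gives $\Psi^{n}=P$ with $\Psi\in P_{<O_{d,n,p}(1)}(X,S^{1})$, as required. The only potential obstacle is the applicability of Corollary \ref{funofpoly}: it requires both the input and output phase polynomials to take values in cyclic groups of $p$-power order for the same prime $p$, which is precisely why we had to peel off the coprime-to-$p$ factor $n'$ first and treat the $p^{a}$ part by iterated single-$p$-th-root extraction. The degree bound degrades by a factor depending on $p,d$ at each of the $a$ iterations, but $a\le\log_{p}n$ is bounded in terms of $n,p$, so the final bound is still of the form $O_{d,n,p}(1)$.
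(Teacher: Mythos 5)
Your proof is correct and is essentially the derivation the paper intends: the paper states Corollary \ref{roots} as an immediate consequence of Corollary \ref{funofpoly}, and you supply exactly the missing details — peeling off the prime-to-$p$ part of $n$ by taking an integer power, and extracting the $p$-power root by writing $\Psi=F(P)$ for an explicit root-taking function $F:C_{p^d}\rightarrow C_{p^{d+1}}$ so that \ref{funofpoly} applies. (As a minor simplification, a single application of \ref{funofpoly} with $F(e^{2\pi ik/p^{d}})=e^{2\pi i\beta k/p^{d+a}}$ would avoid the induction on $a$, but your iterated version is equally valid.)
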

	
\address{Einstein Institute of Mathematics\\
	The Hebrew University of Jerusalem\\
	Edmond J. Safra Campus, Jerusalem, 91904, Israel \\ Or.Shalom@mail.huji.ac.il}
\end{document}